\documentclass[reqno]{amsart}  

\usepackage[english]{babel}
\usepackage[utf8]{inputenc}

\usepackage{amsthm,amsfonts,amssymb,amsmath}
\numberwithin{equation}{section}
\usepackage{dsfont}
\usepackage{multirow}
\usepackage{makecell}
\usepackage{url} 
\usepackage{xcolor}
\usepackage{graphicx}
\usepackage{float}
\usepackage{booktabs}
\usepackage{epstopdf}
\usepackage{psfrag}
\usepackage{mathrsfs}
\usepackage{mathtools}
\usepackage{subcaption}
\usepackage{nicefrac} 
\usepackage{paralist}
\usepackage[inline]{enumitem}

\newtheorem{lemma}{Lemma}[section]
\newtheorem{theorem}[lemma]{Theorem}
\newtheorem{proposition}[lemma]{Proposition}
\newtheorem{corollary}[lemma]{Corollary}

\theoremstyle{remark}
\newtheorem{remark}[lemma]{Remark}

\theoremstyle{definition}
\newtheorem{definition}[lemma]{Definition}
\newtheorem{assumption}[lemma]{Assumption}

\def\bbC{\mathbb{C}}
\def\bbE{\mathbb{E}}
\def\bbN{\mathbb{N}}
\def\bbP{\mathbb{P}}

\def\bbR{\mathbb{R}}
\def\bbZ{\mathbb{Z}}

\newcommand{\cB}{\mathcal{B}}

\newcommand{\cF}{\mathcal{F}}

\newcommand{\cK}{\mathcal{K}}

\newcommand{\cO}{\mathcal{O}}

\newcommand{\cS}{\mathcal{S}}

\newcommand{\cV}{\mathcal{V}}

\newcommand{\eps}{\varepsilon}

\newcommand{\esssup}{\mathop{\operatorname{ess\,sup}}}

\renewcommand{\Re}{\operatorname{Re}}
\renewcommand{\Im}{\operatorname{Im}}

\newcommand{\diff}[1]{\mathop{}\!\mathrm{d}#1} 

\newcommand{\bdg}[1]{B_{#1}}
\newcommand{\gronw}{C_{\kappa,T}}

\newcommand{\ind}[1]{\mathbf{1}_{#1}}
\newcommand{\timemesh}{\mathbb{T}}

\usepackage[pdftex,
	pdftitle={H\"older convergence for Markovian approximations}, 
	pdfauthor={N.~Corneille, 
		K.~Kirchner, 
		and P.~Pezzoli Frigerio},
	bookmarksopen,
	colorlinks,
	linkcolor=blue,
	urlcolor= blue,
	citecolor=blue
]{hyperref}

\begin{document}

\title[H\"older convergence for Markovian approximations]{Convergence 
	in H\"older norms for Markovian approximations of stochastic Volterra equations}


\author[N.~Corneille]{No\'e Corneille}

\address[N.~Corneille]{Delft Institute of Applied Mathematics\\
Delft University of Technology\\
P.O.~Box 5031 \\
2600~GA Delft \\
The Netherlands}

\email{n.corneille@tudelft.nl}


\author[K.~Kirchner]{Kristin Kirchner}

\address[Kristin Kirchner]{Department of Mathematics\\
	KTH Royal Institute of Technology\\
	Lindstedtsv\"agen~25\\
	114~28 Stockholm\\
	Sweden, and\vspace*{-7pt}}

\address[]{Delft Institute of Applied Mathematics\\
	Delft University of Technology\\
	P.O.~Box 5031 \\
	2600~GA Delft \\
	The Netherlands}

\email{krikir@kth.se}


\author[P.~Pezzoli Frigerio]{Pietro Pezzoli Frigerio}

\address[P.~Pezzoli Frigerio]{Delft Institute of Applied Mathematics\\
	Delft University of Technology\\
	P.O.~Box 5031 \\
	2600~GA Delft \\
	The Netherlands}
	
\email{pie.pezzoli@gmail.com} 


\begin{abstract}
	We bound the difference between
	two stochastic Volterra processes 
	with identical Lipschitz coefficients
	but different kernels.
	For non-convolution kernels,
	we establish estimates
	in $C^0([0,T];L^p(\Omega))$, $p\geq 2$,
	and for convolution kernels
	in $L^p(\Omega;L^q(0,T))$, $q \in [1,p]$,
	and
	$C^\beta([0,T];L^p(\Omega))$,
	$L^p(\Omega;C^\beta([0,T]))$, 
	where the range of
	the H\"older exponent $\beta \in (0,1]$
	is the maximal permitted 
	by the regularity of the processes. 
	For the fractional kernel,
	we then construct Markovian approximations
	whose error we show to decay
	as $e^{-a\sqrt{N}}$
	in the aforementioned norms,
	using an $N$-node quadrature
	based on sinc methods.
	Numerical experiments
	for the fractional Brownian motion
	verify our findings.
\end{abstract}

\keywords{Stochastic Volterra equations; 
	Markovian approximations; 
	fractional kernel; 
	sinc quadrature; 
	strong convergence; 
	rough volatility models} 

\subjclass[2020]{Primary: 65C30; secondary: 45D05, 60G22, 65D32.} 


\date{\today}

\maketitle



\section{Introduction}\label{sec:intro} 


\subsection{Background and motivation} 

Stochastic Volterra processes 
arise in various applications 
as a mathematical tool 
to model random phenomena 
with memory.
A general stochastic Volterra equation 
takes, for every $t\geq 0$, the form 
\begin{equation}\label{eq:intr_SVE}
	X_t = X_0 + \int_0^t K_1(t,s) b(X_s) \diff{s} + \int_0^t K_2(t,s) \sigma(X_s)\diff{W}_s, 
	\qquad 
	\bbP\text{-a.s.}, 
\end{equation}
where $K_1$ and $K_2$ are 
deterministic \emph{Volterra kernels},
and $W$ is a Wiener process on a suitable space. 
Such processes play an important role for applications, 
especially for rough stochastic volatility models in finance, 
including the rough 
Bergomi~\cite{bayerFrizGatheral2016, 
	bergomiSmileDynamics2, 
	bergomiSmileDynamics3} 
and rough 
Heston~\cite{elEuchRosenbaum2019, 
	heston1993} 
models,
where the underlying volatility process is modeled 
by means of a stochastic Volterra process    
to capture the roughness of empirical 
market data~\cite{gatheraletal2018}. 
In the aforementioned applications, the models feature kernels 
of \emph{convolution type},
i.e., Volterra kernels of the form 
$K(t,s) = k(t-s) \ind{\{t > s\}}(t,s)$.
A well-known example is
the Riemann--Liouville fractional Brownian 
motion~\cite{levy1953,
	mandelbrotVanNess1968},   
\begin{equation}\label{eq:intr_RLfBM}
	B^H_t 
	= 
	\int_0^t k^{H\!}(t - s) \diff{W}_s,
	\qquad
	k^{H\!}(u) 
	:= 
	\frac{1}{\Gamma( H+\nicefrac{1}{2} )} \, u^{H-\frac{1}{2}}, 
	\quad u \in (0,\infty), 
\end{equation} 
where $H\in\bigl( -\tfrac{1}{2}, \tfrac{1}{2}\bigr)$ is the Hurst parameter 
which controls the smoothness of~$B^{H\!}$.  

In the general formulation \eqref{eq:intr_SVE},
the kernels $K_1$ and $K_2$ endow the process $X$
with a \emph{non-local} structure.
As opposed to ordinary stochastic differential equations (SDEs),
where the Markov property of solutions allows
the future evolution to be determined
from the current state alone,
the increments of~$X$ depend on
its entire historical trajectory. 
It is this property 
that renders classical approaches 
for approximating solutions 
to stochastic Volterra equations 
computationally expensive. 


\subsection{Previous work} 

Existence, uniqueness, pathwise 
continuity, and pathwise 
H\"older regularity of (strong) solutions to \eqref{eq:intr_SVE}  
in Banach spaces 
were studied by Zhang~\cite{zhang_SPDE_banach2010} 
for Lipschitz continuous coefficients $b$ and $\sigma$. 
For a more general class of coefficients, 
which are not necessarily Lipschitz continuous, 
weak solutions have been introduced and 
investigated in~\cite{abijaber2025weaksolutions, 
	abijabercuchierolarssonpulido2021, 
	abijaberlarsson2019}. 
Further analyses of stochastic Volterra processes 
	with singular kernels 
	were conducted in~\cite{CochranEtAl1995, 
		coutinDecreusefond2001, 
		decreusefond2002, 
		Wang2008}. 

Regarding numerical methods to approximate stochastic Volterra processes, 
Euler and Milstein type discretizations for \eqref{eq:intr_SVE} 
have been proposed, 
and strong error bounds in the 
norms of $C^0([0,T];L^p(\Omega))$ 
and $L^p(\Omega;C^0([0,T]))$ 
have been derived for the case that  
the coefficients $b,\sigma$ are globally Lipschitz continuous, 
see~\cite{richard2021timestep, zhangEuler2008}. 
The non-locality of \eqref{eq:intr_SVE} 
causes the computational cost of the 
time-stepping schemes in these works  
to scale quadratically with respect to 
the number of time grid points. 

More recently, 
the idea to approximate 
the non-Markovian solution of \eqref{eq:intr_SVE} 
by a solution of a higher-dimensional 
system of ordinary SDEs  
has evolved, 
see~\cite{abijabereleuch2018, 
	alfonsikebaier2022, 
	bayerbreneis2023, 
	harms2021}. 
This approach, 
known as \emph{multifactor} 
or \emph{Markovian approximations}, 
has the advantage that fast convergence 
of such approximations to the  
stochastic solution process of \eqref{eq:intr_SVE} 
can be combined with efficient computational  
methods for ordinary SDEs 
to reduce the computational cost considerably. 
More specifically, 
every convolution kernel of \emph{completely monotone type}, 
such as the fractional kernel in \eqref{eq:intr_RLfBM}, 
admits a representation as  
a Laplace transform of some measure~$\mu$ on $(0,\infty)$, 
i.e., 
\begin{equation}\label{eq:compl_mon_type_ker}
	\forall t > s:
	\qquad
	K(t,s) 
	= 
	k(t-s) 
	=
	\int_0^\infty e^{-\rho(t-s)} \diff{\mu(\rho)}, 
\end{equation}
where 
$\diff{\mu(\rho)} = 
(\Gamma( H+\nicefrac{1}{2} ) 
\Gamma( \nicefrac{1}{2}-H ) )^{-1}  
\rho^{-\frac{1}{2} - H} \diff{\rho}$ 
for the fractional kernel in \eqref{eq:intr_RLfBM}.  
Affine Volterra processes with such kernels
can thus be represented by applying 
an integral w.r.t.\ $\mu$
to an infinite-dimensional Markov process, 
see~\cite{carmona1998, 
	HarmsStefanovits2019}.
Replacing the measure~$\mu$ 
by a linear combination of $N$ Dirac measures
then corresponds to approximating  
the kernel $k$ in \eqref{eq:compl_mon_type_ker} 
by another convolution kernel $\widehat{k}$  
and the original stochastic Volterra equation  
by a finite-dimensional system
of ordinary SDEs~\cite{alfonsikebaier2022}.  
For the so-constructed Markovian approximations 
of Volterra processes, 
Alfonsi and Kebaier~\cite{alfonsikebaier2022}
bounded the pointwise strong error  
in the norm of $C^0([0,T];L^2(\Omega))$  
by the $L^2(0,T)$-difference between 
the kernels $k$ and $\widehat{k}$. 
For specific choices of the Dirac points 
and corresponding weights, 
they proved convergence rates 
of polynomial order $N^{-r}\!$, 
where the rate scales as $r\sim H$ 
for the fractional kernel in \eqref{eq:intr_RLfBM}. 

Considering the fractional kernel $k^H$ 
in  \eqref{eq:intr_RLfBM}, 
by means of geometric spacing and 
Gaussian quadrature 
of level $m$, Harms~\cite{harms2021} 
achieved the rate $r\sim Hm$ 
for approximating the Riemann--Liouville 
fractional Brownian motion 
in the pathwise topology 
of $L^p(\Omega;C^0([0,T]))$.  
Bayer and Breneis \cite{bayerbreneis2023} 
combined the ideas 
of~\cite{alfonsikebaier2022} 
and~\cite{harms2021}, and 
proposed Gaussian quadrature rules 
that achieve superpolynomial 
convergence  
of the order $e^{-a_H \sqrt{N}}$ 
to approximate~$k^H$ 
in the norms of $L^1(0,T)$ and $L^2(0,T)$, 
with $a_H > 0$ depending on 
the Hurst parameter~$H$. 
Together with \cite{alfonsikebaier2022}, 
the latter error bound implies 
strong convergence 
for approximating stochastic Volterra processes 
with the fractional kernel
in the norm of $C^0([0,T];L^2(\Omega))$, 
whereas the former 
may\linebreak   
be used to bound 
weak errors 
for the rough Heston model~\cite{bayer2023weakma_arxiv}. 
Further results on 
weak error estimates for Markovian approximations 
can be found 
in \cite{abijabereleuch2018, 
	bossy2025weakroughkernelcomparison, 
	elEuchRosenbaum2019}. 

To the best of the authors' knowledge, 
up until now there is no analysis 
available that investigates the pathwise error 
of Markovian approximations 
in H\"older norms on $C^\nu([0,T])$ 
for $\nu\in(0,1]$ --- 
an aspect that will be addressed 
in the present work.  
Furthermore, the Gaussian quadrature methods 
of \cite{bayerbreneis2023, harms2021} 
require to recompute the  
nodes and weights 
when increasing $N$.  
This causes additional computational cost  
for combining the approaches, e.g., 
with multilevel Monte Carlo techniques. 


\subsection{Contributions}

The aim of this paper is two-fold: 
Firstly, we bound the difference 
of two stochastic Volterra processes~$X,\widehat{X}$ 
with the same Lipschitz continuous coefficients $b,\sigma$, 
but with different kernels $(K_1,K_2)$ and 
$(\widehat{K}_1, \widehat{K}_2)$,  
see \eqref{eq:intr_SVE}, 
in various norms  
by deterministic quantities depending 
on the differences of the kernels.  
More specifically, 
Theorem~{\upshape\ref{thm:C0Lp-err_non-conv}}  
addresses the pointwise 
strong error in the norm 
of $C^0([0,T];L^p(\Omega))$ 
and constitutes an extension of 
\cite[Theorem~3.1]{alfonsikebaier2022} 
to $p\in[2,\infty)$ and to 
\emph{non-convolution kernels} $(K_1, K_2)$
satisfying mild integrability conditions.  
Additionally, for the case 
of convolution kernels, 
we provide results about 
the differences of their paths 
by bounding the 
error in the norms of 
$L^p(\Omega;L^q(0,T))$, 
for $p\in[2,\infty)$, $q\in[1,p]$, 
and 
of $L^p(\Omega;C^\nu([0,T]))$, 
for $p\in[2,\infty)$ and $\nu\in[0,1]$ 
up to the maximal H\"older regularity 
permitted by~$X$ and~$\widehat{X}$, 
in Corollary~{\upshape\ref{cor:path_dependent_Lp}} 
and Theorem~{\upshape\ref{thm:SVE_holder_Lp_bound}}, 
respectively. 
To the best of the authors' 
knowledge, the latter is the first 
result in the literature 
quantifying the difference of 
stochastic Volterra processes 
with different kernels 
in H\"older norms. 
To obtain this result, 
we first derive an 
$L^p(\Omega)$-estimate for increments 
of the difference process $X - \widehat{X}$ 
in Proposition~{\upshape\ref{prop:Lp_error_of_increment}}. 

Secondly, we introduce and analyze 
a specific class of Markovian approximations 
for stochastic Volterra equations \eqref{eq:intr_SVE} 
when $(K_1, K_2)$ are fractional kernels \eqref{eq:intr_RLfBM} 
with Hurst parameters 
$H_1\in\bigl(-\tfrac{1}{2},\tfrac{1}{2}\bigr)$, 
$H_2\in\bigl(0,\tfrac{1}{2}\bigr)$. 
This class is based on 
a quadrature  
for \eqref{eq:compl_mon_type_ker} using 
\emph{sinc approximations}~\cite{lund1992sinc, 
	stenger1993sinc}, 
whose nodes and weights 
are easy to compute and 
facilitate a straightforward combination with 
multilevel Monte Carlo methods. 
After bounding the sinc quadrature error  
for the fractional kernel  
in Theorem~{\upshape\ref{thm:error_bound_Lr_general_gamma}}, 
we prove that the so-constructed 
Markovian approximations 
converge  
in the norm of $C^\beta([0,T];L^p(\Omega))$, 
for all $p\in[2,\infty)$ and 
every $\beta  \in [ 0, (H_1 + \nicefrac{1}{2} ) \wedge  H_2 )$, 
at a rate that is exponential 
in the square root of the 
number of quadrature nodes 
in Theorems~{\upshape\ref{thm:C0_error_fracker}} 
and~{\upshape\ref{thm:holder_bound_fracker}}.
Using these results we derive pathwise 
exponential convergence 
in $L^p(\Omega;L^q(0,T))$ 
and $L^p(\Omega;C^\beta([0,T]))$,  
see also 
Theorems~{\upshape\ref{thm:C0_error_fracker}} 
and~{\upshape\ref{thm:holder_bound_fracker}}. 
We note that the range of 
the H\"older exponent $\beta$, essentially 
up to $(H_1 + \nicefrac{1}{2} ) \wedge  H_2$, 
is the maximal permitted by the regularity 
of the exact stochastic Volterra process.  


\subsection{Outline}

In Section~{\upshape\ref{sec:prelim}}, 
after introducing the necessary 
notation and function spaces, 
we summarize 
the preliminaries 
including the 
standing assumptions on 
the coefficients 
and the Volterra kernels. 
We then consider the general stochastic 
Volterra equation \eqref{eq:intr_SVE} 
in Section~{\upshape\ref{sec:general_SVE}} 
and address  
pathwise continuity and 
H\"older regularity of its solution 
in Subsection~{\upshape\ref{subsec:general_SVE_regularity}}, 
as well as 
bounding the difference to another 
Volterra process 
with different kernels, 
but with the same coefficients $b,\sigma$, 
in the norms of 
$C^0([0,T];L^p(\Omega))$ 
and $C^\beta([0,T];L^p(\Omega))$ 
in Subsections~{\upshape\ref{subsec:general_SVE_diff}} 
and~{\upshape\ref{subsec:general_SVE_holder_diff}}, respectively. 
Section~{\upshape\ref{sec:sinc-fractional}} is devoted 
to constructing quadrature methods  
for the Laplace transform \eqref{eq:compl_mon_type_ker} 
of the fractional kernel $k^{H\!}$, 
and estimating the corresponding error 
pointwise, in $L^r(0,T)$, and in Sobolev 
sense. 
In Section~{\upshape\ref{sec:markov_approx}} 
we combine~the~results of 
Sections~{\upshape\ref{sec:general_SVE}} and~{\upshape\ref{sec:sinc-fractional}}
to construct Markovian approximations
for stochastic Volterra equations \eqref{eq:intr_SVE}
with two possibly distinct fractional kernels, 
and we prove strong convergence 
in various norms, including 
pathwise H\"older convergence. 
Finally, 
Section~{\upshape\ref{sec:numerical_experiments}} aims 
to empirically verify 
the convergence of the sinc quadrature approximations  
for the fractional kernel established in Section~{\upshape\ref{sec:sinc-fractional}} 
as well as the strong convergence of 
the corresponding Markovian approximations  
derived in Section~{\upshape\ref{sec:markov_approx}}.


\section{Preliminaries}\label{sec:prelim} 


\subsection{Notation}\label{subsec:notation} 

The sets 
$\bbN := \{1,2,3,\ldots\}$ and 
$\bbN_0 := \bbN \cup \{0\}$ 
denote the positive
and non-negative integers, respectively. 
We write $s\wedge t$ (or $s\vee t$) 
for the minimum
(or maximum) of two real numbers $s,t\in\bbR$. 
The real and imaginary parts of a
complex number $z \in \bbC$ 
are denoted by $\Re z$ and $\Im z$, 
respectively. 

For a measure space $(S,\mathcal{S},\mu)$, 
a Banach space $(E,\| \,\cdot\, \|_E)$, 
and ${p \in [1,\infty)}$, 
we define 
the Lebesgue--Bochner space 
$L^p(S;E) = L^p(S,\cS,\mu;E)$  
as the Banach space of (all equivalence classes
of) strongly $\mu$-measurable functions $f\colon S \to E$, 
for which 
\begin{equation*}
	\| f \|_{L^p(S;E)}
	:= 
	\biggl( \int_S \| f(s) \|_E^p \diff{\mu}(s) \biggr)^{\nicefrac{1}{p}}
	< \infty.
\end{equation*}  
The indicator function of a measurable subset $A\in\cS$ 
of $S$ is denoted by $\mathbf 1_A$. 

Whenever $S$ is an interval 
$S = J \subseteq \bbR$, 
we consider the Borel $\sigma$-algebra and the
Lebesgue measure on $J$, 
and we define  
$\triangle J := \{(t,s) \in J\times J : s \leq t\}$. 
We denote 
the space of continuous functions from $J$ to $E$ 
by $C^0(J;E)$. 
If $J$ is compact, 
this space is endowed with the supremum norm
denoted by $\| f \|_{C^0(J;E)} := \sup_{t \in J} \| f(t) \|_E$.
For $\beta \in (0,1]$ and a compact interval~$J$,
we define the space $C^\beta(J;E)$
as the Banach space of all 
$\beta$-H\"older continuous functions $f\colon J \to E$, 
with the norm 
\begin{equation*}
	\| f \|_{C^\beta(J;E)} := \| f \|_{C^0(J;E)} + | f |_{C^\beta(J;E)},
	\quad
	| f |_{C^\beta(J;E)} := \sup_{t,s \in J, \, s \neq t} \frac{\| f(t)-f(s) \|_E}{| t-s |^\beta}.
\end{equation*}

If the codomain 
is a Euclidean space,  
$E=\bbR^m$ or $E=\bbR^{m\times d\!}$, 
equipped with the Euclidean norm, 
respectively, the Frobenius norm, 
and the dimensions $m, d\in \bbN$ are evident from context, 
we abbreviate  
$L^p(S) := L^p(S;E)$ 
and 
$C^\beta(J) := C^\beta(J;E)$. 

Throughout this paper,
we work on a complete probability space
$(\Omega, \mathcal{F}, \bbP)$
with a right-continuous filtration 
$(\mathcal{F}_t)_{t \geq 0}$, 
where $\cF_0$ contains all $\bbP$-null sets.  
The expecta-\linebreak 
tion operator 
on $(\Omega, \mathcal{F}, \bbP)$ is 
denoted by 
$\bbE [Z] := \int_\Omega Z(\omega) \diff{\bbP}(\omega)$.
We mark equations which hold almost everywhere or 
$\bbP$-almost surely with a.e.\ 
and $\bbP$-a.s., respectively.


\subsection{Definitions and setting}\label{subsec:setting}

\begin{definition}\label{def:SVE_definition} 
	Assume that $K_1, K_2\colon[0,\infty)\times [0,\infty) \to \bbR$
	are Volterra kernels (see Definition~{\upshape\ref{def:volterra_kernel}}).
	Let $m,d \in\bbN$, and suppose that   
	$b\colon\bbR^m \to \bbR^m\!$, 
	$\sigma\colon\bbR^m \to \bbR^{m\times d}$
	are Borel measurable mappings,
	$X_0\colon \Omega \to \bbR^m$ is
	an $\cF_0$-measurable random variable, 
	and $W\colon [0,\infty) \times \Omega \to \bbR^d$ is
	a $d$-dimensional standard Brownian motion.  
	
	A stochastic process $X\colon [0,\infty) \times \Omega \to \bbR^m$
	is called a 
	\emph{continuous 
		stochastic Volterra process
		with initial value $X_0$,
		coefficients $b, \sigma$,
		and kernels $K_1, K_2$},
	if $X$ is $(\cF_t)_{t\geq 0}$-adapted, 
	$X$ has continuous sample paths and, 
	for all $t \in [0,\infty)$, 
	\begin{equation}\label{eq:SVE_def_eq}
		X_t = X_0 + \int_0^t K_1(t,s) b(X_s) \diff{s} + \int_0^t K_2(t,s) \sigma(X_s)\diff{W_s},
		\qquad
		\bbP\text{-a.s.}
	\end{equation} 
\end{definition}

\begin{remark}\label{rmk:conv_kernels}
	We say that the stochastic Volterra process~$X$ 
	in \eqref{eq:SVE_def_eq} has \emph{convolution kernels} 
	if there exist Borel measurable functions $k_1,k_2\colon (0,\infty) \to \bbR$ such that
	\begin{equation}\label{eq:induced_conv_volterra_kernel} 
		\forall j \in \{1,2\}: 
		\quad\; 
		K_j(t,s) =
		\begin{cases}
			k_j(t-s) & \text{if }t > s,   \\
			0        & \text{if }t \leq s,
		\end{cases}
		\quad\;     
		(t,s)\in 
		[0,\infty) \times [0,\infty).  
	\end{equation}
	In this case, 
	the stochastic Volterra equation 
	\eqref{eq:SVE_def_eq} simplifies, 
	for all $t\in[0,\infty)$, to 
	\begin{equation}\label{eq:SVE_def_eq_conv}
		X_t = X_0 + \int_0^t k_1(t-s) b(X_s) \diff{s} + \int_0^t k_2(t-s) \sigma(X_s) \diff{W_s},
		\qquad \bbP\text{-a.s.}
	\end{equation} 
\end{remark} 

In what follows, we summarize our assumptions 
on the coefficients $b,\sigma$ 
as well as on the (non-convolution) 
kernels $K_1, K_2$ in \eqref{eq:SVE_def_eq} 
and the convolution kernels $k_1,k_2$ 
in \eqref{eq:SVE_def_eq_conv}, 
respectively.  

\begin{assumption}\label{ass:lipschitz_coef}
	The functions $b\colon \bbR^m \to \bbR^m$
	and $\sigma \colon \bbR^m \to \bbR^{m\times d}$ are Borel measurable
	and there exists a constant $L \in (0,\infty)$ such that, for all $x,y \in \bbR^m\!$,
	\begin{equation*}
		\| b(x) - b(y) \|_{\bbR^m} \leq L \| x-y \|_{\bbR^m}, 
		\qquad 
		\| \sigma(x) - \sigma(y) \|_{\bbR^{m\times d}} \leq L \| x-y \|_{\bbR^m}.
	\end{equation*}
\end{assumption}

\begin{assumption}[Non-convolution kernels]
	\label{ass:assump_nonconv_kernels}
	Let $K_1, K_2 \colon [0,\infty) \times [0,\infty) \to \bbR$ 
	be Volterra kernels on $[0,\infty)$, 
	see Definition~{\upshape\ref{def:volterra_kernel}}.
	For every $T \in (0,\infty)$, there exist constants 
	$\eta = \eta_T \in (0,\infty)$ and $\alpha = \alpha_T \in (0,1]$ such that 
	the following hold: 
	\begin{enumerate}[leftmargin=8mm, label={\upshape(\roman*)}]
		\item\label{item:assu_noco_i} 
		For all $(t,s)\in\triangle[0,T]$,  
		\begin{equation*}
			\| K_1(t,\,\cdot\,) \|_{L^1(s,t)} \leq \eta (t-s)^{\alpha},
			\qquad
			\| K_2(t,\,\cdot\,) \|_{L^2(s,t)} \leq \eta (t-s)^{\alpha} . 
		\end{equation*}
		\item\label{item:assu_noco_ii} 
		For all $(t,s)\in\triangle[0,T]$,  
		\begin{equation*}
			\| K_1(t,\,\cdot\,) - K_1(s,\,\cdot\,) \|_{L^1(0,s)} \leq \eta (t-s)^\alpha ,
			\quad 
			\| K_2(t,\,\cdot\,) - K_2(s,\,\cdot\,) \|_{L^2(0,s)} \leq \eta (t-s)^\alpha . 
		\end{equation*}
	\end{enumerate}
\end{assumption}

\begin{remark} 
	In Assumption~{\upshape\ref{ass:assump_nonconv_kernels}}  
	and below, 
	we work with the continuous 
	representatives  
	of the Volterra kernels $K_1, K_2$ 
	in $C^0([0,T];L^1(0,T))$ 
	and $C^0([0,T];L^2(0,T))$, 
	respectively, 
	which exist 
	by Lemma~{\upshape\ref{lma:continuous_representative}}. 
	Moreover,  
	by Lemma~{\upshape\ref{lma:u_continuity_lemma}} 
	the left-hand sides 
	of the conditions~{\upshape\ref{item:assu_noco_i}} 
	and~{\upshape\ref{item:assu_noco_ii}} 
	are continuous on $\triangle [0,T]$ 
	and, thus, well-defined.   
\end{remark}  
		 
For $j \in \{1,2\}$ 
and $(t,s)\in\triangle[0,\infty)$, 
we introduce the following abbreviation, 
\begin{equation}\label{eq:K_jts_definition}
	K_{j;t,s}\colon [0,s] \to \bbR, 
	\qquad
	K_{j;t,s} := K_j(t,\,\cdot\,) - K_j(s,\,\cdot\,).
\end{equation}

\begin{assumption}[Convolution kernels]
	\label{ass:assump_conv_kernels}
	Let $k_1, k_2 \colon (0,\infty) \to \bbR$ be Borel measurable functions 
	which are non-negative and non-increasing 
	a.e.\ on $(0,\infty)$, i.e., 
	there exists a Lebesgue 
	null set $A\subset (0,\infty)$ 
	such that  
	$k_j(t) \geq 0$ and 
	$k_j(t) \leq k_j(s)$ 
	holds for $j\in\{1,2\}$ and 
	all $t,s \in (0,\infty)\setminus A$ 
	with $s<t$.   
	Furthermore, for every  
	$T \in (0,\infty)$, there exist 
	constants 
	$\eta = \eta_T \in (0,\infty)$ and 
	$\alpha = \alpha_T \in (0,1]$ such that 
	\begin{equation*}
		\forall t\in [0,T]: 
		\quad 
		\| k_1 \|_{L^1(0,t)} \leq \eta \, t^{\alpha}, 
		\quad\;\; 
		\| k_2 \|_{L^2(0,t)} \leq \eta \, t^{\alpha}. 
	\end{equation*}
\end{assumption} 

\begin{remark}\label{rmk:assumptions_implication_structure} 
	If $k_1,k_2$ satisfy 
	Assumption~{\upshape\ref{ass:assump_conv_kernels}} 
	with the constants 
	$(\eta_T)_{T>0}\subseteq(0,\infty)$ 
	and 
	$(\alpha_T)_{T>0}\subseteq (0,1]$, 
	then a straightforward calculation 
	shows that 
	the Volterra kernels $K_1$ and $K_2$
	defined according to  \eqref{eq:induced_conv_volterra_kernel} 
	fulfill  Assumption~{\upshape\ref{ass:assump_nonconv_kernels}} 
	with the same families of 
	constants $(\eta_T)_{T>0}\subseteq(0,\infty)$ 
	and 
	$(\alpha_T)_{T>0}\subseteq (0,1]$,  
	cf.~Lemma~{\upshape\ref{lma:kernel_assumption_implications}}. 
\end{remark}


\section{Difference in H\"older norms of stochastic Volterra processes}
\label{sec:general_SVE}


\subsection{Existence, uniqueness, and regularity}
\label{subsec:general_SVE_regularity}

Existence and uniqueness of 
continuous stochastic Volterra processes
satisfying \eqref{eq:SVE_def_eq}
is a well-known result, see e.g.\ 
\cite[Theorems~3.1 \&~3.3]{zhang_SPDE_banach2010}.
This subsection serves as a means to connect our framework 
to that of \cite{zhang_SPDE_banach2010} 
and to establish a relation between  
H\"older regularity 
of Volterra processes on $[0,T]$ 
and the constant $\alpha_T$ 
in Assumptions~{\upshape\ref{ass:assump_nonconv_kernels}} 
and~{\upshape\ref{ass:assump_conv_kernels}}, 
respectively. 

\begin{proposition}\label{prop:existence_uniqueness_SVE}
	Let $p \in [2,\infty)$ and  $X_0 \in L^p(\Omega,\cF_0,\bbP)$.
	Suppose that ${b\colon \bbR^m\! \to \bbR^m}$
	and $\sigma\colon \bbR^m \to \bbR^{m\times d}$
	fulfill Assumption~{\upshape\ref{ass:lipschitz_coef}} 
	with the constant $L \in (0,\infty)$. 
	Let $K_1, K_2$ be Volterra kernels
	satisfying Assumption~{\upshape\ref{ass:assump_nonconv_kernels}\ref{item:assu_noco_i}}, 
	with the constants $\eta_T \in (0,\infty)$ 
	and $\alpha_T \in (0,1]$, 
	for every $T \in (0,\infty)$. 
	Then, there exists a unique measurable 
	$(\cF_t)_{t\geq 0}$-adapted stochastic process
	$Y\colon [0,\infty) \times \Omega \to \bbR^m$ 
	which satisfies \eqref{eq:SVE_def_eq} 
	for almost all $t\in[0,\infty)$. 
	Furthermore, 
	for every $T \in (0,\infty)$, 
	we have that 
	\begin{equation*} 
		\esssup_{t \in [0,T]} 
		\| Y_t \|_{L^p(\Omega)}^p 
		=
		\esssup_{t \in [0,T]} 
		\bbE\bigl[ \| Y_t \|_{\bbR^m}^p \bigr]  
		< \infty . 
	\end{equation*} 
\end{proposition}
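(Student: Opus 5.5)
The plan is a Picard iteration in a space of processes with finite weighted $p$-th moments, carried out on each fixed horizon $[0,T]$. Uniqueness then lets us glue the solutions over $T\in\bbN$. Alternatively, one can check that Assumption~\ref{ass:assump_nonconv_kernels}\ref{item:assu_noco_i} implies the integrability hypotheses of \cite[Theorem~3.1]{zhang_SPDE_banach2010}, namely $\sup_{t\le T}\int_0^t|K_1(t,s)|\diff{s}<\infty$ and $\sup_{t\le T}\int_0^t|K_2(t,s)|^2\diff{s}<\infty$, together with the small-interval control. I would present the direct argument, since it also yields the moment bound.

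Fix $T$ and let $\mathcal{H}_T$ be the space of (equivalence classes of) measurable adapted processes $Y$ on $[0,T]$ with $\|Y\|_{\mathcal{H}_T}:=\esssup_{t\le T}\|Y_t\|_{L^p(\Omega)}<\infty$. Define
\begin{equation*}
\Phi(Y)_t := X_0+\int_0^t K_1(t,s)b(Y_s)\diff{s}+\int_0^t K_2(t,s)\sigma(Y_s)\diff{W_s}.
\end{equation*}
\textbf{Step 1: $\Phi$ maps $\mathcal{H}_T$ into itself.}
\begin{itemize}
\item The drift term is bounded with Minkowski's integral inequality and the linear growth $\|b(x)\|\le \|b(0)\|+L\|x\|$, giving at most $\eta T^{\alpha}(\|b(0)\|+L\|Y\|_{\mathcal{H}_T})$.
\item The stochastic term is bounded with the Burkholder--Davis--Gundy inequality followed by Minkowski in $L^{p/2}(\Omega)$. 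This gives
\begin{equation*}
\Bigl(\int_0^t|K_2(t,s)|^2\,\|\sigma(Y_s)\|_{L^p}^2\diff{s}\Bigr)^{1/2}\le \bdg{p}\,\eta T^{\alpha}\bigl(\|\sigma(0)\|+L\|Y\|_{\mathcal{H}_T}\bigr).
\end{equation*}
\end{itemize}
Measurability in $(t,\omega)$ and adaptedness of $\Phi(Y)$ need a little care. I would use the continuous representatives from Lemma~\ref{lma:continuous_representative}: since $t\mapsto K_j(t,\cdot)$ is continuous into $L^j$, the stochastic integral depends continuously on $t$ in $L^p(\Omega)$, so a jointly measurable version exists.

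\textbf{Step 2: contraction.} By the same estimates, the difference satisfies
\begin{equation*}
\|\Phi(Y)_t-\Phi(Z)_t\|_{L^p}^2\le 2L^2\Bigl(\int_0^t|K_1(t,s)|\diff{s}\int_0^t|K_1(t,s)|\,\|Y_s-Z_s\|_{L^p}^2\diff{s}+\bdg{p}^2\int_0^t|K_2(t,s)|^2\,\|Y_s-Z_s\|_{L^p}^2\diff{s}\Bigr).
\end{equation*}
Here I used Cauchy--Schwarz against the measure $|K_1(t,s)|\diff{s}$. This is a Volterra-type inequality with a kernel $\kappa(t,s)\ge0$ satisfying $\sup_t\int_0^t\kappa(t,s)\diff{s}<\infty$. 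The main obstacle is that $\kappa$ need not be of convolution type, so neither a standard Gronwall lemma nor an exponential weight applies directly.

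I would exploit the local smallness in Assumption~\ref{ass:assump_nonconv_kernels}\ref{item:assu_noco_i}. On any window $[s,t]$ with $t-s\le\delta$ we have $\int_s^t\kappa(t,r)\diff{r}\le C\eta^2\delta^{2\alpha}$, and this can be made smaller than $\tfrac12$.

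\textbf{Step 3: stepwise fixed point.} Partition $[0,T]$ into intervals $[t_{i},t_{i+1}]$ of length $\delta$. On $[0,t_1]$, $\Phi$ is a strict contraction in $\mathcal{H}_{t_1}$, so Banach's theorem gives a unique fixed point there. Inductively, suppose the solution is fixed on $[0,t_i]$. The contribution of that past acts as a known adapted forcing in $\mathcal{H}$, and on $[t_i,t_{i+1}]$ only the part of the integrals over $[t_i,t]$ depends on the unknown. That part is again a contraction with constant $<\tfrac12$, which yields existence and uniqueness in $\mathcal{H}_T$.

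Any measurable adapted solution with locally finite $\mathcal{H}$-norm coincides with this one. To drop that a priori integrability, I would localize by stopping times, or instead state uniqueness within the class of processes with finite moments as in \cite{zhang_SPDE_banach2010}. Consistency across $T$ gives the process $Y$ on $[0,\infty)$, and the bound $\esssup_{t\le T}\bbE\|Y_t\|^p<\infty$ is exactly $Y\in\mathcal{H}_T$. The identity \eqref{eq:SVE_def_eq} holds for a.e.\ $t$ because $\mathcal{H}_T$ consists of a.e.-equivalence classes.
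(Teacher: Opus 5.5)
Your argument is correct in outline but takes a genuinely different route from the paper. The paper runs no fixed-point argument. It sets $\kappa(t,s):=|K_1(t,s)|+|K_2(t,s)|^2$ and observes that Assumption~{\upshape\ref{ass:assump_nonconv_kernels}\ref{item:assu_noco_i}} gives $\int_s^t\kappa(t,u)\diff{u}\le\eta(t-s)^\alpha+\eta^2(t-s)^{2\alpha}$. From this it concludes that $\kappa$ belongs to Zhang's class $\cK_0$ (bounded row integrals and vanishing mass on short windows). It then checks (H1)--(H3) via the linear-growth and Lipschitz bounds for $b,\sigma$, and quotes \cite[Theorem~3.1]{zhang_SPDE_banach2010} for existence, uniqueness and the moment bound.

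Your Picard iteration with time-stepping is essentially Zhang's proof specialized to this setting. The short-window smallness you exploit is exactly the $\cK_0$ property the paper verifies. The paper's route is a few lines and delegates all measurability and uniqueness-class issues to the reference. Yours is self-contained and makes the moment bound transparent, but those issues then become yours. Contrary to your remark, an exponential weight also works here: splitting at $t-\delta$ gives $\sup_t\int_0^t\kappa(t,s)e^{-\lambda(t-s)}\diff{s}\le c\,\delta^\alpha+c\,T^\alpha e^{-\lambda\delta}$, which is small after choosing $\delta$ small and then $\lambda$ large.

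\textbf{The measurability step is justified incorrectly.} For joint measurability of $\Phi(Y)$ you invoke Lemma~{\upshape\ref{lma:continuous_representative}}. That lemma uses both conditions {\upshape\ref{item:assu_noco_i}} and {\upshape\ref{item:assu_noco_ii}} of Assumption~{\upshape\ref{ass:assump_nonconv_kernels}}, whereas this proposition assumes only {\upshape\ref{item:assu_noco_i}}. Under {\upshape\ref{item:assu_noco_i}} alone, $t\mapsto K_j(t,\,\cdot\,)$ need not be continuous. For example, $K_2(t,s):=\ind{\{s<t\le 1/2\}}$ satisfies {\upshape\ref{item:assu_noco_i}} with $\alpha=\tfrac12$ but jumps in $L^2(0,T)$ at $t=\tfrac12$. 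With $\sigma\equiv 1$, the stochastic integral then equals $W_t$ for $t\le\tfrac12$ and $0$ afterwards, so it is not continuous in $L^p(\Omega)$.

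The fix runs as follows.
\begin{enumerate}[label={(\arabic*)}]
\item The map $t\mapsto K_j(t,\,\cdot\,)\ind{[0,t]}\in L^j(0,T)$ is strongly measurable, by joint Borel measurability, Tonelli, and Pettis' theorem in the separable space $L^j(0,T)$.
\item The maps $k\mapsto\int_0^T k(s)b(Y_s)\diff{s}$ and $k\mapsto\int_0^T k(s)\sigma(Y_s)\diff{W_s}$ are bounded linear from $L^1(0,T)$, resp.\ $L^2(0,T)$, into $L^p(\Omega)$, by Minkowski's inequality, resp.\ BDG. Hence $t\mapsto\Phi(Y)_t\in L^p(\Omega)$ is strongly measurable.
\item Approximate this map uniformly in $t$ by countably-valued measurable functions with error $2^{-n}$. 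Borel--Cantelli then yields a jointly measurable version.
\item This version is adapted because $\cF_0$ contains the null sets. It has a progressively measurable modification, which you need in order to feed it into the next stochastic integral.
\end{enumerate}

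\textbf{Uniqueness class.} As written, your argument gives uniqueness only among solutions with $\esssup_{t\le T}\|Y_t\|_{L^p(\Omega)}<\infty$. The stopping-time localization you mention for removing this restriction is only sketched, and it is delicate for a process that is merely measurable. So either state uniqueness in that class, as your fallback suggests, or supply the localization in detail.
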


\begin{proof}
	The assertion follows from 
	\cite[Theorem~3.1]{zhang_SPDE_banach2010}, 
	provided that the conditions 
	(H1)--(H3) 
	of \cite[pp.~1375--1376]{zhang_SPDE_banach2010} 
	are satisfied, which we verify below. 
	
	By Assumption~{\upshape\ref{ass:lipschitz_coef}}, 
	the following holds: For all $(t,s) \in \triangle [0,\infty)$
	and $x \in \bbR^m$,
	\begin{align*}
		\| K_1(t,s)b(x) \|_{\bbR^m}
		&\leq 
		| K_1(t,s) | 
		\left(\| b(0) \|_{\bbR^m} + L \| x \|_{\bbR^m} \right)\!,
		\\
		\| K_2(t,s)\sigma(x) \|_{\bbR^{m\times d}}^2
		&\leq 
		2 \, | K_2(t,s) |^2 
		\bigl( \| \sigma(0) \|_{\bbR^{m\times d}}^2 + L^2 \| x \|_{\bbR^m}^2 \bigr).
	\end{align*}
	Moreover, for all $(t,s) \in \triangle [0,\infty)$ 
	and $x,y \in \bbR^m$,
	we have that
	\begin{align*}
		\| K_1(t,s)(b(x) - b(y)) \|_{\bbR^m}
		&\leq 
		L \, | K_1(t,s) | \| x-y \|_{\bbR^m},       
		\\
		\| K_2(t,s)(\sigma(x)-\sigma(y)) \|_{\bbR^{m\times d}}^2
		&\leq 
		L^2 | K_2(t,s) |^2 \| x-y \|_{\bbR^m}^2.
	\end{align*}
	Thus, 
	if we define $\kappa(t,s) := | K_1(t,s) | + | K_2(t,s) |^2$, 
	the conditions (H2) and (H3) of \cite{zhang_SPDE_banach2010}
	are satisfied provided that $\kappa \in \cK_0$,
	which is defined on \cite[pp.~1365--1366]{zhang_SPDE_banach2010}.
	Assumption~{\upshape\ref{ass:assump_nonconv_kernels}\ref{item:assu_noco_i}}  
	implies that, for every $T\in(0,\infty)$ and  
	all $(t,s) \in\triangle[0,T]$, 
	\begin{equation*} 
		\int_s^t \kappa(t,u) \diff{u} \le \eta (t-s)^\alpha + \eta^2 (t-s)^{2\alpha}, 
	\end{equation*}
	where $\eta := \eta_T$ and 
	$\alpha := \alpha_T$. 
	It follows that, for all $T \in (0,\infty)$, 
	\begin{equation*} 
		\esssup_{t \in [0,T]} \int_0^t \kappa(t,u) \diff{u} < \infty,
		\qquad 
		\limsup_{\eps \downarrow 0} \esssup_{t \in [0,T]} 
		\int_t^{t+\eps} \kappa(t+\eps,u) \diff{u} 
		= 0. 
	\end{equation*} 
	This shows that~$\kappa$ indeed 
	is an element of~$\cK_0$.
	Finally, since $X_0\in L^p(\Omega,\cF_0,\bbP)$ 
	does not depend on $t\in[0,\infty)$ 
	and $\kappa\in\cK_0$, 
	also condition~(H1) is fulfilled. 
\end{proof}

\begin{remark}\label{rmk:def_bT_sigmaT}
	Combining the stability bound 
	of Proposition~{\upshape\ref{prop:existence_uniqueness_SVE}} 
	with Assumption~{\upshape\ref{ass:lipschitz_coef}} shows that, 
	for all $T \in (0,\infty)$,  
	\begin{equation*}
		\esssup_{t \in [0,T]} \| b(Y_t) \|_{L^p(\Omega)}
		\leq 
		\overline{b}_T < \infty,
		\qquad
		\esssup_{t \in [0,T]} \| \sigma(Y_t) \|_{L^p(\Omega)}
		\leq 
		\overline{\sigma}_T  < \infty,
	\end{equation*}  
	where we define
	\begin{equation}\label{eq:bbar_sigmabar}  
		\begin{split}  
			\overline{b}_T 
			&:= \| b(0) \|_{\bbR^m} 
			+ L \esssup\nolimits_{t \in [0,T]} \| Y_t \|_{L^p(\Omega)},  
			\\ 
			\overline{\sigma}_T 
			&:= 
			\| \sigma(0) \|_{\bbR^{m\times d}} 
			+ L \esssup\nolimits_{t \in [0,T]} \| Y_t \|_{L^p(\Omega)}.  
		\end{split} 
	\end{equation}
\end{remark}

Zhang \cite[Theorem~3.3]{zhang_SPDE_banach2010} 
derives pathwise H\"older continuity  
of stochastic Volterra processes  
for \emph{some H\"older exponent $\nu \in (0,1]$}.
In~\cite[Lemma~2.4]{abijaberlarsson2019},
Abi Jaber et al.\ obtain an explicit range for $\nu$
for the case of convolution kernels 
which satisfy conditions similar to 
Assumption~{\upshape\ref{ass:assump_nonconv_kernels}}.
In the next proposition,
we prove 
existence, uniqueness 
and explicit H\"older regularity 
of continuous 
stochastic Volterra processes  
for the case that 
the kernels $K_1, K_2$ in \eqref{eq:SVE_def_eq}
are general (non-convolution) 
Volterra kernels,  
which satisfy 
the conditions of 
Assumption~{\upshape\ref{ass:assump_nonconv_kernels}}.

\begin{proposition}\label{prop:X_ct_holder_Lp}
	Let $p \in [2,\infty)$ and  $X_0 \in L^p(\Omega,\cF_0,\bbP)$.
	Suppose that ${b\colon \bbR^m\! \to \bbR^m}$
	and $\sigma\colon \bbR^m \to \bbR^{m\times d}$
	fulfill Assumption~{\upshape\ref{ass:lipschitz_coef}} 
	with the constant $L \in (0,\infty)$. 
	Let $K_1, K_2$ be Volterra kernels
	satisfying Assumption~{\upshape\ref{ass:assump_nonconv_kernels}}, 
	with constants $\eta_T \in (0,\infty)$ 
	and $\alpha_T \in (\nicefrac{1}{p},1]$, 
	for every $T \in (0,\infty)$. 
	Then, there exists
	a continuous stochastic Volterra process
	$X\colon [0,\infty) \times \Omega \to \bbR^m$
	with initial value $X_0$,
	coefficients $b,\sigma$,
	and kernels $K_1, K_2$,
	which is unique up to indistinguishability.
	
	For every $T \in (0,\infty)$, and 
	for $\alpha:=\alpha_T \in (\nicefrac{1}{p}, 1]$,  
	the continuous stochastic Volterra process $X$ is stable 
	in $ C^\alpha([0,T];L^p(\Omega))$, i.e.,   
	\begin{equation*}
		\| X \|_{C^\alpha([0,T];L^p(\Omega))}
		= 
		\sup_{t\in[0,T]} \| X_t \|_{L^p(\Omega)}
		+ 
		\sup_{0\leq s < t \leq T} 
		\frac{ \| X_t - X_s \|_{L^p(\Omega)} }{(t-s)^\alpha} 
		< 
		\infty. 
	\end{equation*}
	Furthermore, 
	for all $\nu\in(0,\alpha-\nicefrac{1}{p})$, 
	$X$ has $\nu$-H\"older continuous sample paths, 
	and    
	\begin{equation*} 
		\| X \|_{L^p(\Omega;C^\nu([0,T]))}  
		= 
		\biggl(
		\bbE\biggl[ 
		\biggl( 
		\sup_{t\in[0,T]} \| X_t \|_{\bbR^m}  
		+ 
		\sup_{0 \leq s < t \leq T}
		\frac{ \| X_t - X_s \|_{\bbR^m}}{ (t-s)^{\nu} } 
		\biggr)^p 
		\biggr] 
		\biggr)^{\nicefrac{1}{p}}  
		< \infty.
	\end{equation*}
\end{proposition}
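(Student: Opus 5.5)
We start from the measurable adapted solution $Y$ of Proposition~\ref{prop:existence_uniqueness_SVE}. It satisfies \eqref{eq:SVE_def_eq} for almost every $t$ and has $\esssup_{t\in[0,T]}\|Y_t\|_{L^p(\Omega)}<\infty$. Using $Y$, we define for \emph{every} $t\in[0,\infty)$ the process
\begin{equation*}
	\widetilde{X}_t := X_0 + \int_0^t K_1(t,s) b(Y_s)\diff{s} + \int_0^t K_2(t,s)\sigma(Y_s)\diff{W_s}.
\end{equation*}
By Assumption~\ref{ass:assump_nonconv_kernels}\ref{item:assu_noco_i} and Remark~\ref{rmk:def_bT_sigmaT}, both integrals are well defined. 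Moreover $\widetilde X_t=Y_t$ $\bbP$-a.s.\ for a.e.\ $t$. Once continuity is established, this lets us replace $Y$ by $\widetilde X$ inside the integrals, because changing the integrand on a $\diff{t}\otimes\bbP$-null set does not affect them.

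The core step is the increment estimate. Fix $0\le s<t\le T$ and write
\begin{equation*}
	\widetilde X_t-\widetilde X_s=\int_s^t K_1(t,u)b(Y_u)\diff u+\int_0^s K_{1;t,s}(u)b(Y_u)\diff u+\int_s^t K_2(t,u)\sigma(Y_u)\diff W_u+\int_0^s K_{2;t,s}(u)\sigma(Y_u)\diff W_u.
\end{equation*}
We bound the drift terms with Minkowski's integral inequality in $L^p(\Omega)$, which gives $\overline b_T\|K_1(t,\cdot)\|_{L^1(s,t)}$ and $\overline b_T\|K_{1;t,s}\|_{L^1(0,s)}$. For the stochastic terms we first apply the Burkholder--Davis--Gundy inequality (with constant $\bdg{p}$) and then Minkowski in $L^{p/2}(\Omega)$. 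For example,
\begin{equation*}
	\Bigl\|\int_0^s K_{2;t,s}(u)\sigma(Y_u)\diff W_u\Bigr\|_{L^p(\Omega)}\le \bdg{p}\Bigl(\int_0^s |K_{2;t,s}(u)|^2\|\sigma(Y_u)\|_{L^p(\Omega)}^2\diff u\Bigr)^{1/2}\le \bdg{p}\overline\sigma_T\|K_{2;t,s}\|_{L^2(0,s)}.
\end{equation*}
Assumption~\ref{ass:assump_nonconv_kernels}\ref{item:assu_noco_i}--\ref{item:assu_noco_ii} then yields
\begin{equation*}
	\|\widetilde X_t-\widetilde X_s\|_{L^p(\Omega)}\le 2\eta(\overline b_T+\bdg p\overline\sigma_T)(t-s)^\alpha.
\end{equation*}
Taking $s=0$ gives $\sup_t\|\widetilde X_t\|_{L^p(\Omega)}<\infty$, so $\widetilde X$ is bounded in $C^\alpha([0,T];L^p(\Omega))$.

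Since $\alpha p>1$, we next apply the Kolmogorov--Chentsov continuity theorem to the increment bound. It provides a continuous modification $X$ of $\widetilde X$ whose paths are $\nu$-H\"older for every $\nu<\alpha-1/p$, together with
\begin{equation*}
	\bbE\bigl[|X|_{C^\nu([0,T])}^p\bigr]\lesssim \sup_{s<t}\frac{\|\widetilde X_t-\widetilde X_s\|_{L^p(\Omega)}^p}{(t-s)^{\alpha p}}.
\end{equation*}
The quantitative form of this bound follows from the Garsia--Rodemich--Rumsey lemma. Adding $\sup_t\|X_t\|\le\|X_0\|+T^\nu|X|_{C^\nu}$ gives the $L^p(\Omega;C^\nu([0,T]))$ bound.

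It remains to check that $X$ is a solution and that it is unique. Since $X_t=\widetilde X_t$ a.s.\ for every $t$, and $\widetilde X=Y$ for a.e.\ $t$, the integrals built from $Y$ and from $X$ coincide a.s. Hence $X$ satisfies \eqref{eq:SVE_def_eq} for all $t$. Adaptedness holds because $\cF_0$ contains the null sets. For uniqueness, any two continuous solutions are in particular measurable adapted solutions in the sense of Proposition~\ref{prop:existence_uniqueness_SVE}, so they agree a.s.\ for a.e.\ $t$. Inserting this into the equation shows they agree a.s.\ for every $t$, and path continuity upgrades this to indistinguishability.

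I expect the main obstacle to be this passage from an a.e.-$t$ solution to an everywhere solution. The delicate point is that the stochastic integral depends only on the $\diff t\otimes\bbP$-equivalence class of the integrand; the Kolmogorov step itself is standard.
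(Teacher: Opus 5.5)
Your proof is correct and follows the same route as the paper: the four-term splitting of the increment, bounded via Minkowski, the Burkholder--Davis--Gundy inequality and Assumption~\ref{ass:assump_nonconv_kernels}, then Kolmogorov--Chentsov, and uniqueness via Proposition~\ref{prop:existence_uniqueness_SVE} together with path continuity. The only difference is bookkeeping. You estimate the increments of the right-hand side $\widetilde X_t$, which is defined for every $t$. The paper instead estimates the increments of $Y$ off the exceptional null set and then extends using completeness of $L^p(\Omega)$. Your version makes explicit a step the paper leaves implicit, namely that the continuous modification solves \eqref{eq:SVE_def_eq} for every $t$.
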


\begin{proof}
	By Proposition~{\upshape\ref{prop:existence_uniqueness_SVE}} 
	there exists a unique measurable 
	$(\cF_t)_{t\geq 0}$-adapted stochastic process
	$Y\colon [0,\infty) \times \Omega \to \bbR^m$ 
	which satisfies \eqref{eq:SVE_def_eq} 
	for almost all $t\in[0,\infty)$.  
	Then, $Y$ admits a progressively measurable 
	modification 
	(see \cite[Proposition~1.12]{KaratzasShreve1991}), 
	which also satisfies \eqref{eq:SVE_def_eq} 
	for almost all $t\in[0,\infty)$, 
	and which we again denote by $Y$.  
	
	Let $T\in(0,\infty)$ and 
	let $A\subset [0,\infty)$ be the 
	null set where \eqref{eq:SVE_def_eq}  
	does not hold. 
	Let $s,t \in [0,T]\setminus A$ 
	with $s<t$, and 
	define $Y_{t,s} := Y_t - Y_s$. 
	Then,  
	\begin{equation}\label{eq:SVE_increment_decomposition} 
		\begin{split} 
			Y_{t,s}
			&=  
			\int_s^t K_1(t,u) b(Y_u) \diff{u} 
			+ 
			\int_0^s \bigl( K_1(t,u) - K_1(s,u) \bigr) b(Y_u) \diff{u}           
			\\
			&\quad  
			+ 
			\int_s^t K_2(t,u) \sigma(Y_u) \diff{W_u} 
			+  
			\int_0^s \bigl(K_2(t,u) - K_2(s,u) \bigr) \sigma(Y_u) \diff{W_u}, 
			\quad 
			\bbP\text{-a.s.}\hspace{-4mm} 
		\end{split} 
	\end{equation}
	By defining the increments $K_{j;t,s}$ 
	as in \eqref{eq:K_jts_definition}
	and by applying the triangle 
	and the Cauchy--Schwarz inequalities
	we thus obtain that 
	\begin{align}
		\| Y_{t,s} \|_{L^p(\Omega)}^2
		&\leq 
		4 \left\| \int_s^t K_1(t,u) b(Y_u) \diff{u} \right\|_{L^p(\Omega)}^2
		+  
		4 \left\| \int_0^s K_{1;t,s}(u) b(Y_u) \diff{u} \right\|_{L^p(\Omega)}^2
		\notag 
		\\
		&\quad 
		+ 
		4 \left\| \int_s^t K_2(t,u) \sigma(Y_u) \diff{W_u} \right\|_{L^p(\Omega)}^2       
		+ 
		4 \left\| \int_0^s K_{2;t,s}(u) \sigma(Y_u) \diff{W_u} \right\|_{L^p(\Omega)}^2 
		\notag 
		\\
		&=:   
		4 \bigl( \mathrm{(I)} + \mathrm{(II)} + \mathrm{(III)} + \mathrm{(IV)} \bigr). 
		\label{eq:decompose_Yts_in_4}  
	\end{align} 
	We now estimate the four terms 
	of \eqref{eq:decompose_Yts_in_4} separately.
	For this purpose, we recall  
	the constants 
	$\overline{b}_T, \overline{\sigma}_T \in [0,\infty)$
	from \eqref{eq:bbar_sigmabar}, 
	and let $\eta:= \eta_T \in(0,\infty)$, 
	$\alpha:= \alpha_T \in(\nicefrac{1}{p}, 1]$. 
	
	For the first term 
	in \eqref{eq:decompose_Yts_in_4}, 
	we readily obtain 
	by Assumption~{\upshape\ref{ass:assump_nonconv_kernels}\ref{item:assu_noco_i}} 
	that 
	\begin{equation*} 
		\mathrm{(I)}                                                                                              
		\leq 
		\left( \int_s^t | K_1(t,u) | \| b(Y_u) \|_{L^p(\Omega)} \diff{u} \right)^2 
		\leq 
		\overline{b}_T^2 
		\| K_1(t,\,\cdot\,) \|_{L^1(s,t)}^2 
		\leq 
		\overline{b}_T^2 \eta^2 (t-s)^{2\alpha}, 
	\end{equation*} 
	whereas 
	for the second term 
	we use 
	Assumption~{\upshape\ref{ass:assump_nonconv_kernels}\ref{item:assu_noco_ii}} 
	and find similarly that 
	\begin{equation*}
		\mathrm{(II)}                  
		\leq 
		\left(\int_0^s | K_{1;t,s}(u) | \| b(Y_u) \|_{L^p(\Omega)} \diff{u}\right)^2 
		\leq 
		\overline{b}_T^2 
		\| K_{1;t,s} \|_{L^1(0,s)}^2 
		\leq 
		\overline{b}_T^2 \eta^2 (t-s)^{2\alpha}.
	\end{equation*}
	
	For the third and fourth terms  
	in \eqref{eq:decompose_Yts_in_4},  
	we use the Burkholder--Davis--Gundy inequality 
	(with constant $\bdg{p} \in(0,\infty)$, 
	see Lemma~{\upshape\ref{lma:BDG_SVE}}), 
	and conclude that  
	\begin{align*}
		\mathrm{(III)}                                                                                              
		&\leq 
		\bdg{p} 
		\overline{\sigma}_T^2 
		\| K_2(t,\,\cdot\,) \|_{L^2(s,t)}^2 
		\leq 
		\bdg{p} 
		\overline{\sigma}_T^2 \eta^2 (t-s)^{2\alpha}, 
		\\ 
		\mathrm{(IV)}                                                                                              
		&\leq  
		\bdg{p} 
		\overline{\sigma}_T^2 
		\| K_{2;t,s}  \|_{L^2(0,s)}^2 
		\leq 
		\bdg{p} 
		\overline{\sigma}_T^2 \eta^2 (t-s)^{2\alpha}.  
	\end{align*} 
	
	Collecting all four terms of \eqref{eq:decompose_Yts_in_4} 
	and taking the square root on both sides 
	yields that, for all $s,t \in [0,T]\setminus A$ with $s<t$,  
	\begin{equation*}
		\| Y_{t,s} \|_{L^p(\Omega)} 
		\leq 
		2 \eta \bigl( 2\overline{b}_T^2 + 2\bdg{p} \overline{\sigma}_T^2  \bigr)^{\nicefrac{1}{2}}  
		\, 
		(t-s)^\alpha .
	\end{equation*}
	By completeness of $L^p(\Omega)$, 
	the restriction of the process $Y$ to $[0,T]\times\Omega$
	has a unique   
	representative in $C^\alpha([0,T];L^p(\Omega))$, 
	which also satisfies \eqref{eq:SVE_def_eq} 
	for almost all $t\in[0,T]$, 
	again denoted by~$Y$. 
	By the Kolmogorov--Chentsov continuity theorem 
	\cite[Theorem~3.9]{coxCompleteness2024},  
	there exists a modification $X$ of $Y$ 
	which, for every ${\nu \in (0, \alpha - \nicefrac{1}{p})}$, 
	has $\nu$-H\"older continuous sample paths 
	and 
	${\| X \|_{C^\alpha([0,T];L^p(\Omega))} 
	= \| Y \|_{C^\alpha([0,T];L^p(\Omega))} < \infty}$,  
	$\| X \|_{L^p(\Omega;C^\nu([0,T]))} < \infty$. 
	Moreover, $X$ then satisfies \eqref{eq:SVE_def_eq} 
	for all $t\in[0,T]$, and 
	since $T\in(0,\infty)$ was arbitrary, 
	we conclude that there exists 
	a continuous stochastic Volterra process
	$X\colon [0,\infty) \times \Omega \to \bbR^m$
	with initial value~$X_0$,
	coefficients $b,\sigma$,
	and kernels $K_1, K_2$. 
	Moreover, 
	if $\widetilde{X}\colon [0,\infty) \times \Omega \to \bbR^m$ 
	is another such 
	continuous stochastic Volterra process, 
	then by \cite[Theorem~3.1]{zhang_SPDE_banach2010}, 
	the processes 
	$X$ and $\widetilde{X}$ are equal a.e.\ in $[0,\infty)\times\Omega$ 
	and, by their sample path continuity, 
	indistinguishable. 
\end{proof}


\subsection{Pointwise differences of Volterra processes}
\label{subsec:general_SVE_diff}

In what follows, 
we consider a stochastic Volterra process 
$X$ with kernels $K_1, K_2$, 
and a family of Volterra kernels
$( \widehat{K}^\xi_1, \widehat{K}^\xi_2 )_{\xi \in \Xi}$ 
which approximate $K_1,K_2$.
This kernel family induces 
a corresponding family
of Volterra processes 
$(\widehat{X}^\xi)_{\xi \in \Xi}$ 
approximating $X$. 
By Proposition~{\upshape\ref{prop:X_ct_holder_Lp}} 
the following assumption 
guarantees existence and uniqueness 
of $\widehat{X}^\xi$, for every $\xi\in\Xi$.  

\begin{assumption}[Non-convolution kernel approximations]
\label{ass:assu_noco_unif_xi}
	Let $\Xi$ be a non-empty set,
	and let
	$( \widehat{K}^\xi_1, \widehat{K}^\xi_2 )_{\xi \in \Xi}$
	be a family of Volterra kernels on $[0,\infty)$, 
	see Definition~{\upshape\ref{def:volterra_kernel}}. 
	We suppose that there exist 
	Volterra kernels 
	$\overline{K}_1, \overline{K}_2 \colon [0,\infty)\times[0,\infty)\to[0,\infty)$
	and, for all $T \in (0,\infty)$, 
	there are constants
	$\eta = \eta_T \in (0,\infty)$, $\alpha = \alpha_T \in (0,1]$ 
	such that: 
	\begin{enumerate}[leftmargin=8mm, label={\upshape(\roman*)}] 
		\item \label{item:assu_noco_i_unif}  
		For every $\xi\in\Xi$,  
		\begin{equation*} 
			| \widehat{K}^\xi_1(t,s) | \leq \overline{K}_1(t,s),
			\quad\;\; 
			| \widehat{K}^\xi_2(t,s) | \leq \overline{K}_2(t,s),
			\quad\;\;  
			\text{a.e.\ in }\triangle[0,T], 
		\end{equation*}
		$\overline{K}_j 
		\in C^0([0,T]; L^j(0,T))$, 
		for $j\in\{1,2\}$,  
		and, for all $(t,s) \in \triangle[0,T]$, 
		\begin{equation*}
			\left\| \overline{K}_1(t,\,\cdot\,) \right\|_{L^1(s,t)} \leq \eta (t-s)^{\alpha},
			\qquad
			\left\| \overline{K}_2(t,\,\cdot\,) \right\|_{L^2(s,t)} \leq \eta (t-s)^{\alpha}.
		\end{equation*}
		\item \label{item:assu_noco_ii_unif} 
		For every $\xi\in\Xi$ and all $(t,s) \in \triangle[0,T]$, 
		\begin{equation*} 
			\| \widehat{K}^\xi_1(t,\,\cdot\,) - \widehat{K}^\xi_1(s,\,\cdot\,) \|_{L^1(0,s)} 
			\leq \eta (t-s)^\alpha,
			\;\;\;  
			\| \widehat{K}^\xi_2(t,\,\cdot\,) - \widehat{K}^\xi_2(s,\,\cdot\,) \|_{L^2(0,s)} 
			\leq \eta (t-s)^\alpha.
		\end{equation*} 
	\end{enumerate}
\end{assumption}

In the case of convolution kernels
we will work in the following setting.

\begin{assumption}[Convolution kernel approximations I]
\label{ass:assu_co_unif_xi}
	Let $\Xi$ be a non-empty~set, 
	and let
	$( \widehat{k}^\xi_1, \widehat{k}^\xi_2 )_{\xi \in \Xi}$
	be a family of Borel measurable real-valued functions on $(0,\infty)$ 
	which are 
	non-negative and non-increasing 
	a.e.\ on $(0,\infty)$. 
	We suppose that there~are 
	non-increasing   
	Borel measurable  
	functions 
	$\overline{k}_1, \overline{k}_2 \colon (0,\infty) \to[0,\infty)$ 
	such that, for every ${\xi\in\Xi}$, 
	\begin{equation*}  
		\widehat{k}^\xi_1 (t) \leq \overline{k}_1(t),
		\quad\;\; 
		\widehat{k}^\xi_2 (t) \leq \overline{k}_2(t), 
		\quad\;\;  
		\text{a.e.\ in } (0,\infty), 
	\end{equation*} 
	and, for all $T \in (0,\infty)$, 
	there are 
	$\eta = \eta_ T \in (0,\infty)$ 
	and $\alpha = \alpha_T \in (0,1]$
	such that 
	\begin{equation*} 
		\forall t\in [0,T]: 
		\quad\; 
		\| \overline{k}_1 \|_{L^1(0,t)} \leq \eta \, t^{\alpha}, 
		\quad\;\; 
		\| \overline{k}_2 \|_{L^2(0,t)} \leq \eta \, t^{\alpha}. 
	\end{equation*} 
\end{assumption}
 
Lemma~{\upshape\ref{lma:kernel_assumption_implications}} 
shows that Assumption~{\upshape\ref{ass:assu_co_unif_xi}} 
implies Assumption~{\upshape\ref{ass:assu_noco_unif_xi}} 
for the associated family of Volterra kernels 
defined as convolution kernels, 
cf.~\eqref{eq:induced_conv_volterra_kernel}. 
Importantly, 
the preceding assumptions guarantee 
that a family of continuous 
stochastic Volterra processes 
$(\widehat{X}^\xi)_{\xi \in \Xi}$
with kernels $( \widehat{K}^\xi_1, \widehat{K}^\xi_2 )_{\xi \in \Xi}$  
is stable in $C^0([0,T];L^p(\Omega))$, 
uniformly in $\xi\in\Xi$. 
This $\xi$-uniform stability is subject  
of the following proposition.  

\begin{proposition}\label{prop:xi_bdd_stability}
	Let $p \in [2,\infty)$ and  
	$X_0 \in L^p(\Omega,\cF_0,\bbP)$.  
	Suppose that 
	$( \widehat{K}^\xi_1, \widehat{K}^\xi_2 )_{\xi \in \Xi}$ 
	is a family of Volterra kernels 
	satisfying Assumption~{\upshape\ref{ass:assu_noco_unif_xi}} with 
	constants 
	$\eta_T \in (0,\infty)$ 
	and $\alpha_T \in (\nicefrac{1}{p},1]$, 
	for all $T\in(0,\infty)$. 
	For every $\xi\in\Xi$, 
	let $\widehat{X}^\xi\colon [0,\infty)\times \Omega \to \bbR^m$ be a  
	continuous stochastic Volterra process with
	initial value $X_0$,
	coefficients $b,\sigma$ fulfilling   
	Assumption~{\upshape\ref{ass:lipschitz_coef}} 
	with $L\in(0,\infty)$, 
	and kernels $\widehat{K}^\xi_1, \widehat{K}^\xi_2$. 
	Then, for every~$T\in(0,\infty)$, 
	\begin{equation*} 
		\sup\limits_{\xi\in\Xi} \| \widehat{X}^\xi \|_{C^0([0,T];L^p(\Omega))} 
		= 
		\sup\limits_{\xi\in\Xi} 
		\sup\limits_{t\in[0,T]} \| \widehat{X}^\xi_t \|_{L^p(\Omega)}
		< \infty. 
	\end{equation*} 
\end{proposition}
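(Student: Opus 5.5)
The plan is a Gronwall-type argument for Volterra kernels. The constants in it depend only on the dominating kernels $\overline{K}_1,\overline{K}_2$, so they are uniform in $\xi\in\Xi$. Fix $T\in(0,\infty)$ and $\xi\in\Xi$, and set $f_\xi(t) := \| \widehat{X}^\xi_t \|_{L^p(\Omega)}^2$. By Proposition~\ref{prop:X_ct_holder_Lp}, applied with the kernels $\widehat{K}^\xi_1,\widehat{K}^\xi_2$ (admissible because Assumption~\ref{ass:assu_noco_unif_xi} implies Assumption~\ref{ass:assump_nonconv_kernels} for each fixed $\xi$ via the domination), $f_\xi$ is finite and bounded on $[0,T]$. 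The goal is to show it is bounded by a constant independent of $\xi$.

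First, I take $L^p(\Omega)$-norms in \eqref{eq:SVE_def_eq} and split into three terms. I use Minkowski's integral inequality for the drift and the Burkholder--Davis--Gundy inequality (Lemma~\ref{lma:BDG_SVE}) for the stochastic integral. The linear growth bounds $\|b(x)\|\le \|b(0)\|+L\|x\|$ and $\|\sigma(x)\|\le\|\sigma(0)\|+L\|x\|$ together with the domination $|\widehat{K}^\xi_j|\le\overline{K}_j$ then give, for all $t\in[0,T]$,
\begin{equation*}
f_\xi(t) \le 3\|X_0\|_{L^p(\Omega)}^2 + 3\Bigl(\int_0^t \overline{K}_1(t,u)\bigl(\|b(0)\| + L f_\xi(u)^{1/2}\bigr)\diff{u}\Bigr)^2 + 3\bdg{p}\int_0^t \overline{K}_2(t,u)^2\bigl(\|\sigma(0)\|+Lf_\xi(u)^{1/2}\bigr)^2\diff{u}.
\end{equation*}
For the drift square I apply Cauchy--Schwarz against the measure $\overline{K}_1(t,u)\diff{u}$, using $\|\overline{K}_1(t,\cdot)\|_{L^1(0,t)}\le\eta T^\alpha$. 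This yields
\begin{equation*}
f_\xi(t)\le C_0 + C_1\int_0^t \kappa(t,u) f_\xi(u)\diff{u},\qquad \kappa:=\overline{K}_1+\overline{K}_2^2,
\end{equation*}
where $C_0,C_1$ depend only on $p,L,\eta_T,\alpha_T,T,b(0),\sigma(0),\|X_0\|_{L^p}$, and not on $\xi$.

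The main step is a Volterra--Gronwall inequality for the kernel $\kappa$, which is the obstacle because $\kappa$ may be singular. The cleanest route avoids resolvents and uses smallness on short intervals. Choose $\delta>0$ such that $C_1(\eta\delta^\alpha+\eta^2\delta^{2\alpha})\le \tfrac12$; this is possible by Assumption~\ref{ass:assu_noco_unif_xi}\ref{item:assu_noco_i_unif}, since $\int_s^t\kappa(t,u)\diff{u}\le \eta(t-s)^\alpha+\eta^2(t-s)^{2\alpha}$. Set $M_\xi(t):=\sup_{u\le t}f_\xi(u)$, which is finite. For $t\in[0,T]$, I split $\int_0^t=\int_0^{(t-\delta)\vee0}+\int_{(t-\delta)\vee 0}^t$. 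The second piece is at most $\tfrac12 M_\xi(t)$. The first is at most $C_1\eta T^\alpha(1+\eta T^\alpha)\,M_\xi((t-\delta)\vee 0)$. Since the right-hand side is nondecreasing in $t$ after taking suprema, this gives
\begin{equation*}
M_\xi(t)\le 2C_0 + C_2 M_\xi((t-\delta)\vee0).
\end{equation*}
Iterating over the $\lceil T/\delta\rceil$ steps gives $M_\xi(T)\le 2C_0\sum_{k=0}^{\lceil T/\delta\rceil}C_2^k$, a bound independent of $\xi$.

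Finiteness of $M_\xi(T)$ for each individual $\xi$ is needed to absorb the term $\tfrac12 M_\xi(t)$, and it is supplied by Proposition~\ref{prop:X_ct_holder_Lp}; continuity of $t\mapsto \widehat{X}^\xi_t$ in $L^p(\Omega)$ makes the suprema well defined. Taking the supremum over $\xi\in\Xi$ concludes the proof.
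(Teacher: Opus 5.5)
Your proof is correct. Its first half matches the paper: the same three-term split, Minkowski/BDG, linear growth and domination by $\overline{K}_1,\overline{K}_2$ give a Volterra inequality whose kernel involves only the dominating kernels. The paper uses $\kappa=6L^2\eta T^\alpha\overline{K}_1+6\bdg{p}L^2\overline{K}_2^2$.

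The routes differ in the Gr\"onwall step.

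The paper invokes its appendix Lemma~\ref{lem:gronwall_ineq_1var}, which rests on Volterra resolvent theory in the Banach algebra $\cV(L^\infty;[0,T])$. Proposition~\ref{prop:k_integrable_resolvent} produces a non-negative resolvent $r=\sum_{j\geq 1}\kappa^{\star j}$ via a partition argument and the spectral-radius formula. A comparison lemma then gives $x\leq f+\int r f$. The resulting constant $1+\| r\|_{\cV(L^\infty;[0,T])}$ is $\xi$-independent only because $\kappa$ is.

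Your absorption-and-iteration argument replaces this with a hands-on version of the same small-interval idea. It uses only two facts: $\sup_t\int_{(t-\delta)\vee 0}^t\kappa(t,u)\diff{u}\to 0$ as $\delta\downarrow 0$, and $\sup_t\int_0^t\kappa(t,u)\diff{u}<\infty$. It also yields an explicit bound $2C_0\sum_{k\leq\lceil T/\delta\rceil}C_2^k$.

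Both routes need the per-$\xi$ a priori finiteness and continuity from Proposition~\ref{prop:X_ct_holder_Lp}. You need it to absorb $\tfrac12 M_\xi(t)$; the paper needs it to apply the comparison lemma.

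What the paper gains is reusability. Lemma~\ref{lem:gronwall_ineq_1var} allows a general continuous forcing term with bound $C_{\kappa,T}\sup_{v\leq t}|f(v)|$, and it is reused in Theorem~\ref{thm:C0Lp-err_non-conv}. Your argument would also cover that case if you replace $C_0$ by the nondecreasing function $\sup_{v\leq t}f(v)$.

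One detail to make explicit: the domination $|\widehat{K}^\xi_j|\leq\overline{K}_j$ holds only a.e.\ on $\triangle[0,T]$. So your Volterra inequality holds only for a.e.\ $t$. Continuity of $f_\xi$ is what lets you pass to the supremum over all $t'\leq t$ in your monotonicity step.
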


\begin{proof}
	Let $T\in(0,\infty)$, and set 
	$\eta:= \eta_T\in(0,\infty)$, 
	$\alpha:=\alpha_T\in(\nicefrac{1}{p}, 1]$. 
	By the triangle, Cauchy--Schwarz, 
	and Burkholder--Davis--Gundy inequalities 
	(with the constant $\bdg{p} \in(0,\infty)$, 
	see Lemma~{\upshape\ref{lma:BDG_SVE}})
	we obtain that, for every $\xi\in\Xi$ 
	and all $t\in[0,T]$, 
	\begin{align*}
		\| \widehat{X}^\xi_t \|_{L^p(\Omega)}^2
		&\leq 
		3 \| X_0 \|_{L^p(\Omega)}^2
		+ 
		3 \left( 
		\int_0^t | \widehat{K}^\xi_1(t,u)| \|b(\widehat{X}^\xi_u)\|_{L^p(\Omega)} \diff{u} 
		\right)^2
		\\
		&\;\; 
		+ 3 \bdg{p} 
		\int_0^t | \widehat{K}^\xi_2(t,u) |^2 \| \sigma(\widehat{X}^\xi_u) \|_{L^p(\Omega)}^2 \diff{u} 
		=: 
		3 \bigl( \| X_0 \|_{L^p(\Omega)}^2 + \mathrm{(I)} + \bdg{p}  \mathrm{(II)} \bigr) .  
	\end{align*}
	Using Assumptions~{\upshape\ref{ass:lipschitz_coef}} 
	and~{\upshape\ref{ass:assu_noco_unif_xi}\ref{item:assu_noco_i_unif}} 
	we can bound the terms $\mathrm{(I)}$ 
	and $\mathrm{(II)}$ as follows, 
	\begin{align*} 
		\mathrm{(I)} 
		&\leq 
		\left( 
		\| b(0) \|_{\bbR^m} 
		\int_0^t | \overline{K}_1(t,u)| \diff{u}  
		+ 
		L 
		\int_0^t | \overline{K}_1(t,u)| \| \widehat{X}^\xi_u \|_{L^p(\Omega)} \diff{u} 
		\right)^2 
		\\
		&\leq 
		2 \| b(0) \|_{\bbR^m}^2 \| \overline{K}_1(t, \,\cdot\,) \|_{L^1(0,t)}^2 
		+ 
		2 L^2 \| \overline{K}_1(t, \,\cdot\,) \|_{L^1(0,t)} 
		\int_0^t | \overline{K}_1(t,u)| \| \widehat{X}^\xi_u \|_{L^p(\Omega)}^2 \diff{u} 
		\\
		&\leq 
		2\| b(0) \|_{\bbR^m}^2 \eta^2 t^{2\alpha} 
		+ 
		2 L^2 \eta t^{\alpha} \int_0^t | \overline{K}_1(t,u) | \| \widehat{X}^\xi_u \|_{L^p(\Omega)}^2 \diff{u}, 
		\\
		\mathrm{(II)} 
		&\leq                                                                                    
		\int_0^t | \overline{K}_2(t,u) |^2 \left( \| \sigma(0) \|_{ \bbR^{m\times d}} 
		+ 
		L \| \widehat{X}^\xi_u \|_{L^p(\Omega)} \right)^2 \diff{u} 
		\\ 
		&\leq 
		2 \| \sigma(0) \|_{\bbR^{m\times d}}^2 
		\eta^2 t^{2\alpha} 
		+ 
		2 L^2 \int_0^t | \overline{K}_2(t,u) |^2 \| \widehat{X}^\xi_u \|_{L^p(\Omega)}^2  \diff{u}.
	\end{align*}
	Next, define the functions 
	$f\colon[0,T]\to [0,\infty)$ and 
	$\kappa \colon \triangle [0,T] \to [0,\infty)$ by
	\begin{align*}
		f(t)      
		& := 
		3 \| X_0 \|_{L^p(\Omega)}^2 
		+ 
		6 \left(\| b(0) \|_{\bbR^m}^2 
		+ 
		\bdg{p} \| \sigma(0) \|_{ \bbR^{m\times d}}^2\right) \eta^2 t^{2\alpha}, 
		\\
		\kappa(t,s) 
		& := 
		6 L^2 \eta T^\alpha | \overline{K}_1(t,s) | + 6\bdg{p} L^2 | \overline{K}_2(t,s) |^2 .
	\end{align*}
	Then, we obtain, for every $\xi\in\Xi$ and all 
	$t\in[0,T]$, 
	the estimate  
	\begin{equation*} 
		\| \widehat{X}^\xi_t \|_{L^p(\Omega)}^2 
		\leq 
		f(t) + \int_0^t \kappa(t,u) \| \widehat{X}^\xi_u \|_{L^p(\Omega)}^2 \diff{u}.
	\end{equation*} 
	It is clear that $f$
	is continuous on~$[0,T]$. 
	Additionally, by 
	Assumption~{\upshape\ref{ass:assu_noco_unif_xi}\ref{item:assu_noco_i_unif}}, 
	\begin{equation*} 
		\forall (t,s)\in\triangle[0,T]: 
		\quad 
		\int_s^t \kappa(t,u) \diff{u} 
		\leq 
		6 L^2 (1 + \bdg{p}) \eta^2 T^\alpha (t-s)^\alpha.
	\end{equation*} 
	Proposition~{\upshape\ref{prop:X_ct_holder_Lp}} 
	(applied for $\widehat{X}^\xi$) 
	shows that the mapping 
	$[0,T]\ni t \mapsto \widehat{X}^\xi_t \in L^p(\Omega)$ 
	is continuous, 
	for every $\xi\in\Xi$, and, thus, 
	also 
	\begin{equation*} 
		x^\xi\colon[0,T]\to[0,\infty), 
		\qquad 
		x^\xi(t) := \| \widehat{X}^\xi_t \|_{L^p(\Omega)}^2, 
	\end{equation*} 
	is continuous on $[0,T]$. 
	Therefore, for every $\xi\in\Xi$, 
	all of the assumptions 
	of the Gr\"onwall inequality 
	(see   
	Lemma~{\upshape\ref{lem:gronwall_ineq_1var}}) 
	are satisfied, 
	whose application yields 
	the existence of a constant $C_{\kappa,T}\in(0,\infty)$, 
	depending only on $\kappa$ and $T$, 
	such that 
	\begin{equation*} 
		\sup_{t\in[0,T]} 
		\| \widehat{X}^\xi_t \|_{L^p(\Omega)}^2 
		= 
		\sup_{t\in[0,T]} 
		x^\xi (t) 
		\leq 
		C_{\kappa,T} \sup_{v \in [0,T]} f(v) . 
	\end{equation*}  
	Since $\kappa$ depends only on the kernels 
	$\overline{K}_1, \overline{K}_2$, and not on 
	$\widehat{K}_1^\xi, \widehat{K}_2^\xi$, the constant  
	$C_{\kappa,T} \in (0,\infty)$ is independent of $\xi\in\Xi$,  
	and we conclude that 
	\begin{equation*} 
		\sup_{\xi\in\Xi}
		\sup_{t\in[0,T]} 
		\| \widehat{X}^\xi_t \|_{L^p(\Omega)}^2 
		\leq 
		3 C_{\kappa,T} 
		\bigl( 
		\| X_0 \|_{L^p(\Omega)}^2 
		+ 
		2 \left(\| b(0) \|_{\bbR^m}^2 
		+ 
		\bdg{p} \| \sigma(0) \|_{ \bbR^{m\times d}}^2\right) \eta^2 T^{2\alpha} 
		\bigr), 
	\end{equation*} 
	which completes the proof.  
\end{proof} 

\begin{remark}\label{rmk:def_bT-hat_sigmaT-hat} 
	Analogously to Remark~{\upshape\ref{rmk:def_bT_sigmaT}}, 
	we have under the assumptions 
	of Proposition~{\upshape\ref{prop:xi_bdd_stability}} 
	that  
	\begin{equation*}
		\sup_{\xi \in \Xi} \sup_{t \in [0,T]} 
		\| b(\widehat{X}^{\xi}_t) \|_{L^p(\Omega)}
		\leq 
		\widehat{b}_T < \infty,
		\qquad
		\sup_{\xi \in \Xi} \sup_{t \in [0,T]} 
		\| \sigma(\widehat{X}^{\xi}_t) \|_{L^p(\Omega)}
		\leq 
		\widehat{\sigma}_T < \infty,
	\end{equation*}  
	where we define 
	\begin{equation}\label{eq:bhat_sigmahat}    
		\begin{split}
			\widehat{b}_T 
			&:=
			\| b(0) \|_{\bbR^m} 
			+ 
			L \sup\limits_{\xi \in \Xi} \sup\limits_{t \in [0,T]} \| \widehat{X}^{\xi}_t \|_{L^p(\Omega)},
			\\ 
			\widehat{\sigma}_T 
			&:= 
			\| \sigma(0) \|_{\bbR^{m\times d}} 
			+ 
			L \sup\limits_{\xi \in \Xi} \sup\limits_{t \in [0,T]} \| \widehat{X}^{\xi}_t \|_{L^p(\Omega)}.
		\end{split} 
	\end{equation}
\end{remark} 

For abbreviation,
we denote the differences between 
the true and approximating kernels
by $\Delta^\xi_1$ and $\Delta^\xi_2$,
i.e., for $j \in \{1,2\}$, $\xi \in \Xi$,
$(t,s) \in\triangle [0,\infty)$, and $u \in [0,s]$,
\begin{equation}\label{eq:delta_definition}
	\Delta^\xi_j(t,s) := K_j(t,s) - \widehat{K}^\xi_j(t,s),
	\qquad
	\Delta^\xi_{j;t,s}(u) := \Delta^\xi_j(t,u) - \Delta^\xi_j(s,u).
\end{equation}
With slight abuse of notation,
we denote the differences 
of convolution kernels also 
by $\Delta^\xi_1$ and $\Delta^\xi_2$, 
with one argument, i.e.,
for $t \in [0,\infty)$, $s \in [0,t]$,
and $u \in [0,s]$, 
\begin{equation}\label{eq:delta_definition_conv}
	\Delta^\xi_j(t) := k_j(t) - \widehat{k}^\xi_j(t),
	\qquad
	\Delta^\xi_{j;t,s}(u) := \Delta^\xi_j(t-u) - \Delta^\xi_j(s-u).
\end{equation}

In the following theorem, 
we bound the difference 
between a stochastic Volterra 
process~$X$ with (possibly non-convolution) kernels 
$K_1,K_2$ and an approximation~$\widehat{X}^\xi$ 
with approximate kernels $\widehat{K}_1^\xi, \widehat{K}_2^\xi$ 
in the norm of $C^0([0,T];L^p(\Omega))$ 
for any ${p\in[2,\infty)}$. 
This result constitutes an extension 
of \cite[Theorem~3.1]{alfonsikebaier2022}, 
where the case of convolution kernels 
of completely monotone type 
and $p=2$ is considered.

\begin{theorem}\label{thm:C0Lp-err_non-conv}
	Let $p \in [2,\infty)$, and  
	suppose that ${X \colon [0,\infty) \times \Omega \to \bbR^m}$   
	is a continuous stochastic Volterra process with
	initial value $X_0 \in L^p(\Omega,\cF_0,\bbP)$,
	coefficients~$b, \sigma$ fulfilling 
	Assumption~{\upshape\ref{ass:lipschitz_coef}} 
	with the constant $L \in (0,\infty)$,
	and kernels $K_1, K_2$ satisfying
	Assumption~{\upshape\ref{ass:assump_nonconv_kernels}} 
	with   
	$\eta_T \in (0,\infty)$ and 
	$\alpha_T \in (\nicefrac{1}{p},1]$, 
	for every $T\in(0,\infty)$. 
	
	Assume that 
	$(\widehat{X}^\xi \colon [0,\infty) \times \Omega \to \bbR^m)_{\xi\in\Xi}$ 
	is a family 
	of continuous stochastic Volterra processes 
	with the same initial value and coefficients as~$X$, 
	whose kernels  
	$(\widehat{K}^\xi_1, \widehat{K}^\xi_2)_{\xi \in \Xi}$
	satisfy Assumption~{\upshape\ref{ass:assu_noco_unif_xi}} 
	with the same constants 
	$(\eta_T)_{T>0}$ and~$(\alpha_T)_{T>0}$. 
	
	For every $T\in(0,\infty)$, 
	there exists a constant $C \in (0,\infty)$  
	such that, for all $\xi \in \Xi$,  
	\begin{align*} 
			\forall t \in [0,T]: 
			\quad 
			\| X_t - \widehat{X}^\xi_t \|_{L^p(\Omega)}
			&\leq 
			C \Bigl( \, \sup_{v \in [0,t]}\| K_1(v,\,\cdot\,) - \widehat{K}^\xi_1(v,\,\cdot\,) \|_{L^1(0,v)}
			\\
			&\qquad\;\; + 
			\sup_{v \in [0,t]}\| K_2(v,\,\cdot\,) - \widehat{K}^\xi_2(v,\,\cdot\,) \|_{L^2(0,v)} \Bigr). 
	\end{align*}
	In particular, the constant $C \in (0,\infty)$ is independent 
	of $\xi\in\Xi$. 
\end{theorem}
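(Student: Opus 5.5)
Fix $T\in(0,\infty)$, $\xi\in\Xi$, and set $E_t := X_t - \widehat{X}^\xi_t$. Subtracting the two Volterra equations (same $X_0$) and adding and subtracting $K_j(t,u)$ applied to the coefficient evaluated at $\widehat{X}^\xi_u$ gives, $\bbP$-a.s., for every $t\in[0,T]$,
\begin{align*}
E_t &= \int_0^t K_1(t,u)\bigl(b(X_u)-b(\widehat{X}^\xi_u)\bigr)\diff{u}
+ \int_0^t \Delta^\xi_1(t,u)\, b(\widehat{X}^\xi_u)\diff{u}
\\
&\quad + \int_0^t K_2(t,u)\bigl(\sigma(X_u)-\sigma(\widehat{X}^\xi_u)\bigr)\diff{W_u}
+ \int_0^t \Delta^\xi_2(t,u)\, \sigma(\widehat{X}^\xi_u)\diff{W_u}.
\end{align*}
Splitting this way places the kernel difference next to the approximate process. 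That is what we want, since only $\widehat{X}^\xi$ has $\xi$-uniform moment bounds (Proposition~\ref{prop:xi_bdd_stability} and Remark~\ref{rmk:def_bT-hat_sigmaT-hat}), whereas the Lipschitz terms carry the true kernels $K_j$, whose integrability does not depend on $\xi$.

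Next take $\|\cdot\|_{L^p(\Omega)}^2$ and use $(a_1+\dots+a_4)^2\le 4\sum a_i^2$. We then bound the four terms separately.
\begin{enumerate}[label={\upshape(\roman*)}]
\item The two perturbation terms are handled by Minkowski's integral inequality for the drift and by the Burkholder--Davis--Gundy inequality (Lemma~\ref{lma:BDG_SVE}) for the stochastic integral. With $\widehat{b}_T,\widehat{\sigma}_T$ from \eqref{eq:bhat_sigmahat}, they are at most $\widehat{b}_T^2\|\Delta^\xi_1(t,\cdot)\|_{L^1(0,t)}^2$ and $\bdg{p}\widehat{\sigma}_T^2\|\Delta^\xi_2(t,\cdot)\|_{L^2(0,t)}^2$.
\item For the Lipschitz drift term, use Assumption~\ref{ass:lipschitz_coef} and Cauchy--Schwarz against the measure $|K_1(t,u)|\diff{u}$, together with $\|K_1(t,\cdot)\|_{L^1(0,t)}\le\eta T^\alpha$. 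This gives at most $L^2\eta T^\alpha\int_0^t|K_1(t,u)|\,\|E_u\|_{L^p(\Omega)}^2\diff{u}$.
\item The Lipschitz stochastic term is, by BDG, at most $\bdg{p}L^2\int_0^t|K_2(t,u)|^2\|E_u\|_{L^p(\Omega)}^2\diff{u}$.
\end{enumerate}

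Put $x(t):=\|E_t\|_{L^p(\Omega)}^2$ and $\kappa(t,u):=4L^2\eta T^\alpha|K_1(t,u)|+4\bdg{p}L^2|K_2(t,u)|^2$. Let
\[
f(t):=4\widehat{b}_T^2\sup_{v\le t}\|\Delta^\xi_1(v,\cdot)\|_{L^1(0,v)}^2+4\bdg{p}\widehat{\sigma}_T^2\sup_{v\le t}\|\Delta^\xi_2(v,\cdot)\|_{L^2(0,v)}^2 ,
\]
where we have already replaced each kernel difference by its supremum over $v\le t$; this makes $f$ non-decreasing. Then
\[
x(t)\le f(t)+\int_0^t\kappa(t,u)\,x(u)\diff{u}, \qquad t\in[0,T].
\]
Assumption~\ref{ass:assump_nonconv_kernels}\ref{item:assu_noco_i} gives $\int_s^t\kappa(t,u)\diff{u}\le C(t-s)^\alpha$, exactly as in the proof of Proposition~\ref{prop:xi_bdd_stability}. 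Moreover, $x$ is continuous, because $X$ and $\widehat{X}^\xi$ both lie in $C^\alpha([0,T];L^p(\Omega))$ by Proposition~\ref{prop:X_ct_holder_Lp}.

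The Gr\"onwall lemma (Lemma~\ref{lem:gronwall_ineq_1var}) should then give $x(t)\le \gronw\sup_{v\le t}f(v)=\gronw f(t)$. I expect this to be the main point needing care: one must check that the lemma can be applied on $[0,t]$ for each $t$, so that the supremum of $f$ is taken only over $[0,t]$ and not over $[0,T]$. If the lemma is stated only on $[0,T]$, I would apply it on $[0,t]$ with $\kappa$ restricted to $\triangle[0,t]$; the constant stays bounded by the $[0,T]$ constant because the bound $\int_s^t\kappa\le C(t-s)^\alpha$ holds uniformly.

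The key observation is that $\kappa$ involves only $K_1,K_2$, so $\gronw$ does not depend on $\xi$. Likewise $\widehat{b}_T,\widehat{\sigma}_T$ are $\xi$-uniform by Proposition~\ref{prop:xi_bdd_stability}. Taking square roots and using $\sqrt{a+b}\le\sqrt a+\sqrt b$ gives the claim with
\[
C:=2\sqrt{\gronw}\,\max\{\widehat{b}_T,\bdg{p}^{1/2}\widehat{\sigma}_T\},
\]
which is independent of $\xi$.
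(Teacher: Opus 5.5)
Your proposal is correct and follows the paper's proof essentially step by step. You use the same splitting with $\Delta^\xi_j$ placed next to $\widehat{X}^\xi$, the same kernel $\kappa$ built only from $K_1,K_2$, and the same application of Lemma~\ref{lem:gronwall_ineq_1var}, which as stated already gives $\sup_{v\in[0,t]}$ (so your restriction argument is unnecessary, and building the running supremum into $f$ is harmless). The one hypothesis of that lemma you did not check is continuity of $f$; the paper gets it from Lemma~\ref{lma:continuous_representative}, which gives $t\mapsto\Delta^\xi_j(t,\cdot)\in C^0([0,T];L^j(0,T))$, together with Lemma~\ref{lma:u_continuity_lemma}, and a running supremum of a continuous function is again continuous.
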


\begin{proof}
	Let $T\in(0,\infty)$ and $\xi \in \Xi$ be 
	arbitrary, and define 
	$\Delta^\xi_1$, $\Delta^\xi_2$ as in \eqref{eq:delta_definition}. 
	Note that, for all $t \in [0,T]$ and $u \in [0,t]$, 
	the following identities hold: 
	\begin{align} 
		\hspace*{-2mm}
		K_1(t,u) b(X_u) - \widehat{K}^{\xi}_1(t,u) b(\widehat{X}^{\xi}_u)
		&=  
		\Delta^\xi_1(t,u) b(\widehat{X}^{\xi}_u) 
		+ K_1(t,u) \bigl( b(X_u) - b(\widehat{X}^{\xi}_u) \bigr), 
		\hspace*{-1.8mm} 
		\label{eq:decomposition_for_X_minus_Xhat}
		\\ 
		\hspace*{-2mm}
		K_2(t,u) \sigma(X_u) - \widehat{K}^{\xi}_2(t,u) \sigma(\widehat{X}^{\xi}_u) 
		&= 
		\Delta^\xi_2(t,u) \sigma(\widehat{X}^{\xi}_u) 
		+ 
		K_2(t,u) \bigl( \sigma(X_u) - \sigma(\widehat{X}^{\xi}_u) \bigr). 
		\hspace*{-1.8mm} 
		\notag 
	\end{align} 
	Therefore, we obtain that, for all $t \in [0,T]$, $\bbP$-a.s., 
	\begin{align*}
		X_t - \widehat{X}^{\xi}_t
		&= 
		\int_0^t \Delta^\xi_1(t,u) b(\widehat{X}^{\xi}_u) \diff{u} 
		+ 
		\int_0^t K_1(t,u) \bigl(b(X_u) - b(\widehat{X}^{\xi}_u)  \bigr) \diff{u}                           
		\\
		&\quad 
		+ 
		\int_0^t \Delta^\xi_2(t,u) \sigma(\widehat{X}^{\xi}_u) \diff{W_u} 
		+ 
		\int_0^t K_2(t,u) \bigl(\sigma(X_u) - \sigma(\widehat{X}^{\xi}_u) \bigr) \diff{W_u}. 
	\end{align*}
	The triangle and Cauchy--Schwarz inequalities yield, 
	for all $t\in[0,T]$, the~bound 
	\begin{align*} 
		&\| X_t - \widehat{X}^{\xi}_t \|_{L^p(\Omega)}^2 
		\leq 
		4\,
		\biggl\| \int_0^t \Delta^\xi_1(t,u) b(\widehat{X}^{\xi}_u) \diff{u} \biggr\|_{L^p(\Omega)}^2 
		\\
		&\quad 
		+ 4\, 
		\biggl\| \int_0^t K_1(t,u) \bigl( b(X_u) - b(\widehat{X}^{\xi}_u) \bigr) \diff{u} \biggr\|_{L^p(\Omega)}^2  
		+
		4\, 
		\biggl\| \int_0^t \Delta^\xi_2(t,u) \sigma(\widehat{X}^{\xi}_u) \diff{W_u} \biggr\|_{L^p(\Omega)}^2 
		\\
		&\quad 
		+ 4\, 
		\biggl\| \int_0^t K_2(t,u) \bigl( \sigma(X_u) - \sigma(\widehat{X}^{\xi}_u) \bigr) \diff{W_u} \biggr\|_{L^p(\Omega)}^2  
		= 
		4 \bigl( \mathrm{(I)} + \mathrm{(II)} + \mathrm{(III)} + \mathrm{(IV)} \bigr). 
	\end{align*}  
	
	Letting the constants $\widehat{b}_T, \widehat{\sigma}_T \in [0, \infty)$ 
	be defined 
	as in \eqref{eq:bhat_sigmahat} 
	of Remark~{\upshape\ref{rmk:def_bT-hat_sigmaT-hat}}, 
	and setting $\eta:=\eta_T \in(0,\infty)$, 
	$\alpha := \alpha_T \in (\nicefrac{1}{p}, 1]$, 
	we obtain that 
	\begin{equation*} 
		\mathrm{(I)} 
		\leq 
		\biggl(\int_0^t | \Delta^\xi_1(t,u) | \| b(\widehat{X}^{\xi}_u) \|_{L^p(\Omega)} \diff{u}  \biggr)^2  
		\leq 
		\widehat{b}_T^2 \| \Delta^\xi_1(t,\,\cdot\,) \|_{L^1(0,t)}^2 . 
	\end{equation*} 
	The second term can be bounded 
	by H\"older's inequality 
	and 
	Assumptions~{\upshape\ref{ass:lipschitz_coef}},~{\upshape\ref{ass:assump_nonconv_kernels}\ref{item:assu_noco_i}}, 
	\begin{align*} 
		\mathrm{(II)} 
		&\leq  
		L^2  \| K_1(t,\,\cdot\,) \|_{L^1(0,t)} \int_0^t | K_1(t,u) |  \| X_u - \widehat{X}^{\xi}_u \|_{L^p(\Omega)}^2 \diff{u}
		\\ 
		&\leq 
		L^2 \eta T^{\alpha}  \int_0^t | K_1(t,u) |  \| X_u - \widehat{X}^{\xi}_u \|_{L^p(\Omega)}^2 \diff{u} . 
	\end{align*}

	For the terms $\mathrm{(III)}$ and 
	$\mathrm{(IV)}$, 
	we additionally use 
	the Burkholder--Davis--Gundy inequality 
	(with constant $\bdg{p} \in (0,\infty)$, 
	see Lemma~{\upshape\ref{lma:BDG_SVE}}), 
	and obtain that 
	\begin{align*} 
		\mathrm{(III)} 
		&\leq 
		\bdg{p} \int_0^t | \Delta^\xi_2(t,u) |^2  \| \sigma(\widehat{X}^{\xi}_u) \|_{L^p(\Omega)}^2 \diff{u} 
		\leq 
		\bdg{p}  \widehat{\sigma}_T^2 \| \Delta^\xi_2(t,\,\cdot\,) \|_{L^2(0,t)}^2,
		\\ 
		\mathrm{(IV)} 
		&\leq 
		\bdg{p} L^2  \int_0^t | K_2(t,u) |^2 \| X_u - \widehat{X}^{\xi}_u \|_{L^p(\Omega)}^2 \diff{u}.
	\end{align*} 
	Define the functions 
	$f^\xi\colon [0,T] \to [0,\infty)$ and 
	$\kappa\colon \triangle [0,T] \to [0,\infty)$ by
	\begin{align*}
		f^\xi(t) 
		&:= 
		4 \, \widehat{b}_T^2 \, \| \Delta^\xi_1(t,\,\cdot\,) \|_{L^1(0,t)}^2 
		+ 4 \bdg{p} \, \widehat{\sigma}_T^2 \, \| \Delta^\xi_2(t,\,\cdot\,) \|_{L^2(0,t)}^2,
		\\
		\kappa(t,s) 
		&:= 
		4L^2 \bigl( \eta T^\alpha | K_1(t,s) | + \bdg{p} | K_2(t,s) |^2 \bigr).
	\end{align*}
	Under Assumptions~{\upshape\ref{ass:assump_nonconv_kernels}} 
	and~{\upshape\ref{ass:assu_noco_unif_xi}}, 
	the maps    
	$t \mapsto K_j(t, \,\cdot\,)$ 
	and 
	${t \mapsto \widehat{K}^\xi_j(t, \,\cdot\,)}$, 
	${j\in\{1,2\}}$, 
	are elements of $C^\alpha([0,T]; L^j(0,T))$, 
	see 
	Lemma~{\upshape\ref{lma:continuous_representative}}. 
	Thus, the map 
	${t \mapsto \Delta^\xi_j(t, \,\cdot\,)}$
	is also an element 
	of $C^\alpha([0,T]; L^j(0,T)) 
	\subset C^0([0,T]; L^j(0,T))$, 
	and 
	Lemma~{\upshape\ref{lma:u_continuity_lemma}} 
	implies that 
	$f^\xi\colon[0,T]\to [0,\infty)$ 
	is continuous.  
	In addition, by 
	Assumption~{\upshape\ref{ass:assump_nonconv_kernels}\ref{item:assu_noco_i}},  
	\begin{equation*} 
		\forall (t,s)\in\triangle[0,T]: 
		\quad 
		\int_s^t \kappa(t,u) \diff{u} 
		\leq 
		4L^2 (1 + \bdg{p} ) \eta^2 T^\alpha (t-s)^\alpha.
	\end{equation*}
	Define the function 
	$x^\xi\colon[0,T]\to [0,\infty)$
	by 
	$x^\xi(t) := \| X_t - \widehat{X}^{\xi}_t \|_{L^p(\Omega)}^2$.
	Proposition~{\upshape\ref{prop:X_ct_holder_Lp}} 
	(applied for $X$ and $\widehat{X}^\xi$) 
	shows that $x^\xi$ is continuous, 
	and by the Gr\"onwall inequality 
	of Lemma~{\upshape\ref{lem:gronwall_ineq_1var}},
	there exists a constant $\gronw \in (0,\infty)$ 
	such that, for all $t \in [0,T]$,
	\begin{equation*}
		\| X_t - \widehat{X}^{\xi}_t \|_{L^p(\Omega)}^2
		\leq 
		\gronw \,
		\sup\nolimits_{v \in [0,t]} 
		\Bigl( 
		4\widehat{b}_T^2 \, \| \Delta^\xi_1(v,\,\cdot\,) \|_{L^1(0,v)}^2
		+ 4\bdg{p} \widehat{\sigma}_T^2 \, \| \Delta^\xi_2(v,\,\cdot\,) \|_{L^2(0,v)}^2 
		\Bigr).
	\end{equation*}
	Since $\kappa$ depends only 
	on $K_1, K_2$ and $T$, but not on $\widehat{K}_1^\xi, \widehat{K}_2^\xi$, 
	the constant $\gronw$ is independent of $\xi\in\Xi$, 
	and taking the square root on both sides 
	completes the proof of the claimed estimate 
	for $C := 2 \bigl( \gronw
	\bigl( \widehat{b}_T^2 \vee \bdg{p} \widehat{\sigma}_T^2 \bigr) 
	\bigr)^{\nicefrac{1}{2}}$. 
\end{proof}

\begin{corollary}\label{cor:strong_error_Lp_conv} 
	Let $p \in [2,\infty)$, and  
	suppose that $X \colon [0,\infty) \times \Omega \to \bbR^m$   
	is a continuous stochastic Volterra process with
	initial value $X_0 \in L^p(\Omega,\cF_0,\bbP)$,
	coefficients $b, \sigma$ fulfilling 
	Assumption~{\upshape\ref{ass:lipschitz_coef}}, 
	and convolution kernels $k_1, k_2$ satisfying
	Assumption~{\upshape\ref{ass:assump_conv_kernels}} with  
	the constants 
	$\eta_T \in (0,\infty)$ 
	and $\alpha_T \in (\nicefrac{1}{p},1]$, 
	for every $T\in(0,\infty)$. 
	
	Let 
	$(\widehat{X}^\xi \colon [0,\infty) \times \Omega \to \bbR^m)_{\xi\in\Xi}$ 
	be a family 
	of continuous stochastic Volterra processes 
	with the same initial value and coefficients as~$X$, 
	whose convolution kernels 
	$(\widehat{k}^\xi_1, \widehat{k}^\xi_2)_{\xi \in \Xi}$
	satisfy Assumption~{\upshape\ref{ass:assu_co_unif_xi}} 
	with the same constants~$(\eta_T)_{T>0}$ 
	and~$(\alpha_T)_{T>0}$. 
	
	For every $T\in(0,\infty)$, 
	there exists a constant $C \in (0,\infty)$ 
	such that, for all $\xi \in \Xi$, 
	\begin{equation*} 
		\forall t\in[0,T]: 
		\quad 
		\| X_t - \widehat{X}^\xi_t \|_{L^p(\Omega)}
		\leq C 
		\bigl( \| k_1 - \widehat{k}^\xi_1 \|_{L^1(0,t)}
		+ \| k_2 - \widehat{k}^\xi_2 \|_{L^2(0,t)} \bigr).
	\end{equation*} 
	In particular, the constant $C \in (0,\infty)$ is independent of $\xi\in\Xi$. 
\end{corollary}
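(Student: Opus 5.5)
This corollary is a direct specialization of Theorem~\ref{thm:C0Lp-err_non-conv} to convolution kernels. The plan is to check that its hypotheses hold for the induced Volterra kernels and then evaluate the two suprema on its right-hand side.

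First, define $K_j$ and $\widehat{K}^\xi_j$, $j\in\{1,2\}$, from $k_j$ and $\widehat{k}^\xi_j$ via \eqref{eq:induced_conv_volterra_kernel}. By Remark~\ref{rmk:assumptions_implication_structure} (i.e., Lemma~\ref{lma:kernel_assumption_implications}), Assumption~\ref{ass:assump_conv_kernels} for $k_1,k_2$ implies Assumption~\ref{ass:assump_nonconv_kernels} for $K_1,K_2$ with the same families $(\eta_T)_{T>0}$ and $(\alpha_T)_{T>0}$. Likewise, Assumption~\ref{ass:assu_co_unif_xi} implies Assumption~\ref{ass:assu_noco_unif_xi} for $(\widehat{K}^\xi_1,\widehat{K}^\xi_2)_{\xi\in\Xi}$, with $\overline{K}_j$ induced by $\overline{k}_j$. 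The point to verify here is that the constants are again the same $\eta_T$, $\alpha_T$, as Theorem~\ref{thm:C0Lp-err_non-conv} requires. Part~\ref{item:assu_noco_i_unif} follows directly from $\|\overline{K}_j(t,\cdot)\|_{L^j(s,t)} = \|\overline{k}_j\|_{L^j(0,t-s)}$. Part~\ref{item:assu_noco_ii_unif} is where monotonicity enters. Since $\widehat{k}^\xi_1\ge0$ is non-increasing,
\begin{equation*}
\int_0^s \bigl(\widehat{k}^\xi_1(s-u)-\widehat{k}^\xi_1(t-u)\bigr)\diff{u}
= \int_0^s \widehat{k}^\xi_1 - \int_{t-s}^{t}\widehat{k}^\xi_1
\le \int_0^{t-s}\widehat{k}^\xi_1 \le \eta(t-s)^\alpha .
\end{equation*}
For $j=2$, the pointwise inequality $(a-b)^2\le a^2-b^2$ for $a\ge b\ge0$ gives the analogous bound in $L^2$, using the domination by $\overline{k}_2$. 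I take these facts as supplied by Lemma~\ref{lma:kernel_assumption_implications}. The processes $X$ and $\widehat{X}^\xi$ are therefore exactly as in Theorem~\ref{thm:C0Lp-err_non-conv}.

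Applying Theorem~\ref{thm:C0Lp-err_non-conv} gives a $\xi$-independent constant $C$ with the bound in terms of $\sup_{v\in[0,t]}\|K_j(v,\cdot)-\widehat{K}^\xi_j(v,\cdot)\|_{L^j(0,v)}$. For convolution kernels, the substitution $w=v-u$ gives
\begin{equation*}
\|K_j(v,\cdot)-\widehat{K}^\xi_j(v,\cdot)\|_{L^j(0,v)} = \|k_j-\widehat{k}^\xi_j\|_{L^j(0,v)} .
\end{equation*}
The right-hand side is non-decreasing in $v$, so the supremum over $v\in[0,t]$ equals its value at $v=t$. This yields the claimed estimate with the same constant $C$.

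The only step requiring any care is the transfer of the assumptions with identical constants, in particular part~\ref{item:assu_noco_ii_unif} for the approximating kernels. There the a.e.\ monotonicity and non-negativity of $\widehat{k}^\xi_j$ must be used, because only $\overline{k}_j$ carries the integrability bounds. The rest is bookkeeping.
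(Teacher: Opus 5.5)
Your proposal is correct and follows the paper's own proof. Like the paper, you induce the Volterra kernels via \eqref{eq:induced_conv_volterra_kernel}, transfer the assumptions with identical constants via Lemma~\ref{lma:kernel_assumption_implications}, and apply Theorem~\ref{thm:C0Lp-err_non-conv}. You then collapse the supremum to the value at $v=t$, using the substitution $w=v-u$ and the fact that $v\mapsto\|k_j-\widehat{k}^\xi_j\|_{L^j(0,v)}$ is non-decreasing.
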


\begin{proof}
	The claim follows by defining 
	the kernels 
	$K_1, K_2$ 
	and $(\widehat{K}^\xi_1, \widehat{K}^\xi_2)_{\xi\in\Xi}$ 
	as in \eqref{eq:induced_conv_volterra_kernel}, 
	see Remark~{\upshape\ref{rmk:conv_kernels}}, 
	and applying 
	Lemma~{\upshape\ref{lma:kernel_assumption_implications}} 
	and Theorem~{\upshape\ref{thm:C0Lp-err_non-conv}} above. 
	Moreover,  
	\begin{equation*} 
		\sup\nolimits_{v \in [0,t]} \| K_j(v,\,\cdot\,) - \widehat{K}^\xi_j(v,\,\cdot\,) \|_{L^j(0,v)} 
		= 
		\sup\nolimits_{v \in [0,t]} \| k_j - \widehat{k}^\xi_j \|_{L^j(0,v)}
		= 
		\| k_j - \widehat{k}^\xi_j \|_{L^j(0,t)} 
	\end{equation*} 
	holds, for $j \in \{1,2\}$, 
	every $\xi\in\Xi$ 
	and all $t\in[0,T]$, 
	which completes the proof.
\end{proof}

Combined with the integral version 
of the Minkowski inequality, 
Corollary~{\upshape\ref{cor:strong_error_Lp_conv}} 
allows to estimate 
differences of paths in the norm 
of $L^p(\Omega;L^q(0,T))$, for~$q\in[1,p]$. 

\begin{corollary}\label{cor:path_dependent_Lp}
	Let $p \in [2,\infty)$, and  
	suppose that $X \colon [0,\infty) \times \Omega \to \bbR^m$   
	is a continuous stochastic Volterra process with
	initial value $X_0 \in L^p(\Omega,\cF_0,\bbP)$,
	coefficients $b, \sigma$ fulfilling 
	Assumption~{\upshape\ref{ass:lipschitz_coef}}, 
	and convolution kernels $k_1, k_2$ satisfying
	Assumption~{\upshape\ref{ass:assump_conv_kernels}} with  
	the constants 
	$\eta_T \in (0,\infty)$ 
	and $\alpha_T \in (\nicefrac{1}{p},1]$, 
	for every $T\in(0,\infty)$. 
	
	Let 
	$(\widehat{X}^\xi \colon [0,\infty) \times \Omega \to \bbR^m)_{\xi\in\Xi}$ 
	be a family 
	of continuous stochastic Volterra processes 
	with the same initial value and coefficients as~$X$, 
	whose convolution kernels 
	$(\widehat{k}^\xi_1, \widehat{k}^\xi_2)_{\xi \in \Xi}$
	satisfy Assumption~{\upshape\ref{ass:assu_co_unif_xi}} 
	with the same constants~$(\eta_T)_{T>0}$ 
	and~$(\alpha_T)_{T>0}$.   
	
	Then, for every $T\in(0,\infty)$, 
	there exists a constant $C \in (0,\infty)$ 
	such that, for all $q\in[1,p]$ 
	and every $\xi \in \Xi$,  
	\begin{align*}
		\| X-\widehat{X}^\xi &\|_{L^p(\Omega;L^q(0,T))}
		= 
		\Biggl( 
		\bbE\Biggl[\biggl| \int_0^T \| X_u - \widehat{X}^\xi_u \|_{\bbR^m}^q \diff{u} \biggr|^{\nicefrac{p}{q}}\Biggr] 
		\Biggr)^{\nicefrac{1}{p}} \\
		&\leq 
		C 
		\Biggl[ 
		\biggl( \int_0^T \| k_1-\widehat{k}^\xi_1 \|_{L^1(0,u)}^q \diff{u} \biggr)^{\nicefrac{1}{q}}
		+ 
		\biggl( \int_0^T \| k_2-\widehat{k}^\xi_2 \|_{L^2(0,u)}^q \diff{u} \biggr)^{\nicefrac{1}{q}} 
		\Biggr].
	\end{align*}
	In particular, the constant $C\in (0,\infty)$ is independent of 
	$q\in[1,p]$ and $\xi\in\Xi$. 
\end{corollary}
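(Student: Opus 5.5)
The plan is to reduce the pathwise $L^p(\Omega;L^q(0,T))$ bound to the pointwise bound of Corollary~\ref{cor:strong_error_Lp_conv} by means of the integral Minkowski inequality. The key observation is that $p/q \geq 1$, since $q\in[1,p]$. Hence the map $Z\mapsto \|Z\|_{L^{p/q}(\Omega)}$ is a norm, and we may move it inside the time integral. Concretely, write
\begin{equation*}
	\| X-\widehat{X}^\xi \|_{L^p(\Omega;L^q(0,T))}^q
	=
	\biggl\| \int_0^T \| X_u - \widehat{X}^\xi_u \|_{\bbR^m}^q \diff{u} \biggr\|_{L^{p/q}(\Omega)} .
\end{equation*}
Minkowski's integral inequality in $L^{p/q}(\Omega)$ then bounds this by
\begin{equation*}
	\int_0^T \bigl\| \| X_u - \widehat{X}^\xi_u \|_{\bbR^m}^q \bigr\|_{L^{p/q}(\Omega)} \diff{u}
	=
	\int_0^T \| X_u - \widehat{X}^\xi_u \|_{L^p(\Omega)}^q \diff{u}.
\end{equation*}
Joint measurability of $(u,\omega)\mapsto \|X_u-\widehat{X}^\xi_u\|^q$ is immediate from the sample path continuity of $X$ and $\widehat{X}^\xi$, so Tonelli and Minkowski apply.

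Next, I insert the pointwise estimate of Corollary~\ref{cor:strong_error_Lp_conv} for each $u\in[0,T]$. Its constant $C$ depends only on $T$, and not on $\xi$ or $u$, which gives
\begin{equation*}
	\int_0^T \| X_u - \widehat{X}^\xi_u \|_{L^p(\Omega)}^q \diff{u}
	\leq C^q \int_0^T \bigl( a(u) + b(u) \bigr)^q \diff{u},
\end{equation*}
where $a(u):=\|k_1-\widehat{k}^\xi_1\|_{L^1(0,u)}$ and $b(u):=\|k_2-\widehat{k}^\xi_2\|_{L^2(0,u)}$. Taking the $q$-th root and applying the triangle inequality in $L^q(0,T)$ (Minkowski, valid since $q\geq 1$) yields
\begin{equation*}
	\|a+b\|_{L^q(0,T)} \leq \|a\|_{L^q(0,T)}+\|b\|_{L^q(0,T)},
\end{equation*}
which is exactly the claimed right-hand side. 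It remains to note that $a$ and $b$ are measurable; they are in fact continuous and nondecreasing in $u$, since both are norms over growing intervals.

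None of these steps is hard. The only point requiring care is keeping the constant uniform in $q$ and $\xi$. This holds because $C$ from Corollary~\ref{cor:strong_error_Lp_conv} depends only on $T$ and the data, and the two Minkowski steps introduce no constants at all. The one step to check is that Minkowski's integral inequality is applied with exponent $p/q\ge1$; this is precisely why the statement restricts to $q\le p$.
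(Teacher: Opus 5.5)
Your proposal is correct and follows the paper's proof. Both use Minkowski's integral inequality (valid since $p/q\ge 1$) to pass from $L^p(\Omega;L^q(0,T))$ to $L^q(0,T;L^p(\Omega))$, then insert the $\xi$-uniform pointwise bound of Corollary~\ref{cor:strong_error_Lp_conv}, and finish with the triangle inequality in $L^q(0,T)$; your version just writes out the Minkowski step explicitly via the $L^{p/q}(\Omega)$-norm.
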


\begin{proof} 
	Let $T\in(0,\infty)$, 
	$q\in[1,p]$, and $\xi\in\Xi$.  
	Then, by Corollary~{\upshape\ref{cor:strong_error_Lp_conv}}, 
	there exists a constant $C \in (0,\infty)$, 
	independent of $\xi\in\Xi$ and $q\in[1,p]$,  such that 
	\begin{equation*}
		\forall u\in[0,T]: 
		\quad 
		\| X_u - \widehat{X}^\xi_u \|_{L^p(\Omega)} 
		\leq 
		C  
		\bigl( \| k_1-\widehat{k}^\xi_1 \|_{L^1(0,u)} + \| k_2-\widehat{k}^\xi_2 \|_{L^2(0,u)} \bigr).
	\end{equation*}
	Combining this bound with 
	Minkowski's integral inequality \cite[Proposition~1.2.22]{analysis_banach_I}, 
	\begin{equation*}
		\| X - \widehat{X}^\xi \|_{L^p(\Omega;L^q(0,T))}
		\leq 
		\| X - \widehat{X}^\xi \|_{L^q(0,T;L^p(\Omega))}
		= 
		\biggl( 
		\int_0^T \| X_u - \widehat{X}^\xi_u \|_{L^p(\Omega)}^{q} \diff{u} 
		\biggr)^{\nicefrac{1}{q}}, 
	\end{equation*} 
	and with the triangle inequality 
	on $L^q(0,T)$ 
	shows the claim. 
\end{proof}


\subsection{Differences of Volterra processes in H\"older norms}
\label{subsec:general_SVE_holder_diff}

In this subsection, 
we bound the difference process 
$X - \widehat{X}^\xi$  
in H\"older sense, i.e., in the norms 
of $C^\nu([0,T];L^p(\Omega))$ 
and 
$L^p(\Omega;C^\nu([0,T]))$,  
for $p\in[2,\infty)$, and $\nu\geq 0$ 
up to the maximal\pagebreak  

\noindent 
H\"older regularity  
permitted 
by $X$ and $(\widehat{X}^\xi)_{\xi\in\Xi}$. 
For this purpose, 
the next~proposition,  
that bounds the increments 
of the difference process  
in the norm of $L^p(\Omega)$ 
by quantities 
depending only on the kernel differences
$\Delta^\xi_1, \Delta^\xi_2$ 
in \eqref{eq:delta_definition}, 
will be key.  

\begin{proposition}\label{prop:Lp_error_of_increment}
	Let $p \in [2,\infty)$, and  
	suppose that ${X \colon [0,\infty) \times \Omega \to \bbR^m}$   
	is a continuous stochastic Volterra process with
	initial value $X_0 \in L^p(\Omega,\cF_0,\bbP)$,
	coefficients~$b, \sigma$ fulfilling 
	Assumption~{\upshape\ref{ass:lipschitz_coef}} 
	with $L \in (0,\infty)$,
	and kernels $K_1, K_2$ satisfying
	Assumption~{\upshape\ref{ass:assump_nonconv_kernels}} 
	with the constants 
	$\eta_T \in (0,\infty)$,  
	$\alpha_T \in (\nicefrac{1}{p},1]$, 
	for every $T\in(0,\infty)$. 
	
	Assume that 
	$(\widehat{X}^\xi \colon [0,\infty) \times \Omega \to \bbR^m)_{\xi\in\Xi}$ 
	is a family 
	of continuous stochastic Volterra processes 
	with the same initial value and coefficients as~$X$, 
	whose kernels  
	$(\widehat{K}^\xi_1, \widehat{K}^\xi_2)_{\xi \in \Xi}$
	satisfy Assumption~{\upshape\ref{ass:assu_noco_unif_xi}} 
	with the same constants 
	$(\eta_T)_{T>0}$ 
	and $(\alpha_T)_{T>0}$. 
	For each $\xi\in\Xi$,  
	let $\Delta^\xi_1 \colon \triangle[0,\infty)\to\bbR$ 
	and $\Delta^\xi_2 \colon \triangle[0,\infty)\to\bbR$ 
	be defined  
	as in \eqref{eq:delta_definition}.
	
	For every $T\in(0,\infty)$, 
	there exists a constant $C \in (0,\infty)$ 
	such that, for all $\xi \in \Xi$ 
	and all ${(t,s)\in\triangle [0,T]}$,  
	\begin{align*}   
			\| (X_t - \widehat{X}^\xi_t) - (&X_s - \widehat{X}^\xi_s) \|_{L^p(\Omega)}
			\\
			&\;\;\leq 
			C \biggl[
			\sup_{v \in [0,t]} 
			\Bigl( \| \Delta^\xi_1(v,\,\cdot\,) \|_{L^1(0,v)}
			+ \| \Delta^\xi_2(v,\,\cdot\,) \|_{L^2(0,v)} \Bigr) (t-s)^{\alpha}
			\\
			&\qquad\qquad 
			+ \| \Delta^\xi_1(t,\,\cdot\,) \|_{L^1(s,t)}
			+ \| \Delta^\xi_1(t,\,\cdot\,) - \Delta^\xi_1(s,\,\cdot\,) \|_{L^1(0,s)} 
			\\ 
			&\qquad\qquad 
			+ \| \Delta^\xi_2(t,\,\cdot\,) \|_{L^2(s,t)} 
			+ \| \Delta^\xi_2(t,\,\cdot\,) - \Delta^\xi_2(s,\,\cdot\,) \|_{L^2(0,s)}
			\biggr]. 
	\end{align*}
\end{proposition}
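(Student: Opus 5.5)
Write $E^\xi_t := X_t - \widehat{X}^\xi_t$ and $E^\xi_{t,s} := E^\xi_t - E^\xi_s$ for $(t,s)\in\triangle[0,T]$. As in the proof of Theorem~\ref{thm:C0Lp-err_non-conv}, I use the decomposition \eqref{eq:decomposition_for_X_minus_Xhat} of the integrands. I then split each resulting integral from $0$ to $t$ as in \eqref{eq:SVE_increment_decomposition}, into a piece on $(s,t)$ and a piece on $(0,s)$ involving the kernel increment.

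This gives, $\bbP$-a.s., eight terms:
\begin{align*}
E^\xi_{t,s}
&= \int_s^t \Delta^\xi_1(t,u) b(\widehat{X}^\xi_u)\diff{u}
+ \int_0^s \Delta^\xi_{1;t,s}(u) b(\widehat{X}^\xi_u)\diff{u}
+ \int_s^t K_1(t,u)\bigl(b(X_u)-b(\widehat{X}^\xi_u)\bigr)\diff{u}
\\
&\quad + \int_0^s K_{1;t,s}(u)\bigl(b(X_u)-b(\widehat{X}^\xi_u)\bigr)\diff{u}
+ (\text{the four analogous stochastic integrals with } \Delta^\xi_2, K_2, \sigma).
\end{align*}
I take $L^p(\Omega)$-norms and use the triangle inequality; there is no need to square here.

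\emph{Terms with $\Delta^\xi_j$.} The four terms involving $\Delta^\xi_1$, $\Delta^\xi_{1;t,s}$, $\Delta^\xi_2$, $\Delta^\xi_{2;t,s}$ are handled with Remark~\ref{rmk:def_bT-hat_sigmaT-hat} and the BDG inequality (Lemma~\ref{lma:BDG_SVE}). They are bounded by $\widehat{b}_T$ or $\bdg{p}^{1/2}\widehat{\sigma}_T$ times, respectively,
\[
\|\Delta^\xi_1(t,\cdot)\|_{L^1(s,t)},\quad
\|\Delta^\xi_1(t,\cdot)-\Delta^\xi_1(s,\cdot)\|_{L^1(0,s)},\quad
\|\Delta^\xi_2(t,\cdot)\|_{L^2(s,t)},\quad
\|\Delta^\xi_2(t,\cdot)-\Delta^\xi_2(s,\cdot)\|_{L^2(0,s)}.
\]
These are exactly the four local terms in the claim, and the constants are uniform in $\xi$.

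\emph{Terms with $K_j$ and $K_{j;t,s}$.} For the drift terms I use Assumption~\ref{ass:lipschitz_coef} and then pull out $\sup_{u\in[0,t]}\|E^\xi_u\|_{L^p(\Omega)}$. This gives the bound
\[
L\Bigl(\|K_1(t,\cdot)\|_{L^1(s,t)}+\|K_{1;t,s}\|_{L^1(0,s)}\Bigr)\sup_{u\le t}\|E^\xi_u\|_{L^p(\Omega)}
\le 2L\eta (t-s)^\alpha \sup_{u\le t}\|E^\xi_u\|_{L^p(\Omega)},
\]
by Assumption~\ref{ass:assump_nonconv_kernels}. The stochastic terms are treated the same way after BDG, using the $L^2$-norms of $K_2(t,\cdot)$ on $(s,t)$ and of $K_{2;t,s}$ on $(0,s)$.

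Finally, Theorem~\ref{thm:C0Lp-err_non-conv} bounds
\[
\sup_{u\le t}\|E^\xi_u\|_{L^p(\Omega)} \le C\sup_{v\in[0,t]}\Bigl(\|\Delta^\xi_1(v,\cdot)\|_{L^1(0,v)}+\|\Delta^\xi_2(v,\cdot)\|_{L^2(0,v)}\Bigr),
\]
with $C$ independent of $\xi$. Taking the sup over $u\le t$ is legitimate because the right-hand side of that theorem is monotone in $t$. Inserting this produces the first term of the claim, with the factor $(t-s)^\alpha$.

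The main technical point is justifying BDG for the stochastic integrals over $(0,s)$ with integrand $K_{2;t,s}(u)\,\sigma(\cdot)$. Here $t$ is a fixed parameter, so for fixed $(t,s)$ the integrand is a deterministic function of $u$ times an adapted process, and Lemma~\ref{lma:BDG_SVE} applies exactly as in the proof of Proposition~\ref{prop:X_ct_holder_Lp}. Beyond that, the work is in checking that every constant depends only on $L$, $\eta_T$, $\alpha_T$, $p$, $T$ and on $\widehat{b}_T,\widehat{\sigma}_T$, which are $\xi$-uniform by Proposition~\ref{prop:xi_bdd_stability}. No Gr\"onwall argument is needed beyond the one already contained in Theorem~\ref{thm:C0Lp-err_non-conv}.
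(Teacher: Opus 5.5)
Your proposal is correct and follows essentially the same route as the paper: the same eight-term splitting (increment decomposition combined with the identity \eqref{eq:decomposition_for_X_minus_Xhat}), bounded using $\widehat{b}_T,\widehat{\sigma}_T$, BDG, Lipschitz continuity and Assumption~\ref{ass:assump_nonconv_kernels}, with the factor $\sup_{u\le t}\|X_u-\widehat{X}^\xi_u\|_{L^p(\Omega)}$ controlled by Theorem~\ref{thm:C0Lp-err_non-conv}. The only difference is that the paper squares the norm and applies Cauchy--Schwarz, taking a square root at the end, whereas you apply the triangle inequality directly; this affects only the constants.
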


\begin{proof}
	Let $T\in(0,\infty)$, 
	$\xi\in\Xi$, and $(t,s) \in \triangle[0,T]$, 
	and set $\eta:=\eta_T \in (0,\infty)$ and 
	${\alpha:=\alpha_T \in (\nicefrac{1}{p},1]}$. 
	We consider the increments 
	$X_{t,s} := X_t - X_s$ 
	and 
	$\widehat{X}^{\xi}_{t,s} := \widehat{X}^{\xi}_t - \widehat{X}^{\xi}_s$. 
	For $j \in \{1,2\}$,
	define the functions 
	$K_{j;t,s}, \widehat{K}^\xi_{j;t,s}$
	as in \eqref{eq:K_jts_definition} 
	and $\Delta^\xi_j, \Delta^\xi_{j;t,s}$
	as in~\eqref{eq:delta_definition}.
	By expressing the increments of the Volterra processes~$X$ and~$\widehat{X}^\xi$ 
	as in \eqref{eq:SVE_increment_decomposition},  
	the difference of the increments  
	can be split into four integrals, 
	and we obtain that 
	\begin{align}
		\| X_{t,s} - \widehat{X}^{\xi}_{t,s} \|_{L^p(\Omega)}^2 
		&\leq 
		4\, 
		\biggl\| \int_s^t \left[K_1(t,u) b(X_u) - \widehat{K}^{\xi}_1(t,u) b(\widehat{X}^{\xi}_u)\right] \diff{u} \biggr\|_{L^p(\Omega)}^2 
		\notag
		\\
		&\qquad+ 
		4\, 
		\biggl\| \int_s^t \left[K_2(t,u) \sigma(X_u) - \widehat{K}^{\xi}_2(t,u) \sigma(\widehat{X}^{\xi}_u)\right] \diff{W}_u 
		\biggr\|_{L^p(\Omega)}^2 
		\notag
		\\
		&\qquad+ 
		4\, 
		\biggl\| \int_0^s \left[K_{1;t,s}(u) b(X_u) - \widehat{K}^{\xi}_{1;t,s}(u) b(\widehat{X}^{\xi}_u) \right] \diff{u} 
		\biggr\|_{L^p(\Omega)}^2 
		\notag
		\\
		&\qquad+ 
		4\, 
		\biggl\| \int_0^s \left[K_{2;t,s}(u) \sigma(X_u) - \widehat{K}^{\xi}_{2;t,s}(u) \sigma(\widehat{X}^{\xi}_u) \right] \diff{W}_u 
		\biggr\|_{L^p(\Omega)}^2 
		\notag 
		\\
		&=: 4 \bigl( \mathrm{(I)} + \mathrm{(II)} + \mathrm{(III)} + \mathrm{(IV)} \bigr) , 
		\label{eq:Holder_diff_whole} 
	\end{align}
	where we also used the triangle and Cauchy--Schwarz inequalities. 
	We will estimate each of the 
	four terms in \eqref{eq:Holder_diff_whole} separately. 
	To this end, 
	we define the constants $\widehat{b}_T, \widehat{\sigma}_T \in [0,\infty)$
	as in \eqref{eq:bhat_sigmahat}, 
	see Remark~{\upshape\ref{rmk:def_bT-hat_sigmaT-hat}}, 
	and we recall that by Theorem~{\upshape\ref{thm:C0Lp-err_non-conv}}, 
	\begin{equation}\label{eq:def_Theta_t_xi} 
		\begin{split} 
			\forall t \in[0,T]: 
			\quad 
			&\sup\limits_{v \in [0,t]} \| X_v - \widehat{X}^\xi_v \|_{L^p(\Omega)}
			\leq 
			\Theta_t(\xi), 
			\\
			\Theta_t(\xi) 
			:= 
			\widehat{C}
			\bigl(\, \sup\nolimits_{v \in [0,t]} 
			&\| \Delta^\xi_1(v,\,\cdot\,) \|_{L^1(0,v)}
			+ \sup\nolimits_{v \in [0,t]} \| \Delta^\xi_2(v,\,\cdot\,) \|_{L^2(0,v)} \bigr), 
		\end{split} 
	\end{equation}
	where $\widehat{C} \in (0,\infty)$ is a constant, 
	independent of $\xi\in\Xi$.   
	
	\textbf{Term~$\boldsymbol{\mathrm{(I)}}$:} 
	Using the identity observed in \eqref{eq:decomposition_for_X_minus_Xhat}, 
	we find that 
	\begin{align*}
		\mathrm{(I)} 
		&\leq  
		2\, 
		\biggl\|  \int_s^t \Delta^{\xi}_1(t,u) b(\widehat{X}^\xi_u) \diff{u}  \biggr\|_{L^p(\Omega)}^2 
		+
		2\, 
		\biggl\|  \int_s^t K_1(t,u) \bigl( b(X_{u}) - b(\widehat{X}^{\xi}_{u}) \bigr)\diff{u}  \biggr\|_{L^p(\Omega)}^2 
		\\
		&=: 
		2 \bigl( \mathrm{(Ia)} + \mathrm{(Ib)} \bigr) , 
	\end{align*}
	where we readily can bound the term $\mathrm{(Ia)}$ 
	using \eqref{eq:bhat_sigmahat} 
	as follows, 
	\begin{equation*}
		\mathrm{(Ia)} 
		\leq 
		\biggl(\int_s^t | \Delta^{\xi}_1(t,u) | \| b(\widehat{X}^\xi_u) \|_{L^p(\Omega)} \diff{u} \biggr)^2  
		\leq 
		\widehat{b}_T^2 \| \Delta^{\xi}_1(t,\,\cdot\,) \|_{L^1(s,t)}^2.
	\end{equation*}
	For term $\mathrm{(Ib)}$, we 
	find by Lipschitz continuity of~$b$, 
	postulated in Assumption~{\upshape\ref{ass:lipschitz_coef}}, 
	that   
	\begin{align*}
		\mathrm{(Ib)} 
		&\leq  
		\biggl(
		\int_s^t |K_1(t,u)| \, L \, 
		\| X_u-\widehat{X}^{\xi}_u \|_{L^p(\Omega)}  
		\diff{u} \biggr)^2 
		\\
		&\leq 
		L^2 \sup_{v\in[s,t]}  \| X_v - \widehat{X}^{\xi}_v \|_{L^p(\Omega)}^2
		\| K_1(t, \,\cdot\,)\|_{L^1(s,t)}^2 
		\leq  
 		L^2 \eta^2 \Theta_t^2(\xi) (t-s)^{2\alpha} , 
	\end{align*}
	where the last step follows from \eqref{eq:def_Theta_t_xi} and 
	Assumption~{\upshape\ref{ass:assump_nonconv_kernels}\ref{item:assu_noco_i}}.
	
	\textbf{Term~$\boldsymbol{\mathrm{(II)}}$:} 
	We proceed similarly as for term $\mathrm{(I)}$: 
	We first split it as follows, 
	\begin{align*}
		\mathrm{(II)} 
		&\leq 
		2 \biggl\| \int_s^t \Delta^{\xi}_2(t,u) \sigma(\widehat{X}^\xi_u) \diff{W}_u  \biggr\|_{L^p(\Omega)}^2 \!\! 
		+ 
		2 \biggl\| \int_s^t K_2(t,u) \bigl( \sigma(X_u) - \sigma(\widehat{X}^{\xi}_u) \bigr) \diff{W}_u \biggr\|_{L^p(\Omega)}^2  
		\\
		&=: 
		2 \bigl( \mathrm{(IIa)} + \mathrm{(IIb)} \bigr), 
	\end{align*}
	and then apply the 
	Burkholder--Davis--Gundy inequality 
	(with constant $\bdg{p} \in(0,\infty)$, 
	see Lemma~{\upshape\ref{lma:BDG_SVE}})  
	and \eqref{eq:bhat_sigmahat} 
	to estimate the 
	$L^p(\Omega)$-norm of 
	the stochastic integrals, 
	\begin{align*}
		\mathrm{(IIa)} 
		&\leq 
		\bdg{p} \int_s^t | \Delta^{\xi}_2(t,u) |^2 \| \sigma(\widehat{X}^\xi_u) \|_{L^p(\Omega)}^2 \diff{u} 
		\leq \bdg{p} \widehat{\sigma}_T^2 \| \Delta^{\xi}_2(t,\,\cdot\,) \|_{L^2(s,t)}^2 , 
		\\
		\mathrm{(IIb)}  
		&\leq  
		\bdg{p} L^2 
		\int_s^t |K_2(t,u)|^2  
		\| X_u - \widehat{X}^{\xi}_u \|_{L^p(\Omega)}^2 \diff{u} 
		\\
		&\leq 
		\bdg{p} L^2 
		\sup_{v\in[s,t]}  \| X_v - \widehat{X}^{\xi}_v \|_{L^p(\Omega)}^2 
		\| K_2(t,\,\cdot\,) \|_{L^2(s,t)}^2
		\leq 
		\bdg{p} 
		L^2 \eta^2 \Theta_t^2(\xi) (t-s)^{2\alpha} ,  
	\end{align*}
	where we also used 
	Assumptions~{\upshape\ref{ass:lipschitz_coef}},~{\upshape\ref{ass:assump_nonconv_kernels}\ref{item:assu_noco_i}} 
	and \eqref{eq:def_Theta_t_xi} to bound $\mathrm{(IIb)}$. 
	
	\textbf{Term~$\boldsymbol{\mathrm{(III)}}$:} 
	We first note the identity 
	\begin{equation}\label{eq:decomposition_Kb-minus_Khbh}
		\begin{split} 
			K_{1;t,s}(u) b(X_u) - \widehat{K}^{\xi}_{1;t,s}&(u) b(\widehat{X}^{\xi}_u)
			\\
			&= 
			\Delta^{\xi}_{1;t,s}(u) b(\widehat{X}^{\xi}_u)  
			+ 
			K_{1;t,s}(u) \bigl(b(X_u) - b(\widehat{X}^{\xi}_u) \bigr) , 
		\end{split} 
	\end{equation}
	and use it to bound the third term as follows, 
	\begin{align*}
		\mathrm{(III)} 
		&=  
		\biggl\| 
		\int_0^s \left[ \Delta^{\xi}_{1;t,s}(u) b(\widehat{X}^{\xi}_u) + 
		K_{1;t,s}(u) \bigl(b(X_u) - b(\widehat{X}^{\xi}_u) \bigr) 
		\right] \diff{u} 
		\biggr\|_{L^p(\Omega)}^2
		\\ 
		&\leq 
		2 \, 
		\biggl\| 
		\int_0^s \Delta^{\xi}_{1;t,s}(u) b(\widehat{X}^{\xi}_u) \diff{u} 
		\biggr\|_{L^p(\Omega)}^2
		+ 
		2 \, 
		\biggl\| 
		\int_0^s  K_{1;t,s}(u) \bigl(b(X_u) - b(\widehat{X}^{\xi}_u) \bigr) \diff{u} 
		\biggr\|_{L^p(\Omega)}^2 
		\\
		&=: 
		2 \bigl( \mathrm{(IIIa)} + \mathrm{(IIIb)} \bigr)  . 
	\end{align*} 
	Then, by  
	\eqref{eq:bhat_sigmahat} 
	and 
	\eqref{eq:def_Theta_t_xi} 
	as well as 
	Assumptions~{\upshape\ref{ass:lipschitz_coef}} 
	and~{\upshape\ref{ass:assump_nonconv_kernels}\ref{item:assu_noco_ii}}, 
	we conclude that 
	\begin{equation*} 
		\mathrm{(IIIa)} 
		\leq 
		\biggl( \int_0^s | \Delta^{\xi}_{1;t,s}(u) | \| b(\widehat{X}^{\xi}_u) \|_{L^p(\Omega)} \diff{u} \biggr)^2  
		\leq 
		\widehat{b}_T^2 \| \Delta^{\xi}_{1;t,s} \|_{L^1(0,s)}^2,
	\end{equation*}
	\begin{align*} 
		\mathrm{(IIIb)} 
		&\leq 
		\biggl( \int_0^s| K_{1;t,s}(u) | \| b(X_u) - b(\widehat{X}^{\xi}_u) \|_{L^p(\Omega)} \diff{u} \biggr)^2  
		\\
		&\leq 
		L^2 \sup_{v \in [0,s]} \| X_v - \widehat{X}^{\xi}_v \|_{L^p(\Omega)}^2 \| K_{1;t,s} \|_{L^1(0,s)}^2 
		\leq 
		L^2 \eta^2 \Theta_s^2(\xi) (t-s)^{2\alpha}. 
	\end{align*} 
	
	\textbf{Term~$\boldsymbol{\mathrm{(IV)}}$:} 
	In analogy to \eqref{eq:decomposition_Kb-minus_Khbh}, 
	we split the fourth term, 
	\begin{align*}
		\mathrm{(IV)} 
		&\leq  
		2\, 
		\biggl\| 
		\int_0^s 
		\Delta^{\xi}_{2;t,s}(u)  
		\sigma(\widehat{X}^{\xi}_u) \diff{W}_u 
		\biggr\|_{L^p(\Omega)}^2 
		\\
		&\qquad + 
		2\, 
		\biggl\|  
		\int_0^s K_{2;t,s}(u) \bigl( \sigma(X_u) - \sigma(\widehat{X}^{\xi}_u) \bigr) 
		\diff{W}_u 
		\biggr\|_{L^p(\Omega)}^2 
		=: 
		2 \bigl( \mathrm{(IVa)} + \mathrm{(IVb)} \bigr) , 
	\end{align*} 
	and bound the remaining terms 
	by means of the Burkholder--Davis--Gundy 
	inequality, 
	\eqref{eq:bhat_sigmahat} 
	and 
	\eqref{eq:def_Theta_t_xi} 
	as well as 
	Assumptions~{\upshape\ref{ass:lipschitz_coef}} 
	and~{\upshape\ref{ass:assump_nonconv_kernels}\ref{item:assu_noco_ii}}, 
	\begin{align*} 
		\mathrm{(IVa)}
		&\leq 
		\bdg{p} \int_0^s | \Delta^{\xi}_{2;t,s}(u) |^2 \| \sigma(\widehat{X}^{\xi}_u) \|_{L^p(\Omega)}^2 \diff{u} 
		\leq 
		\bdg{p} \widehat{\sigma}_T^2 \| \Delta^{\xi}_{2;t,s} \|_{L^2(0,s)}^2, 
		\\
		\mathrm{(IVb)}
		&\leq 
		\bdg{p} \int_0^s | K_{2;t,s}(u) |^2 \| \sigma(X_u) - \sigma(\widehat{X}^{\xi}_u) \|_{L^p(\Omega)}^2 \diff{u} 
		\\
		&\leq 
		\bdg{p} L^2 \sup_{v \in [0,s]} \| X_v - \widehat{X}^{\xi}_v \|_{L^p(\Omega)}^2 \| K_{2;t,s} \|_{L^2(0,s)}^2 
		\leq 
		\bdg{p} L^2 \eta^2 \Theta_s^2(\xi)(t-s)^{2\alpha} . 
	\end{align*}  
	By recalling $\Theta_t(\xi)$
	from \eqref{eq:def_Theta_t_xi} 
	and noting that $\Theta_s(\xi) \leq \Theta_t(\xi)$,   
	we can summarize the bounds 
	for the terms (I)--(IV) in \eqref{eq:Holder_diff_whole}, 
	\begin{align*}
		\| X_{t,s} - \widehat{X}^{\xi}_{t,s} \|_{L^p(\Omega)}^2
		&\leq 
		8 \, \Bigl[
		2 L^2 \eta^2 (1+\bdg{p}) \Theta_t^2(\xi)   
		(t-s)^{2\alpha} 
		\\
		&\qquad\, 
		+ \widehat{b}_T^2 
		\, \Bigl( \| \Delta^\xi_1(t,\,\cdot\,) \|_{L^1(s,t)}^2
		+  \| \Delta^\xi_1(t,\,\cdot\,) - \Delta^\xi_1(s,\,\cdot\,) \|_{L^1(0,s)}^2 \Bigr)
		\\
		&\qquad\, 
		+ \bdg{p} \widehat{\sigma}_T^2 \, 
		\Bigl( \| \Delta^\xi_2(t,\,\cdot\,) \|_{L^2(s,t)}^2
		+  \| \Delta^\xi_2(t,\,\cdot\,) - \Delta^\xi_2(s,\,\cdot\,) \|_{L^2(0,s)}^2 \Bigr)
		\Bigr], 
	\end{align*} 
	and the claim follows by taking the square root 
	on both sides, as well as
	subadditivity of the square root function.  
\end{proof}

\begin{corollary}\label{cor:Lp_error_of_increment_conv}  
	Let $p \in [2,\infty)$, and  
	suppose that $X \colon [0,\infty) \times \Omega \to \bbR^m$   
	is a continuous stochastic Volterra process with
	initial value $X_0 \in L^p(\Omega,\cF_0,\bbP)$,
	coefficients $b, \sigma$ fulfilling 
	Assumption~{\upshape\ref{ass:lipschitz_coef}}, 
	and convolution kernels $k_1, k_2$ satisfying
	Assumption~{\upshape\ref{ass:assump_conv_kernels}} with  
	the constants 
	$\eta_T \in (0,\infty)$ 
	and $\alpha_T \in (\nicefrac{1}{p},1]$, 
	for every $T\in(0,\infty)$. 
	
	Let 
	$(\widehat{X}^\xi \colon [0,\infty) \times \Omega \to \bbR^m)_{\xi\in\Xi}$ 
	be a family 
	of continuous stochastic Volterra processes 
	with the same initial value and coefficients as~$X$, 
	whose convolution kernels 
	$(\widehat{k}^\xi_1, \widehat{k}^\xi_2)_{\xi \in \Xi}$
	satisfy Assumption~{\upshape\ref{ass:assu_co_unif_xi}} 
	with the same constants~$(\eta_T)_{T>0}$ 
	and~$(\alpha_T)_{T>0}$.    
	For every $\xi\in\Xi$,  
	let $\Delta^\xi_1 \colon [0,T]\to\bbR$ 
	and $\Delta^\xi_2 \colon [0,T]\to\bbR$ 
	be defined  
	as in \eqref{eq:delta_definition_conv}. 
	
	For every $T\in(0,\infty)$, 
	there exists a constant $C \in (0,\infty)$ 
	such that, for all $\xi \in \Xi$ 
	and all ${(t,s)\in\triangle [0,T]}$,  
	\begin{align*}
		\| (X_t &- \widehat{X}^\xi_t) - (X_s - \widehat{X}^\xi_s) \|_{L^p(\Omega)}
		\\
		&\leq 
		C \Bigl[
		\Bigl( 
		\| \Delta^\xi_1 \|_{L^1(0,t)}
		+ \| \Delta^\xi_2 \|_{L^2(0,t)} \Bigr) (t-s)^{\alpha}
		+ \| \Delta^\xi_1 \|_{L^1(0,t-s)}
		+ \| \Delta^\xi_2 \|_{L^2(0,t-s)}
		\\ 
		&\qquad 
		+ 
		\| \Delta^\xi_1(t-\,\cdot\,) - \Delta^\xi_1(s-\,\cdot\,) \|_{L^1(0,s)}
		+  
		\| \Delta^\xi_2(t-\,\cdot\,) - \Delta^\xi_2(s-\,\cdot\,) \|_{L^2(0,s)}
		\Bigr].
	\end{align*} 
\end{corollary}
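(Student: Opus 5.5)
This corollary follows from Proposition~\ref{prop:Lp_error_of_increment}, in the same way that Corollary~\ref{cor:strong_error_Lp_conv} was deduced from Theorem~\ref{thm:C0Lp-err_non-conv}. We define Volterra kernels $K_1,K_2$ and $(\widehat{K}^\xi_1,\widehat{K}^\xi_2)_{\xi\in\Xi}$ from $k_1,k_2$ and $(\widehat{k}^\xi_1,\widehat{k}^\xi_2)_{\xi\in\Xi}$ via \eqref{eq:induced_conv_volterra_kernel}. By Lemma~\ref{lma:kernel_assumption_implications} (cf.\ Remark~\ref{rmk:assumptions_implication_structure}), Assumption~\ref{ass:assump_conv_kernels} implies Assumption~\ref{ass:assump_nonconv_kernels}, and Assumption~\ref{ass:assu_co_unif_xi} implies Assumption~\ref{ass:assu_noco_unif_xi}, with the same constants $(\eta_T)_{T>0}$ and $(\alpha_T)_{T>0}$, where $\alpha_T\in(\nicefrac1p,1]$. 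The dominating kernels are $\overline{K}_j(t,s)=\overline{k}_j(t-s)\ind{\{t>s\}}$. Moreover, $X$ and $\widehat{X}^\xi$ are continuous stochastic Volterra processes with respect to these kernels. Hence Proposition~\ref{prop:Lp_error_of_increment} applies and yields a constant $C$, independent of $\xi$ and of $(t,s)\in\triangle[0,T]$.

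It then remains to rewrite each term on the right-hand side of Proposition~\ref{prop:Lp_error_of_increment} in convolution form, with $\Delta^\xi_j$ as in \eqref{eq:delta_definition_conv}.

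\textbf{First term.} For $v\in[0,t]$, the substitution $w=v-u$ gives
\[
\|\Delta^\xi_j(v,\cdot)\|_{L^j(0,v)}=\|k_j-\widehat{k}^\xi_j\|_{L^j(0,v)}=\|\Delta^\xi_j\|_{L^j(0,v)}.
\]
This is non-decreasing in $v$, so its supremum over $v\in[0,t]$ equals $\|\Delta^\xi_j\|_{L^j(0,t)}$. This is the identity already used in the proof of Corollary~\ref{cor:strong_error_Lp_conv}.

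\textbf{Second term.} The substitution $w=t-u$ maps $(s,t)$ onto $(0,t-s)$, so
\[
\|\Delta^\xi_j(t,\cdot)\|_{L^j(s,t)}=\|\Delta^\xi_j\|_{L^j(0,t-s)}.
\]

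\textbf{Third term.} For $u\in(0,s)$ we have $t>u$ and $s>u$, so the indicator in \eqref{eq:induced_conv_volterra_kernel} equals $1$ for both kernels. Hence $\Delta^\xi_j(t,u)-\Delta^\xi_j(s,u)=\Delta^\xi_j(t-u)-\Delta^\xi_j(s-u)$, which gives the last two terms of the claim. The boundary point $u=s$ is a null set and does not matter.

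The only point that needs care is to confirm that the hypotheses of Proposition~\ref{prop:Lp_error_of_increment} are met exactly, in particular that Lemma~\ref{lma:kernel_assumption_implications} transfers the constants unchanged. Everything else consists of the substitutions above, so no genuine obstacle is expected.
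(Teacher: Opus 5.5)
Your proposal is correct and follows the paper's proof. The paper likewise deduces the corollary from Proposition~\ref{prop:Lp_error_of_increment} via the induced convolution Volterra kernels and the substitution identity $\| \Delta^\xi_j(v-\,\cdot\,) \|_{L^j(u,v)} = \| \Delta^\xi_j \|_{L^j(0,v-u)}$. You additionally spell out the transfer of assumptions through Lemma~\ref{lma:kernel_assumption_implications} and the monotonicity that reduces the supremum term to $\| \Delta^\xi_j \|_{L^j(0,t)}$, both of which the paper leaves implicit.
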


\begin{proof}
	The claim follows from 
	Proposition~{\upshape\ref{prop:Lp_error_of_increment}} 
	since, for $j \in \{1,2\}$ and for every $(v,u)\in\triangle[0,\infty)$, 
	we have that 
	$\| \Delta^\xi_j(v-\,\cdot\,) \|_{L^j(u,v)}
	= 
	\| \Delta^\xi_j \|_{L^j(0,v-u)}$.  
\end{proof} 

The last two terms in 
the bound of Corollary~{\upshape\ref{cor:Lp_error_of_increment_conv}} 
cannot readily be simplified. 
While $k_1, k_2$ and $\widehat{k}_1^\xi, \widehat{k}_2^\xi$ 
are each individually monotone 
by Assumptions~{\upshape\ref{ass:assump_conv_kernels}} 
and~{\upshape\ref{ass:assu_co_unif_xi}}, 
their differences 
$\Delta^\xi_1$ and $\Delta^\xi_2$ may not. 
These terms 
will be addressed in 
Lemma~{\upshape\ref{lma:kernel_increments_holder_and_rate}} 
in the setting specified in the following assumption.  

\begin{assumption}[Convolution kernel 
	approximations II]\label{ass:convker_holder_assu} 
	Suppose that the functions 
	$k_1, k_2 \colon (0,\infty) \to \bbR$  
	satisfy Assumption~{\upshape\ref{ass:assump_conv_kernels}} 
	with the constants $\eta_T \in (0,\infty)$ 
	and $\alpha_T \in (0,1]$, 
	for all $T\in(0,\infty)$.
	Let $(\widehat{k}^\xi_1,\widehat{k}^\xi_2)_{\xi \in \Xi}$ 
	be a family of functions
	satisfying Assumption~{\upshape\ref{ass:assu_co_unif_xi}} 
	with the same constants $\eta_T$ 
	and $\alpha_T$, 
	for all $T\in(0,\infty)$. 
	In addition, assume that, 
	for every $T \in(0,\infty)$, 
	there exist functions  
	${\theta_T \colon \Xi \to (0,\infty)}$, 
	${g_T\in L^1(0,T)}$,  
	and 
	constants 
	$\beta=\beta_T \in (0,1]$, 
	${\eta' = \eta'_T \in(0,\infty)}$ 
	such that:   
	\begin{enumerate}[leftmargin=8mm, label={\upshape(\roman*)}] 
		\item\label{item:diff_diff_holder_conv_i}
		For every $\xi\in\Xi$,  
		\begin{equation*}
			\qquad 
			\forall t \in [0,T]: 
			\quad 
			\| k_1 - \widehat{k}^\xi_1 \|_{L^1(0,t)} \leq \theta_T(\xi) \, t^{\beta},
			\qquad
			\| k_2 - \widehat{k}^\xi_2 \|_{L^2(0,t)} \leq \theta_T(\xi) \, t^{\beta}. 
		\end{equation*}
		\item\label{item:diff_diff_holder_conv_ii}
		For every $\xi\in\Xi$,   
		$k_1-\widehat{k}^\xi_1$ and $k_2-\widehat{k}^\xi_2$
		are weakly differentiable on $(0,T)$ with 
		\begin{equation*}
			\qquad 
			\| (k_1 - \widehat{k}^\xi_1)' \|_{L^1(u,T)} \leq \theta_T(\xi) \, g_T(u),
			\qquad
			\| (k_2 - \widehat{k}^\xi_2)' \|_{L^2(u,T)} \leq \theta_T(\xi) \, g_T(u),
		\end{equation*}
		holding for almost all $u \in (0,T)$, and 
		\begin{equation*}
			\qquad 
			\forall t \in [0, T]: \quad
			\| g_T \|_{L^1(0,t)} \leq \eta' \, t^{\beta}. 
		\end{equation*}
	\end{enumerate}  
\end{assumption}

\begin{lemma}
\label{lma:kernel_increments_holder_and_rate}
	Suppose that $k_1,k_2$ and the family 
	$(\widehat{k}^\xi_1,\widehat{k}^\xi_2)_{\xi \in \Xi}$
	are functions satisfying 
	Assumption~{\upshape\ref{ass:convker_holder_assu}\ref{item:diff_diff_holder_conv_ii}}, 
	with constants 
	$\eta_T' \in (0,\infty)$, 
	$\beta_T \in (0,1]$,
	and functions
	$\theta_T \colon \Xi \to (0,\infty)$,
	$g_T \in L^1(0,T)$, 
	for every $T\in(0,\infty)$. 
	In addition, for every $\xi\in\Xi$,  
	let $\Delta^\xi_1 \colon [0,\infty)\to\bbR$ 
	and $\Delta^\xi_2 \colon [0,\infty)\to\bbR$ 
	be defined  
	as in \eqref{eq:delta_definition_conv}.  
	
	Then, for all $T\in(0,\infty)$, 
	for $j \in \{1,2\}$, 
	and for every~$\xi \in \Xi$,   
	we have that 
	\begin{equation*} 
		\forall (t,s) \in \triangle[0,T]: 
		\quad 
		\| \Delta^\xi_j(t-\,\cdot\,) - \Delta^\xi_j(s-\,\cdot\,) \|_{L^j(0,s)}
		\leq 
		\eta'_T \, \theta_T(\xi) \, (t-s)^{\beta_T}.
	\end{equation*} 
\end{lemma}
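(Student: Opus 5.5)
The plan is to reduce the kernel increment to an integral of the weak derivative and then apply Minkowski's integral inequality. The tail bound in Assumption~\ref{ass:convker_holder_assu}\ref{item:diff_diff_holder_conv_ii} then produces the factor $\theta_T(\xi)\,g_T$, and integrating $g_T$ over an interval of length $t-s$ gives $(t-s)^{\beta_T}$.

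Fix $T$, $j\in\{1,2\}$, $\xi\in\Xi$ and $(t,s)\in\triangle[0,T]$, and set $h:=t-s$. The case $h=0$ or $s=0$ is trivial, so assume both are positive. Substituting $v=s-u$ gives
\begin{equation*}
\| \Delta^\xi_j(t-\,\cdot\,) - \Delta^\xi_j(s-\,\cdot\,) \|_{L^j(0,s)} = \| \Delta^\xi_j(\,\cdot\,+h) - \Delta^\xi_j \|_{L^j(0,s)} .
\end{equation*}
Since $\Delta^\xi_j = k_j-\widehat{k}^\xi_j$ is weakly differentiable on $(0,T)$ with $\Delta^{\xi\prime}_j\in L^1_{\mathrm{loc}}(0,T)$, it has a locally absolutely continuous representative on $(0,T)$. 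The tail bounds in Assumption~\ref{ass:convker_holder_assu}\ref{item:diff_diff_holder_conv_ii} make the derivative integrable on every $(\varepsilon,T)$, because $\| \cdot \|_{L^1(\varepsilon,T)}\le T^{1/2}\|\cdot\|_{L^2(\varepsilon,T)}$ when $j=2$. Hence, for almost every $v\in(0,s)$ (note that $v+h< t\le T$), we have
\begin{equation*}
\Delta^\xi_j(v+h)-\Delta^\xi_j(v) = \int_0^h \Delta^{\xi\prime}_j(v+r)\diff{r} .
\end{equation*}
This is the only step needing care: one must make sure the representative of $\Delta^\xi_j$ used in \eqref{eq:delta_definition_conv} coincides a.e.\ with the absolutely continuous one. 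Since the $L^j$-norm does not see null sets, this causes no problem.

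Next, apply Minkowski's integral inequality \cite[Proposition~1.2.22]{analysis_banach_I} in $L^j(0,s)$ and then shift the integration variable:
\begin{equation*}
\| \Delta^\xi_j(\,\cdot\,+h) - \Delta^\xi_j \|_{L^j(0,s)} \le \int_0^h \| \Delta^{\xi\prime}_j(\,\cdot\,+r) \|_{L^j(0,s)} \diff{r} = \int_0^h \| \Delta^{\xi\prime}_j \|_{L^j(r,r+s)} \diff{r} \le \int_0^h \| \Delta^{\xi\prime}_j \|_{L^j(r,T)} \diff{r} .
\end{equation*}
The last inequality uses $r+s\le t\le T$. By Assumption~\ref{ass:convker_holder_assu}\ref{item:diff_diff_holder_conv_ii}, the integrand is at most $\theta_T(\xi)\,g_T(r)$ for almost every $r$. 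Therefore the right-hand side is bounded by $\theta_T(\xi)\,\|g_T\|_{L^1(0,h)}\le \eta'_T\,\theta_T(\xi)\,h^{\beta_T}$, which is the claim.
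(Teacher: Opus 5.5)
Your proposal is correct and follows essentially the same route as the paper: write the increment as the integral of the weak derivative over an interval of length $t-s$, apply Minkowski's integral inequality, shift variables to bound the integrand by $\|(k_j-\widehat{k}^\xi_j)'\|_{L^j(u,T)}\le\theta_T(\xi)\,g_T(u)$, and integrate $g_T$ over $(0,t-s)$. Your remarks on the locally absolutely continuous representative and on the integrability of the derivative on $(\varepsilon,T)$ make explicit what the paper uses implicitly.
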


\begin{proof}
	Let $T\in(0,\infty)$, 
	$\xi \in \Xi$, $j \in \{1,2\}$, 
	$(t,s) \in \triangle (0,T]$. 
	Then, for all $v \in (0,s)$, 
	we have that 
	$(s-v, t-v) \subseteq (0,T)$, 
	and 
	using Assumption~{\upshape\ref{ass:convker_holder_assu}\ref{item:diff_diff_holder_conv_ii}}, 
	we find that 
	\begin{align*}
		&\| \Delta^\xi_j (t-\,\cdot\,) - \Delta^\xi_j(s-\,\cdot\,) \|_{L^j(0,s)} 
		= 
		\left\| \int_{s-\,\cdot\,}^{t-\,\cdot\,} (\Delta^\xi_j)'(u) \diff{u} \right\|_{L^j(0,s)}
		\\ 
		&\qquad\leq   
		\left\| \int_0^{t-s} | (\Delta^\xi_j)'(u - \,\cdot\, + s) | \diff{u} \right\|_{L^j(0,s)} 
		\leq  
		\int_0^{t-s} \| (\Delta^\xi_j)'(u-\,\cdot\,+s) \|_{L^j(0,s)} \diff{u}, 
	\end{align*}
	where the last step holds by Minkowski's integral inequality \cite[Proposition~1.2.22]{analysis_banach_I}.
	Again, by a change of variables and 
	Assumption~{\upshape\ref{ass:convker_holder_assu}\ref{item:diff_diff_holder_conv_ii}}, 
	we obtain that 
	\begin{align*}
		&\int_0^{t-s} \| (\Delta^\xi_j)'(u-\,\cdot\,+s) \|_{L^j(0,s)} \diff{u} 
		=  
		\int_0^{t-s} 
		\| (k_j - \widehat{k}^\xi_j)' \|_{L^j(u,u+s)} \diff{u} 
		\\ 
		&\quad 
		\leq 
		\int_0^{t-s} 
		\| (k_j - \widehat{k}^\xi_j)' \|_{L^j(u,T)} \diff{u} 
		\leq 
		\theta_T(\xi) \int_0^{t-s} | g_T(u) | \diff{u} 
		\leq 
		\eta_T' \theta_T(\xi) \, (t-s)^{\beta_T}.\;  \qedhere
	\end{align*}
\end{proof} 

As stipulated in 
Assumption~{\upshape\ref{ass:convker_holder_assu}\ref{item:diff_diff_holder_conv_i}}, 
the function $\theta_T \colon\Xi \to (0,\infty)$ 
measures the accuracy 
of the kernel approximations $\widehat{k}_1^\xi, \widehat{k}_2^\xi$  
in the norms of $L^1(0,T)$ and $L^2(0,T)$, respectively. 
Using this in  
the bound of Corollary~{\upshape\ref{cor:strong_error_Lp_conv}} 
we conclude that 
\begin{equation}\label{eq:C0Lp_theta_bd}
	\| X - \widehat{X}^\xi \|_{C^0([0,T];L^p(\Omega))}
	\leq 
	2 C T^\beta  \theta_T(\xi) . 
\end{equation}
Furthermore, 
Assumption~{\upshape\ref{ass:convker_holder_assu}\ref{item:diff_diff_holder_conv_i}} 
facilitates estimating the accuracy 
of the first four terms 
in the bound of 
Corollary~{\upshape\ref{cor:Lp_error_of_increment_conv}}. 
Finally, 
as seen in 
Lemma~{\upshape\ref{lma:kernel_increments_holder_and_rate}} above, 
under 
Assumption~{\upshape\ref{ass:convker_holder_assu}\ref{item:diff_diff_holder_conv_ii}} 
also the last two terms  
of Corollary~{\upshape\ref{cor:Lp_error_of_increment_conv}} 
can be bounded. 
In the next theorem, 
we summarize these observations, 
and obtain a result on the accuracy 
and convergence with respect to $\xi$ 
in (pathwise) H\"older norms.

\begin{theorem}\label{thm:SVE_holder_Lp_bound}
	Let $p \in [2,\infty)$, 
	and suppose that $k_1,k_2$ and 
	the family 
	$(\widehat{k}^\xi_1,\widehat{k}^\xi_2)_{\xi \in \Xi}$
	are functions fulfilling 
	Assumption~{\upshape\ref{ass:convker_holder_assu}}, 
	with constants
	$\eta_T,\eta_T' \in (0,\infty)$, 
	$\beta_T \in (0,1]$,
	${\alpha_T  \in (\nicefrac{1}{p},1]}$,
	and functions
	$\theta_T \colon \Xi \to (0,\infty)$,
	$g_T\in L^1(0,T)$,  
	for every $T\in(0,\infty)$. 
	Assume 
	that $X \colon [0,\infty) \times \Omega \to \bbR^m$   
	is a continuous stochastic Volterra process with
	initial value $X_0 \in L^p(\Omega,\cF_0,\bbP)$,
	coefficients $b, \sigma$ satisfying
	Assumption~{\upshape\ref{ass:lipschitz_coef}},  
	and convolution kernels $k_1, k_2$.  
	Let 
	$(\widehat{X}^\xi \colon [0,\infty) \times \Omega \to \bbR^m)_{\xi\in\Xi}$ 
	be a family 
	of continuous stochastic Volterra processes 
	with the same initial value and coefficients as~$X$, 
	and with convolution kernels $(\widehat{k}^\xi_1, \widehat{k}^\xi_2)_{\xi \in \Xi}$.  
	
	Then, for every $T\in(0,\infty)$, 
	and for $\alpha:=\alpha_T\in(\nicefrac{1}{p},1]$, 
	$\beta:=\beta_T\in(0,1]$, 
	there exists a constant $C \in (0,\infty)$ 
	such that, for all $\xi \in \Xi$,
	\begin{equation}\label{eq:thm:SVE_holder_Lp_bound}
		\| X - \widehat{X}^\xi \|_{C^{\alpha \wedge \beta}([0,T];L^p(\Omega))}
		\leq 
		C \, \theta_T(\xi) . 
	\end{equation}
	
	Moreover, if $\beta > \nicefrac{1}{p}$, 
	then, for every 
	$T\in(0,\infty)$ 
	and all 
	$\nu \in (0,(\alpha \wedge \beta) - \nicefrac{1}{p})$,
	there exists a constant $C_\nu\in(0,\infty)$ such that, 
	for all $\xi\in\Xi$, 
	\begin{equation}\label{eq:kolm-chents_difference}
		\| X - \widehat{X}^\xi \|_{L^p(\Omega;C^\nu([0,T]))}  
		\leq 
		C_\nu \, \theta_T(\xi) . 
	\end{equation}
\end{theorem}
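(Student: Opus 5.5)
The plan is to combine the $C^0([0,T];L^p(\Omega))$ estimate \eqref{eq:C0Lp_theta_bd} with an increment bound obtained from Corollary~\ref{cor:Lp_error_of_increment_conv} and Lemma~\ref{lma:kernel_increments_holder_and_rate}. For the pathwise statement we then apply the Kolmogorov--Chentsov theorem to the difference process.

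First I verify the hypotheses of the earlier results. Assumption~\ref{ass:convker_holder_assu} contains Assumptions~\ref{ass:assump_conv_kernels} and~\ref{ass:assu_co_unif_xi} with the same $\eta_T,\alpha_T$, and $\alpha_T>\nicefrac1p$. Hence Corollaries~\ref{cor:strong_error_Lp_conv} and~\ref{cor:Lp_error_of_increment_conv} apply, with constants independent of $\xi$. The sup part of the Hölder norm then follows from \eqref{eq:C0Lp_theta_bd}: it is bounded by $2CT^\beta\theta_T(\xi)$.

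Next, for the seminorm, fix $(t,s)\in\triangle[0,T]$ and estimate each term of Corollary~\ref{cor:Lp_error_of_increment_conv}.
\begin{itemize}
\item By Assumption~\ref{ass:convker_holder_assu}\ref{item:diff_diff_holder_conv_i}, the first group is at most $2\theta_T(\xi)T^\beta(t-s)^\alpha$.
\item Again by \ref{item:diff_diff_holder_conv_i}, applied at time $t-s$, we get $\|\Delta^\xi_1\|_{L^1(0,t-s)}+\|\Delta^\xi_2\|_{L^2(0,t-s)}\le 2\theta_T(\xi)(t-s)^\beta$.
\item By Lemma~\ref{lma:kernel_increments_holder_and_rate}, the last two terms are bounded by $2\eta'_T\theta_T(\xi)(t-s)^\beta$.
\end{itemize}
Since $t-s\le T$, we have $(t-s)^\alpha\le T^{\alpha-\alpha\wedge\beta}(t-s)^{\alpha\wedge\beta}$, and the same with $\beta$ in place of $\alpha$. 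Summing gives $\|(X-\widehat X^\xi)_t-(X-\widehat X^\xi)_s\|_{L^p(\Omega)}\le C\theta_T(\xi)(t-s)^{\alpha\wedge\beta}$ with $C$ independent of $\xi$. This proves \eqref{eq:thm:SVE_holder_Lp_bound}.

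For \eqref{eq:kolm-chents_difference}, assume $\beta>\nicefrac1p$, so that $\gamma:=\alpha\wedge\beta>\nicefrac1p$. The process $Z^\xi:=X-\widehat X^\xi$ is continuous and satisfies
\[
\|Z^\xi_t-Z^\xi_s\|_{L^p(\Omega)}\le C\theta_T(\xi)|t-s|^\gamma,\qquad \sup_t\|Z^\xi_t\|_{L^p(\Omega)}\le C\theta_T(\xi).
\]
I would invoke the quantitative Kolmogorov--Chentsov theorem \cite[Theorem~3.9]{coxCompleteness2024}, used as in Proposition~\ref{prop:X_ct_holder_Lp}. It bounds $\|Z\|_{L^p(\Omega;C^\nu([0,T]))}\le C_{\nu,\gamma,p,T}\|Z\|_{C^\gamma([0,T];L^p(\Omega))}$ for $\nu<\gamma-\nicefrac1p$, with a constant depending only on $\nu,\gamma,p,T$. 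Because $Z^\xi$ already has continuous paths, the modification produced by the theorem is indistinguishable from $Z^\xi$ itself. Combining this with the first part yields $C_\nu\theta_T(\xi)$.

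The main point needing care is this last step. We need the linear, $\xi$-independent form of the Kolmogorov--Chentsov estimate, not merely existence of a Hölder modification; if the cited version lacked it, I would reprove it via the Garsia--Rodemich--Rumsey inequality.
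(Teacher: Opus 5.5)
Your proposal is correct and follows the paper's proof essentially step by step. The supremum part comes from \eqref{eq:C0Lp_theta_bd}, the seminorm from Corollary~{\upshape\ref{cor:Lp_error_of_increment_conv}} combined with Assumption~{\upshape\ref{ass:convker_holder_assu}\ref{item:diff_diff_holder_conv_i}}, Lemma~{\upshape\ref{lma:kernel_increments_holder_and_rate}} and the reduction of $(t-s)^{\alpha}$ and $(t-s)^{\beta}$ to $(t-s)^{\alpha\wedge\beta}$, and the pathwise bound from the $\xi$-independent form of \cite[Theorem~3.9]{coxCompleteness2024} plus indistinguishability; that cited theorem already gives the linear bound you flagged as the delicate point, so the Garsia--Rodemich--Rumsey fallback is not needed.
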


\begin{proof}
	Let $T\in(0,\infty)$ and $\xi \in \Xi$ be arbitrary, 
	and set $\alpha:=\alpha_T$, 
	$\beta:=\beta_T$, 
	$\eta'\! := \eta'_T$.  
	By Assumption~{\upshape\ref{ass:convker_holder_assu}\ref{item:diff_diff_holder_conv_i}} 
	we obtain that, for all $(t,s)\in\triangle[0,T]$, 
	\begin{align*}
		\bigl( 
		\| \Delta^\xi_1 \|_{L^1(0,t)}
		+ \| \Delta^\xi_2 \|_{L^2(0,t)} \bigr) 
		&(t-s)^{\alpha}
		+ \| \Delta^\xi_1 \|_{L^1(0,t-s)}
		+ \| \Delta^\xi_2 \|_{L^2(0,t-s)}
		\\
		&\leq 
		2 \theta_T(\xi)  
		\bigl( t^{\beta}  (t-s)^\alpha + (t-s)^\beta \bigr) 
		\leq 
		C'  \theta_T(\xi) (t-s)^{\alpha \wedge \beta},
	\end{align*}
	where $C' := 2 T^{\beta - (\alpha\wedge\beta)} (T^\alpha + 1)$.
	Furthermore, 
	by Lemma~{\upshape\ref{lma:kernel_increments_holder_and_rate}} 
	we have that 
	\begin{align*} 
		\| \Delta^\xi_1(t-\,\cdot\,) - \Delta^\xi_1(s-\,\cdot\,) \|_{L^1(0,s)}
		&+  
		\| \Delta^\xi_2(t-\,\cdot\,) - \Delta^\xi_2(s-\,\cdot\,) \|_{L^2(0,s)}
		\\
		&\leq 
		2 \eta' \theta_T(\xi) (t-s)^{\beta} 
		\leq 
		C'' \theta_T(\xi) (t-s)^{\alpha\wedge\beta},
	\end{align*}
	where $C'' := 2 \eta' T^{\beta - (\alpha\wedge\beta)}$. 
	By combining these estimates 
	with Corollary~{\upshape\ref{cor:Lp_error_of_increment_conv}},   
	we conclude that 
	there exists a constant $\widehat{C}\in(0,\infty)$ 
	such that, for all $\xi\in\Xi$, 
	\begin{align*}
		| X-\widehat{X}^\xi |_{C^{\alpha\wedge\beta}([0,T];L^p(\Omega))}
		&= 
		\sup_{0 \leq s < t \leq T} 
		\frac{\| (X_t - \widehat{X}^\xi_t) - (X_s - \widehat{X}^\xi_s) \|_{L^p(\Omega)}}{ 
			(t-s)^{\alpha\wedge\beta} }
		\\
		&\leq 
		\widehat{C} \left( C' + C'' \right)  \theta_T(\xi) . 
	\end{align*}
	Furthermore, 
	by Corollary~{\upshape\ref{cor:strong_error_Lp_conv}} 
	(see also \eqref{eq:C0Lp_theta_bd}),
	there exists a constant $\widetilde{C} \in (0,\infty)$
	such that  
	$\| X-\widehat{X}^\xi \|_{C^0([0,T];L^p(\Omega))} 
	\leq 
	\widetilde{C} \, \theta_T(\xi)$ 
	holds for all $\xi\in\Xi$. 
	We thus have~established 
	\eqref{eq:thm:SVE_holder_Lp_bound} 
	for the constant $C:= \widetilde{C} + \widehat{C} \left( C' + C'' \right)$ 
	which is independent of~$\xi\in\Xi$. 
	 
	By the Kolmogorov--Chentsov continuity theorem 
	(see, e.g.,~\cite[Theorem~3.9]{coxCompleteness2024})  
	there exist 
	(i) a modification $Y^\xi$ of $X - \widehat{X}^\xi$ 
	which, for all $\nu \in (0, (\alpha\wedge\beta)- \nicefrac{1}{p} )$, 
	has $\nu$-H\"older continuous sample paths, 
	and 
	(ii) a constant $\widetilde{C}_{\alpha,\beta,\nu,p,T}\in(0,\infty)$, 
	which is  independent of $\xi\in\Xi$, 
	such that  
	\begin{equation*}
		\| Y^\xi \|_{L^p(\Omega;C^\nu([0,T]))} 
		\leq  
		\widetilde{C}_{\alpha,\beta,\nu,p,T}
		\| X - \widehat{X}^\xi \|_{C^{\alpha \wedge \beta}([0,T];L^p(\Omega))} 
		\leq 
		\widetilde{C}_{\alpha,\beta,\nu,p,T} \, 
		C \, \theta_T(\xi) . 
	\end{equation*} 
	Since $X - \widehat{X}^\xi$ has continuous sample paths,
	$Y^\xi$ is indistinguishable from $X - \widehat{X}^\xi$, 
	and their sample paths coincide $\bbP$-a.s. 
	Thus, also 
	their pathwise $C^\nu([0,T])$-norms are identical, $\bbP$-a.s., 
	and the claim \eqref{eq:kolm-chents_difference} follows.  
\end{proof}

  
\section{Sinc quadrature approximations for the fractional kernel}
\label{sec:sinc-fractional}

In this section, we introduce
and analyze sinc quadrature approximations 
of the fractional kernel  
$k_\gamma \colon (0,\infty) \to (0,\infty)$, 
given by
\begin{equation}\label{eq:fracker} 
	k_\gamma(t) :=  t^{-\gamma},
	\qquad
	\gamma\in(0,\infty).
\end{equation}
To this end,
we first represent~$k_\gamma$
using the gamma function, 
and perform a
change of variables
$x = \log \rho$
to obtain an integral over the whole real line,
\begin{equation}\label{eq:fracker_as_laplace}
	k_\gamma(t)
	=
	\frac{1}{\Gamma(\gamma)}
	\int_0^\infty e^{-\rho t} \rho^{\gamma - 1} \diff{\rho}
	=
	\frac{1}{\Gamma(\gamma)}
	\int_{-\infty}^\infty e^{-t e^x + \gamma x} \diff{x},
	\qquad
	t\in(0,\infty).
\end{equation}
For constructing 
a sequence of approximating kernels,
we apply a \emph{sinc quadrature} to 
the latter integral representation 
in \eqref{eq:fracker_as_laplace}.
This quadrature technique
corresponds to the trapezoidal rule 
and, by leveraging tools from complex analysis, 
we derive convergence rates 
that are exponential in the reciprocal 
of the quadrature step size.

First, in Subsection~{\upshape\ref{subsec:sinc_quad_pointw}}, 
we bound the pointwise error
when $t^{-\gamma}$ is approximated
using an infinite series
corresponding to a trapezoidal rule
for the integral representation  
\eqref{eq:fracker_as_laplace}
on the entire real line.
In Subsection~{\upshape\ref{subsec:sinc_quad_trunc}},
we then extend this analysis
to obtain error bounds
in $L^r(0,T)$ and in Sobolev
norms for the truncated series.  


\subsection{Pointwise error of the infinite sinc series}
\label{subsec:sinc_quad_pointw}

It is well known that,
in the case that the latter integrand
in \eqref{eq:fracker_as_laplace}
can be extended
from the real line to
some strip 
$S_d := \{z \in \bbC : | \Im(z) | < d\}$ 
in the complex plane 
of half-width  
$d\in(0,\infty)$ 
such that it exhibits certain regularity
and integrability properties on $S_d$,
then the sinc quadrature method 
converges exponentially in the reciprocal of 
the (uniform) quadrature step size~$h\in(0,\infty)$. 
 
More specifically,
if we define the mapping
\begin{equation}\label{eq:def_g_fracker}
	g_\gamma\colon(0,\infty)\times \bbC \to \bbC,
	\qquad
	g_\gamma(t,z)
	:=
	\frac{1}{\Gamma(\gamma)} \, e^{-t e^z + \gamma z}, 
\end{equation}
then pointwise convergence
of the sinc quadrature method  
to approximate $t^{-\gamma}$, 
for some $t\in(0,\infty)$, 
follows and is of the order 
$e^{-a_d/h}$, 
with $a_d\in(0,\infty)$ 
depending on $d$, 
provided that $g_\gamma(t,\,\cdot\,)$ is
an element of the Hardy space $H^1(S_d)$
(see \cite[Section~3.1]{stenger1993sinc}).
In the following proposition, 
we establish and use the fact that this 
indeed is the case for every  
$d \in \bigl( 0, \tfrac{\pi}{2} \bigr)$.

\begin{proposition}\label{prop:error_term_full_sum}
	Let $\gamma \in (0,\infty)$, let $h \in (0, \infty)$,
	and let $\delta \in \bigl(0, \pi^2\bigr)$.
	Suppose that $k_\gamma$ and
	$g_\gamma$ are defined as in
	\eqref{eq:fracker}
	and \eqref{eq:def_g_fracker}, respectively.
	Then, for all $t \in (0,\infty)$,
	\begin{equation}\label{eq:prop:error_term_full_sum}
		\biggl| k_\gamma(t) -h \sum_{\ell  \in \bbZ} g_\gamma(t,\ell h) \biggr|
		\leq
		2 \,
		\bigl[ \sin\bigl(\tfrac{\delta}{2\pi} \bigr) \bigr]^{-\gamma}
		\frac{ e^{-(\pi^2-\delta)/h} }{
			1-e^{-(\pi^2-\delta)/h} } \, t^{-\gamma }.
	\end{equation}
\end{proposition}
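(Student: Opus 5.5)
The plan is to apply the classical sinc/trapezoidal error estimate on a strip: if $f\in H^1(S_d)$, then
\[
\Bigl|\int_\bbR f(x)\diff{x} - h\sum_{\ell\in\bbZ} f(\ell h)\Bigr| \leq \frac{e^{-\pi d/h}}{2\sinh(\pi d/h)}\, N(f,S_d) = \frac{e^{-2\pi d/h}}{1-e^{-2\pi d/h}}\, N(f,S_d),
\]
where $N(f,S_d)=\int_\bbR (|f(x+id)|+|f(x-id)|)\diff{x}$ (limit as the line approaches the boundary) \cite{stenger1993sinc}. We apply this to $f=g_\gamma(t,\cdot\,)$ with the choice $d := \tfrac{\pi}{2}-\tfrac{\delta}{2\pi}\in(0,\tfrac{\pi}{2})$. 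Then $2\pi d = \pi^2-\delta$, which produces exactly the exponential factor in \eqref{eq:prop:error_term_full_sum}. By \eqref{eq:fracker_as_laplace}, the integral of $f$ equals $k_\gamma(t)$.

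The first step is to verify $g_\gamma(t,\cdot\,)\in H^1(S_d)$ for this $d$. Analyticity is clear, since the function is entire. For $z=x+iy$ with $|y|\le d<\pi/2$,
\[
|g_\gamma(t,z)| = \Gamma(\gamma)^{-1} e^{-t e^x\cos y+\gamma x} \le \Gamma(\gamma)^{-1} e^{-t e^x \cos d+\gamma x}.
\]
This bound is integrable in $x$, uniformly in $y$. It decays like $e^{\gamma x}$ as $x\to-\infty$ and doubly exponentially as $x\to+\infty$. Hence vertical segments contribute nothing as $|x|\to\infty$, and the horizontal line integrals are uniformly bounded, as required for membership in $H^1(S_d)$.

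The second step is to compute $N(f,S_d)$. With the substitution $\rho=e^x$,
\[
\int_\bbR e^{-te^x\cos d+\gamma x}\diff{x} = \Gamma(\gamma)(t\cos d)^{-\gamma}.
\]
Therefore $N(f,S_d) = 2(\cos d)^{-\gamma} t^{-\gamma}$. Since $\cos d=\cos(\tfrac{\pi}{2}-\tfrac{\delta}{2\pi})=\sin(\tfrac{\delta}{2\pi})$, this gives exactly the constant $2[\sin(\tfrac{\delta}{2\pi})]^{-\gamma}t^{-\gamma}$. Combining the two steps yields the claim.

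The main obstacle is stating the trapezoidal error theorem in precisely this form, with constant $\tfrac{1}{1-e^{-2\pi d/h}}$ rather than a cruder bound. If no citable version matches, I would reprove it directly. One route writes the error as the contour integral
\[
\frac{1}{2i}\int_{\partial S_d} \frac{f(z)\,\bigl(\cot(\pi z/h)\mp i\bigr)}{1}\diff{z}
\]
(equivalently via Poisson summation, the error is $\sum_{k\neq 0}\widehat f(2\pi k/h)$). One then shifts contours to $\Im z=\pm d$ and bounds $|\widehat f(\omega)|\le e^{-d|\omega|}\int_\bbR|f(x\mp id)|\diff{x}$. Summing the geometric series over $k\ge1$ gives $\tfrac{e^{-2\pi d/h}}{1-e^{-2\pi d/h}}$ times the two boundary integrals, which matches the claimed bound.
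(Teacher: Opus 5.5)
Your proposal is correct and matches the paper's proof: it uses the same half-width $d=\tfrac{\pi}{2}-\tfrac{\delta}{2\pi}$, the same check that $g_\gamma(t,\cdot\,)\in H^1(S_d)$, and the same evaluation $N_1=2[\sin(\tfrac{\delta}{2\pi})]^{-\gamma}t^{-\gamma}$ via $\rho=e^x$. It then applies Stenger's Theorem~3.2.1 together with $e^{-y}/(2\sinh y)=e^{-2y}/(1-e^{-2y})$, as the paper does. Your Poisson-summation fallback is sound but unnecessary, since the cited theorem already gives the bound $\frac{e^{-\pi d/h}}{2\sinh(\pi d/h)}\,N_1$ in exactly the form you need.
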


\begin{proof}
	Let $t \in (0,\infty)$ be arbitrary,
	and define the strip 
	half-width $d := \tfrac{\pi}{2} - \tfrac{\delta}{2\pi}$.
	Since $\delta \in \bigl(0,\pi^2 \bigr)$,
	we have that $d \in \bigl(0, \tfrac{\pi}{2}\bigr)$.
	To prove the claim, we resort to
	standard sinc quadrature error estimates 
	(see \cite[Theorem~2.20]{lund1992sinc} or \cite[Theorem 3.2.1]{stenger1993sinc}).
	To apply these results,
	we need to verify that
	$g_\gamma(t, \,\cdot\,) \in H^1(S_d)$.
	It is clear that $g_\gamma(t,\,\cdot\,)$
	is holomorphic everywhere 
	on $\bbC \supset S_d$.
	Therefore, it is an 
	element of $H^1(S_d)$ if 
	\begin{gather*}
		\lim_{x \to \pm \infty}
		\int_{-d}^d |g_\gamma(t,x+iy)| \diff{y} < \infty,
		\\ 
		N_1(g_\gamma(t,\,\cdot\,), S_d)  
		:= 
		\lim_{y \uparrow d}\int_{-\infty}^\infty  \left( | g_\gamma(t,x+iy) | +  | g_\gamma(t,x-iy) |\right)\diff{x} < \infty.
	\end{gather*}  
	
	Let $z \in \bbC$,
	and set $x := \Re z$
	and $y := \Im z$.
	Using Euler's formula, we find that
	\begin{equation*} 
		g_\gamma(t,z) 
		= 
		\frac{1}{\Gamma(\gamma)} {e^{-t e^z} e^{\gamma z}}
		= 
		\frac{1}{\Gamma(\gamma)} {e^{-t e^{x} \cos(y) + \gamma  x} e^{i \left[-t e^{x} \sin(y) + \gamma y\right]}}.
	\end{equation*}
	Consequently, $| g_\gamma(t,z) | 
	= 
	\frac{1}{\Gamma(\gamma)} 
	e^{-t e^{x} \cos( y) + \gamma  x}$
	and,
	for all $x \in \bbR$,
	we derive the bound
	\begin{equation*}
		\int_{-d}^d | g_\gamma(t,x+iy) |\diff{y}
		= \frac{1}{\Gamma(\gamma)} \int_{-d}^d e^{-t e^x \cos(y) + \gamma x} \diff{y} \le \frac{2d}{\Gamma(\gamma)}  \, e^{-t e^x \cos(d)}
		e^{\gamma x}.
	\end{equation*}
	Since $d \in \bigl(0,\tfrac{\pi}{2}\bigr)$
	and $t \in (0,\infty)$,
	we have that $t \cos(d) \in (0,\infty)$
	and,
	for $x \to +\infty$,
	the latter expression tends to~$0$ 
	because of
	the double exponential term,
	whereas the term~$e^{\gamma x}$  
	renders it converging to $0$
	for $x \to -\infty$.
	Moreover,
	since $\cos(y) = \cos(-y)$ holds
	for all $y \in [0,\infty)$,
	we find that
	\begin{equation*}
		N_1(g_\gamma(t,\,\cdot\,), S_d) 
		=
		2 \int_{-\infty}^\infty | g_\gamma(t,x+id) | \diff{x}
		=
		\frac{2}{\Gamma(\gamma)}
		\int_{-\infty}^\infty e^{-t e^x \cos(d)} e^{\gamma x} \diff{x},
	\end{equation*}
	and using
	the substitution $u = t \cos(d) \, e^{x\!}$, 
	the definition of the gamma function, 
	and that $\cos(d) = \sin\bigl( \frac{\pi}{2} - d \bigr)$, 
	we can explicitly evaluate $N_1(g_\gamma(t,\,\cdot\,), S_d)$ 
	as follows, 
	\begin{equation*}
		N_1(g_\gamma(t,\,\cdot\,), S_d) 
		=
		\frac{2}{\Gamma(\gamma)}
		[\cos(d)]^{-\gamma } \, t^{-\gamma }
		\int_0^\infty e^{-u}  \, u^{\gamma-1} \diff{u}
		=
		2 \, \bigl[ \sin\bigl (\tfrac{\pi}{2}-d\bigr)\bigr]^{-\gamma } \, t^{-\gamma }.
	\end{equation*}
	Since this value is finite for any fixed $t \in (0,\infty)$,
	it follows that $g_\gamma(t,\,\cdot\,) \in H^1(S_d)$.
	
	By \cite[Theorem~3.2.1]{stenger1993sinc}
	and by recalling that
	$d = \frac{1}{2} \bigl( \pi - \frac{\delta}{\pi}\bigr)$,
	we obtain that 
	\begin{equation*}
		\biggl|
		k_\gamma(t) -h \sum_{\ell  \in \bbZ} g_\gamma(t,\ell h) \biggr|
		\leq
		\frac{N_1(g_\gamma(t,\,\cdot\,),S_d)}{
			2 \sinh( \pi d / h ) }
		\, e^{- \pi d / h }
		=
		\bigl[ \sin\bigl( \tfrac{\delta}{2\pi} \bigr)\bigr]^{-\gamma}
		\,
		\frac{ e^{- \frac{ \pi^2 - \delta }{ 2h } } }{
			\sinh\bigl( \tfrac{\pi^2 - \delta }{2h}\bigr)}
		\,
		t^{-\gamma} .
	\end{equation*}
	Finally, the claim \eqref{eq:prop:error_term_full_sum}
	follows as the identity
	$\sinh(x) 
	=
	\tfrac{1}{2} e^x (1 - e^{-2x})$
	implies that
	\begin{equation*}
		\bigl[ \sinh \bigl( \tfrac{\pi^2 - \delta}{2h} \bigr) \bigr]^{-1}
		e^{- \frac{ \pi^2 - \delta }{ 2h } }
		=
		\frac{2 \, e^{-(\pi^2 - \delta)/h } }{
			1-e^{-(\pi^2 - \delta)/h}} .
		\qedhere
	\end{equation*}
\end{proof}


\subsection{Error of the truncated series in Lebesgue spaces}
\label{subsec:sinc_quad_trunc} 

To extend the pointwise estimate
of the sinc approximation in
\eqref{eq:prop:error_term_full_sum}
to an error bound
for a truncated series
in the $L^r$-norm,
we first derive
explicit bounds for the tails of the series. 
These  estimates additionally imply 
absolute convergence
of the infinite series in~$L^r(s,t)$.    

The following lemma distinguishes 
intervals bounded away from zero 
from those starting at $s=0$. 
In the latter case, the integrability condition 
$\gamma r < 1$ is required.

\begin{lemma}\label{lma:sinc_quad_series_conv+tail}
	Let $\gamma\in(0,\infty)$,
	$h \in (0,\infty)$, $t\in(0,\infty)$,
	and define $g_\gamma$ as in \eqref{eq:def_g_fracker}.
	
	Then, for every $r \in [1,\infty)$
	and all $s \in (0,t]$,
	the sequence
	$(g_\gamma(\,\cdot\,,\ell h))_{\ell \in \bbZ}$
	is absolutely summable in $L^r(s,t)$.
	In addition, for all $M^{-\!}, M^{+\!} \in \bbN$,
	every $\widetilde{\beta} \in \bigl[0,\tfrac{1}{r} \bigr]$,
	and all
	${\eps \in \bigl(
		0 \vee \bigl(\gamma+\widetilde{\beta}-\frac{1}{r} \bigr),
		\infty\bigr)}$,
	the following tail estimates hold, 
	\begin{align}\label{eq:sinc_tail_neg} 
		h \sum_{\ell  = -\infty}^{-(M^{-\!}+1)}
		\| g_\gamma(\,\cdot\,,\ell h) \|_{L^r(s,t)}
		&\leq
		\frac{1}{\Gamma(\gamma + 1)}
		\, e^{-\gamma  M^{-\!} h} \, (t-s)^{\nicefrac{1}{r}}, 
		\\ 
		h \sum_{\ell  = M^{+\!}+1}^\infty
		\| g_\gamma(\,\cdot\,,\ell h) \|_{L^r(s,t)}
		&\leq
		\frac{ \eps^\eps e^{-\eps} }{
			\bigl(\frac{1}{r} - \gamma - \widetilde{\beta} + \eps \bigr) \, \Gamma(\gamma)}
		\,
		e^{-\left( \frac{1}{r} - \gamma -\widetilde{\beta} + \eps \right) M^{+\!} h}
		\, s^{-\eps} (t-s)^{\widetilde{\beta}} .  
		\label{eq:sinc_tail_pos_s>0}
	\end{align} 
	
	Moreover, if $\gamma\in (0,1)$
	and $r\in\bigl[ 1, \gamma^{-1} \bigr)$,
	the sequence $(g_\gamma(\,\cdot\,,\ell h))_{\ell \in \bbZ}$
	is absolutely summable in $L^r(0,t)$,
	and
	the negative tail satisfies
	the bound \eqref{eq:sinc_tail_neg}
	for all $s \in [0,t]$.   
	For
	all $r\in\bigl[ 1, \gamma^{-1} \bigr)$,
	every $\beta \in \bigl[ 0, \frac{1}{r} - \gamma \bigr)$,
	and all $s \in [0,t]$,
	the positive tail satisfies 
	\begin{equation}\label{eq:sinc_tail_pos}
		h \sum_{\ell  = M^{+\!}+1}^\infty
		\| g_\gamma(\,\cdot\,,\ell h) \|_{L^r(s,t)}
		\leq  \frac{e^{-s \, e^{M^{+\!}h}} }{
			\bigl( \frac{1}{r}-\gamma-\beta \bigr) \, \Gamma(\gamma) }
		\, e^{-\left( \frac{1}{r}-\gamma-\beta \right)M^{+\!} h}
		\, (t-s)^{\beta}.
	\end{equation}
\end{lemma}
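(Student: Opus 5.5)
The plan is to work directly with the explicit form $g_\gamma(u,\ell h) = \Gamma(\gamma)^{-1} e^{\gamma \ell h} e^{-u e^{\ell h}}$. For each fixed $\ell$, this is a decreasing exponential in $u$, so its $L^r(s,t)$-norm can be estimated by elementary means. The tails are then bounded by comparing the sums with integrals. Absolute summability follows at once from finiteness of the tail bounds, since only finitely many terms remain. For $s>0$ we use \eqref{eq:sinc_tail_neg} and \eqref{eq:sinc_tail_pos_s>0}. For $s=0$, when $\gamma r<1$, we use \eqref{eq:sinc_tail_neg} together with \eqref{eq:sinc_tail_pos}.

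\textbf{Negative tail.} Since $e^{-ue^{\ell h}}\le 1$, we have $\|g_\gamma(\cdot,\ell h)\|_{L^r(s,t)}\le \Gamma(\gamma)^{-1}e^{\gamma\ell h}(t-s)^{1/r}$ for all $s\in[0,t]$. Summing over $\ell\le -(M^-+1)$ and using monotonicity of $x\mapsto e^{\gamma x}$ gives
\[
h\sum_{\ell\le -(M^-+1)} e^{\gamma \ell h}\le \int_{-\infty}^{-M^-h} e^{\gamma x}\diff{x}=\gamma^{-1}e^{-\gamma M^-h}.
\]
Together with $\gamma\Gamma(\gamma)=\Gamma(\gamma+1)$ this yields \eqref{eq:sinc_tail_neg}. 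The argument needs no restriction on $r$ or $s$.

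\textbf{Positive tail, $s>0$.} Write $\lambda=e^{\ell h}$. We aim to split off the factors $(t-s)^{\widetilde\beta}$ and $s^{-\eps}$. By H\"older's inequality, with $q$ chosen so that $\frac1r=\widetilde\beta+\frac1q$ (admissible since $\widetilde\beta\le 1/r$),
\[
\|e^{-\lambda\cdot}\|_{L^r(s,t)}\le (t-s)^{\widetilde\beta}\,\|e^{-\lambda\cdot}\|_{L^q(s,\infty)}\le (t-s)^{\widetilde\beta}\,(q\lambda)^{-1/q}e^{-\lambda s}.
\]
For the remaining exponential we use $e^{-\lambda s}=(\lambda s)^{-\eps}(\lambda s)^\eps e^{-\lambda s}\le \eps^\eps e^{-\eps}(\lambda s)^{-\eps}$, since $x^\eps e^{-x}$ is maximal at $x=\eps$. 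Dropping $q^{-1/q}\le1$ leaves $\lambda^{\gamma-1/r+\widetilde\beta-\eps}$, whose exponent is negative by the choice of $\eps$. The sum over $\ell\ge M^++1$ is then bounded by $\int_{M^+h}^\infty e^{-(1/r-\gamma-\widetilde\beta+\eps)x}\diff{x}$, which produces \eqref{eq:sinc_tail_pos_s>0}. A small point is the degenerate case $\widetilde\beta=1/r$, i.e.\ $q=\infty$; there the $L^\infty$ bound $e^{-\lambda s}$ works the same way.

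\textbf{Positive tail, $s\ge0$, $\gamma r<1$.} This is the case I expect to be the main obstacle, because the factor $e^{-se^{M^+h}}$ has to be kept while the bound must stay uniform as $s\to0$. We apply the same H\"older step with $\widetilde\beta=\beta<1/r-\gamma$. On $[s,\infty)$ we use $e^{-\lambda u}=e^{-\lambda s}e^{-\lambda(u-s)}$, so that $\|e^{-\lambda\cdot}\|_{L^q(s,\infty)}\le e^{-\lambda s}(q\lambda)^{-1/q}$. For $\ell\ge M^++1$ we have $e^{-\lambda s}\le e^{-se^{M^+h}}$, and this factor pulls out of the sum. What remains is $\sum_\ell h\,e^{-(1/r-\gamma-\beta)\ell h}$ with a positive exponent, bounded by $(1/r-\gamma-\beta)^{-1}e^{-(1/r-\gamma-\beta)M^+h}$ through the same integral comparison. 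Combining this with $(t-s)^\beta$ gives \eqref{eq:sinc_tail_pos}. Summability in $L^r(0,t)$ then follows from these tail bounds, since each individual term lies in $L^r(0,t)$.
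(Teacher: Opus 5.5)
Your proof is correct and follows the paper's argument. Both derive the per-term bound $\|g_\gamma(\cdot,\ell h)\|_{L^r(s,t)}\le \Gamma(\gamma)^{-1}e^{-s e^{\ell h}}e^{(\gamma+\widetilde{\beta}-1/r)\ell h}(t-s)^{\widetilde{\beta}}$ and then use the maximum of $x^{\eps}e^{-x}$ together with a Riemann-sum comparison for the tails; the only cosmetic difference is that you get the per-term bound from H\"older's inequality with $\frac{1}{r}=\widetilde{\beta}+\frac{1}{q}$, whereas the paper computes the norm exactly and interpolates with $1-e^{-x}\le x^{\theta}$, $\theta=r\widetilde{\beta}$.
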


\begin{proof}
	Let
	$\gamma,h,t\in(0,\infty)$,
	$r\in[1, \infty)$ and $s\in[0,t]$
	be arbitrary.
	Then, for all~$\ell \in \bbZ$,
	\begin{align}
		\| g_\gamma(\,\cdot\,,\ell h) \|_{L^r(s,t)}
		& =
		\frac{1}{\Gamma(\gamma)}
		\left( \int_s^t e^{-ru \, e^{\ell h} + r\gamma \ell h} \diff{u}\right)^{\nicefrac{1}{r}}
		\notag
		\\
		& =
		\frac{ e^{\left(\gamma-\frac{1}{r} \right)\ell h} }{
			r^{\nicefrac{1}{r}} \, \Gamma(\gamma)}
		\, e^{-s \, e^{\ell h}}
		\Bigl( 1 - e^{-r(t-s) e^{\ell h}} \Bigr)^{\nicefrac{1}{r}} \!.
		\label{eq:g_gamma_Lr_general}
	\end{align}
	Note that the term $1-e^{-x}$ (with $x \geq 0$)
	can be bounded by both $x$ and $1$.
	Thus, for every $\theta\in[0,1]$
	and all $x\geq 0$, we have that
	$1-e^{-x}
	= (1-e^{-x} )^\theta ( 1-e^{-x} )^{1-\theta}
	\leq 
	x^\theta\!$.
	Using this estimate
	with $\theta := r \widetilde{\beta}$,
	for $\widetilde{\beta} \in \bigl[0,\tfrac{1}{r} \bigr]$,
	we can
	bound \eqref{eq:g_gamma_Lr_general} by
	\begin{equation}\label{eq:g_gamma_Lr_beta} 
		\| g_\gamma(\,\cdot\,,\ell h) \|_{L^r(s,t)}
		\leq
		\frac{1}{\Gamma(\gamma)}
		\, e^{-s \, e^{\ell h}}
		\, e^{\left(\gamma + \widetilde{\beta} - \frac{1}{r} \right) \ell h}
		(t-s)^{\widetilde{\beta}},
	\end{equation}
	where we also used that
	$r^{\widetilde{\beta} - \frac{1}{r}} \leq 1$.
	Choosing $\widetilde{\beta} = \frac{1}{r}$
	in this upper bound, it follows
	that $(g_\gamma(\,\cdot\,,\ell h))_{\ell \in \bbZ}$
	is absolutely summable in $L^r(s,t)$ for $s > 0$.
	Deriving the case $s=0$ and 
	the tail estimates  
	\eqref{eq:sinc_tail_neg}--\eqref{eq:sinc_tail_pos} 
	will constitute 
	the remainder of this proof.
	
	\textbf{Negative tail, $\boldsymbol{s \in [0,t]}$:}
	As $\ell \to -\infty$, the double exponential
	in \eqref{eq:g_gamma_Lr_beta}
	can be bounded from above by $1$,
	for all $s\in[0,t]$,
	and the term
	$e^{\left(\gamma + \widetilde{\beta} - \frac{1}{r} \right)\ell h}$
	is exponentially decaying
	if we choose
	$\widetilde{\beta}$ as large as possible, i.e.,
	$\widetilde{\beta} = \frac{1}{r}$.
	It follows that,
	for all~$n\in\bbN$,
	\begin{equation*}
		\| g_\gamma(\,\cdot\,,-n h) \|_{L^r(s,t)}
		\leq
		\tfrac{1}{\Gamma(\gamma)} \,
		e^{-\gamma n h} \, (t-s)^{\nicefrac{1}{r}},
		\quad
		t\in(0,\infty),
		\quad
		s\in[0,t].
	\end{equation*}
	As this bound is monotonically decreasing in $n$,
	we can use a Riemann sum estimate
	which shows that,
	for every $t\in(0,\infty)$ and all $s\in[0,t]$,
	\begin{align*}
		h \sum_{\ell =-\infty}^{-(M^{-\!}+1)}
		\| g_\gamma(\,\cdot\,,\ell h) 
		& \|_{L^r(s,t)}
		\leq
		\frac{ (t-s)^{\nicefrac{1}{r}} }{\Gamma(\gamma)} \, h
		\sum_{n =M^{-\!}+1}^{\infty}
		e^{-\gamma n h}
		\\
		& \leq
		\frac{(t-s)^{\nicefrac{1}{r}}}{\Gamma(\gamma)}
		\int_{ M^{-\!} h}^\infty e^{-\gamma x} \diff{x}
		=
		\frac{1}{\Gamma(\gamma+1)} \, e^{-\gamma  M^{-\!} h} \, (t-s)^{\nicefrac{1}{r}},
	\end{align*}
	where we used that $\gamma\Gamma(\gamma)=\Gamma(\gamma+1)$,
	completing the proof of \eqref{eq:sinc_tail_neg}.
	
	\textbf{Positive tail, $\boldsymbol{s \in(0,t]}$:}
	Let $\widetilde{\beta} \in \bigl[0,\frac{1}{r} \bigr]$
	and $\eps \in \bigl( 0 \vee \bigl(\gamma+\widetilde{\beta}-\frac{1}{r}\bigr), \infty \bigr)$.
	Using the identity
	$e^{\left(\gamma + \widetilde{\beta} - \frac{1}{r} \right)\ell h}
	= e^{\left(\gamma + \widetilde{\beta} - \frac{1}{r} - \eps \right) \ell h} e^{\eps \ell h}$
	and Lemma~{\upshape\ref{lma:exponential_maximum_bound}} 
	in \eqref{eq:g_gamma_Lr_beta}, 
	we find that 
	\begin{equation*}
		\| g_\gamma(\,\cdot\,,\ell h) \|_{L^r(s,t)}
		\leq
		\frac{ \eps^\eps e^{-\eps} }{ \Gamma(\gamma) }
		\,
		e^{-\left(\frac{1}{r} - \gamma - \widetilde{\beta} +\eps \right) \ell h}
		\, s^{-\eps} (t-s)^{\widetilde{\beta}} , 
		\qquad 
		\ell\in\bbZ. 
	\end{equation*}
	The claim \eqref{eq:sinc_tail_pos_s>0}
	then follows again from
	a Riemann sum 
	estimate,
	\begin{align*}
		h \sum_{\ell = M^{+\!} + 1}^{\infty} \| g_\gamma(\,\cdot\,,\ell h) \|_{L^r(s,t)}
		&\leq
		\frac{ \eps^\eps e^{-\eps} }{ \Gamma(\gamma) }
		\, s^{-\eps}(t-s)^{\widetilde{\beta}}
		\int_{M^{+\!} h }^\infty
		e^{-\left( \frac{1}{r} - \gamma - \widetilde{\beta} + \eps \right)x } \diff{x} 
		\\  
		&\leq 
		\frac{ \eps^\eps e^{-\eps} }{
			\bigl( \frac{1}{r} - \gamma-\widetilde{\beta} + \eps\bigr) \, \Gamma(\gamma)}
		\, e^{-\left( \frac{1}{r} - \gamma-\widetilde{\beta}+\eps\right) M^{+\!} h}
		\, s^{-\eps} (t-s)^{\widetilde{\beta}}.
	\end{align*} 
	
	\textbf{Positive tail, $\boldsymbol{s \in [0,t]}$:}
	Suppose now that
	$r\in\bigl[ 1, \gamma^{-1} \bigr)$, i.e.,
	$\gamma \in \bigl( 0,\frac{1}{r} \bigr)$,
	let $\beta \in \bigl[ 0,\frac{1}{r}-\gamma\bigr)$
	and choose $\widetilde{\beta} = \beta$
	in \eqref{eq:g_gamma_Lr_beta}.
	By the same Riemann sum argument, 
	also the tail 
	estimate \eqref{eq:sinc_tail_pos} follows, 
	which completes the proof: 
	\begin{align*}
		h \sum_{\ell = M^{+\!} + 1}^{\infty} \| g_\gamma(\,\cdot\,,\ell h) \|_{L^r(s,t)}
		& \leq
		\frac{ e^{-s \, e^{M^{+\!}h}} }{ \Gamma(\gamma) }
		\, (t-s)^{\beta}
		\int_{M^{+\!} h}^{\infty}
		e^{-\left( \frac{1}{r} - \gamma - \beta \right) x} \diff{x}
		\\
		& = \frac{ e^{-s\, e^{M^{+\!}h}}
		}{ \bigl( \frac{1}{r}-\gamma-\beta\bigr) \, \Gamma(\gamma)}
		\, e^{-\left(\frac{1}{r}-\gamma-\beta \right)M^{+\!} h}
		\, (t-s)^{\beta}. \qedhere
	\end{align*}
\end{proof} 

Let $\gamma\in(0,\infty)$ and
the function~$g_\gamma$ be given
by \eqref{eq:def_g_fracker}.
For a step size ${h \in (0,\infty)}$
and truncation parameters $M^{-\!}, M^{+\!} \in \bbN$,
we define 
the (\emph{truncated}) 
\emph{sinc quadrature approximation} 
$\widehat{k}_\gamma^{(h,M^{-\!},M^{+\!})}
\colon (0,\infty) \to (0,\infty)$ 
of the fractional kernel 
$k_\gamma$ in \eqref{eq:fracker} 
by
\begin{equation}\label{eq:sinc_quad_kernel}
	\widehat{k}_\gamma^{(h,M^{-\!},M^{+\!})}(t)
	:=
	h
	\sum_{\ell =-M^{-\!}}^{M^{+\!}} g_\gamma(t,\ell h)
	=
	\frac{h}{\Gamma(\gamma)}
	\sum_{\ell=-M^{-\!}}^{M^{+\!}} e^{-t \, e^{\ell h}} e^{\gamma \ell h}.
\end{equation}

In the next theorem, 
we combine the estimate \eqref{eq:prop:error_term_full_sum} 
of Proposition~{\upshape\ref{prop:error_term_full_sum}} 
for the pointwise error of the 
infinite sinc quadrature series with 
the tail estimates 
\eqref{eq:sinc_tail_neg}--\eqref{eq:sinc_tail_pos}
of Lemma~{\upshape\ref{lma:sinc_quad_series_conv+tail}} 
to estimate the error of 
approximating $k_\gamma$ by 
$\widehat{k}_\gamma^{(h,M^{-\!},M^{+\!})}$ 
in the norm of $L^r(s,t)$ for $t\in(0,T]$ and, 
depending on $\gamma$, 
for $s\in(0,t]$ or $s\in[0,t]$.   
In addition, we 
bound the $L^r(s,t)$-difference  
of the derivatives 
for $s\in(0,t]$.  

\begin{theorem}
	\label{thm:error_bound_Lr_general_gamma}
	Let $T\in(0,\infty)$, 
	let $\gamma \in (0,\infty)$,
	and let $k_\gamma$ be given by \eqref{eq:fracker}. 
	Let $h_0 \in (0,\infty)$
	and, for $h \in (0,h_0]$
	and $M^{-\!},M^{+\!} \in \bbN$,
	define $\widehat{k}_\gamma^{(h,M^{-\!},M^{+\!})}$
	as in \eqref{eq:sinc_quad_kernel}.
	\begin{enumerate}[leftmargin=8mm, label={\upshape(\roman*)}] 
		\item\label{item:thm:error_bound_Lr_general_gamma-i} 
		For all $r \in [1,\infty)$,
		$\widetilde{\beta} \in \bigl[ 0, \tfrac{1}{r} \bigr)$,
		$\eps \in \bigl( 0 \vee \bigl( \gamma + 
		\widetilde{\beta} - \frac{1}{r} \bigr), \gamma\bigr]$, 
		and 
		$\delta \in \bigl( 0,\pi^2 \bigr)$, 
		there exists a constant $c'\!\in(0,\infty)$ 
		such that, 
		for all $t\in(0,T]$ and $s\in(0,t]$,
		\begin{align*} 
			&\bigl\|
			k_\gamma - \widehat{k}_\gamma^{(h,M^{-\!},M^{+\!})} 
			\bigr\|_{L^r(s,t)}
			\\
			&\qquad\quad 
			\leq 
			c' 
			\Bigl(e^{-(\pi^2 - \delta)/h} 
			+ e^{-\gamma M^{-\!} h} 
			+  e^{-\left(\frac{1}{r} - \gamma - \widetilde{\beta} + \eps\right) M^{+\!} h} \Bigr)
			s^{-\eps} (t-s)^{\widetilde{\beta}}. 
		\end{align*}
		The constant $c'\in (0,\infty)$
		is independent of $(h,M^{-\!},M^{+\!})$
		and satisfies
		\begin{equation*}
			c' =
			\cO\Bigl(
			(1-r\gamma+r\eps)^{-\nicefrac{1}{r}}
			\cdot \bigl[ \sin\bigl(\tfrac{\delta}{2\pi} \bigr)
			\bigr]^{-\gamma}
			+ 
			\bigl( \tfrac{1}{r} - \gamma - \widetilde{\beta} + \eps 
			\bigr)^{-1}
			\Bigr).
		\end{equation*}
		
		\item\label{item:thm:error_bound_Lr_general_gamma-ii}  
		If $\gamma  \in (0,1)$,
		then for all ${r\in\bigl[ 1, \gamma^{-1} \bigr)}$,
		$\beta \in \bigl[0, \frac{1}{r} - \gamma \bigr)$,
		and 
		$\delta \in \bigl(0,\pi^2 \bigr)$, 
		there exists a constant $c''\!\in(0,\infty)$ 
		such that, for all $t\in(0,T]$ and $s \in [0,t]$,
		\begin{equation*}  
			\bigl\|
			k_\gamma -  \widehat{k}_\gamma^{(h,M^{-\!},M^{+\!})}
			\bigr\|_{L^r(s,t)}
			\leq
			c'' 
			\Bigl(
			e^{-(\pi^2 - \delta)/h}
			+
			e^{-\gamma M^{-\!} h}
			+
			e^{-\left(\frac{1}{r} - \gamma - \beta\right) M^{+\!} h}  \Bigr) (t-s)^{\beta}. 
		\end{equation*}
		The constant $c''\in(0,\infty)$ is 
		independent of $(h,M^{-\!},M^{+\!})$
		and satisfies 
		\begin{equation*} 
			c''  
			=
			\cO\Bigl(
			( 1-r\gamma )^{-\nicefrac{1}{r}}
			\cdot
			\bigl[ \sin\bigl(\tfrac{\delta}{2\pi} \bigr)\bigr]^{-\gamma}
			+
			\bigl(\tfrac{1}{r} - \gamma - \beta \bigr)^{-1} \Bigr). 
		\end{equation*}
		
		\item\label{item:thm:error_bound_Lr_general_gamma-iii}  
		If $\gamma  \in (0,1)$,
		then for all ${r\in\bigl[ 1, \gamma^{-1} \bigr)}$,
		$\beta \in \bigl[0, \frac{1}{r} - \gamma \bigr)$,
		and 
		$\delta \in \bigl( 0,\pi^2 \bigr)$, 
		there exists a constant $c'''\!\in(0,\infty)$ 
		such that, for all $t\in(0,T]$ and $s \in (0,t]$, 
		\begin{equation*}
			\bigl\| 
			\bigl( k_\gamma -  \widehat{k}_\gamma^{(h,M^{-\!},M^{+\!})} \bigr)' \bigr\|_{L^r(s,t)}
			\leq 
			c''' 
			\Bigl( e^{-(\pi^2 - \delta)/h} + e^{-(\gamma+1) M^{-\!} h} + e^{-\left(\frac{1}{r} - \gamma - \beta\right) M^{+\!} h} \Bigr) 
			s^{\beta - 1}. 
		\end{equation*}
		The constant $c'''\in(0,\infty)$ is 
		independent of $(h,M^{-\!},M^{+\!})$
		and satisfies  
		\begin{equation*}
			c'''  
			=
			\cO\Bigl(
			(1 - r\gamma - r\beta)^{-\nicefrac{1}{r}}
			\cdot 
			\bigl[\sin\bigl(\tfrac{\delta}{2\pi}\bigr)\bigr]^{-(\gamma + 1)}
			+ 
			\bigl(\tfrac{1}{r} - \gamma - \beta\bigr)^{-1} 
			\Bigr).
		\end{equation*} 
	\end{enumerate} 
\end{theorem}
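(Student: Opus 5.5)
The plan is to split the error into the error of the infinite sinc series and the two truncated tails. Writing
\begin{equation*}
k_\gamma - \widehat{k}_\gamma^{(h,M^{-\!},M^{+\!})}
= \Bigl( k_\gamma - h\sum_{\ell\in\bbZ} g_\gamma(\,\cdot\,,\ell h)\Bigr)
+ h\sum_{\ell=-\infty}^{-(M^{-\!}+1)} g_\gamma(\,\cdot\,,\ell h)
+ h\sum_{\ell=M^{+\!}+1}^{\infty} g_\gamma(\,\cdot\,,\ell h),
\end{equation*}
the series converge absolutely in $L^r(s,t)$ by Lemma~{\upshape\ref{lma:sinc_quad_series_conv+tail}}. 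So the triangle inequality reduces every part of the theorem to three separate estimates.

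\emph{Part (i).} For the infinite-series term I take the $L^r(s,t)$-norm of the pointwise bound \eqref{eq:prop:error_term_full_sum}. This gives a constant times $e^{-(\pi^2-\delta)/h}\,\|u\mapsto u^{-\gamma}\|_{L^r(s,t)}$; the factor $(1-e^{-(\pi^2-\delta)/h})^{-1}$ is bounded by $(1-e^{-(\pi^2-\delta)/h_0})^{-1}$ because $h\le h_0$. It remains to show
\begin{equation*}
\|u^{-\gamma}\|_{L^r(s,t)} \lesssim (1-r\gamma+r\eps)^{-1/r} s^{-\eps}(t-s)^{\widetilde\beta}.
\end{equation*}
I would write $u^{-\gamma}\le s^{-\eps}u^{\eps-\gamma}$ and compute
\begin{equation*}
\int_s^t u^{r(\eps-\gamma)}\diff{u} \le \frac{t^{1-r\gamma+r\eps}-s^{1-r\gamma+r\eps}}{1-r\gamma+r\eps}.
\end{equation*}
Since $\eps\le\gamma$, the exponent $a:=1-r\gamma+r\eps$ lies in $(0,1]$, so $x\mapsto x^a$ is concave and $t^a-s^a\le (t-s)^a$. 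Then $(t-s)^{a/r}\le T^{a/r-\widetilde\beta}(t-s)^{\widetilde\beta}$, which uses $\widetilde\beta< \tfrac1r-\gamma+\eps=a/r$; this is exactly the lower constraint on $\eps$. The negative tail follows from \eqref{eq:sinc_tail_neg}, since $(t-s)^{1/r}\le T^{1/r-\widetilde\beta}(t-s)^{\widetilde\beta}$ and $s^{-\eps}\ge T^{-\eps}$. The positive tail is directly \eqref{eq:sinc_tail_pos_s>0}.

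\emph{Part (ii).} The argument is the same with $s\in[0,t]$ and $\eps=0$. Now $\|u^{-\gamma}\|_{L^r(s,t)}\le (1-r\gamma)^{-1/r}(t-s)^{1/r-\gamma}$, again by concavity, and since $\beta<\tfrac1r-\gamma$ this is at most a $T$-dependent constant times $(t-s)^{\beta}$. The tails come from \eqref{eq:sinc_tail_neg} and \eqref{eq:sinc_tail_pos}, bounding $e^{-s e^{M^+h}}\le1$.

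\emph{Part (iii).} I would differentiate termwise. Since $\partial_t g_\gamma(t,z)=-\tfrac{\Gamma(\gamma+1)}{\Gamma(\gamma)}\, g_{\gamma+1}(t,z)$ and $k_\gamma'=-\gamma k_{\gamma+1}$, the derivative error equals $\gamma$ times the sinc error for $k_{\gamma+1}$ with the same $(h,M^{-\!},M^{+\!})$. Termwise differentiation on $[s,t]$, $s>0$, is justified by uniform convergence of the differentiated series there.

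For the infinite-sum part, I apply Proposition~{\upshape\ref{prop:error_term_full_sum}} with $\gamma+1$. This produces the factor $[\sin(\tfrac{\delta}{2\pi})]^{-(\gamma+1)}$ and the norm $\|u^{-\gamma-1}\|_{L^r(s,t)}$. Writing $u^{-\gamma-1}\le s^{\beta-1}u^{-\gamma-\beta}$ with $r(\gamma+\beta)<1$ gives the factor $(1-r\gamma-r\beta)^{-1/r}T^{1/r-\gamma-\beta}$. The negative tail is \eqref{eq:sinc_tail_neg} for $\gamma+1$, giving $e^{-(\gamma+1)M^-h}$, with $(t-s)^{1/r}\le T^{1/r}\le T^{1/r+1-\beta}s^{\beta-1}$.

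The positive tail is the main obstacle. Using \eqref{eq:sinc_tail_pos_s>0} with $\gamma+1$, $\widetilde\beta=0$ and $\eps=1-\beta$ gives the decay rate
\begin{equation*}
\tfrac1r-(\gamma+1)+(1-\beta)=\tfrac1r-\gamma-\beta>0,
\end{equation*}
which is exactly the claimed rate, with prefactor $s^{\beta-1}$. The condition $\eps>\gamma+1-\tfrac1r$ holds because $\beta<\tfrac1r-\gamma$. The constant $\eps^\eps e^{-\eps}/(\tfrac1r-\gamma-\beta)$ is bounded by $(\tfrac1r-\gamma-\beta)^{-1}$. Finally I collect all constants, keeping track of their dependence to match the stated $\cO$-forms.
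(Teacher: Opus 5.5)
Your proposal is correct and follows essentially the same route as the paper's proof. Both use the same three-term split with absolute convergence from Lemma~\ref{lma:sinc_quad_series_conv+tail}, the same subadditivity bound for $\|u^{-\gamma}\|_{L^r(s,t)}$, and, for part~(iii), the identities $k_\gamma' = -\gamma k_{\gamma+1}$ and $\partial_t g_\gamma = -\gamma g_{\gamma+1}$ followed by the part-(i) argument with $\gamma+1$, $\widetilde\beta = 0$, $\eps = 1-\beta$; the paper cites part~(i) where you redo it inline, and no termwise differentiation of an infinite series is really needed since $\widehat{k}_\gamma^{(h,M^{-\!},M^{+\!})}$ is a finite sum.
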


\begin{proof} 
	Recall that
	$\widehat{k}_\gamma^{(h,M^{-\!},M^{+\!})}
	=
	h \sum_{\ell =-M^{-\!}}^{M^{+\!}} g_\gamma(\,\cdot\,,\ell h)$,
	where $g_\gamma$ is defined
	in \eqref{eq:def_g_fracker}.
	By Lemma~{\upshape\ref{lma:sinc_quad_series_conv+tail}},
	under the conditions of this theorem,
	the infinite series
	converges absolutely in $L^r(s,t)$.
	Hence, by the triangle inequality,
	\begin{equation}
		\label{eq:Lp_norm_three_terms_general}
		\begin{split}
			&\bigl\|
			k_\gamma -  \widehat{k}_\gamma^{(h,M^{-\!},M^{+\!})}
			\bigr\|_{L^r(s,t)}
			\leq
			\biggl\| k_\gamma -  h \sum_{\ell  \in \bbZ} g_\gamma(\,\cdot\,,\ell h) \biggr\|_{L^r(s,t)}
			\\
			&\hspace*{20mm}
			+\biggl\| h \sum_{\ell =-\infty}^{-(M^{-\!}+1)} g_\gamma(\,\cdot\,, \ell h) \biggr\|_{L^r(s,t)}
			+\biggl\|  h \sum_{\ell =M^{+\!}+1}^\infty g_\gamma(\,\cdot\,, \ell h) \biggr\|_{L^r(s,t)}
			.
		\end{split}
	\end{equation}
	By Proposition~{\upshape\ref{prop:error_term_full_sum}},
	we can estimate the first term as follows,
	\begin{align}
		\biggl\| k_\gamma -  h \sum_{\ell  \in \bbZ} g_\gamma(\,\cdot\,,\ell h) \biggr\|_{L^r(s,t)}
		& \leq
		\Biggl( \int_s^t
		\biggl| \, 2 \,
		\bigl[ \sin\bigl(\tfrac{\delta}{2\pi}\bigr)\bigr]^{-\gamma }
		\, \frac{ e^{-(\pi^2 - \delta)/h} }{ 1-e^{-(\pi^2 - \delta)/h} }
		\, u^{-\gamma } \biggr|^r \diff{u}
		\Biggr)^{\nicefrac{1}{r}} \quad
		\notag
		\\
		& \leq
		\frac{ 2 \, \bigl[ \sin\bigl( \tfrac{\delta}{2\pi} \bigr)\bigr]^{-\gamma } }{
			1-e^{-(\pi^2 - \delta)/h_0} }
		\, e^{-(\pi^2 - \delta)/h}
		\biggl( \int_s^t u^{-r\gamma } \diff{u} \biggr)^{\nicefrac{1}{r}}\!.\hspace{-50mm}
		\label{eq:Lp_norm_series_term_general}
	\end{align}
	
	\textbf{Proof of~{\upshape\ref{item:thm:error_bound_Lr_general_gamma-i}}:}
	Let
	$r \in [1,\infty)$,
	$\widetilde{\beta} \in \bigl[ 0, \tfrac{1}{r} \bigr)$,
	$\eps \in \bigl( 0 \vee \bigl(
	\gamma+ \widetilde{\beta} - \frac{1}{r} \bigr), \gamma\bigr]$, 
	$\delta \in \bigl( 0,\pi^2 \bigr)$. 
	Then, we obtain for the
	integral term in \eqref{eq:Lp_norm_series_term_general},
	for all $t\in(0,T]$ and $s\in(0,t]$,
	\begin{equation*}
		\biggl(
		\int_s^t u^{-r\gamma } \diff{u}
		\biggr)^{\nicefrac{1}{r}}
		\leq
		s^{-\eps}
		\biggl( \int_s^t u^{-r(\gamma-\eps) } \diff{u} \biggr)^{\nicefrac{1}{r}} 
		\leq
		\frac{s^{-\eps} \, T^{\frac{1}{r} - \gamma + \eps - \widetilde{\beta}}}{(1-r(\gamma-\eps))^{\nicefrac{1}{r}}} 
		\, (t-s)^{\widetilde{\beta}},
	\end{equation*}
	since 
	$0\leq r (\gamma - \eps) < 
	r \bigl( \tfrac{1}{r} - \widetilde{\beta} \bigr) 
	\leq 1$  
	and $(0,\infty) \ni t\mapsto t^{1-r(\gamma-\eps)}$ 
	is subadditive. 
	
	The latter two terms of \eqref{eq:Lp_norm_three_terms_general}
	can be approximated using
	the tail estimates \eqref{eq:sinc_tail_neg} 
	and \eqref{eq:sinc_tail_pos_s>0} 
	of Lemma~{\upshape\ref{lma:sinc_quad_series_conv+tail}}.
	Interchanging the norm and the series 
	yields
	\begin{align*} 
		\biggl\|
		h \sum_{\ell  = -\infty}^{-(M^{-\!}+1)}  g_\gamma(\,\cdot\,,\ell h)
		\biggr\|_{L^r(s,t)}
		&\leq
		\frac{(t-s)^{\nicefrac{1}{r}}}{\Gamma(\gamma + 1)} \,
		e^{-\gamma  M^{-\!} h}
		\leq
		\frac{T^{\frac{1}{r} - \widetilde{\beta} } }{ \Gamma(\gamma + 1)} \, e^{-\gamma  M^{-\!} h} \, (t-s)^{\widetilde{\beta}}, 
		\\
		\biggl\|
		h \sum_{\ell  = M^{+\!}+1}^\infty g_\gamma(\,\cdot\,,\ell h)
		\biggr\|_{L^r(s,t)}
		&\leq
		\frac{\eps^\eps e^{-\eps}}{
			\bigl(\frac{1}{r} - \gamma-\widetilde{\beta}+\eps\bigr) \, \Gamma(\gamma)}
		\, e^{-\left(\frac{1}{r} - \gamma - \widetilde{\beta} + \eps\right) M^{+\!} h} \, s^{-\eps} (t-s)^{\widetilde{\beta}}. 
	\end{align*}
	Using these three bounds in
	\eqref{eq:Lp_norm_three_terms_general} 
	and that $1 \leq T^\eps s^{-\eps}$
	completes the proof 
	of~{\upshape\ref{item:thm:error_bound_Lr_general_gamma-i}}.
	
	\textbf{Proof of~{\upshape\ref{item:thm:error_bound_Lr_general_gamma-ii}}:}
	Suppose now that
	$\gamma\in(0,1)$,
	and let
	${r\in\bigl[ 1, \gamma^{-1} \bigr)}$, 
	$\beta \in \bigl[0, \frac{1}{r} - \gamma \bigr)$, 
	and $\delta \in \bigl( 0,\pi^2 \bigr)$. 
	Since $r\gamma \in (0,1)$, it holds that,
	for all  $t\in(0,T]$ and
	$s \in [0,t]$,
	\begin{equation*}
		\left(\int_s^t u^{-r\gamma} \diff{u}\right)^{\nicefrac{1}{r}}
		=
		\frac{ \bigl( t^{1-r\gamma} - s^{1-r\gamma} \bigr)^{\nicefrac{1}{r}} }{
			(1-r\gamma)^{\nicefrac{1}{r}} }
		\leq
		(1-r\gamma)^{-\frac{1}{r}} \,
		T^{\frac{1}{r} - \gamma - \beta} \, (t-s)^{\beta},
	\end{equation*}
	where we also used
	that
	$t^{1-r\gamma} = (t-s+s)^{1-r\gamma}
	\leq (t-s)^{1-r\gamma} + s^{1-r\gamma}$
	by subadditivity.
	
	Combined
	with \eqref{eq:Lp_norm_series_term_general},
	this yields an estimate for the first term
	in \eqref{eq:Lp_norm_three_terms_general},
	\begin{equation*}
		\biggl\|
		k_\gamma -  h \sum_{\ell  \in \bbZ} g_\gamma(\,\cdot\,,\ell h)
		\biggr\|_{L^r(s,t)}
		\leq
		\frac{\bigl[\sin\bigl( \tfrac{\delta}{2\pi} \bigr)\bigr]^{-\gamma}} { (1-r\gamma )^{\nicefrac{1}{r}} }
		\frac{ 2 \, T^{\frac{1}{r} - \gamma - \beta} }{ 1 - e^{-(\pi^2 - \delta)/h_0} }
		\, e^{-(\pi^2 - \delta)/h}
		\, (t-s)^{\beta}     .
	\end{equation*}
	For the positive tail,
	we apply \eqref{eq:sinc_tail_pos}
	from Lemma~{\upshape\ref{lma:sinc_quad_series_conv+tail}} 
	and use that
	$e^{-s \, e^{M^{+\!}h}\!} \leq 1$,
	\begin{equation*}
		h \sum_{\ell  = M^{+\!}+1}^\infty \left\| g_\gamma(\,\cdot\,,\ell h) \right\|_{L^r(s,t)}
		\leq
		\frac{1 }{
			\bigl( \frac{1}{r}-\gamma-\beta\bigr) \, \Gamma(\gamma) }
		\, e^{-\left( \frac{1}{r}-\gamma-\beta \right)M^{+\!} h}
		\, (t-s)^{\beta}.
	\end{equation*}
	Using
	these bounds and the negative tail bound \eqref{eq:sinc_tail_neg} in
	\eqref{eq:Lp_norm_three_terms_general}
	proves~{\upshape\ref{item:thm:error_bound_Lr_general_gamma-ii}}.  
	
	\textbf{Proof of~{\upshape\ref{item:thm:error_bound_Lr_general_gamma-iii}}:}
	The definitions \eqref{eq:fracker}, 
	\eqref{eq:def_g_fracker} 
	of the functions~$k_\gamma$~and~$g_\gamma$ 
	imply that 
	\begin{equation*}
		k_\gamma'(t)
		= -\gamma \, t^{-(\gamma + 1)}
		= - \gamma k_{\gamma+1}(t),
		\quad
		\tfrac{\partial}{\partial t} \, 
		g_\gamma(t,x)
		= 
		- \gamma g_{\gamma+1}(t,x), 
		\quad 
		t\in(0,\infty), \; 
		x\in\bbR. 
	\end{equation*}
	By Lemma~{\upshape\ref{lma:sinc_quad_series_conv+tail}},
	the sequence $(g_{\gamma+1}(\,\cdot\,,\ell h))_{\ell \in \bbZ}$
	is absolutely summable in $L^r(s,T)$
	for $s \in (0,T]$.
	Thus, taking derivatives commutes 
	with the infinite series, and 
	\begin{equation*}
		\bigl\| 
		\bigl(k_\gamma - \widehat{k}_\gamma^{(h,M^{-\!},M^{+\!})}\bigr)' 
		\bigr\|_{L^r(s,t)}
		= 
		\gamma \, 
		\bigl\| k_{\gamma+1} - \widehat{k}_{\gamma+1}^{(h,M^{-\!},M^{+\!})} 
		\bigr\|_{L^r(s,t)}.
	\end{equation*}
	Bounding the 
	right-hand side using 
	part~{\upshape\ref{item:thm:error_bound_Lr_general_gamma-i}} 
	with ${\widetilde{\beta} := 0}$ and ${\eps := 1-\beta}$ 
	completes the proof, 
	where we note that 
	$\eps \leq 1 < \gamma+1$ and 
	$\eps > \gamma + 1 - \frac{1}{r} 
	\geq \gamma > 0$, 
	since 
	$\beta \in \bigl[ 0, \frac{1}{r} - \gamma\bigr)$ 
	is assumed. 
\end{proof}

\begin{remark}\label{rmk:type_of_parameters} 
	The error estimates 
	of  Theorem~{\upshape\ref{thm:error_bound_Lr_general_gamma}\ref{item:thm:error_bound_Lr_general_gamma-ii},\ref{item:thm:error_bound_Lr_general_gamma-iii}} 
	for the sinc quadrature approximation 
	of the fractional kernel 
	depend on the following parameters: 
	\begin{enumerate}[leftmargin=8mm, label={$\bullet$}]  
		\item 
		\emph{fixed parameters}: 
		$\gamma \in (0,1)$, 
		$T \in (0,\infty)$, $h_0 \in (0,\infty)$, 
		\item 
		\emph{discretization parameters}: 
		$h \in (0,h_0]$, $M^{-\!} \in \bbN$, $M^{+\!} \in \bbN$, 
		\item 
		\emph{free parameters}: 
		$\delta \in \bigl( 0,\pi^2 \bigr)$, 
		$r \in \bigl[1,\gamma^{-1}\bigr)$, 
		$\beta \in \bigl[ 0, \tfrac{1}{r} - \gamma \bigr)$. 
	\end{enumerate}
	In Theorem~{\upshape\ref{thm:error_bound_Lr_general_gamma}} 
	the asymptotic dependence on fixed parameters is omitted.  
	In the notation of Section~{\upshape\ref{sec:general_SVE}}, 
	we have that $\xi = (h,M^{-\!}, M^{+\!})$ 
	and $\Xi = (0,h_0] \times \bbN \times \bbN$. 
\end{remark}

\begin{remark}\label{rmk:quadrature_error_fixed_total_nodes} 
	To calibrate the truncation errors 
	with the sinc quadrature error 
	in the $L^r$-bound for $\gamma\in(0,1)$, 
	see  
	Theorem~{\upshape\ref{thm:error_bound_Lr_general_gamma}\ref{item:thm:error_bound_Lr_general_gamma-ii}}, 
	we choose, for $h\in (0,h_0]$, 
	\begin{equation}
	\label{eq:sinc_quad_kernel-cali} 
		M^{-\!}  
		:= 
		\biggl\lceil \frac{\pi^2 - \delta}{\gamma h^2} \biggr\rceil, 
		\qquad
		M^{+\!}  
		:= 
		\biggl\lceil \frac{\pi^2 - \delta}{\bigl(\frac{1}{r} - \gamma - \beta\bigr) h^2} \biggr\rceil. 
	\end{equation}
	This choice 
	reduces the bound of 
	Theorem~{\upshape\ref{thm:error_bound_Lr_general_gamma}\ref{item:thm:error_bound_Lr_general_gamma-ii}}, 
	for all $t\in(0,T]$ and $s\in[0,t]$, 
	to 
	\begin{equation*}  
		\bigl\|
		k_\gamma -  \widehat{k}_\gamma^{(h,M^{-\!},M^{+\!})}
		\bigr\|_{L^r(s,t)}
		\leq
		3 \, c'' \, 
		e^{- (\pi^2 - \delta)/h} \, (t-s)^\beta. 
	\end{equation*}
	The total number of quadrature nodes 
	$N = M^{-\!} + M^{+\!} + 1$ 
	satisfies, under this calibration, 
	$N < 
	\tfrac{\pi^2 - \delta}{h^2} 
	\bigl( \gamma^{-1} + \bigl( \tfrac{1}{r} - \gamma -\beta\bigr)^{-1} \bigr) 
	+ 3$. 
	Therefore, 
	for all $t\in(0,T]$ and $s\in[0,t]$, 
	the error bound 
	may be expressed in terms of $N$ as 
	\begin{equation*} 
		\bigl\|
		k_\gamma -  \widehat{k}_\gamma^{(h,M^{-\!},M^{+\!})}
		\bigr\|_{L^r(s,t)}
		\leq
		3 \, c''  
		\exp\Biggl( - \sqrt{\frac{(\pi^2 - \delta) \, \gamma \, \bigl( \frac{1}{r} - \gamma - \beta \bigr)}{\frac{1}{r} - \beta}} \, \cdot \sqrt{N-3}  \Biggr)
		\, (t-s)^{\beta}.  
	\end{equation*} 
\end{remark}


\section{Markovian approximations for stochastic Volterra equations}
\label{sec:markov_approx} 

In this section we 
reparametrize and rescale 
the fractional kernel $k_\gamma$ 
in \eqref{eq:fracker}, for $\gamma\in(0,1)$, 
using the \emph{Hurst parameter} 	
$H\in\bigl(-\tfrac{1}{2},\tfrac{1}{2}\bigr)$, 
\begin{equation}\label{eq:fracker_hurst} 
	k^{H\!}(t)
	:= 
	\frac{1}{\Gamma(H + \nicefrac{1}{2})} \,  
	k_{\frac{1}{2}-H}(t) 
	= 
	\frac{1}{\Gamma(H + \nicefrac{1}{2})} \,  t^{H-\frac{1}{2}}, 
	\qquad 
	t\in(0,\infty),  
\end{equation} 
and we construct Markovian approximations
for stochastic Volterra 
processes \eqref{eq:SVE_def_eq_conv} 
with fractional convolution kernels  
of Hurst parameters 
$H_1\in\bigl(-\tfrac{1}{2},\tfrac{1}{2}\bigr)$, 
$H_2\in\bigl(0,\tfrac{1}{2}\bigr)$. 
As noted in \eqref{eq:intr_RLfBM}, 
an important member 
of this class of processes  
is the Riemann--Liouville fractional 
Brownian motion $B^H\!$, 
for which in \eqref{eq:SVE_def_eq_conv} 
we may set 
$X_0 = 0$,
$b \equiv 0$,  
$\sigma \equiv 1$, and 
${k_1 = k_2 = k^H\!}$. 
We recall that 
$B^H$ is pathwise H\"older continuous 
of order $\nu$
on the interval $[0,T]$, 
for all $\nu\in(0,H)$ 
and every $T\in(0,\infty)$.   

We will approximate the fractional 
kernel \eqref{eq:fracker_hurst} 
as discussed 
in Section~{\upshape\ref{sec:sinc-fractional}}. 
In Subsection~{\upshape\ref{subsec:markov_approx_intro}}, 
we first recall the concept of 
Markovian approximations 
from~\cite{abijabereleuch2018, 
	alfonsikebaier2022, 
	bayerbreneis2023} 
which allows us 
to approximate a stochastic Volterra equation
using a system of ordinary stochastic differential equations, 
and we relate this concept  
to the sinc quadrature 
in \eqref{eq:sinc_quad_kernel}. 
In Subsections~{\upshape\ref{subsec:markov_approx_diff}} 
and~{\upshape\ref{subsec:markov_approx_holder}},
we apply the sinc quadrature 
error estimates derived in 
Subsection~{\upshape\ref{subsec:sinc_quad_trunc}} 
to the results of
Subsections~{\upshape\ref{subsec:general_SVE_diff}} 
and~{\upshape\ref{subsec:general_SVE_holder_diff}} 
and obtain strong convergence    
in various norms, including 
pathwise H\"older convergence.  


\subsection{Sinc-Markovian approximations for the fractional kernel}
\label{subsec:markov_approx_intro} 

Approximations of 
stochastic Volterra processes 
with convolution kernels 
of completely monotone 
type 
can be obtained by 
replacing the integral representation 
\eqref{eq:compl_mon_type_ker} 
of the kernel 
by a finite sum. 
This motivates to consider, 
for $N \in \bbN$,  
${w = (w_1,\ldots,w_N)\in [0, \infty)^N\!}$,
and
$\rho=(\rho_1,\ldots,\rho_N) \in [0, \infty)^N\!$, 
the kernel approximation 
\begin{equation}
\label{eq:markovian_approximation_kernel}
	\widehat{k}^{(N, w,\rho)}(t) 
	= 
	\sum_{n =1}^N w_n  e^{-\rho_n  t}. 
\end{equation} 
Correspondingly, 
a stochastic Volterra process 
with kernels of completely 
monotone type may be 
approximated 
by a Volterra process with 
kernels of the form 
\eqref{eq:markovian_approximation_kernel}. 
Since such an approximation 
can be represented 
as a linear functional applied
to a finite-dimensional Markov process, 
this methodology is referred to 
as \emph{Markovian approximations} 
in the literature \cite{abijabereleuch2018, 
	alfonsikebaier2022, 
	bayerbreneis2023}. 
The following proposition 
summarizes this result and 
is a straightforward extension 
of \cite[Proposition 2.1(2)]{alfonsikebaier2022} 
to the case of different (numbers of) nodes  
for scalar kernels 
(in \cite{alfonsikebaier2022}, 
$N_1=N_2$ and $\rho_1 = \rho_2$).

\begin{proposition}\label{prop:MA_SDE}
	Let $N_1, N_2 \in \bbN$,  
	$w_1, \rho_1 \in [0,\infty)^{N_1\!}$, 
	and 
	$w_2, \rho_2 \in [0,\infty)^{N_2\!}$. 
	Define the functions
	$\widehat{k}_1^{(N_1, w_1, \rho_1)\!}, 
	\, 
	\widehat{k}_2^{(N_2, w_2, \rho_2)\!} 
	\colon (0,\infty) \to [0,\infty)$
	according to 
	\eqref{eq:markovian_approximation_kernel}.
	
	Let $p \in [2,\infty)$ and 
	$T\in (0,\infty)$. 
	Suppose that 
	$\widehat{X}\colon[0,\infty) \times \Omega\to\bbR^m$ 
	is a continuous stochastic Volterra process
	with initial value $X_0 \in L^p(\Omega,\cF_0,\bbP)$,
	coefficients $b,\sigma$ satisfying 
	Assumption~{\upshape\ref{ass:lipschitz_coef}} 
	with the constant $L \in (0,\infty)$,
	and convolution kernels $\widehat{k}_1^{(N_1, w_1, \rho_1)\!}$,  
	$\widehat{k}_2^{(N_2, w_2, \rho_2)\!}$.
	Then, up to indistinguishability,
	$\widehat{X}$ has the representation,
	\begin{equation*}
		\forall t \in [0,T]: 
		\quad 
		\widehat{X}_t = X_0 + Y_t w_1 + Z_t w_2, 
	\end{equation*}
	where the matrix-valued process
	$(Y,Z) \colon [0,T] \times \Omega \to \bbR^{m \times N_1\!} \times \bbR^{m \times N_2\!}$
	solves the system of coupled 
	stochastic differential equations (SDEs) 
	given by 
	\begin{equation}\label{eq:OU_MA_SDE} 
		\begin{cases}
			\diff{Y}_t^i = - \rho_{1,i} Y^i_t \diff{t} + b(X_0 + Y_t w_1 + Z_t w_2) \diff{t},
			&\quad i \in \{1,\ldots,N_1\}, 
			\\
			\diff{Z}_t^n = - \rho_{2,n} Z^n_t \diff{t} + \sigma(X_0 + Y_t w_1 + Z_t w_2) \diff{W}_t,
			&\quad n \in \{1,\ldots,N_2\},
		\end{cases} 
	\end{equation}
	for $t\in[0,T]$, 
	with initial condition $(Y_0, Z_0) = (0,0)$. 
	In \eqref{eq:OU_MA_SDE}, 
	$Y_t^i$ and $Z_t^n$ denote
	the $i$-th and $n$-th 
	$\mathbb{R}^m$-valued columns
	of the stochastic matrices $Y_t$ and $Z_t$.
\end{proposition}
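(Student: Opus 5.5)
Since $\widehat{X}$ is unique up to indistinguishability (Proposition~\ref{prop:X_ct_holder_Lp}, its kernels being bounded and Lipschitz), we only have to produce a continuous adapted process of the form $X_0 + Y w_1 + Z w_2$ that solves the same Volterra equation.

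\textbf{Existence of $(Y,Z)$.} The system \eqref{eq:OU_MA_SDE} is an ordinary SDE in $\bbR^{m\times N_1}\times\bbR^{m\times N_2}$. By Assumption~\ref{ass:lipschitz_coef}, the drift $(Y,Z)\mapsto(-\rho_{1,i}Y^i + b(X_0+Yw_1+Zw_2))_i$ and the diffusion $(Y,Z)\mapsto \sigma(X_0+Yw_1+Zw_2)$ are globally Lipschitz, uniformly in $\omega$. The random but $\cF_0$-measurable initial datum $X_0\in L^p$ enters only as a parameter, so we may view $(X_0,Y,Z)$ as the state with $\diff X_0 = 0$. Standard SDE theory then yields a unique continuous adapted solution with $(Y_0,Z_0)=(0,0)$.

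\textbf{Variation of constants.} Apply Itô's formula (here just the product rule, since $e^{\rho t}$ is deterministic and of finite variation) to $e^{\rho_{1,i}t}Y^i_t$ and $e^{\rho_{2,n}t}Z^n_t$. With $V_s := X_0 + Y_s w_1 + Z_s w_2$ this gives, for all $t\in[0,T]$, $\bbP$-a.s.,
\[
Y^i_t = \int_0^t e^{-\rho_{1,i}(t-s)} b(V_s)\diff s,
\qquad
Z^n_t = \int_0^t e^{-\rho_{2,n}(t-s)} \sigma(V_s)\diff W_s .
\]
For the stochastic integral we write $e^{-\rho t}\int_0^t e^{\rho s}\sigma(V_s)\diff W_s = \int_0^t e^{-\rho(t-s)}\sigma(V_s)\diff W_s$, which holds because $e^{-\rho t}$ is a deterministic scalar at fixed $t$.

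\textbf{Assembling $V$.} Multiply by the weights and sum. Using \eqref{eq:markovian_approximation_kernel}, $\sum_i w_{1,i}e^{-\rho_{1,i}(t-s)} = \widehat{k}_1(t-s)$, and similarly for $\widehat{k}_2$. Linearity of the integrals then gives
\[
V_t = X_0 + \int_0^t \widehat{k}_1(t-s)b(V_s)\diff s + \int_0^t \widehat{k}_2(t-s)\sigma(V_s)\diff W_s .
\]
Thus $V$ is a continuous adapted stochastic Volterra process with the same data as $\widehat{X}$.

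\textbf{Kernels and conclusion.} The kernels $\widehat{k}_j$ are nonnegative, nonincreasing and bounded by $\sum w_{j}$. Hence they satisfy Assumption~\ref{ass:assump_conv_kernels} with $\alpha_T = 1/2$ for $j=2$ and $\alpha_T=1$ for $j=1$. Taking the minimum gives $\alpha=1/2$, but Proposition~\ref{prop:X_ct_holder_Lp} requires $\alpha>1/p$, which fails for $p=2$. This is the main obstacle, and we handle it in one of two ways. We can increase $p$, since $p$ is only an integrability level: we may use $p'>2$ if $X_0\in L^{p'}$. Alternatively, and more simply, we can invoke uniqueness directly from \cite[Theorem~3.1]{zhang_SPDE_banach2010} via Proposition~\ref{prop:existence_uniqueness_SVE}, which needs only Assumption~\ref{ass:assump_nonconv_kernels}\ref{item:assu_noco_i}. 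That result gives $V=\widehat{X}$ a.e.\ on $[0,T]\times\Omega$, and path continuity of both processes upgrades this to indistinguishability.
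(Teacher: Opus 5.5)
Your proof is correct and follows the same route as the paper. Both arguments establish well-posedness of the finite-dimensional SDE system, obtain the variation-of-constants formula from It\^o's formula applied to $e^{\rho t}Y^i_t$ and $e^{\rho t}Z^n_t$, sum against the weights to recover $\widehat{k}_1,\widehat{k}_2$, and identify the result with $\widehat{X}$ through uniqueness of continuous Volterra solutions.

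The differences are minor. You absorb $X_0$ into the state to get deterministic Lipschitz coefficients, whereas the paper invokes \cite[Theorem~3.1.1]{LiuRoeckner2015} with $\omega$-dependent coefficients. You also make explicit a point the paper leaves implicit: since $\alpha_T\le\nicefrac{1}{2}$ for these bounded kernels, the uniqueness step for $p=2$ must come from \cite[Theorem~3.1]{zhang_SPDE_banach2010} via Proposition~\ref{prop:existence_uniqueness_SVE}, not from Proposition~\ref{prop:X_ct_holder_Lp}. Your alternative of raising $p$ only works when $X_0$ has more than $p$ moments, so the Zhang route is the one that covers the stated hypotheses.
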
  

\begin{proof} 
	We first define the mappings 
	$\widetilde{b}, \, \widetilde{\sigma} 
	\colon \Omega \times \! 
	\bigl(\bbR^{m\times N_1\!} 
	\times \bbR^{m\times N_2} \bigr) 
	\to \bbR^m$ 
	by 
	\begin{equation*}   
		\widetilde{b}(\omega, (A, B) )
		:= 
		b( X_0(\omega)  + A w_1 + B w_2 ), 
		\quad 
		\widetilde{\sigma}(\omega, (A, B) )
		:= 
		\sigma( X_0(\omega)  + A w_1 + B w_2 ), 
	\end{equation*} 
	and note that 
	$\widetilde{b}$, $\widetilde{\sigma}$ 
	are 
	$\cF_0\otimes \cB(\bbR^{m\times N_1\!} \times  \bbR^{m\times N_2})$-measurable. 
	For every $\omega\in\Omega$ 
	and all 
	$(A,B), (A'\!, B') \in 
	\bbR^{m\times N_1\!} \times  \bbR^{m\times N_2\!}$, 
	we obtain for $\widetilde{b}$ 
	(and similarly for $\widetilde{\sigma}$) that  
	\begin{align*}   
		&\bigl\| 
		\widetilde{b}(\omega, (A, B) )
		- 
		\widetilde{b}(\omega, (A'\!, B') )
		\bigr\|_{\bbR^m} 
		\leq 
		L \, 
		\| (A-A')w_1 + (B - B')w_2 \|_{\bbR^m} 
		\\
		&\hspace*{25mm}\leq 
		L \, 
		\bigl( \|w_1\|_{\bbR^{N_1\!}}^2 		
		+ \|w_2\|_{\bbR^{N_2\!}}^2 \bigr)^{\nicefrac{1}{2}} 
		\bigl( 
		\| A - A' \|_{\bbR^{m\times N_1\!}}^2  
		+ 
		\| B - B'\|_{\bbR^{m\times N_2\!}}^2 
		\bigr)^{\nicefrac{1}{2}} \! . 
	\end{align*} 
	By \cite[Theorem~3.1.1]{LiuRoeckner2015} 
	there exists 
	an up to indistinguishability unique 
	continuous solution of the 
	SDE system \eqref{eq:OU_MA_SDE} 
	and applying It\^o's formula 
	to the processes 
	$e^{\rho_{1,i} t} Y_t^i$ 
	and 
	$e^{\rho_{2,n} t} Z_t^n$ 
	shows that they satisfy 
	the variation of constants formula,
	i.e.,  
	\begin{align*}
		Y^i_t 
		& = \int_0^t e^{-\rho_{1,i} (t-s)} b(X_0 + Y_s w_1 + Z_s w_2) \diff{s},
		&&\bbP\text{-a.s.}, 
		\quad\;\;  
		i \in \{1,\ldots,N_1\},  
		\\
		Z^n_t
		& = \int_0^t e^{-\rho_{2,n} (t-s)} \sigma(X_0 + Y_s w_1 + Z_s w_2) \diff{W}_s,
		&&\bbP\text{-a.s.}, 
		\quad\;\; 
		n \in \{1,\ldots,N_2\},  
	\end{align*}
	for all $t\in[0,T]$. 
	Now define the continuous process
	$\widetilde{X}\colon [0,T] \times \Omega \to \bbR^m$
	by
	\begin{equation*}
		\widetilde{X}_t = X_0 + Y_t w_1 + Z_t w_2.
	\end{equation*}
	Substituting the above integral representations  
	of $Y$ and $Z$ into $\widetilde{X}$, 
	and using linearity of the integrals,
	we find that $\widetilde{X}$ 
	is a continuous stochastic Volterra process 
	on the interval $[0,T]$
	with initial value $X_0$,
	coefficients $b,\sigma$,
	and convolution kernels 
	$\widehat{k}_1^{(N_1, w_1, \rho_1)\!}$ 
	and 
	$\widehat{k}_2^{(N_2, w_2, \rho_2)\!}$. 
	Since the same holds for~$\widehat{X}$, 
	and since both~$\widehat{X}$ and~$\widetilde{X}$ 
	are pathwise continuous, 
	they are indistinguishable.  
\end{proof}

\begin{remark} 
	The stochastic process 
	$\Psi \colon [0,T] \times \Omega 
	\to \bbR^{m\!} \times \bbR^{m \times N_1\!} \times \bbR^{m \times N_2\!}$, 
	given by 
	$\Psi_t := (X_0, Y_t, Z_t)$, 
	where $(Y,Z)$ is the solution 
	of \eqref{eq:OU_MA_SDE}, 
	solves an SDE system 
	with coefficients 
	independent of $\omega$ 
	and, hence, is 
	a Markov process 
	\cite[Proposition~4.3.3]{LiuRoeckner2015}.  
\end{remark} 

In the next assumption 
we specify the kernels  
of the stochastic Volterra processes 
and of the Markovian 
approximations that we consider 
in this section.

\begin{assumption}\label{ass:fractional_sinc}
	Let $H_1 \in \bigl(-\tfrac{1}{2},\tfrac{1}{2}\bigr)$ 
	and $H_2 \in \bigl(0,\tfrac{1}{2}\bigr)$, 
	let the fractional kernels $k^{H_1}$ 
	and $k^{H_2}$ 
	be defined 
	according to \eqref{eq:fracker_hurst}, and 
	let $k_1,k_2 \colon (0,\infty) \to \bbR$ 
	be given by  
	\begin{equation*}
		k_1(t)  := k^{H_1}(t),
		\qquad
		k_2(t)  := k^{H_2}(t),
		\qquad\quad 
		t \in (0,\infty). 
	\end{equation*}
	Let $h_0\in(0,\infty)$ 
	and $\delta\in\bigl(0,\pi^2\bigr)$ be given, 
	define $\Xi:=(0,h_0] \times \bigl[0, \bigl(H_1+\frac{1}{2}\bigr)\wedge H_2\bigr)$, 
	and let 
	$\bigl( \widehat{k}^{(h,\beta)\!}_1,  
	\widehat{k}^{(h,\beta)}_2 \bigr)_{(h,\beta)\in\Xi}$ 
	denote the corresponding family  
	of sinc approximations, 
	\begin{equation}\label{eq:sinc_quad_fractional_kernel}
		\widehat{k}^{(h,\beta)}_j(t) 
		:= \frac{h}{\Gamma(H_j+\nicefrac{1}{2})\Gamma(\nicefrac{1}{2}-H_j)} 
		\sum_{\ell  = -M_j^{-\!}(h)}^{M_j^{+\!}(h,\beta)} 
		e^{-t \, e^{\ell h}}
		e^{\left( \frac{1}{2} - H_j \right) \ell h} ,  
		\quad\;  
		j\in\{1,2\}, 
	\end{equation} 
	where, for $j\in\{1,2\}$, we set 
	\begin{equation}\label{eq:sinc_quad_fractional_kernel-cali}
		M_j^{-\!}(h) 
		:= 
		\biggl\lceil \frac{\pi^2 - \delta}{\bigl(\frac{1}{2}-H_j\bigr) h^2} \biggr\rceil, 
		\qquad
		M_j^{+\!}(h,\beta) 
		:= 
		\biggl\lceil \frac{\pi^2 - \delta}{\bigl(\frac{1}{j} - \frac{1}{2} + H_j - \beta\bigr) h^2} \biggr\rceil. 
	\end{equation} 
\end{assumption} 

\begin{remark} 
	Motivated by the error 
	estimates of Section~{\upshape\ref{sec:general_SVE}}, 
	we have chosen the truncation 
	parameters in \eqref{eq:sinc_quad_fractional_kernel-cali} 
	to calibrate the 
	kernel error of $\widehat{k}^{(h,\beta)\!}_1$ 
	in the $L^1$-norm 
	and that 
	of $\widehat{k}^{(h,\beta)\!}_2$ 
	in the $L^2$-norm, 
	see 
	Remark~{\upshape\ref{rmk:quadrature_error_fixed_total_nodes}}. 
	We emphasize that, 
	since we impose only $L^1$-type 
	assumptions on $k_1$, 
	the Hurst parameter $H_1$ may be negative. 
	We furthermore note that 
	a higher value of $\beta\in\bigl[0, \bigl(H_1+\frac{1}{2}\bigr) \wedge H_2\bigr)$ 
	implies higher values of the truncation parameters 
	$M_1^{+\!}(h,\beta)$ and $M_2^{+\!}(h,\beta)$ 
	for the positive tails.
\end{remark} 

\begin{remark} 
	For $j\in\{1,2\}$,  
	the sinc quadrature approximation   
	\eqref{eq:sinc_quad_fractional_kernel}
	of the fractional kernel $k^{H_j}$ 
	can be expressed 
	in the form \eqref{eq:markovian_approximation_kernel}, 
	$\widehat{k}^{(h,\beta)}_j 
	= 
	\widehat{k}^{(N_j, w_j,\rho_j)\!}$, 
	by setting  
	\begin{equation*}
		N_j 
		:= 
		M_j^{-\!}+M_j^{+\!}+1,
		\quad\;\; 
		w_{j,n} 
		:= 
		\frac{h \, e^{\left(\frac{1}{2} - H_j \right) (n-M_j^{-\!}-1)h} }{ 
			\Gamma(H_j+\nicefrac{1}{2})\Gamma(\nicefrac{1}{2}-H_j) },
		\quad\;\;   
		\rho_{j,n} := e^{(n-M_j^{-\!}-1)h}, 
	\end{equation*} 
	and, consequently, 
	induces corresponding 
	\emph{sinc-Markovian approximations} 
	of stochastic Volterra processes 
	with fractional convolution kernels. 
\end{remark} 


\subsection{Strong convergence in pointwise and pathwise Lebesgue sense}
\label{subsec:markov_approx_diff}

In this subsection we apply   
the $L^r$-error bound of the 
sinc quadrature approximation in 
Theorem~{\upshape\ref{thm:error_bound_Lr_general_gamma}\ref{item:thm:error_bound_Lr_general_gamma-ii}} 
to the error estimates  
for the corresponding Volterra processes 
in $C^0([0,T];L^p(\Omega))$ 
and $L^p(\Omega;L^q(0,T))$ 
derived in Subsection~{\upshape\ref{subsec:general_SVE_diff}}.
For this purpose, 
we first need to verify that the kernels 
$k_1, k_2$ 
and
$\bigl( \widehat{k}^{(h,\beta)\!}_1, \widehat{k}^{(h,\beta)}_2 \bigr)_{(h,\beta) \in \Xi}$ 
introduced in Subsection~{\upshape\ref{subsec:markov_approx_intro}} 
satisfy Assumptions~{\upshape\ref{ass:assump_conv_kernels}} 
and~{\upshape\ref{ass:assu_co_unif_xi}}, respectively.

\begin{proposition}
	\label{prop:5_verif_assumps_2_3}
	Let $H_1 \in \bigl(-\frac{1}{2}, \frac{1}{2}\bigr)$,
	$H_2 \in \bigl(0, \frac{1}{2}\bigr)$,
	$h_0\in(0,\infty)$, and 
	$\delta \in \bigl( 0, \pi^2 \bigr)$.
	Assume that the kernels $k_1$, $k_2$ 
	and, for $(h,\beta)\in\Xi := 
	(0,h_0] \times \bigl[0, \bigl(H_1+\frac{1}{2}\bigr)  \wedge H_2\bigr)$, 
	the approximate kernels 
	$\widehat{k}^{(h,\beta)\!}_1$,  
	$\widehat{k}^{(h,\beta)\!}_2$  
	are defined as in 
	Assumption~{\upshape\ref{ass:fractional_sinc}}. 
	Then: 
	\begin{enumerate}[leftmargin=8mm, label={\upshape(\roman*)}]
		\item \label{item:5_verif_assu_i}
		The functions $k_1$ and $k_2$ satisfy
		Assumption~{\upshape\ref{ass:assump_conv_kernels}} 
		with 
		\begin{equation}\label{eq:alpha-eta-fractional} 
			\qquad 
			\eta_T := 
			\left( 1+2 h_0 \right) \widetilde{\eta}_T, 
			\quad\;\; 
			\alpha_T \equiv \alpha :=\bigl(H_1 + \tfrac{1}{2}\bigr) \wedge H_2,  
			\quad\;\; 
			T \in(0,\infty), 
		\end{equation} 
		where 
		\begin{equation}\label{eq:eta-tilde-fractional}  
			\qquad 
			\widetilde{\eta}_T  
			:= 
			\biggl(
			\frac{T^{H_1 + \frac{1}{2}-\alpha}}{\Gamma(H_1 + \nicefrac{3}{2})}
			\biggr) \vee 
			\biggl(
			\frac{T^{H_2 - \alpha}}{\sqrt{2H_2} \, \Gamma(H_2 + \nicefrac{1}{2})}
			\biggr), 
			\qquad 
			T\in(0,\infty). 
		\end{equation} 
		\item \label{item:5_verif_assu_ii}
		The family of functions
		$\bigl( \widehat{k}^{(h,\beta)\!}_1, \widehat{k}^{(h,\beta)}_2 \bigr)_{(h,\beta) \in \Xi}$
		satisfies Assumption~{\upshape\ref{ass:assu_co_unif_xi}} 
		with the same families of constants 
		$(\eta_T)_{T>0}$ and $(\alpha_T)_{T>0}$ 
		as in \eqref{eq:alpha-eta-fractional}. 
	\end{enumerate}
\end{proposition}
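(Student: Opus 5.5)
Part~\ref{item:5_verif_assu_i} is a direct computation, and part~\ref{item:5_verif_assu_ii} is reduced to it by building a dominating kernel that is a constant multiple of the exact one.

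\textbf{Part~\ref{item:5_verif_assu_i}.} Both $k_1=k^{H_1}$ and $k_2=k^{H_2}$ are positive and strictly decreasing on $(0,\infty)$, because $H_1-\tfrac12<0$ and $H_2-\tfrac12<0$. For $t\in[0,T]$ the norms are explicit:
\begin{equation*}
\|k_1\|_{L^1(0,t)}=\frac{t^{H_1+\frac12}}{\Gamma(H_1+\nicefrac{3}{2})},
\qquad
\|k_2\|_{L^2(0,t)}=\frac{t^{H_2}}{\sqrt{2H_2}\,\Gamma(H_2+\nicefrac12)}.
\end{equation*}
Since $\alpha\le H_1+\tfrac12$ and $\alpha\le H_2$, we can write $t^{H_1+\frac12}\le T^{H_1+\frac12-\alpha}t^{\alpha}$ and $t^{H_2}\le T^{H_2-\alpha}t^{\alpha}$. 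This gives the bound with constant $\widetilde\eta_T\le\eta_T$ and exponent $\alpha\in(0,1]$.

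\textbf{Part~\ref{item:5_verif_assu_ii}.} Each $\widehat k^{(h,\beta)}_j$ is a finite sum of positive multiples of $e^{-\rho t}$, so it is nonnegative and nonincreasing. The remaining task is a $\xi$-uniform majorant $\overline k_j$ that is nonincreasing and satisfies the integrability bounds with $\eta_T$. I would take $\overline k_j:=(1+2h_0)k_j$; the stated form of $\eta_T$ suggests exactly this. It then suffices to prove the pointwise bound
\begin{equation*}
h\sum_{\ell\in\bbZ}e^{-te^{\ell h}}e^{\gamma\ell h}\le(1+2h_0)\,\Gamma(\gamma)\,t^{-\gamma},\qquad \gamma=\tfrac12-H_j\in(0,1),
\end{equation*}
for all $h\in(0,h_0]$. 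This is enough because truncating the sum only decreases it, and the normalization in \eqref{eq:sinc_quad_fractional_kernel} combined with $\Gamma(\gamma)/\bigl(\Gamma(H_j+\tfrac12)\Gamma(\tfrac12-H_j)\bigr)$ recovers $k^{H_j}$.

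\textbf{Main step: the pointwise bound.} The function $\phi(x):=e^{-te^{x}+\gamma x}$ is unimodal, with a single maximum at $x^*=\log(\gamma/t)$ and maximum value $\phi(x^*)=(\gamma/e)^\gamma t^{-\gamma}$. For a unimodal function, a Riemann sum with step $h$ exceeds the integral by at most $h$ times one maximum value of the integrand, and in any case by at most $2h\max\phi$. So
\begin{equation*}
h\sum_{\ell}\phi(\ell h)\le\int_\bbR\phi\,\diff{x}+2h\max\phi=\Gamma(\gamma)t^{-\gamma}+2h(\gamma/e)^\gamma t^{-\gamma}.
\end{equation*}
We then need $(\gamma/e)^\gamma\le\Gamma(\gamma)$ for $\gamma\in(0,1)$. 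This holds because $(\gamma/e)^\gamma\le1\le\Gamma(\gamma)$ on $(0,1)$. The result is the constant $(1+2h)\le(1+2h_0)$.

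The only delicate point is stating the unimodal Riemann-sum comparison carefully. On the increasing branch, each term is bounded by the integral over the following cell; on the decreasing branch, by the integral over the preceding cell. At most two grid points near $x^*$ are left unpaired, and each is bounded by $h\max\phi$. Once $\overline k_j=(1+2h_0)k_j$ is in place, the bounds $\|\overline k_1\|_{L^1(0,t)}\le\eta_T t^\alpha$ and $\|\overline k_2\|_{L^2(0,t)}\le\eta_T t^\alpha$ follow immediately from part~\ref{item:5_verif_assu_i}. Monotonicity of $\overline k_j$ is inherited from $k_j$.
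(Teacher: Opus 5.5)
Your proposal is correct and follows essentially the same route as the paper. Part (i) is the explicit computation of $\|k_j\|_{L^j(0,t)}$. For part (ii), the paper also uses the majorant $\overline{k}_j=(1+2h_0)k_j$, obtained from the unimodal Riemann-sum comparison in which the two grid points adjacent to $x^*=\log(\gamma/t)$ are each bounded by $h(\gamma/e)^\gamma t^{-\gamma}$, combined with $(\gamma/e)^\gamma\le 1\le\Gamma(\gamma)$; the only difference is that you keep $\Gamma(\gamma)$ outside the integrand rather than working with the normalized $g_{\frac12-H_j}$.
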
 

\begin{proof}
	\textbf{Proof of~{\upshape\ref{item:5_verif_assu_i}}:}
	Recall that, for $j \in \{1,2\}$ and $t \in (0,\infty)$,
	$k_j(t) 
	= 
	\frac{ t^{H_j - \frac{1}{2}}}{ 
		\Gamma(H_j + \nicefrac{1}{2})}$.
	These functions are non-negative and non-increasing 
	on $(0,\infty)$.
	Fix an arbitrary ${T \in (0,\infty)}$ and 
	let $\eta_T, \widetilde{\eta}_T$, 
	and $\alpha_T \equiv \alpha$ 
	be as in \eqref{eq:alpha-eta-fractional}--\eqref{eq:eta-tilde-fractional}.  
	Then,  
	\begin{equation*}
		\| k_j \|_{L^j(0,t)} 
		= 
		\bigl(   
		j \bigl( H_j - \tfrac{1}{2} \bigr) + 1 
		\bigr)^{-\frac{1}{j}} \, 
		\frac{t^{H_j - \frac{1}{2} + \frac{1}{j}}}{\Gamma(H_j + \nicefrac{1}{2})} 
		\leq 
		\widetilde{\eta}_T \, t^\alpha , 
		\qquad 
		t \in [0,T], 
	\end{equation*}
	holds for $j\in\{1,2\}$.  
	Since $\widetilde{\eta}_T < \eta_T$, 
	this 
	shows that $k_1$, $k_2$ satisfy Assumption~{\upshape\ref{ass:assump_conv_kernels}} 
	with the families of constants $(\eta_T)_{T > 0}$
	and $(\alpha_T \equiv \alpha)_{T>0}$  
	as in \eqref{eq:alpha-eta-fractional}. 
	
	\textbf{Proof of~{\upshape\ref{item:5_verif_assu_ii}}:}
	Let $(h,\beta)\in\Xi = 
	(0,h_0] \times \bigl[0, \bigl(H_1+\frac{1}{2}\bigr)  \wedge H_2\bigr)$. 
	Then, by \eqref{eq:sinc_quad_fractional_kernel},  
	\begin{equation*}
		\widehat{k}_j^{(h,\beta)}(t) 
		= 
		\frac{h}{\Gamma(H_j + \nicefrac{1}{2})} 
		\sum_{\ell  = -M_j^{-\!}(h)}^{M_j^{+\!}(h,\beta)} 
		g_{\frac{1}{2}-H_j}(t,\ell h), 
		\qquad 
		t \in (0,\infty), 
		\quad 
		j\in\{1,2\}, 
	\end{equation*}
	where $g_{\frac{1}{2}-H_j}$ is defined as in \eqref{eq:def_g_fracker}.
	The function 
	$t\mapsto g_{\frac{1}{2} - H_j}(t,x)$ 
	is strictly positive, for every $x\in\bbR$. 
	Moreover, the partial derivative 
	of $g_{\frac{1}{2} - H_j}$ with respect to $t$ 
	is strictly negative for all $(t,x) \in (0,\infty)\times\bbR$.
	By linearity, we conclude that 
	$\widehat{k}^{(h,\beta)}_j$ is 
	non-negative and non-increasing
	on $(0,\infty)$, 
	for all $j\in\{1,2\}$  
	and every $(h,\beta) \in \Xi$. 
	
	Moreover, by 
	Lemma~{\upshape\ref{lma:exponential_maximum_bound}}, 
	for $j\in\{1,2\}$, and for every $t\in(0,\infty)$, 
	the function $x \mapsto g_{\frac{1}{2} - H_j}(t,x)$
	has a unique global maximum at 
	$x^*_j = \log\Bigl(\tfrac{ 1/2 - H_j }{t} \Bigr)$. 
	Consequently, it is monotonically increasing on 
	$(-\infty,\ell^*_j h] \subseteq (-\infty, x^*_j]$
	and monotonically decreasing on 
	$[(\ell^*_j +1)h, \infty) \subset (x^*_j, \infty)$, 
	where we set   
	$\ell^*_j := 
	\bigl\lfloor \tfrac{x^*_j}{h} \bigr\rfloor$. 
	We therefore can bound 
	$\widehat{k}^{(h,\beta)}_j(t)$, 
	for $j\in\{1,2\}$, 
	for every $(h,\beta)\in\Xi$, 
	and for all $t\in(0,\infty)$,	
	as follows 
	\begin{align*}
		\Gamma(H_j + \nicefrac{1}{2}) \, 
		&\widehat{k}^{(h,\beta)}_j(t)
		\leq 
		h \sum_{\ell \in \bbZ} 
		g_{\frac{1}{2} - H_j}(t, \ell h) 
		\\
		&\leq 
		h \, g_{\frac{1}{2} - H_j}(t, \ell^*_j h)
		+
		h \, g_{\frac{1}{2} - H_j}(t, (\ell^*_j+1) h)
		+ 
		\int_{-\infty}^{\infty} g_{\frac{1}{2} - H_j}(t, x) \diff{x}. 
	\end{align*} 
	By construction,
	$\int_{-\infty}^{\infty} g_{\frac{1}{2} - H_j}(t, x) \diff{x} 
	= t^{H_j - \frac{1}{2}}$ 
	and by 
	Lemma~{\upshape\ref{lma:exponential_maximum_bound}}, 
	for all $x\in\bbR$, 
	\begin{equation*}
		h \, g_{\frac{1}{2} - H_j}(t, x) 
		\leq 
		h \, g_{\frac{1}{2} - H_j}(t, x_j^*) 
		=  
		\frac{h}{\Gamma(\nicefrac{1}{2} - H_j)} \, 
		\biggl( \frac{ \nicefrac{1}{2} - H_j }{ e } 
		\biggr)^{\frac{1}{2} - H_j} 
		t^{H_j - \frac{1}{2}} 
		<  
		h_0 \, t^{H_j - \frac{1}{2}} , 
	\end{equation*}
	where we also used that 
	$\Gamma(\nicefrac{1}{2} - H_j) > 1$  
	and $e^{-1} (\nicefrac{1}{2} - H_j) < 1$. 
	Since the preceding 
	estimates hold 
	for every 
	$(h,\beta) \in \Xi 
	= (0,h_0] \times \bigl[0, \bigl(H_1+\frac{1}{2}\bigr) \wedge H_2\bigr)$, 
	we conclude that 
	\begin{equation*}
		\forall (h,\beta)\in\Xi, 
		\quad\;\; 
		\forall t\in(0,\infty): 
		\qquad  
		\widehat{k}^{(h,\beta)}_j (t)
		\leq 
		(1 + 2 h_0 ) \, k_j(t). 
	\end{equation*}
	We therefore set $\overline{k}_j(t) 
	:= (1 + 2 h_0  ) \, k_j(t)$, 
	and it then follows from the proof of 
	part~{\upshape\ref{item:5_verif_assu_i}} 
	that, for $j\in\{1,2\}$, 
	for all $T\in(0,\infty)$, 
	and every $t\in[0, T]$, 
	we have that 
	$\| \overline{k}_j \|_{L^j(0,t)} 
	\leq (1+2 h_0)\, \widetilde{\eta}_T \, t^\alpha 
	= \eta_T \, t^\alpha$, 
	which completes the proof 
	of~{\upshape\ref{item:5_verif_assu_ii}}.
\end{proof}

Having verified 
Assumptions~{\upshape\ref{ass:assump_conv_kernels}} 
and~{\upshape\ref{ass:assu_co_unif_xi}}  
for the fractional kernels 
and the corresponding 
family of sinc approximations, 
by means of Corollaries~{\upshape\ref{cor:strong_error_Lp_conv}} 
and~{\upshape\ref{cor:path_dependent_Lp}}, 
we can now establish  
strong convergence 
of the associated sinc-Markovian 
approximations to    
stochastic Volterra processes
with fractional kernels, 
pointwise in the norm of $C^0([0,T];L^p(\Omega))$, 
and pathwise 
in the norm of $L^p(\Omega;L^q(0,T))$, 
respectively.

\begin{theorem}\label{thm:C0_error_fracker} 
	Let $H_1 \in \bigl(-\frac{1}{2}, \frac{1}{2}\bigr)$,
	$H_2 \in \bigl(0, \frac{1}{2}\bigr)$, 
	$h_0\in (0,\infty)$,  
	and $\delta\in\bigl(0, \pi^2\bigr)$ 
	be given, and  
	let the kernels $k_1,k_2\colon(0,\infty)\to\bbR$ 
	and the family  
	$\bigl( \widehat{k}^{(h,\beta)\!}_1,  
	\widehat{k}^{(h,\beta)}_2 \bigr)_{(h,\beta)\in\Xi}$ 
	be defined 
	as in Assumption~{\upshape\ref{ass:fractional_sinc}}, 
	where 
	$\Xi:=(0,h_0] \times \bigl[0, \bigl(H_1+\frac{1}{2}\bigr)\wedge H_2\bigr)$.  
	Define  
	$\alpha \in \bigl(0, \frac{1}{2}\bigr)$ 
	as in \eqref{eq:alpha-eta-fractional}, 
	and let $p\in(\nicefrac{1}{\alpha}, \infty)$.  
	
	Suppose that $X \colon [0,\infty) \times \Omega \to \bbR^m$
	is a continuous stochastic Volterra process with
	initial value $X_0 \in L^p(\Omega,\cF_0,\bbP)$,
	coefficients $b, \sigma$ satisfying
	Assumption~{\upshape\ref{ass:lipschitz_coef}},  
	and convolution kernels $k_1, k_2$. 
	For every $h\in(0,h_0]$, 
	let  
	$\widehat{X}^{(h, 0)}
	\colon [0,\infty) \times \Omega \to \bbR^m$ 
	be a continuous stochastic Volterra process 
	with the same initial value 
	and coefficients as~$X$, 
	and with convolution kernels
	$\widehat{k}^{(h,0)\!}_1$ 
	and 
	$\widehat{k}^{(h,0)\!}_2$. 
	
	Then, 
	for every $T\in(0,\infty)$, 
	there exists a constant $C \in (0,\infty)$, 
	independent of $h\in(0,h_0]$ 
	and $q\in[1,p]$, 
	such that, 
	for all $h \in (0,h_0]$ 
	and every~$q\in[1,p]$,
	\begin{equation*}
		\sup_{t\in[0,T]} 
		\bigl\| 
		X_t - \widehat{X}^{(h,0)}_t 
		\bigr\|_{L^p(\Omega)} 
		+ 
		\bigl\| X - \widehat{X}^{(h,0)} 
		\bigr\|_{L^p(\Omega;L^q(0,T))}
		\leq 
		C  e^{- (\pi^2 - \delta)/h} . 
	\end{equation*} 
\end{theorem}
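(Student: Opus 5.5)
The plan is to combine Corollaries~\ref{cor:strong_error_Lp_conv} and~\ref{cor:path_dependent_Lp} with the kernel error bound of Theorem~\ref{thm:error_bound_Lr_general_gamma}\ref{item:thm:error_bound_Lr_general_gamma-ii}, taking $\beta=0$. The hypotheses of both corollaries follow from Proposition~\ref{prop:5_verif_assumps_2_3}. Indeed, $k_1,k_2$ satisfy Assumption~\ref{ass:assump_conv_kernels}, and the family $(\widehat{k}^{(h,0)}_1,\widehat{k}^{(h,0)}_2)_{h\in(0,h_0]}$ is a subfamily of the one in part~\ref{item:5_verif_assu_ii}, so it satisfies Assumption~\ref{ass:assu_co_unif_xi}. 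Both hold with the same constants $\eta_T$ and $\alpha_T\equiv\alpha$. Since $p>\nicefrac{1}{\alpha}$, we have $\alpha\in(\nicefrac1p,1]$, as both corollaries require. Hence there is a constant $C_0$, independent of $h$ and $q$, such that for all $t\in[0,T]$
\[
\|X_t-\widehat X^{(h,0)}_t\|_{L^p(\Omega)}\le C_0\bigl(\|k_1-\widehat k^{(h,0)}_1\|_{L^1(0,t)}+\|k_2-\widehat k^{(h,0)}_2\|_{L^2(0,t)}\bigr),
\]
and the analogous integrated bound holds in $L^p(\Omega;L^q(0,T))$.

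The key step is to bound $\|k_j-\widehat k^{(h,0)}_j\|_{L^j(0,t)}\le c\,e^{-(\pi^2-\delta)/h}$ uniformly in $t\in[0,T]$. Write $k_j=\Gamma(H_j+\nicefrac12)^{-1}k_{\gamma_j}$ with $\gamma_j:=\frac12-H_j\in(0,1)$. Note that $\gamma_1\in(0,1)$ because $H_1>-\frac12$, and $\gamma_2\in(0,\frac12)$. We then identify $\widehat k^{(h,0)}_j$ as $\Gamma(H_j+\nicefrac12)^{-1}\widehat k_{\gamma_j}^{(h,M^-_j,M^+_j)}$ from \eqref{eq:sinc_quad_kernel}; this is immediate because $\Gamma(\gamma_j)=\Gamma(\nicefrac12-H_j)$. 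Apply Theorem~\ref{thm:error_bound_Lr_general_gamma}\ref{item:thm:error_bound_Lr_general_gamma-ii} with $r=j$, $s=0$ and $\beta=0$. The condition $r<\gamma_j^{-1}$ reads $\gamma_1<1$ for $j=1$ and $2\gamma_2<1$ for $j=2$, i.e.\ $H_2>0$, so both hold.

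The truncation exponents are $\gamma_jM^-_jh$ and $(\frac1j-\gamma_j)M^+_jh$, where $\frac1j-\gamma_j=\frac1j-\frac12+H_j$, which is positive. The calibration \eqref{eq:sinc_quad_fractional_kernel-cali} with $\beta=0$ makes both exponents at least $(\pi^2-\delta)/h$. We must check that Assumption~\ref{ass:fractional_sinc} uses $\lceil\cdot\rceil\ge$ the stated quantity, so that $M^\pm_jh\cdot(\text{rate})\ge(\pi^2-\delta)/h$. This mirrors Remark~\ref{rmk:quadrature_error_fixed_total_nodes}. Each kernel error is therefore at most $3c''_j T^0 e^{-(\pi^2-\delta)/h}$, where $(t-s)^\beta=1$. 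The constants $c''_j$ are independent of $h$ and depend only on $\gamma_j$, $j$, $\delta$, $T$ and $h_0$.

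Inserting these bounds, the pointwise term is bounded by $C_0(3c''_1+3c''_2)e^{-(\pi^2-\delta)/h}$, uniformly in $t$. For the $L^q$ term, the right-hand side of Corollary~\ref{cor:path_dependent_Lp} is at most $C_0\,T^{1/q}(3c''_1+3c''_2)e^{-(\pi^2-\delta)/h}$. Since $T^{1/q}\le 1\vee T$ for all $q\ge1$, the constant is independent of $q\in[1,p]$. Adding the two terms gives the claim.

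The only delicate point is bookkeeping rather than analysis: matching the normalizations of \eqref{eq:sinc_quad_fractional_kernel} and \eqref{eq:sinc_quad_kernel}, and verifying the admissibility of $r=j$ with $\beta=0$. The second check is exactly where $H_2>0$ and $H_1>-\frac12$ are used. Uniformity in $h\in(0,h_0]$ comes from the factor $(1-e^{-(\pi^2-\delta)/h_0})^{-1}$ already absorbed into $c''$.
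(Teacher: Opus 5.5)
Your proposal is correct and follows the paper's proof essentially step for step. You verify the assumptions via Proposition~\ref{prop:5_verif_assumps_2_3}, apply Corollaries~\ref{cor:strong_error_Lp_conv} and~\ref{cor:path_dependent_Lp}, and bound the kernel errors using Theorem~\ref{thm:error_bound_Lr_general_gamma}\ref{item:thm:error_bound_Lr_general_gamma-ii} with $r=j$, $\beta=0$ and the calibration of Remark~\ref{rmk:quadrature_error_fixed_total_nodes}; your explicit treatment of $T^{1/q}\le 1\vee T$ and of the factor $\Gamma(H_j+\nicefrac{1}{2})^{-1}$ only spells out details the paper leaves implicit.
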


\begin{proof}
	By Proposition~{\upshape\ref{prop:5_verif_assumps_2_3}}, 
	the kernels 
	$k_1$, $k_2$ satisfy 
	Assumption~{\upshape\ref{ass:assump_conv_kernels}},
	and the family
	$\bigl( \widehat{k}^{(h,0)\!}_1,
	\widehat{k}^{(h,0)}_2 \bigr)_{h \in (0,h_0]}$
	fulfills Assumption~{\upshape\ref{ass:assu_co_unif_xi}},  
	with ${\alpha_T \equiv  
		\bigl(H_1+\tfrac{1}{2}\bigr) \wedge H_2 
		\in \bigl(0, \tfrac{1}{2}\bigr)}$.
	Hence,  Corollaries~{\upshape\ref{cor:strong_error_Lp_conv}} 
	and~{\upshape\ref{cor:path_dependent_Lp}} 
	are applicable, yielding, for every $T\in(0,\infty)$, 
	the existence 
	of a constant $\widehat{C} \in (0,\infty)$ such that,
	for all $h \in (0,h_0]$ and all~$q\in[1,p]$, 
	\begin{align*}
		\sup_{t\in[0,T]} 
		\bigl\| X_t - \widehat{X}^{(h,0)}_t 
		\bigr\|_{L^p(\Omega)}
		&\leq 
		\widehat{C} 
		\Bigl( 
		\bigl\| k_1 - \widehat{k}^{(h,0)}_1
		\bigr\|_{L^1(0,T)} 
		+ 
		\bigl\| k_2 - \widehat{k}^{(h,0)}_2 
		\bigr\|_{L^2(0,T)} 
		\Bigr), 
		\\
		\bigl\| X - \widehat{X}^{(h,0)} 
		\bigr\|_{L^p(\Omega;L^q(0,T))}
		&\leq 
		\widehat{C} \, T^{\frac{1}{q}} 
		\Bigl( 
		\bigl\| k_1 - \widehat{k}^{(h,0)}_1 
		\bigr\|_{L^1(0,T)} 
		+ 
		\bigl\| k_2 - \widehat{k}^{(h,0)}_2 
		\bigr\|_{L^2(0,T)} 
		\Bigr) . 
	\end{align*}
	The claim then follows from  
	Theorem~{\upshape\ref{thm:error_bound_Lr_general_gamma}\ref{item:thm:error_bound_Lr_general_gamma-ii}} 
	and Remark~{\upshape\ref{rmk:quadrature_error_fixed_total_nodes}}  
	with $\beta=0$, 
	showing that, for 
	all $T\in(0,\infty)$ 
	and $j\in\{1,2\}$, 
	there exists   
	a constant $c_j''\in(0,\infty)$ such that 
	\begin{equation*} 
		\forall h \in (0,h_0]: 
		\qquad 
		\bigl\| k_j - \widehat{k}_j^{(h,0)} 
		\bigr\|_{L^j(0,T)}
		\leq 
		3\, c_j'' \, 
		e^{-(\pi^2-\delta)/h} . 
	\end{equation*}  
	Here, we have used 
	the fact that, for $j\in\{1,2\}$, 
	the truncation parameters 
	$M_j^{-\!}(h)$ and 
	$M_j^{+\!}(h,0)$  
	of $\widehat{k}_j^{(h,0)\!}$ 
	in 
	\eqref{eq:sinc_quad_fractional_kernel-cali}  
	are chosen such that  
	the  $L^j$-error is calibrated.  
\end{proof}

\begin{remark}
	\label{rmk:calibrate_all_exponents_beta0} 
	Similarly as for the estimate 
	of the kernel difference   
	in Remark~{\upshape\ref{rmk:quadrature_error_fixed_total_nodes}}, 
	we may  
	use the calibration \eqref{eq:sinc_quad_fractional_kernel-cali} 
	to express the error bound  
	of Theorem~{\upshape\ref{thm:C0_error_fracker}} 
	in terms of the numbers of quadrature nodes 
	$N_1 = N_1(h,0) := M_1^{-\!}(h) + M_1^{+\!}(h,0) + 1$ 
	and 
	$N_2 = N_2(h,0) := M_2^{-\!}(h) + M_2^{+\!}(h,0) + 1$
	used to approximate the kernels 
	$k_1$ and~$k_2$, respectively. 
	This shows that, 
	for every $T\in(0,\infty)$, 
	there exists a constant 
	$C\in(0,\infty)$, 
	independent of $h\in(0,h_0]$ and $q\in[1,p]$, 
	such that 
	\begin{align*}
		\sup_{t\in[0,T]} 
		&\bigl\| 
		X_t - \widehat{X}^{(h,0)}_t 
		\bigr\|_{L^p(\Omega)} 
		+ 
		\bigl\| X - \widehat{X}^{(h,0)} 
		\bigr\|_{L^p(\Omega;L^q(0,T))}
		\\
		&\qquad\leq 
		C \Bigl( 
		e^{- \sqrt{(\pi^2 - \delta) \left(\frac{1}{2}-H_1\right) \left( H_1 + \frac{1}{2} \right)} 
			\cdot \sqrt{N_1 - 3}}
		+ e^{- \sqrt{2 (\pi^2 - \delta) \left( \frac{1}{2}-H_2 \right) H_2} \cdot \sqrt{N_2 - 3}} 
		\Bigr). 
	\end{align*}  
\end{remark}

\begin{remark}[Comparison with \cite{alfonsikebaier2022}]
	The pointwise error bound  
	in $C^0([0,T];L^2(\Omega))$ 
	of~\cite[Theorem~3.1]{alfonsikebaier2022} 
	requires that the convolution kernel 
	$k_1$ in the drift 
	is square-integrable, i.e., $k_1 \in L^2(0,T)$,  
	and this excludes the range  
	$H_1\in\bigl(-\tfrac{1}{2}, 0\bigr]$.   
	Moreover, the constant in that result
	may depend on the 
	choice of the 
	approximations
	$\widehat{k}_1^\xi, \widehat{k}_2^\xi$ 
	(via the constant $\overline C$ 
	in~\cite[Eq.~(3.6)]{alfonsikebaier2022}). 
	Assumption~{\upshape\ref{ass:assu_co_unif_xi}}
	ensures uniformity in $\xi\in\Xi$.
\end{remark}


\subsection{Strong convergence in H\"older norms}
\label{subsec:markov_approx_holder}

To extend our strong error analysis
from the pointwise to the pathwise 
H\"older sense,  
we first establish that 
the sinc kernel approximations
introduced in Assumption~{\upshape\ref{ass:fractional_sinc}} 
satisfy the stricter  
conditions of 
Assumption~{\upshape\ref{ass:convker_holder_assu}}  
in Subsection~{\upshape\ref{subsec:general_SVE_holder_diff}}.
This is addressed in the following proposition.

\begin{proposition}\label{prop:verification_conv_holder} 
	Let $H_1 \in \bigl(-\frac{1}{2}, \frac{1}{2}\bigr)$,
	$H_2 \in \bigl(0, \frac{1}{2}\bigr)$,
	$h_0\in(0,\infty)$, and 
	$\delta \in \bigl( 0, \pi^2 \bigr)$.
	Assume that the kernels $k_1$, $k_2$ 
	and, for $(h,\beta)\in\Xi := 
	(0,h_0] \times \bigl[0, \bigl(H_1+\frac{1}{2}\bigr)  \wedge H_2\bigr)$, 
	the approximate kernels 
	$\widehat{k}^{(h,\beta)\!}_1$,  
	$\widehat{k}^{(h,\beta)\!}_2$  
	are defined as in 
	Assumption~{\upshape\ref{ass:fractional_sinc}}. 
	
	Fix $\beta_{\star} \in \bigl(0, \bigl(H_1+\frac{1}{2}\bigr)  \wedge H_2\bigr)$. 
	Then, $k_1, k_2$ 
	and the family $\bigl( \widehat{k}^{(h,\beta_\star)\!}_1,  
	\widehat{k}^{(h,\beta_\star)}_2 \bigr)_{h\in(0,h_0]}$ satisfy 
	Assumption~{\upshape\ref{ass:convker_holder_assu}}. 
	Moreover, for every $T \in (0,\infty)$, 
	the constants and functions 
	in Assumption~{\upshape\ref{ass:convker_holder_assu}} 
	may be chosen as follows: 
	$\eta_T\in(0,\infty)$ and 
	$\alpha_T\in\bigl(0,\frac{1}{2}\bigr)$  
	as in \eqref{eq:alpha-eta-fractional}, 
	$\beta_T := \beta_\star$, 
	$\eta_T' := \beta_\star^{-1}\!$, 
	$\theta_T(h) := 3 \, c_T \, e^{-(\pi^2 - \delta)/h}$, 
	$g_T(u) := u^{\beta_{\star\!} - 1}$. 
	Here, for all $T\in(0,\infty)$, 
	the constant $c_T \in(0,\infty)$ 
	is independent of $h\in(0,h_0]$, 
	and defined as  
	$c_T := c_{1}'' \vee c_{2}'' \vee c_{1}''' \vee c_{2}'''$, 
	where 
	$c_j'', c_j''' \in (0,\infty)$, $j\in\{1,2\}$,  
	are as in 
	Theorem~{\upshape\ref{thm:error_bound_Lr_general_gamma}\ref{item:thm:error_bound_Lr_general_gamma-ii}} and~{\upshape\ref{item:thm:error_bound_Lr_general_gamma-iii}}, 
	respectively, 
	with   
	$\gamma = \frac{1}{2} - H_j$, $r = j$, 
	and 
	$\beta = \beta_\star\in\bigl(0, \bigl( H_1 + \frac{1}{2}\bigr) \wedge H_2 \bigr)$.   
\end{proposition}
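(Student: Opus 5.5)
The plan is to verify the items of Assumption~\ref{ass:convker_holder_assu} one at a time, with the sinc error bounds of Theorem~\ref{thm:error_bound_Lr_general_gamma} doing the work. Proposition~\ref{prop:5_verif_assumps_2_3} already shows that $k_1,k_2$ satisfy Assumption~\ref{ass:assump_conv_kernels}, and that the family $\bigl(\widehat{k}^{(h,\beta_\star)}_1,\widehat{k}^{(h,\beta_\star)}_2\bigr)_{h\in(0,h_0]}$, being a subfamily of the one considered there, satisfies Assumption~\ref{ass:assu_co_unif_xi} with $\eta_T,\alpha_T$ as in \eqref{eq:alpha-eta-fractional}. It therefore remains to check parts \ref{item:diff_diff_holder_conv_i} and \ref{item:diff_diff_holder_conv_ii} with the proposed $\theta_T$, $g_T$, $\beta_T$ and $\eta_T'$.

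For part \ref{item:diff_diff_holder_conv_i}, fix $j\in\{1,2\}$ and write $k_j-\widehat{k}^{(h,\beta_\star)}_j = \Gamma(H_j+\nicefrac12)^{-1}\bigl(k_\gamma-\widehat{k}_\gamma^{(h,M_j^-,M_j^+)}\bigr)$ with $\gamma=\tfrac12-H_j\in(0,1)$. The sinc sum \eqref{eq:sinc_quad_fractional_kernel} matches \eqref{eq:sinc_quad_kernel} after the normalisation $1/\Gamma(\gamma)$, so this identification holds.

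\begin{itemize}
\item Take $r=j$. The condition $r<\gamma^{-1}$ becomes $j(\tfrac12-H_j)<1$, which holds for $j=1$ since $H_1>-\tfrac12$, and for $j=2$ since $H_2>0$.
\item The required range $\beta_\star<\tfrac1r-\gamma = \tfrac1j-\tfrac12+H_j$ follows from $\beta_\star<(H_1+\tfrac12)\wedge H_2$.
\item Apply Theorem~\ref{thm:error_bound_Lr_general_gamma}\ref{item:thm:error_bound_Lr_general_gamma-ii} with $s=0$ and $\beta=\beta_\star$.
\item The calibration \eqref{eq:sinc_quad_fractional_kernel-cali} makes $\gamma M_j^-h\ge(\pi^2-\delta)/h$ and $(\tfrac1r-\gamma-\beta_\star)M_j^+h\ge(\pi^2-\delta)/h$, as in Remark~\ref{rmk:quadrature_error_fixed_total_nodes}.
\end{itemize}

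This gives $\|k_j-\widehat{k}_j\|_{L^j(0,t)}\le 3c_j''e^{-(\pi^2-\delta)/h}t^{\beta_\star}\le\theta_T(h)t^{\beta_\star}$. Possibly the factor $\Gamma(H_j+\nicefrac12)^{-1}$ must be absorbed into the constant; I would define $c_j''$ to include it.

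For part \ref{item:diff_diff_holder_conv_ii}, weak differentiability on $(0,T)$ holds because on each $(s,T)$ with $s>0$ the derivative series converges absolutely, as used in the proof of Theorem~\ref{thm:error_bound_Lr_general_gamma}\ref{item:thm:error_bound_Lr_general_gamma-iii}. That part, with the same $\gamma$, $r=j$ and $\beta=\beta_\star$, and with $s=u$, $t=T$, gives
\[
\bigl\|(k_j-\widehat{k}_j)'\bigr\|_{L^j(u,T)}\le c_j'''\bigl(e^{-(\pi^2-\delta)/h}+e^{-(\gamma+1)M_j^-h}+e^{-(\frac1j-\gamma-\beta_\star)M_j^+h}\bigr)u^{\beta_\star-1}.
\]
Since $(\gamma+1)M_j^-h\ge\gamma M_j^-h$, the same calibration bounds the bracket by $3e^{-(\pi^2-\delta)/h}$. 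Hence the right-hand side is at most $\theta_T(h)g_T(u)$ with $g_T(u)=u^{\beta_\star-1}$.

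Finally, $g_T\in L^1(0,T)$ and $\|g_T\|_{L^1(0,t)}=\beta_\star^{-1}t^{\beta_\star}$, so $\eta_T'=\beta_\star^{-1}$. This is exactly where $\beta_\star>0$ is needed.

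I expect the main obstacle to be bookkeeping rather than any single estimate. Two points need care. First, the constants $c_j''$ and $c_j'''$ depend only on fixed and free parameters ($\gamma,T,h_0,\delta,r,\beta_\star$) and not on $h$, which Theorem~\ref{thm:error_bound_Lr_general_gamma} guarantees. Second, the parameter ranges must be checked for both $j=1$ and $j=2$ with $r=j$, including the condition $\tfrac1r-\gamma-\beta_\star>0$ in the form $1-r\gamma-r\beta>0$, which is needed for $c_j'''$ to be finite.
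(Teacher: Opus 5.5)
Your proposal is correct and follows the paper's proof essentially step by step. It uses Proposition~\ref{prop:5_verif_assumps_2_3} for Assumptions~\ref{ass:assump_conv_kernels} and~\ref{ass:assu_co_unif_xi}; Theorem~\ref{thm:error_bound_Lr_general_gamma}\ref{item:thm:error_bound_Lr_general_gamma-ii} with $r=j$, $\beta=\beta_\star$ and the calibration \eqref{eq:sinc_quad_fractional_kernel-cali} for part~\ref{item:diff_diff_holder_conv_i}; and Theorem~\ref{thm:error_bound_Lr_general_gamma}\ref{item:thm:error_bound_Lr_general_gamma-iii} together with $e^{-(\gamma+1)x}\le e^{-\gamma x}$ and $\|u\mapsto u^{\beta_\star-1}\|_{L^1(0,t)}=\beta_\star^{-1}t^{\beta_\star}$ for part~\ref{item:diff_diff_holder_conv_ii}. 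Your hedge about the prefactor $\Gamma(H_j+\nicefrac{1}{2})^{-1}$ is unnecessary: since $H_j+\tfrac12\in(0,1)$, we have $\Gamma(H_j+\nicefrac{1}{2})>1$, so this factor is at most $1$ and the constants $c_j''$, $c_j'''$ of Theorem~\ref{thm:error_bound_Lr_general_gamma} can be used unchanged.
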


\begin{proof} 
	By Proposition~{\upshape\ref{prop:5_verif_assumps_2_3}},  
	$k_1, k_2$ and the family   
	$\bigl( \widehat{k}^{(h,\beta_\star)\!}_1, 
	\widehat{k}^{(h,\beta_\star)}_2 \bigr)_{h\in(0,h_0]}$ 
	satisfy Assumptions~{\upshape\ref{ass:assump_conv_kernels}} 
	and~{\upshape\ref{ass:assu_co_unif_xi}}, 
	respectively, 
	with $(\eta_T)_{T>0}$ and 
	$(\alpha_T)_{T>0}$ 
	as in \eqref{eq:alpha-eta-fractional}.  
	
	Furthermore, 
	for every $T\in(0,\infty)$, 
	and for $j\in\{1,2\}$, 
	by 
	Theorem~{\upshape\ref{thm:error_bound_Lr_general_gamma}\ref{item:thm:error_bound_Lr_general_gamma-ii}} 
	and Remark~{\upshape\ref{rmk:quadrature_error_fixed_total_nodes}}, 
	applied with 
	$\gamma = \tfrac{1}{2} - H_j$, 
	$r=j$, and $\beta=\beta_\star$, 
	there exists a constant $c_{j}''\in(0,\infty)$ 
	such that, for all $h \in (0,h_0]$,  
	\begin{equation*} 
		\forall t \in [0,T] : \quad 
		\bigl\| k_j - \widehat{k}_j^{(h,\beta_\star)} \bigr\|_{L^j(0,t)} 
		\leq 
		3\, c_{j}'' \, e^{-(\pi^2-\delta)/h} \, t^{\beta_\star} . 
	\end{equation*} 
	Thus, 
	Assumption~{\upshape\ref{ass:convker_holder_assu}\ref{item:diff_diff_holder_conv_i}} 
	is fulfilled  
	with 
	$\theta_T(h) 
	= 3 \, c_T 
	\, e^{-(\pi^2 - \delta)/h}$ 
	and 
	$\beta_T=\beta_\star$. 
	
	Finally, by 
	Theorem~{\upshape\ref{thm:error_bound_Lr_general_gamma}\ref{item:thm:error_bound_Lr_general_gamma-iii}}, 
	Remark~{\upshape\ref{rmk:quadrature_error_fixed_total_nodes}}, 
	and by noting that 
	$e^{-(\gamma+1)x} \leq e^{-\gamma x}$ 
	holds for all $\gamma, x \in(0,\infty)$, 
	we conclude that, 
	for every $T\in(0,\infty)$, 
	and for $j\in\{1,2\}$, 
	there exists a constant $c_j'''\in(0,\infty)$ 
	such that 	 
	\begin{equation*}
		\forall u \in (0,T): 
		\quad 
		\bigl\| \bigl(k_j - \widehat{k}^{(h,\beta_\star)}_j \bigr)' 
		\bigr\|_{L^j(u,T)} 
		\leq 
		3\, c_{j}''' \,
		e^{- (\pi^2 - \delta)/h} \, 
		u^{\beta_{\star\!} - 1}.
	\end{equation*}
	Since 
	$\| u\mapsto u^{\beta_{\star\!} - 1}\|_{L^1(0,t)} 
	= 
	\frac{1}{\beta_\star} \, t^{\beta_\star}\!$, 
	we find that also 
	Assumption~{\upshape\ref{ass:convker_holder_assu}\ref{item:diff_diff_holder_conv_ii}} 
	is satisfied  
	with 
	$\theta_T(h) = 3 \, c_T \, e^{-(\pi^2 - \delta)/h}$, 
	$g_T(u) = u^{\beta_{\star\!} - 1}$,   
	and 
	$\beta_T = \beta_\star$, 
	$\eta_T' = \beta_\star^{-1}$.  
\end{proof}

\begin{theorem}\label{thm:holder_bound_fracker}
	Let $H_1 \in \bigl(-\frac{1}{2}, \frac{1}{2}\bigr)$,
	$H_2 \in \bigl(0, \frac{1}{2}\bigr)$, 
	$h_0\in (0,\infty)$,  
	and $\delta\in\bigl(0, \pi^2\bigr)$ 
	be given, and  
	let the kernels $k_1,k_2\colon(0,\infty)\to\bbR$ 
	and the family  
	$\bigl( \widehat{k}^{(h,\beta)\!}_1,  
	\widehat{k}^{(h,\beta)}_2 \bigr)_{(h,\beta)\in\Xi}$ 
	be defined 
	as in Assumption~{\upshape\ref{ass:fractional_sinc}}, 
	where 
	$\Xi:=(0,h_0] \times \bigl[0, \bigl(H_1+\frac{1}{2}\bigr)\wedge H_2\bigr)$.  
	Define  
	$\alpha \in \bigl(0, \frac{1}{2}\bigr)$ 
	as in \eqref{eq:alpha-eta-fractional}, 
	and let $p\in(\nicefrac{1}{\alpha}, \infty)$.   
	
	Suppose that $X \colon [0,\infty) \times \Omega \to \bbR^m$
	is a continuous stochastic Volterra process with
	initial value $X_0 \in L^p(\Omega,\cF_0,\bbP)$,
	coefficients $b, \sigma$ satisfying
	Assumption~{\upshape\ref{ass:lipschitz_coef}},  
	and convolution kernels $k_1, k_2$. 
	Fix ${\beta_\star\in\bigl(0, \bigl( H_1 + \tfrac{1}{2}\bigr) \wedge H_2\bigr)}$ 
	and, for every $h\in(0,h_0]$, 
	let  
	$\widehat{X}^{(h, \beta_\star)}
	\colon [0,\infty) \times \Omega \to \bbR^m$ 
	be a continuous stochastic Volterra process 
	with the same initial value 
	and coefficients as~$X$, 
	and with convolution kernels
	$\widehat{k}^{(h,\beta_\star)\!}_1,  
	\widehat{k}^{(h,\beta_\star)\!}_2$.  
	
	Then, 
	for every $T\in(0,\infty)$, 
	there exists a constant $C \in (0,\infty)$, 
	independent of $h\in(0,h_0]$, 
	such that, 
	for all $h \in (0,h_0]$,  
	\begin{equation*} 
		\bigl\| X - \widehat{X}^{(h,\beta_\star)} 
		\bigr\|_{C^{\beta_{\star\!}}([0,T];L^p(\Omega))}
		\leq 
		C  e^{- (\pi^2 - \delta)/h} . 
	\end{equation*} 
	
	Moreover, if $\beta_\star > \nicefrac{1}{p}$,
	then, for every 
	$T\in(0,\infty)$ 
	and all 
	$\nu \in (0, \beta_{\star\!} - \nicefrac{1}{p})$,
	there exists a constant $C_\nu \in (0,\infty)$
	such that, for all $h \in (0,h_0]$, 
	\begin{equation*} 
		\bigl\| X - \widehat{X}^{(h,\beta_\star)}  
		\bigr\|_{L^p(\Omega;C^\nu([0,T]))}
		\leq 
		C_\nu 
		e^{- (\pi^2 - \delta)/h} . 
	\end{equation*}
\end{theorem}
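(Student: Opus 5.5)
The plan is to obtain Theorem~\ref{thm:holder_bound_fracker} as a direct application of Theorem~\ref{thm:SVE_holder_Lp_bound}, using the verification already carried out in Proposition~\ref{prop:verification_conv_holder}. Fix $T\in(0,\infty)$ and $\beta_\star\in\bigl(0,(H_1+\frac{1}{2})\wedge H_2\bigr)$, and take the index set $\Xi_\star:=(0,h_0]$ with $\xi=h$. By Proposition~\ref{prop:verification_conv_holder}, the kernels $k_1,k_2$ and the family $\bigl(\widehat{k}^{(h,\beta_\star)}_1,\widehat{k}^{(h,\beta_\star)}_2\bigr)_{h\in(0,h_0]}$ satisfy Assumption~\ref{ass:convker_holder_assu}. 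The relevant constants are $\alpha_T\equiv\alpha=(H_1+\frac12)\wedge H_2$, $\beta_T=\beta_\star$, $\eta_T'=\beta_\star^{-1}$, $g_T(u)=u^{\beta_\star-1}$, and $\theta_T(h)=3c_Te^{-(\pi^2-\delta)/h}$, where $c_T$ does not depend on $h$.

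Before invoking Theorem~\ref{thm:SVE_holder_Lp_bound}, I need to confirm that its hypotheses hold. The standing requirement is $\alpha_T\in(\nicefrac{1}{p},1]$. This follows because $p>\nicefrac{1}{\alpha}$ means $\alpha>\nicefrac1p$, and $\alpha<\frac12\le1$. The processes $X$ and $\widehat{X}^{(h,\beta_\star)}$ are, by hypothesis, continuous Volterra processes with the same initial value and coefficients, and with the respective convolution kernels. So all assumptions of the theorem are met.

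Theorem~\ref{thm:SVE_holder_Lp_bound} then yields a constant $C'$, independent of $h$, with
\[
\bigl\|X-\widehat{X}^{(h,\beta_\star)}\bigr\|_{C^{\alpha\wedge\beta_\star}([0,T];L^p(\Omega))}\le C'\,\theta_T(h).
\]
Since $\beta_\star<\alpha$, we have $\alpha\wedge\beta_\star=\beta_\star$. Setting $C:=3c_TC'$ gives the first claim.

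For the pathwise claim, assume $\beta_\star>\nicefrac1p$. Then the condition $\beta>\nicefrac1p$ of Theorem~\ref{thm:SVE_holder_Lp_bound} holds. For $\nu\in(0,\beta_\star-\nicefrac1p)=(0,(\alpha\wedge\beta_\star)-\nicefrac1p)$, estimate \eqref{eq:kolm-chents_difference} gives $C_\nu'$ such that $\|X-\widehat{X}^{(h,\beta_\star)}\|_{L^p(\Omega;C^\nu([0,T]))}\le C_\nu'\theta_T(h)$. Absorbing the factor $3c_T$ into the constant gives the second claim.

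There is no real obstacle here, since the analytic work was done in Proposition~\ref{prop:verification_conv_holder}. The only point needing care is that every constant is uniform in $h\in(0,h_0]$. This holds because $c_T$ comes from Theorem~\ref{thm:error_bound_Lr_general_gamma} with $h\le h_0$, the calibrated truncation in \eqref{eq:sinc_quad_fractional_kernel-cali} is used with the fixed value $\beta=\beta_\star$, and the constants in Theorem~\ref{thm:SVE_holder_Lp_bound} are uniform over $\xi\in\Xi$.
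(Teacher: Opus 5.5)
Your proposal is correct and is essentially the paper's proof. Proposition~\ref{prop:verification_conv_holder} verifies Assumption~\ref{ass:convker_holder_assu} with $\beta_T=\beta_\star$ and $\theta_T(h)=3c_T e^{-(\pi^2-\delta)/h}$, and since $\alpha_T\wedge\beta_T=\beta_\star$, both claims follow directly from Theorem~\ref{thm:SVE_holder_Lp_bound}; your explicit checks that $\alpha>\nicefrac{1}{p}$ and that all constants are uniform in $h\in(0,h_0]$ are the points the paper uses implicitly.
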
 

\begin{proof} 
	For a fixed 
	$\beta_{\star} \in \bigl(0, \bigl(H_1+\frac{1}{2}\bigr)  \wedge H_2\bigr)$, 
	by Proposition~{\upshape\ref{prop:verification_conv_holder}} above, 
	the functions $k_1, k_2$ 
	and the family $\bigl( \widehat{k}^{(h,\beta_\star)\!}_1,  
	\widehat{k}^{(h,\beta_\star)}_2 \bigr)_{h\in(0,h_0]}$ satisfy 
	Assumption~{\upshape\ref{ass:convker_holder_assu}}, 
	and, for each fixed $T\in(0,\infty)$, 
	the constants and functions 
	in Assumption~{\upshape\ref{ass:convker_holder_assu}} 
	may be chosen as follows: 
	$\eta_T\in(0,\infty)$  
	as in \eqref{eq:alpha-eta-fractional}, 
	$\alpha_T \equiv\alpha =\bigl(H_1 + \tfrac{1}{2}\bigr) \wedge H_2$, 
	$\beta_T = \beta_\star$, 
	$\eta_T' = \beta_\star^{-1}$, 
	and  
	$\theta_T(h) = 3 \, c_T \, e^{-(\pi^2 - \delta)/h}$, 
	$g_T(u) = u^{\beta_{\star\!} - 1}$, 
	where $c_T\in(0,\infty)$ 
	is independent of $h\in(0,h_0]$. 
	Thus, $\alpha_T\wedge \beta_T =\beta_\star$ 
	and the claim follows from   
	Theorem~{\upshape\ref{thm:SVE_holder_Lp_bound}}.  
\end{proof}

\begin{remark}\label{rmk:calibrate_all_exponents}
	As in Remark~{\upshape\ref{rmk:calibrate_all_exponents_beta0}}, 
	by means of the calibration \eqref{eq:sinc_quad_fractional_kernel-cali} 
	we may express the error estimate 
	of Theorem~{\upshape\ref{thm:holder_bound_fracker}} 
	in terms of the numbers of quadrature nodes 
	$N_1 = N_1(h,\beta_\star) := M_1^{-\!}(h) + M_1^{+\!}(h,\beta_\star) + 1$,  
	$N_2 = N_2(h,\beta_\star) := M_2^{-\!}(h) + M_2^{+\!}(h,\beta_\star) + 1$
	used to approximate the fractional kernels 
	$k_1$ and~$k_2$, respectively. 
	This shows that there exists a constant 
	$C\in(0,\infty)$, 
	independent of $h\in(0,h_0]$, 
	such that 
	\begin{align*} 
		&\bigl\| X - \widehat{X}^{(h,\beta_\star)} 
		\bigr\|_{C^{\beta_{\star\!}}([0,T];L^p(\Omega))}
		\\
		&\qquad\leq 
		C \Bigl( 
		e^{- \sqrt{(\pi^2 - \delta) \left(\frac{1}{2}-H_1\right) 
				\frac{ H_1 + 1/2 -\beta_\star }{1-\beta_\star} } 
			\cdot \sqrt{N_1 - 3}}
		+ e^{- \sqrt{ (\pi^2 - \delta) \left( \frac{1}{2}-H_2 \right) 
				\frac{ H_2 -\beta_\star}{1/2 - \beta_\star} } \cdot \sqrt{N_2 - 3}} 
		\Bigr). 
	\end{align*}   
	Notably, taking the limit 
	$\beta_{\star\!} \downarrow 0$
	in this expression  
	recovers the $C^0([0,T];L^p(\Omega))$-result, 
	see  
	Remark~{\upshape\ref{rmk:calibrate_all_exponents_beta0}}. 
	However, as the integrability criterion of 
	Assumption~{\upshape\ref{ass:convker_holder_assu}\ref{item:diff_diff_holder_conv_ii}} 
	requires that $\beta_\star > 0$, 
	we have discussed 
	the pointwise and H\"older bounds
	separately. 
\end{remark}


\section{Numerical experiments}\label{sec:numerical_experiments}

In this section we empirically verify 
the convergence results 
of the sinc quadrature approximations 
for the fractional kernel 
from Section~{\upshape\ref{sec:sinc-fractional}} 
and of the corresponding Markovian 
approximations discussed  
in Section~{\upshape\ref{sec:markov_approx}}.   
For the latter, we will focus on 
the Riemann--Liouville 
fractional Brownian motion \eqref{eq:intr_RLfBM} 
with varying Hurst parameter $H$, 
whose Gaussian distribution 
is uniquely determined by its covariance  
function. This covariance function 
admits a closed formula 
and, thus, exact simulation via  
Cholesky factorization. 
All numerical experiments were implemented in Python~3.13.3 using
\texttt{SciPy}~1.17.1 and \texttt{NumPy}~2.4.3. 
The code to reproduce these experiments
is openly available on Zenodo at \cite{corneilleetal26_holder_code}.

We assume that 
$H \in \bigl(-\frac{1}{2},\frac{1}{2}\bigr)$
is given, 
and we consider the sinc quadrature 
approximation \eqref{eq:sinc_quad_kernel} 
of the fractional kernel \eqref{eq:fracker_hurst}. 
To this end, we fix ${\delta := 0.01}$ and, 
for given integrability and regularity parameters 
$r \in \bigl[1, (\frac{1}{2} - H)^{-1}\bigr)$ and  
${\beta \in \bigl[0, \frac{1}{r} - \frac{1}{2} + H\bigr)}$,
we calibrate the discretization 
parameters~$(h, M^{-\!}, M^{+\!})$ 
as discussed in 
Remark~{\upshape\ref{rmk:quadrature_error_fixed_total_nodes}}. 
That is, 
for a  sequence of quadrature step sizes, 
\begin{equation}\label{eq:num_def_h_n}
	h_n 
	= 
	h_n(\beta,r)
	:= 
	\sqrt{ 
		\frac{ (\pi^2 - \delta)
			\Bigl( \bigl(\frac{1}{2}-H \bigr)^{-1} 
			+ 
			\bigl (\frac{1}{r} - \frac{1}{2} + H - \beta \bigr)^{-1} 
			\Bigr)}{n} } \, , 
	\quad\;  
	n\in\bbN, 
\end{equation}
we define $M^{-\!}$ and $M^{+\!}$ 
according to the 
calibration~\eqref{eq:sinc_quad_kernel-cali}. 
The sequence $(h_n)_{n\in\bbN}$ 
in \eqref{eq:num_def_h_n} 
is chosen such that 
the corresponding  
numbers of quadrature nodes
$N_n =  N_n(\beta,r) 
:= M^{-\!}(h_n) + M^{+\!}(h_n, \beta, r) + 1$ 
scale like $N_n \sim n$  
(with $n+1 \leq N_n \leq n+3$). 

We then consider 
the fractional kernel $k$ and its sinc approximation
$\widehat{k}^{(h_n, \beta, r)\!}$, 
i.e., 
\begin{equation}\label{eq:num_def_kernels}
	k(t) := k^H(t), 
	\qquad 
	\widehat{k}^{(h_n, \beta, r)} (t) 
	:= 
	\Gamma(H+\nicefrac{1}{2})^{-1} \,
	\widehat{k}^{(h_n, M^{-\!}(h_n), M^{+\!}(h_n,\beta,r))}_{\frac{1}{2}-H}(t), 
\end{equation}
for all $t\in(0,\infty)$, 
where $k^H$ and 
$\widehat{k}^{(h_n, M^{-\!}, M^{+\!})}_{\frac{1}{2}-H}$ 
are defined
as in \eqref{eq:fracker_hurst}
and
\eqref{eq:sinc_quad_kernel}. 


\subsection{Sinc quadrature errors}
\label{subsec:num_exp_det_quad_err} 

We let 
$H \in \bigl(-\frac{1}{2},\frac{1}{2} \bigr)$, 
and consider $T:=1$. 
The results of 
Theorem~{\upshape\ref{thm:error_bound_Lr_general_gamma}\ref{item:thm:error_bound_Lr_general_gamma-ii}} 
and
Remark~{\upshape\ref{rmk:quadrature_error_fixed_total_nodes}},
imply that, 
for all 
${r \in \bigl[1, (\frac{1}{2} - H)^{-1}\bigr)}$,
for every 
$\beta \in \bigl[0, \frac{1}{r} - \frac{1}{2} + H\bigr)$, 
and for all $(t,s) \in \triangle[0,1]$, 
the $L^r(s,t)$-error bound, 
\begin{equation}\label{eq:num_exp_sinc_conv} 
	\bigl\| k - \widehat{k}^{(h_n, \beta, r)} 
	\bigr\|_{L^r(s,t)}
	\lesssim 
	\exp\bigl(-a_{H,\beta,r} \cdot \sqrt{N_n - 3} \bigr)
	\, (t-s)^\beta, 
\end{equation}
holds 
for the sinc quadrature approximation 
of the fractional kernel 
in \eqref{eq:num_def_kernels}, 
where
\begin{equation}\label{eq:sinc_coef}
	a_{H,\beta,r} 
	= 
	\sqrt{\frac{ (\pi^2 - \delta) 
			\bigl(\frac{1}{2} - H \bigr)
			\bigl(\frac{1}{r} - \frac{1}{2} + H - \beta\bigr)}{\frac{1}{r} - \beta}} \, .
\end{equation}

For $\beta=0$ and $r\in\{1,2\}$, 
we can compare 
$a_{H,0,r}$ with the 
factors in   
$L^1$- and $L^2$-error bounds 
of the form 
$e^{-a_{H,0,r} \cdot \sqrt{N}}$ 
for the Gaussian quadrature schemes 
analyzed by Bayer and Breneis 
\cite{bayerbreneis2023, bayer2023weakma_arxiv}. 
This comparison is summarized 
in Table~{\upshape\ref{tbl:theoretical_coef}}. 
Figure~{\upshape\ref{fig:conv_rates_theory_L1L2}} 
plots these factors  
against the Hurst parameter    
${H\in\bigl(\frac{1}{2}-\tfrac{1}{r}, \tfrac{1}{2} \bigr)}$.
For $r=1$,
the curve of the 
sinc approximation intersects
with the geometric Gaussian (GG) and 
non-geometric Gaussian (NGG)
approximations
of \cite{bayer2023weakma_arxiv} at 
$H \approx 0.18$ and $H \approx -0.07$. 
For $r=2$,
the intersections 
with~\cite[Theorem~2.1 and 
\emph{optimal $m, \xi_0, \xi_n$}]{bayerbreneis2023}  
are at ${H \approx 0.46}$ 
and $H \approx 0.37$. 
For all Hurst parameters 
less or equal than these 
thresholds, the factor \eqref{eq:sinc_coef} 
outperforms those of 
the Gaussian quadrature schemes, 
and the sinc approximation
is particularly advantageous 
in (super) rough regimes.

\begin{table}[b!]
	\centering
	{\small
	\begin{tabular}{ccc}
		\toprule
		Method                                             & $a_{H,0,1}$                                                                         & $a_{H,0,2}$                                                                     \\
		\midrule
		Sinc; \eqref{eq:sinc_coef} with $\beta=0$                & $\sqrt{(\pi^2 - \delta) \bigl(\frac{1}{2} + H\bigr) \bigl(\frac{1}{2} - H\bigr)}$ & $\sqrt{2 (\pi^2 - \delta) H \bigl(\frac{1}{2} - H\bigr)}$                      \\
		\cite[Theorem~2.1]{bayerbreneis2023}               & ---                                                                                 & $1.06418 \cdot \bigl(\frac{1}{H} + \frac{1}{3/2 - H} \bigr)^{-\nicefrac{1}{2}}$ \\
		\cite[Optimal $m, \xi_0, \xi_n$]{bayerbreneis2023} & ---                                                                                 & $1.8 \cdot \bigl(\frac{1}{H} + \frac{1}{3/2 - H} \bigr)^{-\nicefrac{1}{2}}$     \\
		\cite[GG, Theorem~3.9]{bayer2023weakma_arxiv}      & $2 \log\bigl(\sqrt{2} + 1\bigr) \cdot \sqrt{\frac{1}{2} + H}$                       & ---                                                                             \\
		\cite[NGG, Theorem~3.16]{bayer2023weakma_arxiv}    & $2.3854 \cdot \sqrt{\frac{1}{2} + H}$                                               & ---                                                                             \\
		\bottomrule
	\end{tabular}}
	\caption{Comparison of the theoretical 
		convergence rate factors.} 
	\label{tbl:theoretical_coef}
\end{table}

\begin{figure}[t!]
	\centering
	\includegraphics[width=0.49\linewidth]{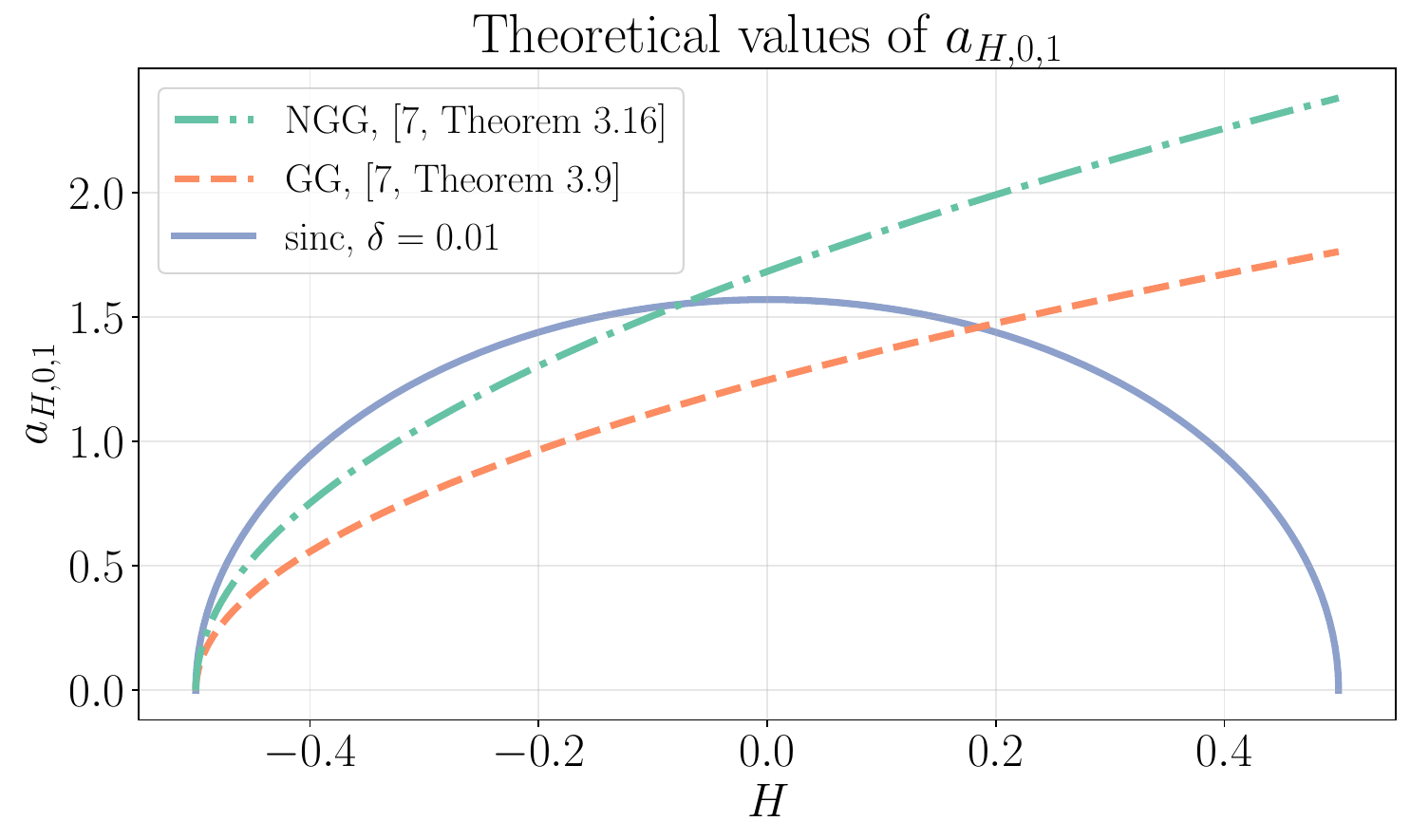}
	\includegraphics[width=0.49\linewidth]{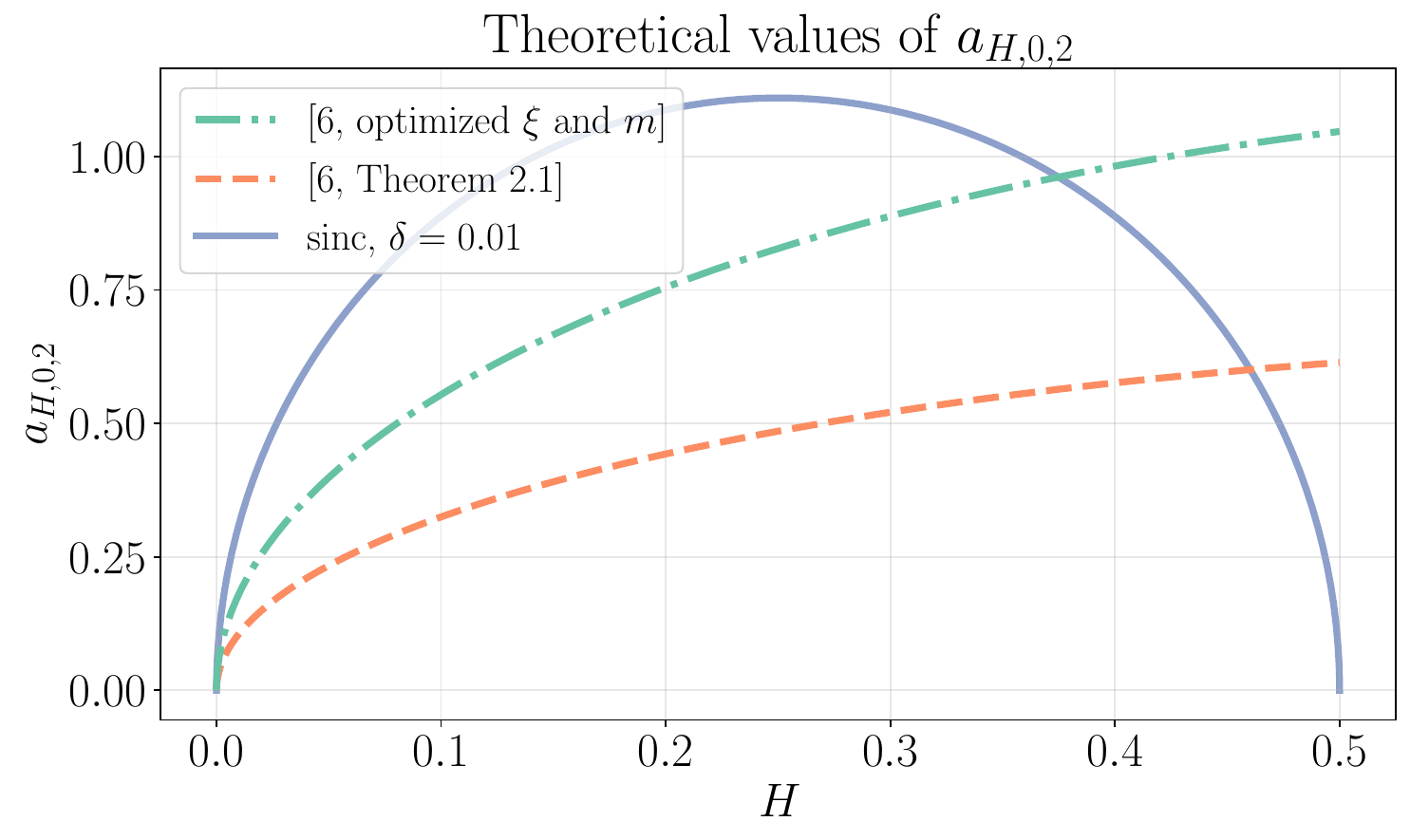}
	\caption{The factors $a_{H,0,r}$
		of Table~{\upshape\ref{tbl:theoretical_coef}} 
		as a function of $H$.}
	\label{fig:conv_rates_theory_L1L2}
\end{figure}

The error bound \eqref{eq:num_exp_sinc_conv} 
implies that, 
for sufficiently large $n$,
the log-squared error 
behaves like  
$\log^2\bigl(\| k-\widehat{k}^{(h_n, \beta, r)} \|_{L^r(0,1)}\bigr) \! 
\sim a_{H,\beta,r}^2 \, N_n$. 
To compute 
the $L^r(0,1)$-norm of the difference   
$\Delta := k-\widehat{k}^{(h_n, \beta, r)\!}$,  
we use a right rectangle rule
on a uniform mesh on $[0,1]$
with step size $\tau$ (i.e., $\tau^{-1\!}\in\bbN$), 
which avoids the singularity
at the origin,  
$\| \Delta \|_{L^r(0,1)} 
\approx I_{\tau}^{r}(\Delta) 
:= 
\bigl( \tau \sum_{j=1}^{\tau^{-1}} |\Delta(j\tau)|^r \bigr)^{\nicefrac{1}{r}}\!$. 
We note that 
${I_\tau^r(\Delta) = \| \Delta_\tau \|_{L^r(\tau,1+\tau)}}$, 
where 
$\Delta_\tau \colon [\tau, 1+\tau) \to \bbR$ 
is a step function 
corresponding to
a left point approximation
of $\Delta$ on $[\tau, 1+\tau)$, i.e.,  
$\Delta_\tau := \sum_{j=1}^{\tau^{-1}} \Delta(j\tau) \ind{[ j \tau, (j+1) \tau)}$.  
Then, by the (reverse) triangle inequality, 
we obtain that 
\begin{equation*}
	\bigl| \|\Delta\|_{L^r(0,1)} - I_\tau^r(\Delta) \bigr| 
	\leq 
	\| \Delta \|_{L^r(0,\tau)} 
	+
	\| \Delta \|_{L^r(1,1+\tau)} 
	+
	\| \Delta - \Delta_\tau \|_{L^r(\tau,1+\tau)}.
\end{equation*}
By the fundamental theorem of calculus
and H\"older's inequality, 
it follows that 
\begin{align*}
	\| \Delta - \Delta_\tau \|_{L^r(\tau,1+\tau)}^r
	&\leq 
	\sum_{j=1}^{\tau^{-1}} \int_{j\tau}^{(j+1)\tau} \left( \int_{j\tau}^u |\Delta'(v)| \diff{v} \right)^r \diff{u} \\
	&\leq 
	\sum_{j=1}^{\tau^{-1}} \int_{j\tau}^{(j+1)\tau} \tau^{r-1} \| \Delta' \|_{L^r(j\tau, u)}^r \diff{u}
	\leq 
	\tau^{r} \| \Delta' \|_{L^r(\tau,1+\tau)}^r.
\end{align*}
Theorem~{\upshape\ref{thm:error_bound_Lr_general_gamma}\ref{item:thm:error_bound_Lr_general_gamma-ii}} 
and~{\upshape\ref{item:thm:error_bound_Lr_general_gamma-iii}} 
then imply that 
$\bigl| \|\Delta\|_{L^r(0,1)} - I_\tau^r(\Delta) \bigr|
\lesssim \tau^\beta e^{-a_{H,\beta,r} \sqrt{N_n - 3}}$, 
and we expect
$\log^2(I_\tau^r(\Delta))$, 
plotted against~$N_n$, 
to approach a line 
with slope $a_{H,\beta,r}^2$. 

Figure~{\upshape\ref{fig:C0Lr_error_deterministic}} 
and Table~{\upshape\ref{tbl:C0Lr_error_deterministic}}  
confirm this for $\beta = 0$, 
where we used regression on  
the last quarter of data points
to compute the slope
in the asymptotic regime. 
In addition, as predicted by \eqref{eq:sinc_coef}, 
we observe 
a symmetric behavior of $a_{H,0,r}$
around $H = \tfrac{1}{2}-\tfrac{1}{2r}$.

\begin{figure}[b!]
	\centering
	\includegraphics[width=0.48\linewidth]{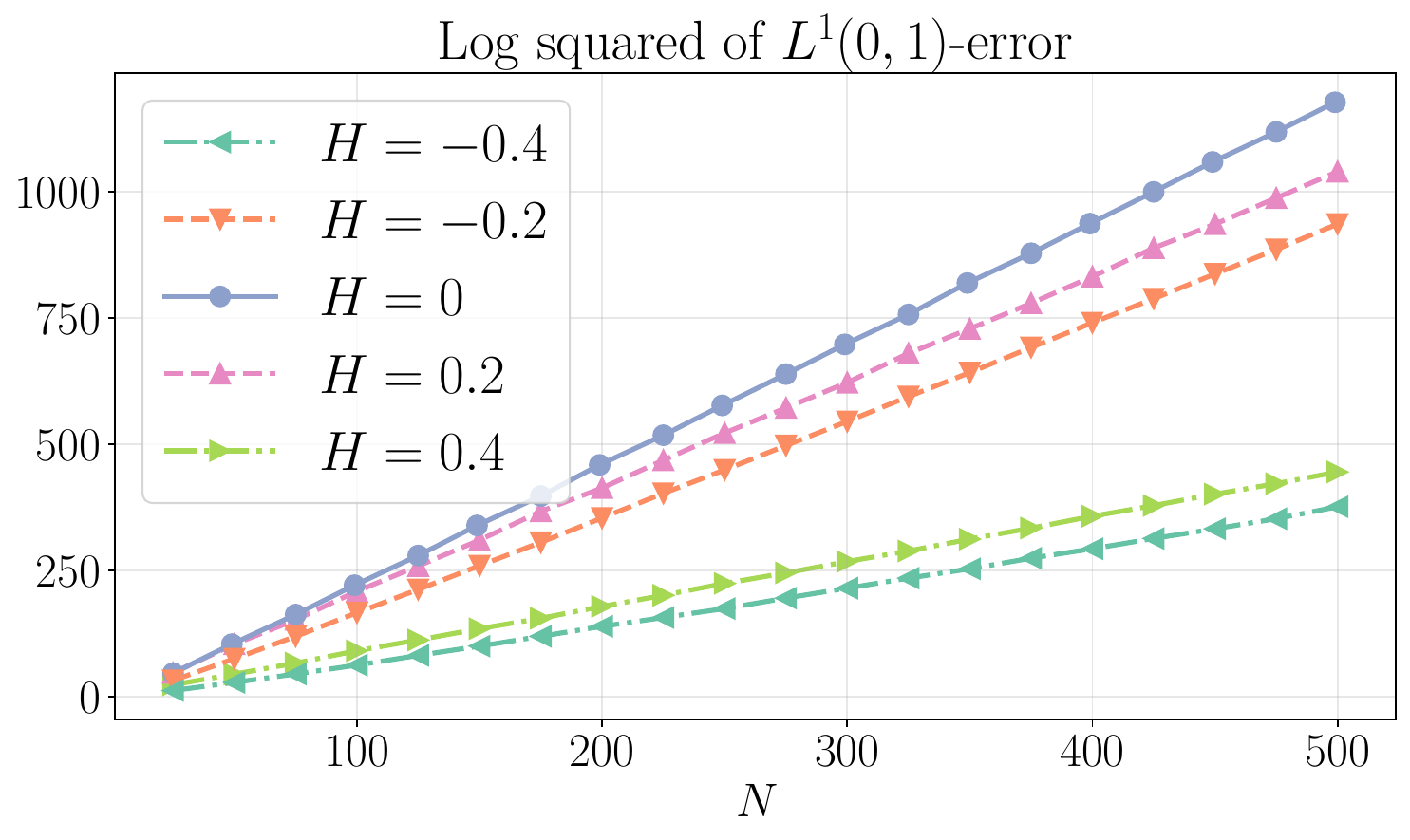}
	\includegraphics[width=0.48\linewidth]{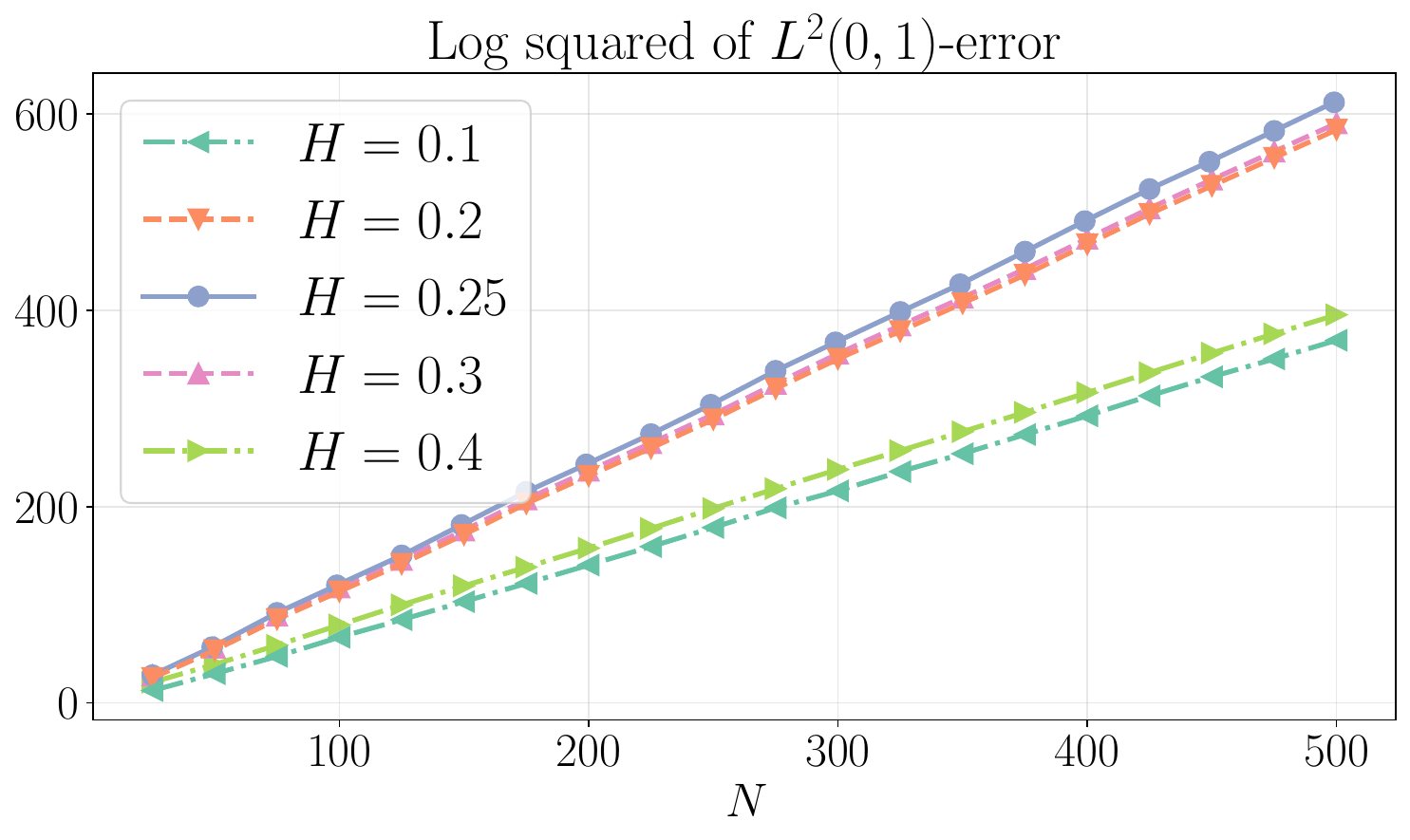}
	\vspace*{-1mm} 
	
	\caption{$\log^2 \bigl( I^r_\tau( k - \widehat{k}^{(h_n, \beta, r)} ) \bigr)$ 
		for $r=1$ (left) and $r=2$ (right) 
		for various $H$, 
		as a function of $N_n$, 
		computed with  
		${\tau = 2^{-14}}$.}
	\label{fig:C0Lr_error_deterministic} 
\end{figure}

\begin{table}[t!]
	\centering
	\small
	\begin{tabular}{cccccc}
		\toprule
		& Theor.\ slope              & Emp.\ slope &  & Theor.\ slope & Emp.\ slope \\
		\cmidrule(lr){2-3} \cmidrule(l){5-6} 
		$H$ & \multicolumn{2}{c}{$r=1$} & $H$ & \multicolumn{2}{c}{$r=2$}  \\ 
		\cmidrule(r){1-3} \cmidrule(l){4-6}
		-0.4 & 0.887 & 0.822  & 0.1 & 0.789 & 0.766  \\
		-0.2 & 2.071 & 1.954  & 0.2 & 1.183 & 1.161  \\
		0    & 2.465 & 2.397  & 0.25 & 1.232 & 1.205 \\
		0.2  & 2.071 & 2.060  & 0.3 & 1.183 & 1.171  \\
		0.4  & 0.887 & 0.875  & 0.4 & 0.789 & 0.792  \\
		\bottomrule
	\end{tabular}
	\caption{Theoretical and empirical slopes
		for the lines in Figure~{\upshape\ref{fig:C0Lr_error_deterministic}}.}
	\label{tbl:C0Lr_error_deterministic}
\end{table} 

Figure~{\upshape\ref{fig:holder_plots_deterministic}} 
and Table~{\upshape\ref{tbl:holder_slopes}} 
verify the H\"older regularity of
the mapping   
${t\mapsto \|\Delta\|_{L^r(0,t)}}$ 
on the interval $[0,1]$,  
for $r\in\{1,2\}$, where 
for several values of the 
Hurst parameter $H\in \bigl(\frac{1}{2} - \frac{1}{r}, \frac{1}{2}\bigr)$, 
we vary 
the H\"older exponent 
$\beta\in \bigl[0, H - \frac{1}{2} + \frac{1}{r} \bigr)$. 
For this purpose, 
we investigate the quantity 
$\sup_{0 \leq s < t \leq 1} (t-s)^{-\beta} \| \Delta \|_{L^r(s,t)}$, 
which is an upper bound 
for the H\"older seminorm 
of the mapping 
${[0,1] \ni t\mapsto \|\Delta\|_{L^r(0,t)}}$.

\begin{figure}[h!]
	\centering
	\includegraphics[width=0.32\linewidth]{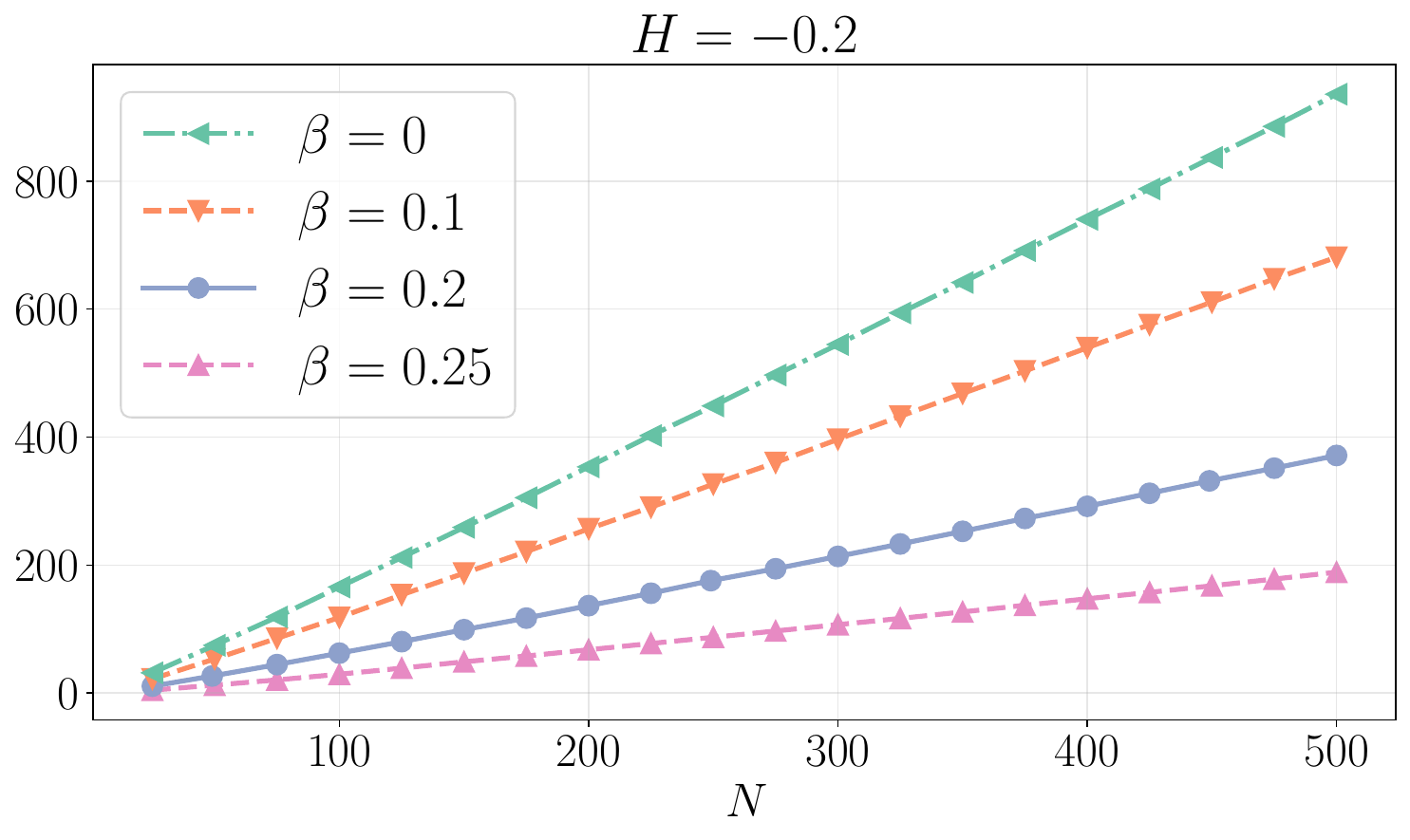}
	\includegraphics[width=0.32\linewidth]{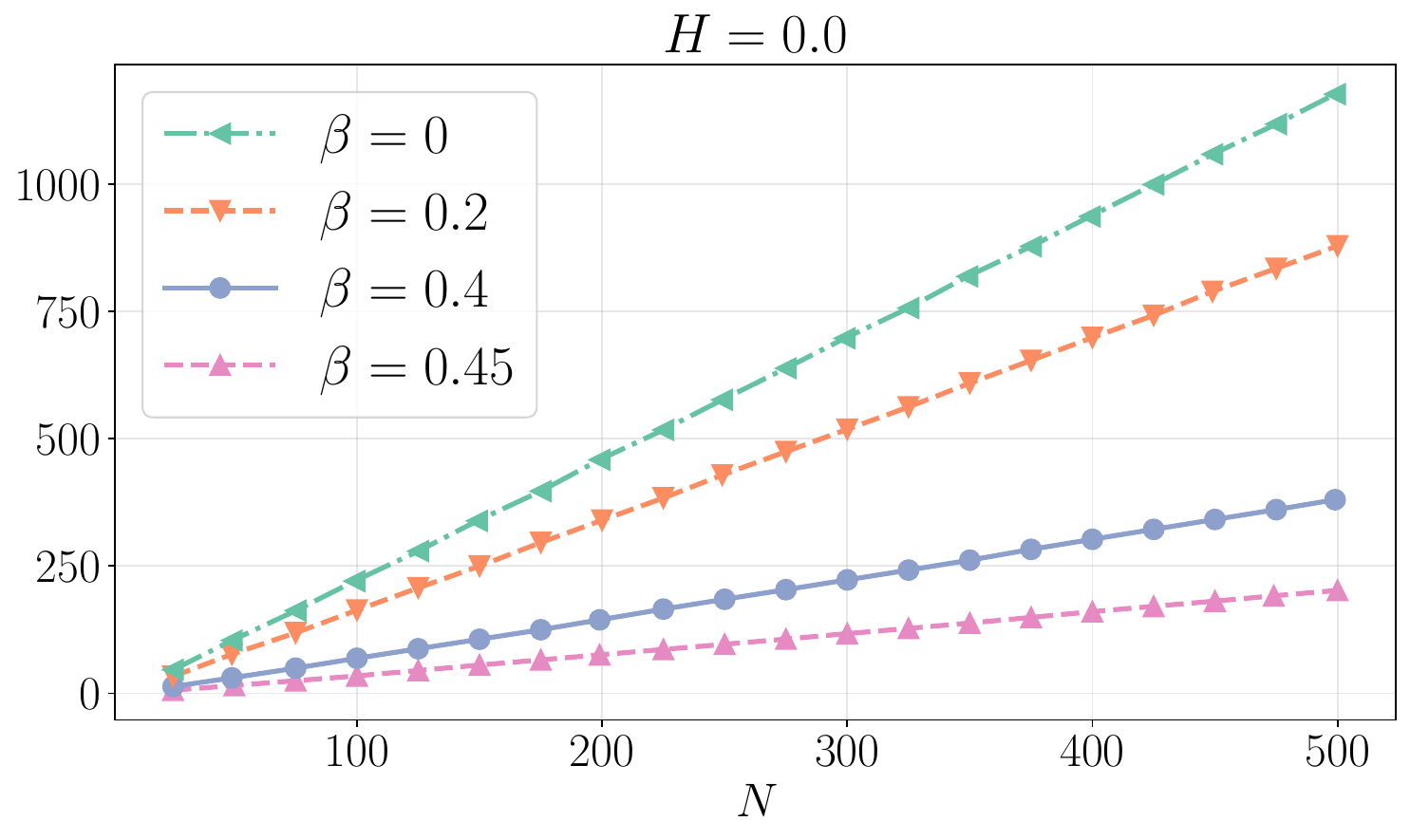}
	\includegraphics[width=0.32\linewidth]{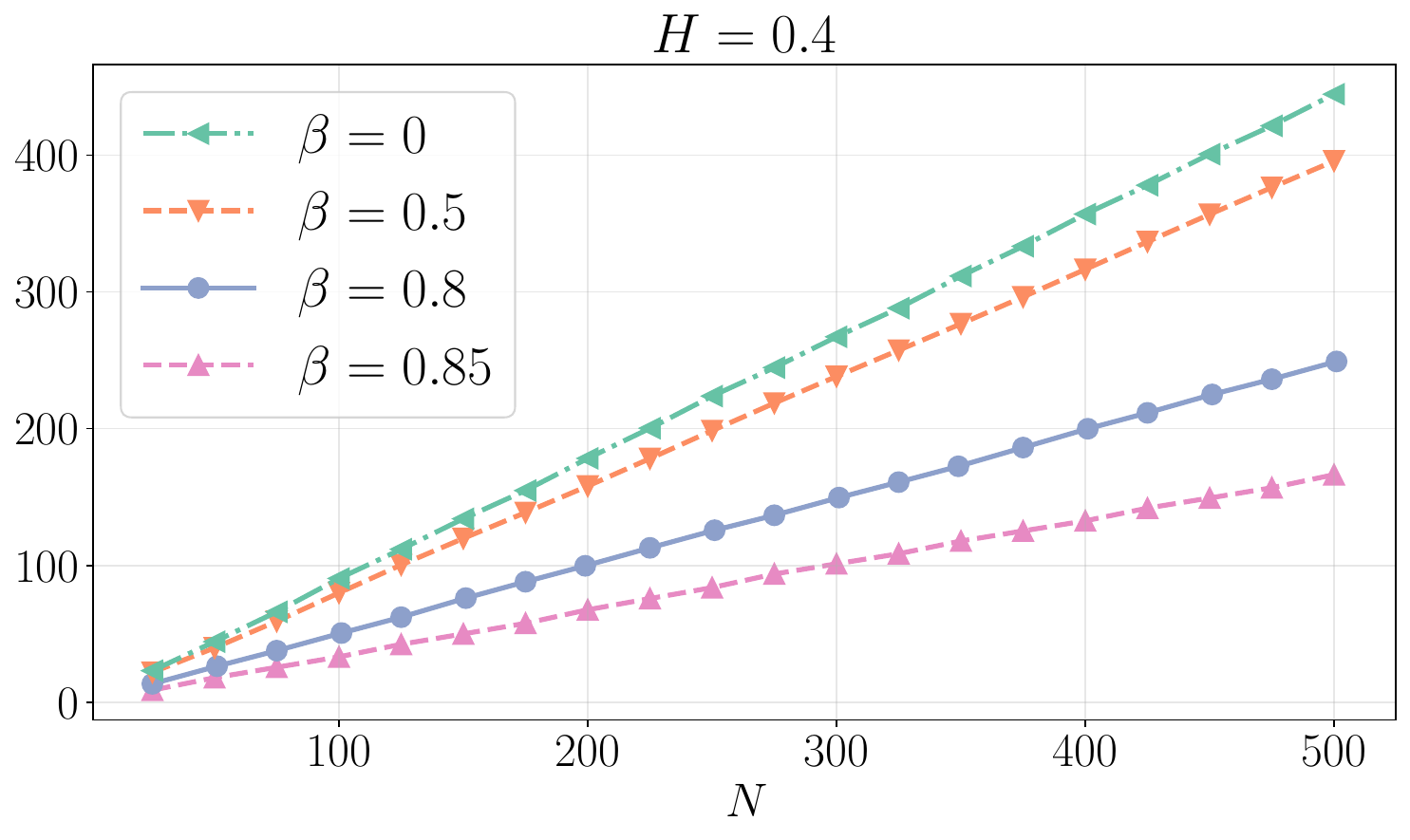}
	\includegraphics[width=0.32\linewidth]{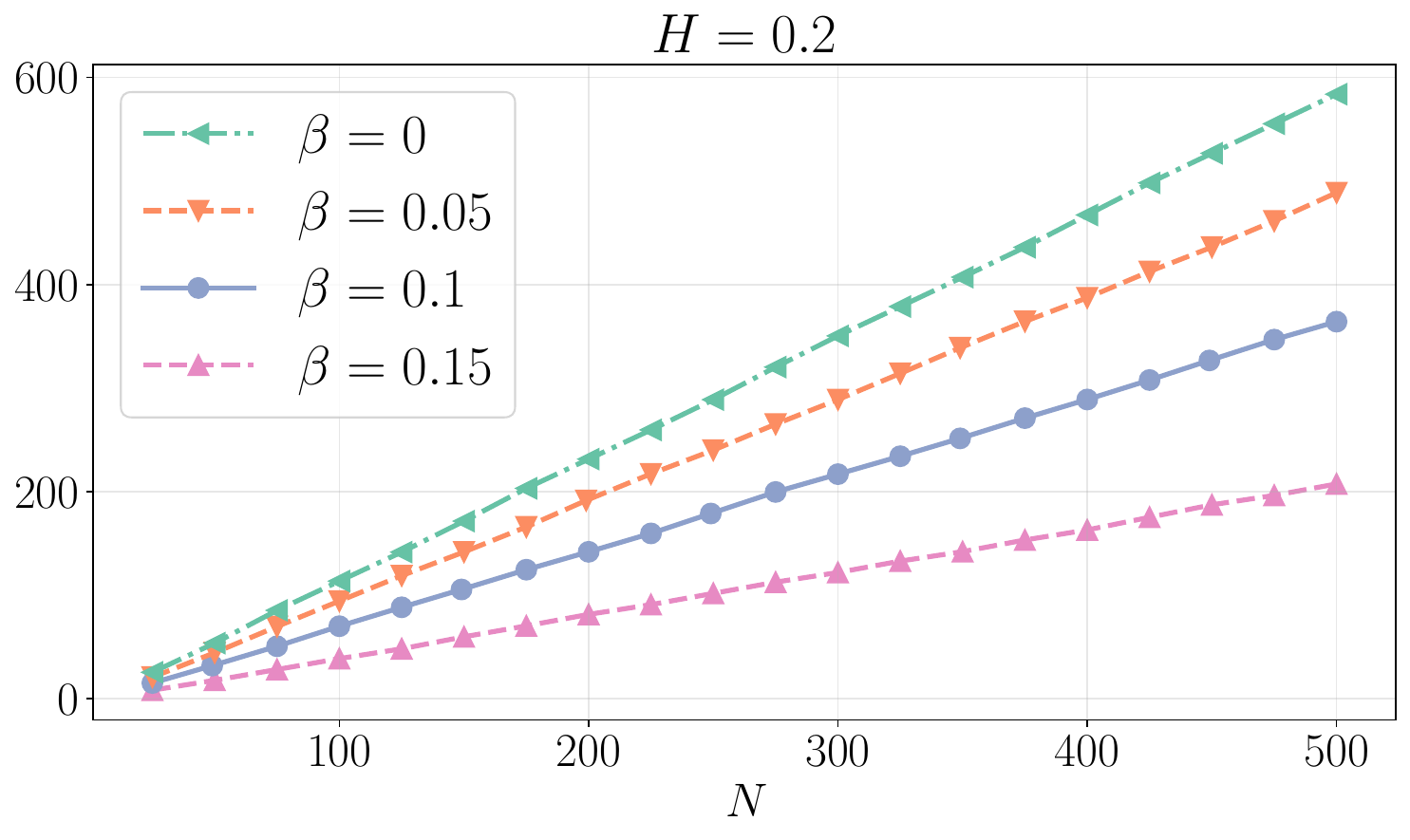}
	\includegraphics[width=0.32\linewidth]{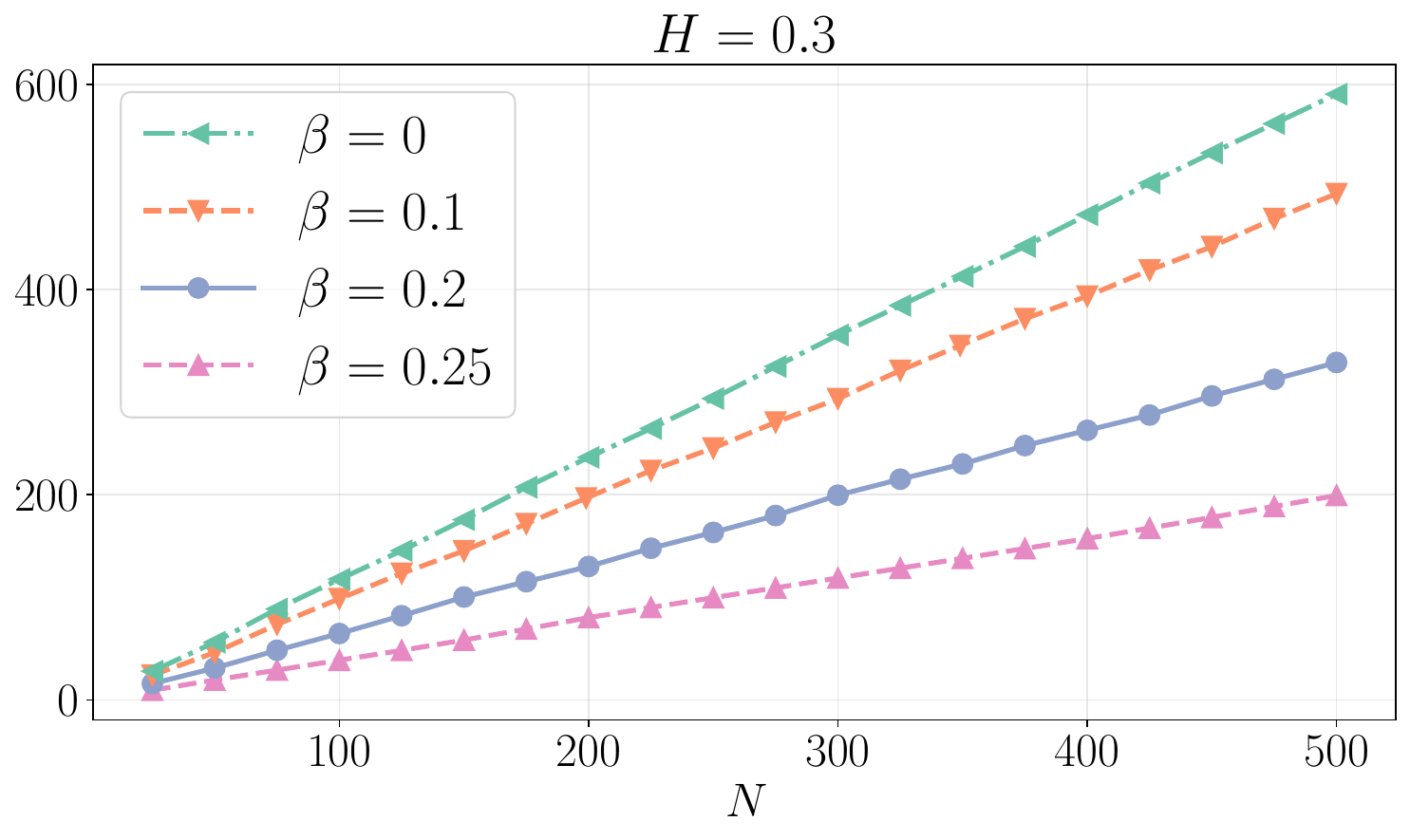}
	\includegraphics[width=0.32\linewidth]{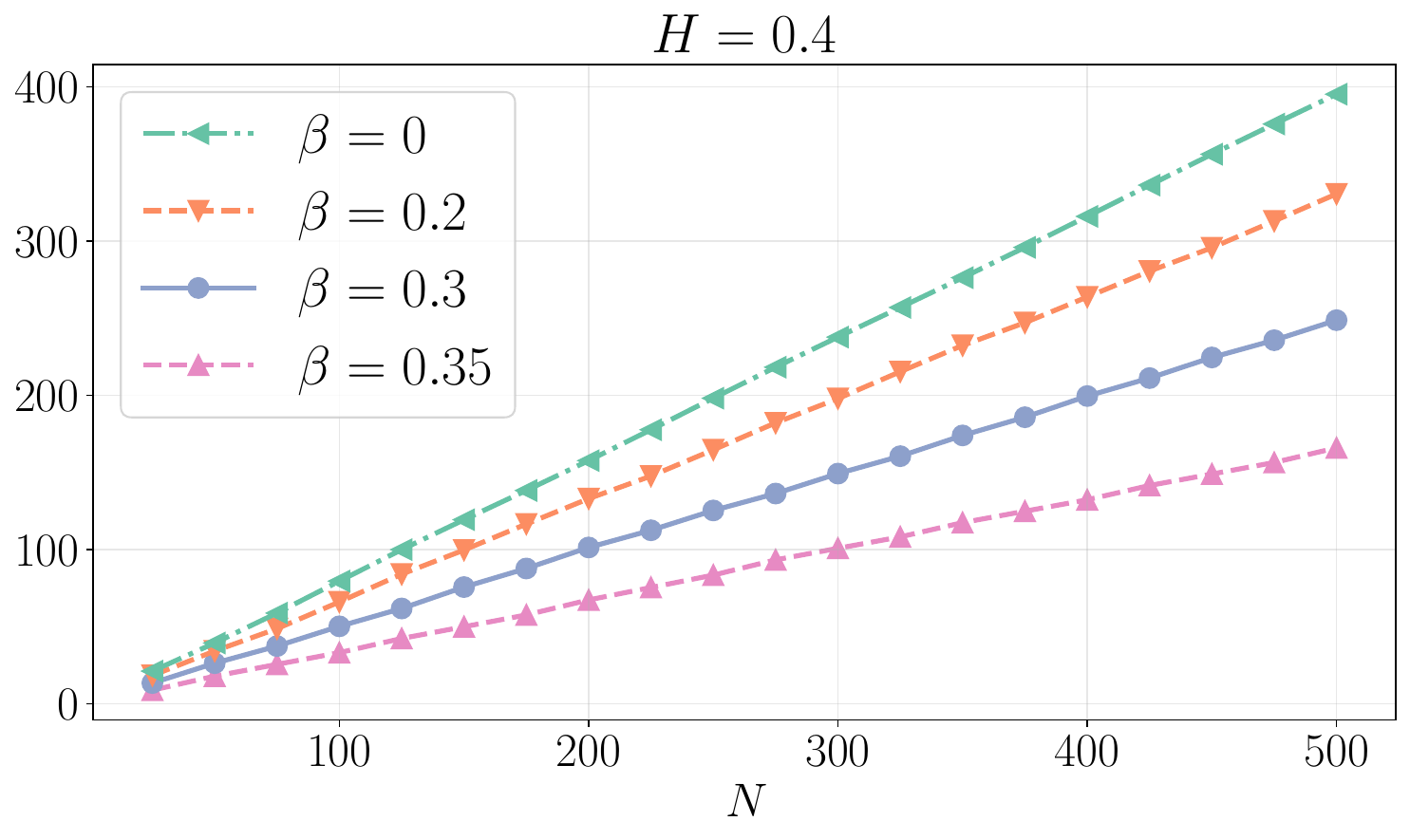}
	\caption{$\log^2 \bigl(E_{\tau}^{(h_n,\beta,r)}\bigr)$ 
		for $r=1$ (top) and $r=2$ (bottom) 
		for various $H$ and $\beta$, 
		as a function of $N_n$, 
		computed with  
		${\tau = 2^{-14}}$.\vspace{-1.5\baselineskip}}  
	\label{fig:holder_plots_deterministic}
\end{figure}

\begin{table}[h!]
	\centering
	{\small
		\begin{tabular}{@{}ccccccccc@{}}
			\toprule
			& Th.\ slope & Emp.\ slope & & Th.\ slope & Emp.\ slope & &  Th.\ slope & Emp.\ slope \\
			\cmidrule(lr){2-3} \cmidrule(lr){5-6} \cmidrule(l){8-9}
			$\beta$ & \multicolumn{2}{c}{$r=1$, $H = -0.2$} & 
			$\beta$ & \multicolumn{2}{c}{$r=1$, $H = 0$} & 
			$\beta$ & \multicolumn{2}{c}{$r=1$, $H = 0.4$} \\ 
			\cmidrule(r){1-3} \cmidrule(lr){4-6} \cmidrule(l){7-9}
			0 & 2.071 & 1.954    & 0 & 2.465 & 2.397     & 0 & 0.887 & 0.875    \\
			0.1 & 1.534 & 1.417  & 0.2 & 1.849 & 1.810   & 0.5 & 0.789 & 0.793  \\
			0.2 & 0.863 & 0.792  & 0.4 & 0.822 & 0.783   & 0.8 & 0.493 & 0.492  \\
			0.25 & 0.460 & 0.415 & 0.45 & 0.448 & 0.418  & 0.85 & 0.329 & 0.330 \\
			
			\specialrule{.08em}{.4ex}{.65ex} 
			
			$\beta$ & \multicolumn{2}{c}{$r=2$, $H = 0.2$} & 
			$\beta$ & \multicolumn{2}{c}{$r=2$, $H = 0.3$} & 
			$\beta$ & \multicolumn{2}{c}{$r=2$, $H = 0.4$} \\ 
			\cmidrule(r){1-3} \cmidrule(lr){4-6} \cmidrule(l){7-9}
			0 & 1.183 & 1.161    & 0 & 1.183 & 1.171    & 0 & 0.789 & 0.792    \\
			0.05 & 0.986 & 1.007 & 0.1 & 0.986 & 0.998  & 0.2 & 0.657 & 0.665  \\
			0.1 & 0.739 & 0.757  & 0.2 & 0.657 & 0.668  & 0.3 & 0.493 & 0.492  \\
			0.15 & 0.423 & 0.440 & 0.25 & 0.394 & 0.420 & 0.35 & 0.329 & 0.329 \\
			\bottomrule
	\end{tabular}}
	\caption{Theoretical and empirical slopes
		for the lines in Figure~{\upshape\ref{fig:holder_plots_deterministic}}.}
	\label{tbl:holder_slopes}
\end{table}
   
We approximate 
$\sup_{0 \leq s < t \leq 1} (t-s)^{-\beta} \| \Delta \|_{L^r(s,t)}$ on 
a uniform grid of step size~$\tau$, 
where we again use the right rectangle rule  
for computing the $L^r(s,t)$-norms, i.e., 
\begin{equation*} 
	E^{(h_n,\beta,r)}_{\tau} 
	= 
	I^{r,\beta}_{\tau}\bigl(k-\widehat{k}^{(h_n, \beta, r)} \bigr) 
	:= 
	\max_{\substack{ i, j \in \{0,\ldots,\tau^{-1}\} \\ j < i}}
	(\tau(i - j))^{-\beta}
	\Biggl( \tau \sum_{\ell=j+1}^{i} |\Delta(\ell \tau)|^r \Biggr)^{\nicefrac{1}{r}}\!.
\end{equation*}


\subsection{Strong stochastic errors}
\label{subsec:num_exp_stoch_err}

For a given $H \in \bigl(0,\frac{1}{2}\bigr)$, 
and for $\beta \in [0, H)$, 
we consider $h_n := h_n(\beta, 2)$ 
and $\widehat{k}^{(h_n, \beta)} := \widehat{k}^{(h_n, \beta, 2)}$
as in \eqref{eq:num_def_h_n} 
and \eqref{eq:num_def_kernels} 
with $r=2$. 
We define $X$
and $\widehat{X}^{(h_n,\beta)}$
as continuous
stochastic Volterra processes with
initial value $X_0 \equiv 0$,
coefficients $b \equiv 0$ and $\sigma \equiv 1$,
and with convolution kernels
$k^H$ and
$\widehat{k}^{(h_n,\beta)\!}$, 
respectively. 
Then, $X$ is a Riemann--Liouville 
fractional Brownian motion 
with Hurst parameter $H$,  
and both $X$
and $\widehat{X}^{(h_n,\beta)}$
are Gaussian processes 
(due to having zero drift and additive noise). 
Therefore, 
we can simulate their paths exactly
by computing the lower Cholesky factors
of the respective covariance matrices. 

Let $s,t \in [0,\infty)$
and assume w.l.o.g.\ that $s \leq t$
(for $s > t$, we may use symmetry of the covariance).
By \cite[Equation~(11)]{limSithi1995}
the covariance of $X$ is given by 
\begin{equation*} 
	\operatorname{Cov}(X_t, X_s)
	= \frac{s^{H+\frac{1}{2}} t^{H-\frac{1}{2}}}{\Gamma(H+\nicefrac{1}{2})\Gamma(H+\nicefrac{3}{2})}
	\,{}_2F_1\bigl({\tfrac12 - H, 1, H+\tfrac{3}{2}, \tfrac{s}{t}}\bigr) . 
\end{equation*} 
Here, $_2F_1$ is the hypergeometric function 
which we evaluate in Python 
by means of the 
\texttt{scipy.special.hyp2f1} function.  
Furthermore, 
using It\^o's isometry,
we also can derive  
the covariance function of
$\widehat{X}^{(h_n,\beta)}$ 
explicitly,   
\begin{align}
	&\hspace*{-0.95mm}\operatorname{Cov}  
	\bigl( \widehat{X}^{(h_n,\beta)}_t, \widehat{X}^{(h_n,\beta)}_s \bigr)
	= 
	\int_0^s \widehat{k}^{(h_n,\beta)}(t-u) \, \widehat{k}^{(h_n,\beta)}(s-u) \diff{u} \notag
	\\
	&\hspace*{-0.95mm}= 
	\sum_{\ell_1=-M^{-\!}_n}^{M^{+\!}_n(\beta)} 
	\sum_{\ell_2=-M^{-\!}_n}^{M^{+\!}_n(\beta)}
	\frac{h_n^2 \, e^{\left(\frac{1}{2} - H \right) (\ell_1+\ell_2)h_n} }{ 
			\Gamma(H+\nicefrac{1}{2})^2 \, \Gamma(\nicefrac{1}{2}-H)^2}
	\int_0^s e^{u \left(e^{\ell_1 h_n} + e^{\ell_2 h_n} \right) - t e^{\ell_1 h_n} - s e^{\ell_2 h_n} } \diff{u} 
	\notag
	\\
	&\hspace*{-0.95mm}= 
	\sum_{\ell_1=-M^{-\!}_n}^{M^{+\!}_n(\beta)} 
	\sum_{\ell_2=-M^{-\!}_n}^{M^{+\!}_n(\beta)}
	\frac{h_n^2 \, e^{\left(\frac{1}{2} - H \right) (\ell_1+\ell_2)h_n} }{ 
		\Gamma(H+\nicefrac{1}{2})^2 \, \Gamma(\nicefrac{1}{2}-H)^2}
	\, e^{- (t-s) e^{\ell_1 h_n}} \, 
	\frac{1 - e^{-s \left( e^{\ell_1 h_n} + e^{\ell_2 h_n} \right)} }{ e^{\ell_1 h_n} + e^{\ell_2 h_n}},\hspace*{-0.95mm} 
	\label{eq:sinc_cov_optim}
\end{align}
where 
we set $M_n^- := M^-(h_n)$ 
and $M_n^+(\beta) := M^+(h_n, \beta, 2)$. 
Evaluating the fraction 
$R_{\ell_1 \ell_2}^{-1} \bigl(1-e^{-s R_{\ell_1 \ell_2}}\bigr)$,  
with $R_{\ell_1 \ell_2} := e^{\ell_1 h_n} + e^{\ell_2 h_n}$, 
is numerically unstable for $R_{\ell_1 \ell_2} \downarrow 0$.
To increase stability,
we express its numerator as $-\operatorname{expm1}(-sR_{\ell_1 \ell_2})$
using \texttt{NumPy}'s \texttt{expm1} function.
When $sR_{\ell_1 \ell_2} < 2^{-510}$,
the second-order term in
the numerator's Taylor expansion
is smaller than machine precision, 
and we therefore approximate the fraction
by its analytical limit,
$\lim_{R \downarrow 0}R^{-1}\bigl(1 - e^{-sR}\bigr) = s$.

For the simulation of the strong errors, 
we let $\timemesh = \{t_1, t_2, \ldots, t_{\tau^{-1} - 1}, 1\}$, 
where $t_j = j\tau$ 
and $\tau$ is the step size of a uniform mesh on $[0,1]$.  
We use $M \in \bbN$ samples 
$\boldsymbol{\zeta}^{(1)\!}, \ldots, \boldsymbol{\zeta}^{(M)\!} \in \bbR^{|\timemesh|}$ 
of a standard normally distributed vector $\boldsymbol{\zeta}$ 
to generate samples
of the difference process
$X - \widehat{X}^{(h_n,\beta)}$
on $\timemesh$ via
$\mathbf{D}^{(h_n,\beta,i)}
:= (\mathbf{L} - \widehat{\mathbf{L}}^{(h_n,\beta)}) \boldsymbol{\zeta}^{(i)\!}$,
where $\mathbf{L}$ and $\widehat{\mathbf{L}}^{(h_n,\beta)}$
are the lower Cholesky factors
of the respective covariance matrices
evaluated on~$\timemesh$.
To empirically investigate 
the strong errors in $C^\beta([0,1];L^2(\Omega))$ 
and $L^2(\Omega;C^\beta([0,1]))$ for $\beta\in[0,H)$, 
we then consider  
\begin{align*}
	E^{(h_n)}_{C^0(L^2)}
	& :=
	\max_{{j_1}} \biggl( \frac{1}{M} \sum_{i=1}^M \left| D^{(h_n,0,i)}_{j_1} \right|^2 \biggr)^{\nicefrac{1}{2}},
	\
	E^{(h_n)}_{L^2(C^0)}
	:=
	\biggl( \frac{1}{M} \sum_{i=1}^M
	\max_{{j_1}} \left| D^{(h_n,0,i)}_{j_1} \right|^2 \biggr)^{\nicefrac{1}{2}},
\end{align*} 
\begin{align*}  
	E^{(h_n,\beta)}_{C^\beta(L^2)}
	& :=
	\max_{{j_1} > {j_2}} \,
	(t_{j_1} - t_{j_2})^{-\beta}\biggl( \frac{1}{M} \sum_{i=1}^M \left| D^{(h_n,\beta,i)}_{j_1} - D^{(h_n,\beta,i)}_{j_2} \right|^2 \biggr)^{\nicefrac{1}{2}},
	\\ E^{(h_n,\beta)}_{L^2(C^\beta)}
	& := \biggl( \frac{1}{M} \sum_{i=1}^M
	\max_{{j_1} > {j_2}} \,
	(t_{j_1} - t_{j_2})^{-2\beta}\left| D^{(h_n,\beta,i)}_{j_1} - D^{(h_n,\beta,i)}_{j_2} \right|^2 \biggr)^{\nicefrac{1}{2}}.
\end{align*} 

Figures~{\upshape\ref{fig:stochastic_L2_C0_errors}} 
and~{\upshape\ref{fig:stochastic_L2_holder_errors}},
alongside Tables~{\upshape\ref{tbl:stochastic_L2_C0_slopes}} 
and~{\upshape\ref{tbl:stochastic_holder_slopes}},
show that the observed log-squared errors
lie on lines with the slopes $a_{H,\beta,2}^2$
which confirms the strong error bounds 
of Theorems~{\upshape\ref{thm:C0_error_fracker}} 
and~{\upshape\ref{thm:holder_bound_fracker}}. 
Here, we again used linear regression on  
the last quarters of data points
for computing the empirical slopes.

\begin{figure}[h!]
	\centering
	\includegraphics[width=0.49\linewidth]{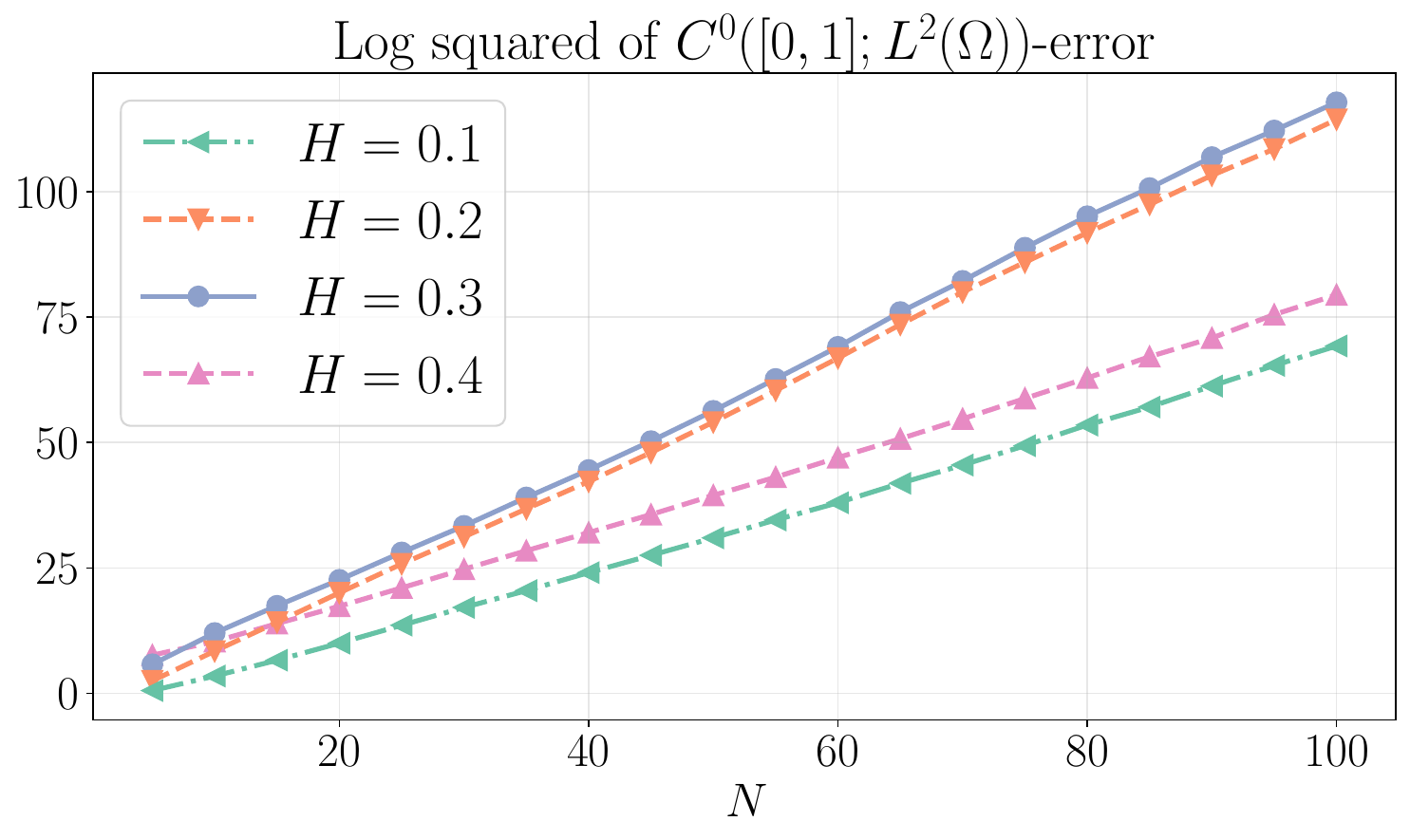}
	\includegraphics[width=0.49\linewidth]{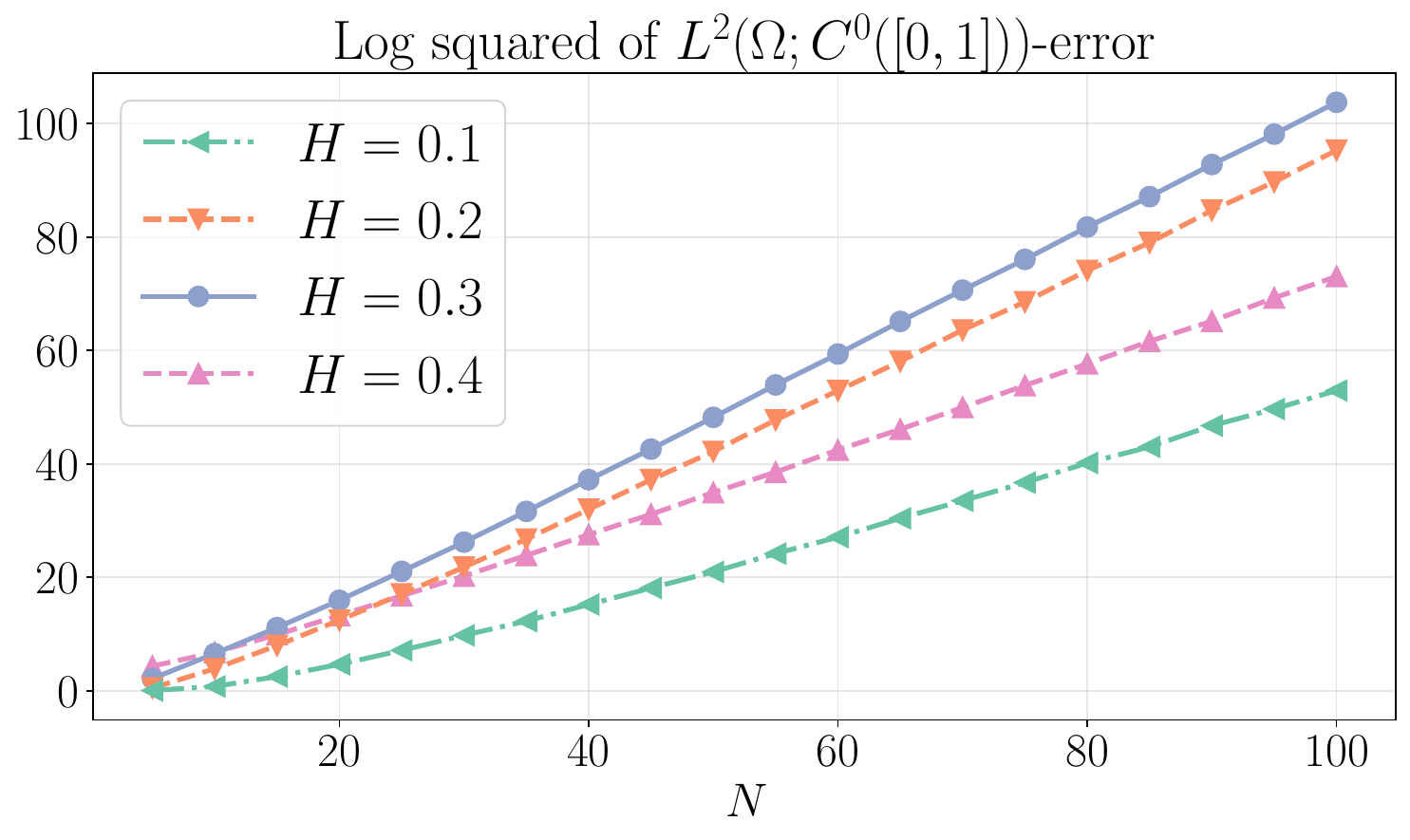}
	\caption{$\log^2\bigl(E^{(h_n)}_{C^0(L^2)}\bigr)$
		(left) and $\log^2\bigl(E^{(h_n)}_{L^2(C^0)}\bigr)$ (right) 
		for various $H$, 
		as a function of $N_n$, 
		computed with  
		$\tau = 2^{-10}$
		and $M = 2^{13}$.\vspace{-.75\baselineskip}} 
	\label{fig:stochastic_L2_C0_errors}
\end{figure}

\begin{table}[h!]
	\centering
	\small
	\begin{tabular}{@{}cccc@{}}
		\toprule
		& Theor.\ slope & Emp.\ slope  & Emp.\ slope  \\
		$H$ & $C^0([0,1];L^2(\Omega))$ & $C^0([0,1];L^2(\Omega))$ & $L^2(\Omega;C^0([0,1]))$ \\
		\midrule 
		 0.1 &  0.789 &  0.796 &  0.644 \\
		 0.2 &  1.183 &  1.125 &  1.062 \\
		 0.3 &  1.183 &  1.138 &  1.100 \\
		 0.4 &  0.789 &  0.832 &  0.769 \\
		\bottomrule
	\end{tabular}
	\caption{Theoretical and empirical slopes
		for the lines in Figure~{\upshape\ref{fig:stochastic_L2_C0_errors}}.\vspace{-.75\baselineskip}}  
	\label{tbl:stochastic_L2_C0_slopes}
\end{table}

\begin{figure}[h!]
	\centering
	\includegraphics[width=0.32\linewidth]{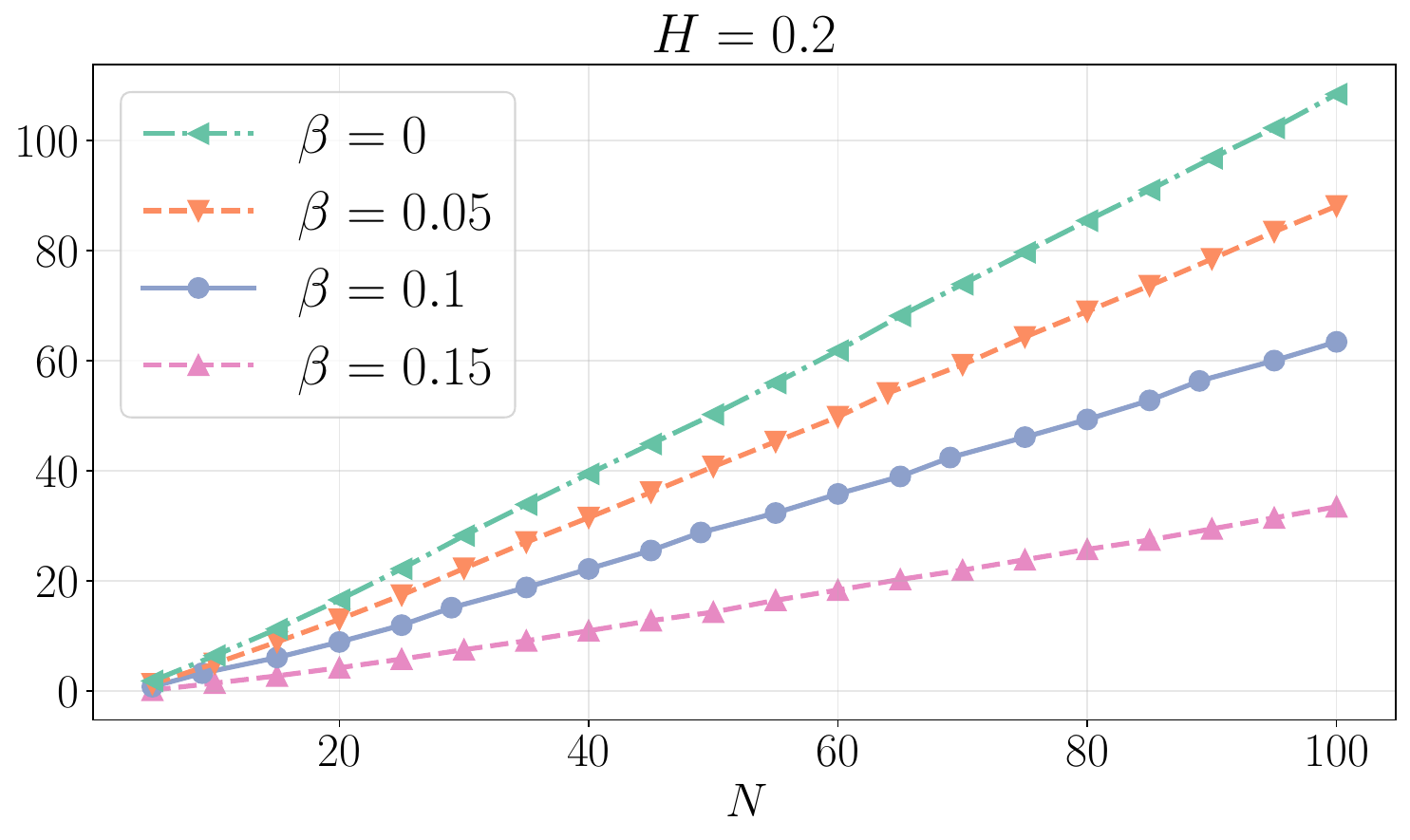}
	\includegraphics[width=0.32\linewidth]{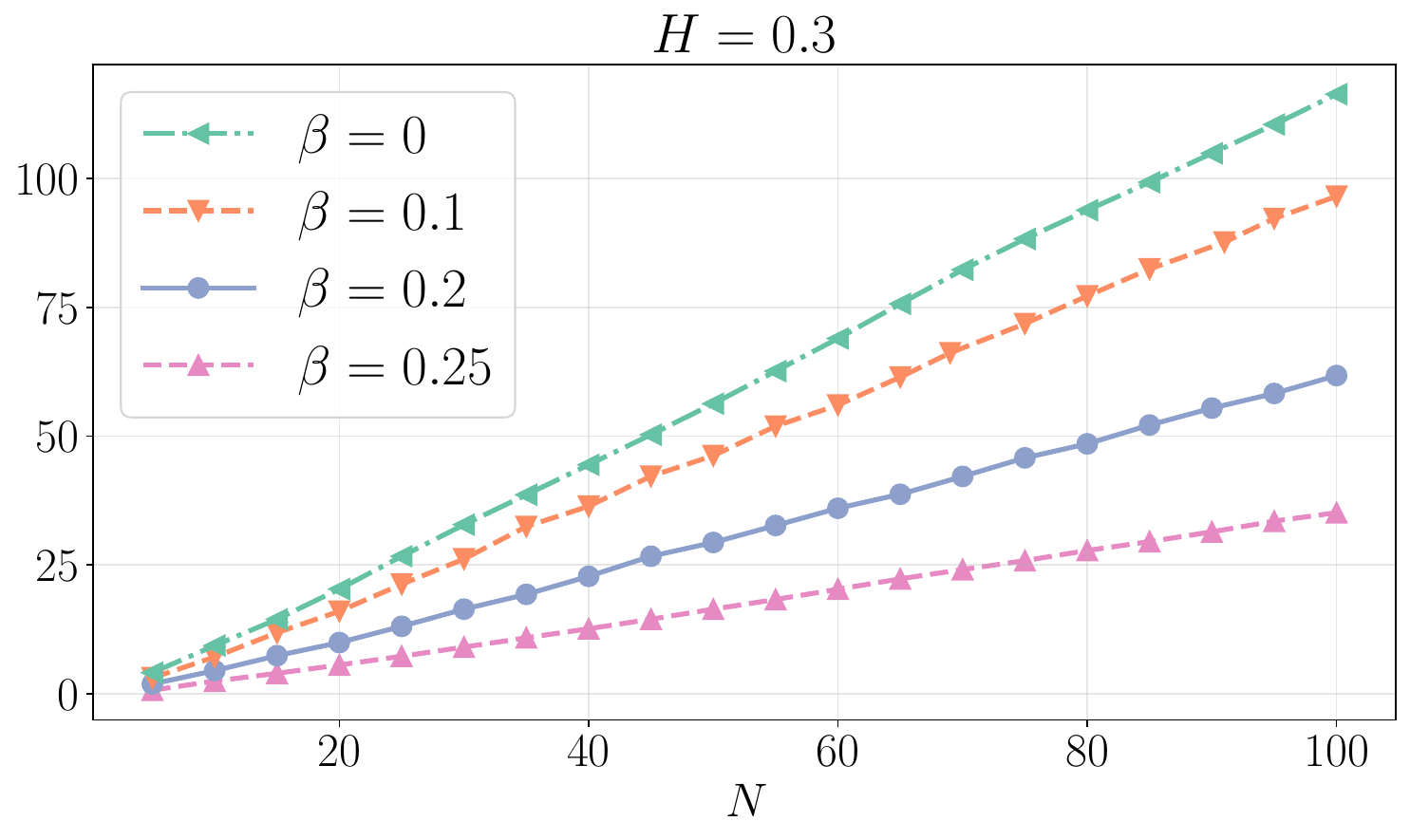}
	\includegraphics[width=0.32\linewidth]{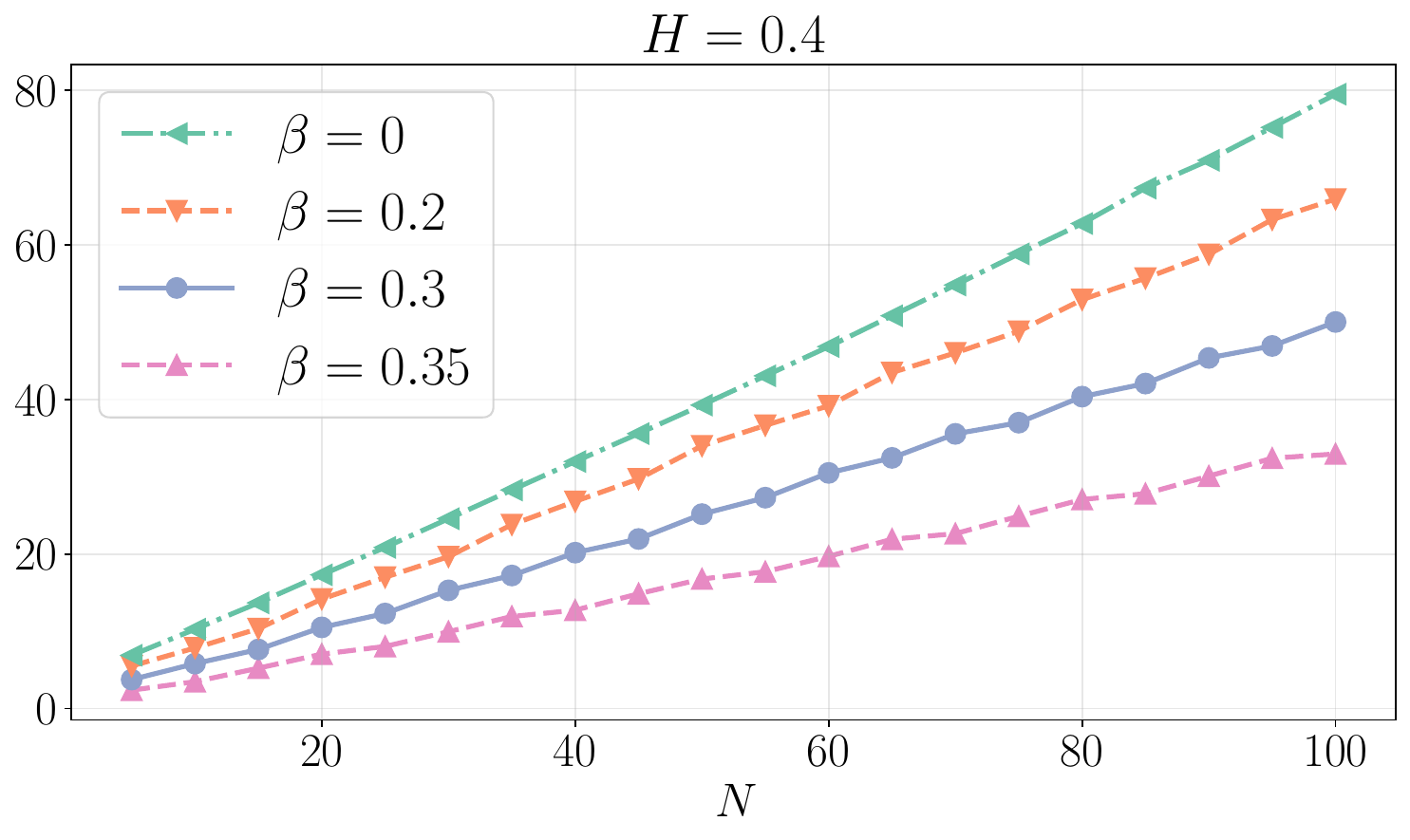}
	\includegraphics[width=0.32\linewidth]{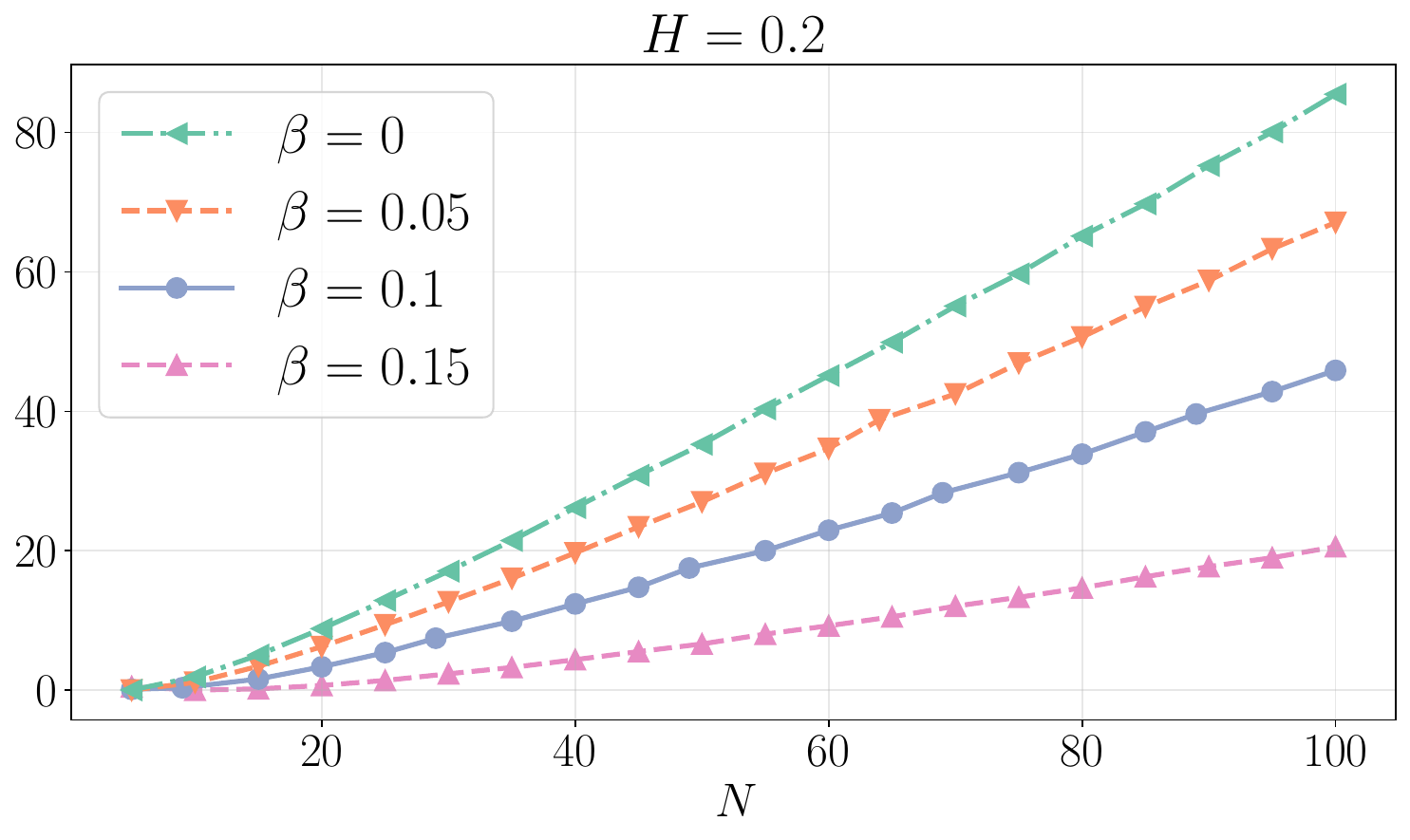}
	\includegraphics[width=0.32\linewidth]{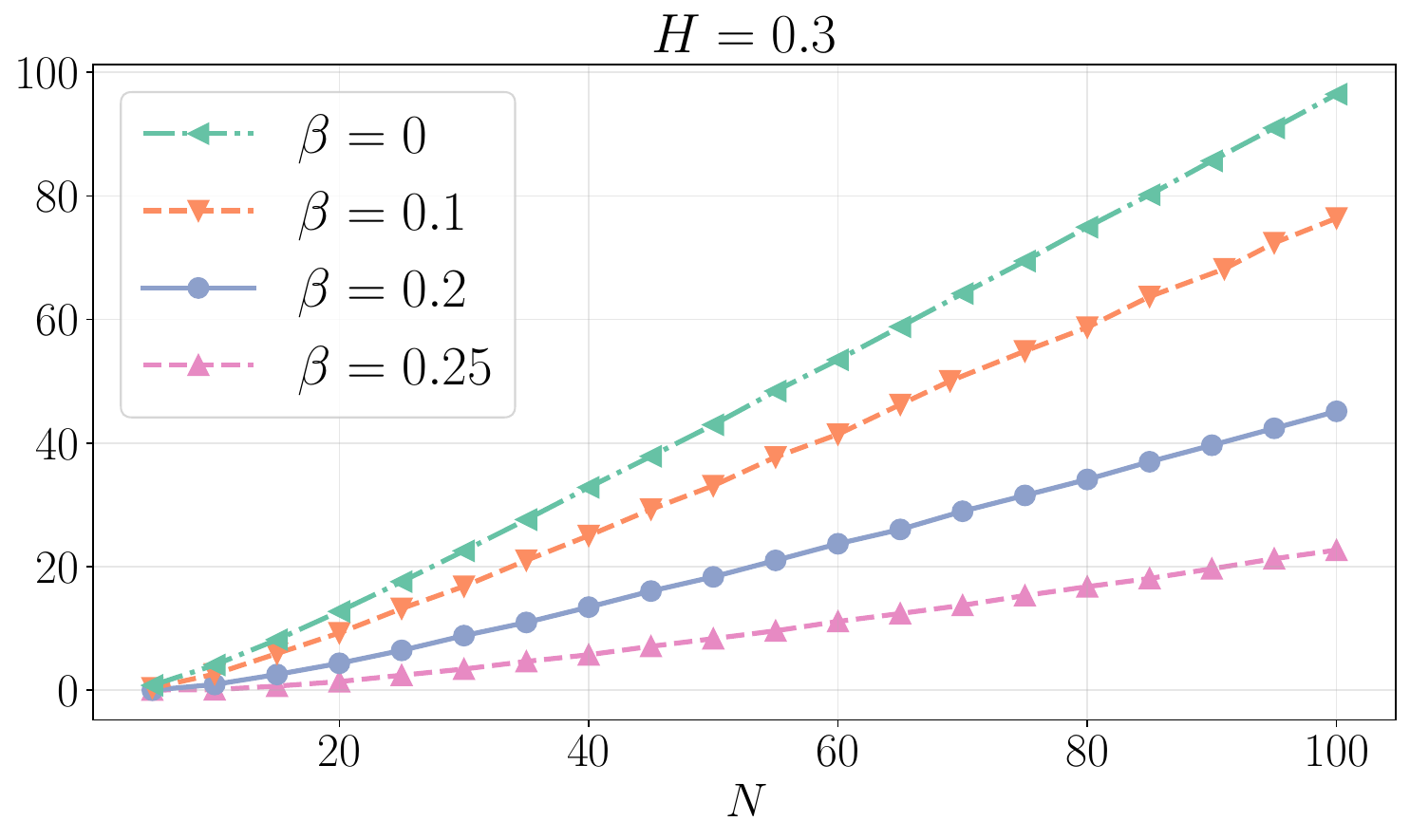}
	\includegraphics[width=0.32\linewidth]{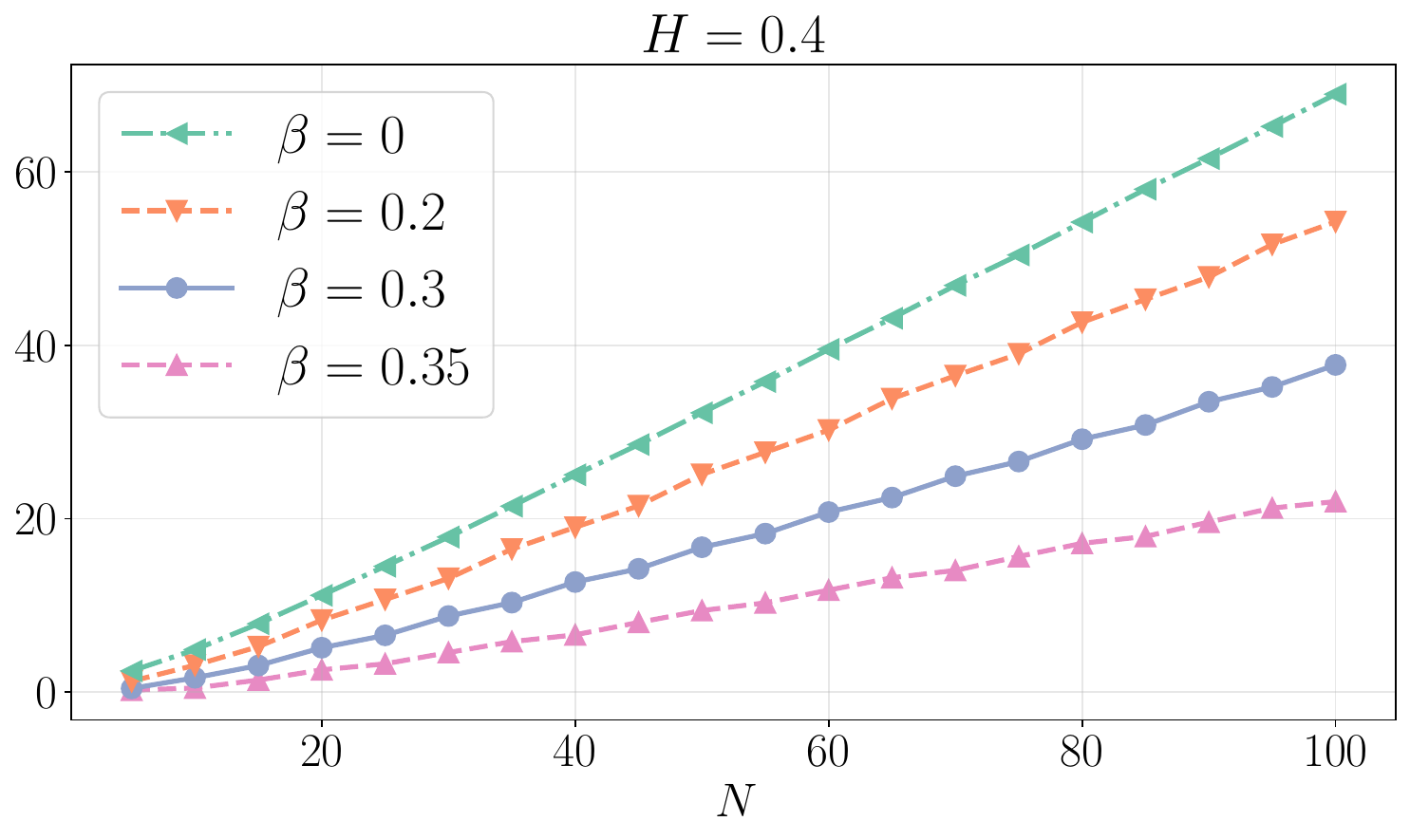}
	\caption{$\log^2\bigl(E^{(h_n,\beta)}_{C^\beta(L^2)}\bigr)$
		(top) and 
		$\log^2\bigl(E^{(h_n,\beta)}_{L^2(C^\beta)}\bigr)$
		(bottom) for various~$H$ and~$\beta$,  
		computed with   
		$\tau = 2^{-10}$
		and $M = 2^{13}\!$.}
	\label{fig:stochastic_L2_holder_errors}
\end{figure}
 
 \begin{table}[t!]
 	\centering 
 	\small
 	\begin{tabular}{cccc}
 		\toprule
 		& Theor.\ slope & Emp.\ slope  & Emp.\ slope  \\
 		& $C^\beta([0,1];L^2(\Omega))$ & $C^\beta([0,1];L^2(\Omega))$ & $L^2(\Omega;C^\beta([0,1]))$ \\
 		\cmidrule(lr){2-4} 
 		$\beta$ & \multicolumn{3}{c}{$H=0.2$} \\
 		\midrule   
 		0    & 1.183 & 1.145 & 1.020 \\
 		0.05 & 0.986 & 0.961 & 0.823 \\
 		0.1  & 0.739 & 0.707 & 0.595 \\
 		0.15 & 0.423 & 0.390 & 0.291 \\
 		\specialrule{.08em}{.4ex}{.65ex}
 		$\beta$ & \multicolumn{3}{c}{$H=0.3$} \\
 		\midrule 
 		0    & 1.183 & 1.125 & 1.077 \\
 		0.1  & 0.986 & 0.969 & 0.879 \\
 		0.2  & 0.657 & 0.653 & 0.549 \\
 		0.25 & 0.394 & 0.373 & 0.301 \\
 		\specialrule{.08em}{.4ex}{.65ex}
 		$\beta$ & \multicolumn{3}{c}{$H=0.4$} \\
 		\midrule 
 		0    & 0.789 & 0.826  & 0.735 \\
 		0.2  & 0.657 & 0.672  & 0.591 \\
 		0.3  & 0.493 & 0.483  & 0.431 \\
 		0.35 & 0.329 & 0.327  & 0.258 \\
 		\bottomrule
 	\end{tabular}
 	\caption{Theoretical and empirical slopes
 		for the lines in 
 		Figure~{\upshape\ref{fig:stochastic_L2_holder_errors}}.}
 	\label{tbl:stochastic_holder_slopes}
 \end{table}


\appendix


\section{Volterra kernels and a Gr\"onwall type inequality}
\label{appendix:Gronwall}

This appendix serves as a means, firstly, 
to introduce Volterra kernels and   
some of their properties which are 
relevant for  
the results in Section~{\upshape\ref{sec:general_SVE}} 
(see Appendix~{\upshape\ref{subappendix:Volterra}})
and, secondly, 
by means of resolvent equations 
to establish a generalized 
Gr\"onwall type inequality 
for non-convolution kernels, 
see Appendix~{\upshape\ref{subappendix:gronwall}}.  


\subsection{Auxiliary results on Volterra kernels}\label{subappendix:Volterra}

In this section we follow 
\cite[Chapter~9]{gripenberg1990} 
to introduce the concept of Volterra kernels, 
and we discuss some of their important features. 
Recall from Subsection~{\upshape\ref{subsec:notation}} 
that $\triangle J := \{(t,s) \in J\times J : s \leq t\}$.

\begin{definition}\label{def:volterra_kernel}
	Assume that $J\subset\bbR$ is an interval.
	A Borel measurable function
	${\kappa \colon J\times J \to \bbR}$
	is called a Volterra kernel
	if $\kappa(t,s) = 0$ a.e.\ outside of 
	$\triangle J$.
	The space of Volterra kernels on $J$ 
	is denoted by $\cV(J)$.
	
	A Volterra kernel is of type $L^\infty$ on $J$ if 
	\begin{equation*}
		\| \kappa \|_{\cV(L^\infty;J)} 
		:= 
		\esssup_{t \in J} \int_J | \kappa(t,s) | \diff{s} < \infty.
	\end{equation*}
	The space of all such kernels
	(modulo equality a.e.)
	is denoted by $\cV(L^\infty;J)$.
\end{definition}

Equipped with the product operation 
${\star\colon\cV(L^\infty;J) \times \cV(L^\infty;J) \to \cV(J)}$,  
\begin{equation}\label{eq:def_star} 
	(a\star b)(t,s) = \int_J a(t,u) b(u,s) \diff{u},
	\qquad 
	\text{a.e.\ in } J\times J, 
\end{equation}
the space $\left(\cV(L^\infty;J), \, \star\right)$ 
forms a Banach algebra, 
see~\cite[Theorem~9.2.4]{gripenberg1990}. 
As Banach algebra norms
are submultiplicative,
it follows that $\star$
maps into 
$\cV(L^\infty;J) \subseteq \cV(J)$.

Definition~{\upshape\ref{def:volterra_kernel}} 
does not guarantee that 
a Volterra kernel of type $L^\infty$  
is pointwise defined 
in its first variable, 
rather than merely almost everywhere. 
The next two lemmas show that 
if the conditions of 
Assumption~{\upshape\ref{ass:assump_nonconv_kernels}} 
hold in an almost everywhere sense, 
then there exists a unique representative 
satisfying these bounds everywhere.

\begin{lemma}\label{lma:continuous_representative}
	Let $T \in (0,\infty)$ and 
	$K_1, K_2$ be Volterra kernels
	satisfying Assumption~{\upshape\ref{ass:assump_nonconv_kernels}}, 
	where the 
	conditions~{\upshape\ref{item:assu_noco_i},\ref{item:assu_noco_ii}}  
	instead hold
	for \emph{almost every} $(t,s) \in \triangle[0,T]$ 
	with $\eta := \eta_T \in (0,\infty)$ and 
	$\alpha := \alpha_T \in (0,1]$. 
	Then, for $j \in \{1,2\}$, the mapping  
	$[0,T] \ni t \mapsto K_j(t,\,\cdot\,) \in L^j(0,T)$
	has a unique representative in $C^\alpha([0,T];L^j(0,T))$.
\end{lemma}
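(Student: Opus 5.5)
The plan is to exhibit the map $t\mapsto K_j(t,\cdot)$ as an element of $L^\infty(0,T;L^j(0,T))$ that is $\alpha$-H\"older on a set of full measure, and then extend it by completeness of $L^j(0,T)$. Fix $j\in\{1,2\}$. Since $K_j$ is Borel measurable on $[0,T]^2$ and, by condition~(i) with $s=0$, $\|K_j(t,\cdot)\|_{L^j(0,t)}\le \eta t^\alpha$ for a.e.\ $t$, Fubini's theorem shows that for a.e.\ $t\in[0,T]$ the section $K_j(t,\cdot)$ is an element of $L^j(0,T)$. Moreover, $t\mapsto K_j(t,\cdot)$ is strongly measurable as an $L^j(0,T)$-valued map (standard for jointly measurable functions, with separable target). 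Recall also that $K_j(t,u)=0$ for a.e.\ $u>t$, by the Volterra property.

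The key estimate is the following. For a.e.\ pair $(t,s)$ with $s\le t$, split
\begin{equation*}
\|K_j(t,\cdot)-K_j(s,\cdot)\|_{L^j(0,T)} \le \|K_j(t,\cdot)-K_j(s,\cdot)\|_{L^j(0,s)}+\|K_j(t,\cdot)\|_{L^j(s,t)},
\end{equation*}
using that both sections vanish a.e.\ on $(t,T)$ and $K_j(s,\cdot)$ vanishes a.e.\ on $(s,t)$. By (ii) and (i), this is at most $2\eta(t-s)^\alpha$.

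Care is needed here: ``almost every $(t,s)$'' is with respect to two-dimensional measure. I would therefore pass to a full-measure set $G\subset[0,T]$ of good times as follows. By Fubini, there is a full-measure set $G_0$ such that for each $t\in G_0$ the estimate holds for a.e.\ $s$. Then, using Lebesgue points of the Bochner-measurable map $F(t):=K_j(t,\cdot)$, one upgrades this to all $s,t\in G$. Concretely, for Lebesgue points $t,s$ of $F$, average the inequality over small balls around $t$ and $s$ and pass to the limit. The averaged difference converges in $L^j$ to $F(t)-F(s)$, while the right-hand side converges to $2\eta|t-s|^\alpha$ by continuity. This yields $\|F(t)-F(s)\|\le 2\eta|t-s|^\alpha$ for all $t,s$ in the full-measure set $G$ of Lebesgue points. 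I expect this measure-theoretic upgrade to be the main obstacle; local integrability of $F$, which the Lebesgue differentiation theorem requires, follows from (i).

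Finally, $G$ is dense, and $F|_G$ is uniformly $\alpha$-H\"older continuous into the complete space $L^j(0,T)$. Hence $F|_G$ extends uniquely to $\widetilde F\in C^\alpha([0,T];L^j(0,T))$, and $\widetilde F=F$ a.e., so $\widetilde F$ is a representative. Uniqueness holds because two continuous maps that agree a.e.\ agree on a dense set and therefore everywhere. The H\"older constant $2\eta$ also shows that the bounds of Assumption~(i)–(ii) extend to all $(t,s)$, by continuity of the norms in $t$ and $s$.
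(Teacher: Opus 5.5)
Your proof is correct, but it takes a different route from the paper's.

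\textbf{The paper's route.} It never uses Lebesgue points or Bochner measurability. It lets $D$ be the set of times $t$ whose section $A_t$ of the exceptional set $A\subset\triangle[0,T]$ is null. For $s<t$ in $D$, it chains through an auxiliary time $u<s$ with $(t,u)\notin A$ and $(s,u)\notin A$; this is possible because both sections are null. Conditions (i) and (ii), applied at the good pairs $(t,u)$ and $(s,u)$, give $\|K_j(t,\cdot)-K_j(s,\cdot)\|_{L^j(0,T)}\le 2\eta(t-u)^\alpha+2\eta(s-u)^\alpha$. It then lets $u\uparrow s$ along such points. This exploits the one-sided Volterra structure of the hypotheses and needs nothing beyond Fubini for sections.

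\textbf{Your route.} You handle the possibility that $(t,s)$ itself is a bad pair by averaging the two-dimensional a.e.\ inequality $\|F(t')-F(s')\|\le 2\eta|t'-s'|^\alpha$ over small intervals around $t$ and $s$, then passing to the limit at Lebesgue points. This is more general: it upgrades any a.e.\ bound with a continuous modulus, with no intermediate point. It costs more machinery, however: strong (Pettis) measurability of $t\mapsto K_j(t,\cdot)$ using separability of $L^j(0,T)$, its Bochner integrability, and the vector-valued Lebesgue differentiation theorem. The auxiliary set $G_0$ is not needed, since the averaging uses the two-dimensional a.e.\ statement directly.

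\textbf{One imprecision.} Your first step, ``condition (i) with $s=0$'', is not literally available. The hypothesis holds only for a.e.\ $(t,s)$, and $\{(t,0)\}$ is a null set in $\triangle[0,T]$. Use the Fubini argument you invoke later: for a.e.\ $t$, (i) holds for a.e.\ $s$. Then let $s\downarrow 0$ along such $s$ and apply monotone convergence to get $\|K_j(t,\cdot)\|_{L^j(0,t)}\le \eta t^\alpha$ for a.e.\ $t$. This bound is what gives the boundedness and integrability of $F$ that your Lebesgue-point argument requires.
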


\begin{proof}
	Let $A \subset \triangle[0,T]$ be the null set
	where the bounds of 
	Assumptions~{\upshape\ref{ass:assump_nonconv_kernels}\ref{item:assu_noco_i}} 
	or of~{\upshape\ref{ass:assump_nonconv_kernels}\ref{item:assu_noco_ii}} 
	fail.
	By Fubini's theorem, the sections
	$A_t := \{s \in [0,t] : (t,s) \in A\}$ 
	are Lebesgue null sets
	for almost every $t \in [0,T]$.
	Consequently, the set
	$D$ defined by 
	$D := \{t \in [0,T] : A_t \text{ is a null set}\}$
	has full  Lebesgue 
	measure and is dense in $[0,T]$.
	
	Fix $s, t \in D$ with $s < t$.
	The sets $D_t := [0,t] \setminus A_t$
	and $D_s := [0,s] \setminus A_s$
	have full Lebesgue measure 
	in $[0,t]$ and $[0,s]$, respectively.
	Their intersection $D_{t,s} := D_t \cap D_s$
	therefore has full Lebesgue 
	measure in $[0,s]$, 
	making it dense in $[0,s]$.
	
	For any $u \in D_{t,s}$, we have 
	$u \notin A_t \cup A_s$.
	Using the 
	conditions~{\upshape\ref{item:assu_noco_i},\ref{item:assu_noco_ii}}   
	of Assumption~{\upshape\ref{ass:assump_nonconv_kernels}} 
	and the properties of Volterra kernels,
	we obtain that, for $j\in\{1,2\}$, 
	\begin{align*}
		\| K_j(t,\,\cdot\,) - K_j(u,\,\cdot\,) \|_{L^j(0,T)}
		&\leq 
		\| K_j(t,\,\cdot\,) - K_j(u,\,\cdot\,) \|_{L^j(0,u)} + \| K_j(t,\,\cdot\,) \|_{L^j(u,t)} 
		\\
		&\leq 
		\eta (t - u)^\alpha + \eta (t - u)^\alpha
		= 
		2 \eta (t - u)^\alpha, 
	\end{align*}
	and similarly for 
	$\| K_j(s,\,\cdot\,) - K_j(u,\,\cdot\,) \|_{L^j(0,T)}$. 
	Thus, by the triangle inequality, 
	\begin{equation*}
		\| K_j(t,\,\cdot\,) - K_j(s,\,\cdot\,) \|_{L^j(0,T)} 
		\leq 
		2 \eta (t - u)^\alpha + 2 \eta (s - u)^\alpha.
	\end{equation*}
	Since $D_{t,s}$ is dense in $[0,s]$,
	we can find a sequence 
	$(u_n)_{n \in \bbN} \subset D_{t,s}$
	such that $u_n \uparrow s$.
	Evaluating the right-hand side 
	at $u=u_n$ and taking the limit 
	as $n \to \infty$ yields
	\begin{equation*}
		\forall s,t \in D :
		\quad 
		\| K_j(t,\,\cdot\,) - K_j(s,\,\cdot\,) \|_{L^j(0,T)}
		\leq 
		2 \eta (t - s)^\alpha, 
	\end{equation*}
	i.e., 
	the map  
	$t \mapsto K_j(t,\,\cdot\,)$
	is uniformly $\alpha$-H\"older continuous
	on the dense subset~$D$.
	Since $L^j(0,T)$ is a Banach space,
	it uniquely extends to a continuous function  
	on the entire interval, yielding the desired
	representative in $C^\alpha([0,T]; L^j(0,T))$.
\end{proof}

\begin{lemma}\label{lma:u_continuity_lemma}
	Let $T \in (0,\infty)$,
	let $q \in [1,\infty)$,
	and let $U \in C^0([0,T];L^q(0,T))$.
	Then, the following statements hold:
	\begin{enumerate}[leftmargin=8mm, label={\upshape(\roman*)}]
		\item\label{item:u_continuity_lemma_i}
		The mapping
		$(t,s) \mapsto f(t,s) := \| U(t) \|_{L^q(s,t)}$
		is continuous on $\triangle [0,T]$.
		\item\label{item:u_continuity_lemma_ii}
		The mapping
		$(t,s) \mapsto  g(t,s) :=\| U(t) - U(s) \|_{L^q(0,s)}$
		is continuous on $\triangle [0,T]$.
	\end{enumerate}
\end{lemma}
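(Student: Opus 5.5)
Both parts reduce to the same basic estimate: the reverse triangle inequality in $L^q$, combined with the fact that $U$ is uniformly continuous from the compact interval $[0,T]$ into $L^q(0,T)$. The other ingredient is absolute continuity of the Lebesgue integral: for a fixed $F\in L^q(0,T)$, the map $(a,b)\mapsto \|F\|_{L^q(a,b)}$ is continuous. In both parts we fix $(t,s)\in\triangle[0,T]$, take a sequence $(t_n,s_n)\to(t,s)$ in $\triangle[0,T]$, and split the difference into a part where the function changes and a part where only the interval changes.

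For part (i), we write
\begin{equation*}
|f(t_n,s_n)-f(t,s)| \leq \bigl| \|U(t_n)\|_{L^q(s_n,t_n)} - \|U(t)\|_{L^q(s_n,t_n)}\bigr| + \bigl|\|U(t)\|_{L^q(s_n,t_n)} - \|U(t)\|_{L^q(s,t)}\bigr| .
\end{equation*}
By the reverse triangle inequality and monotonicity of the norm in the domain, the first term is at most $\|U(t_n)-U(t)\|_{L^q(0,T)}$. This tends to zero by continuity of $U$. For the second term, the fixed function $F:=U(t)$ has $\|F\|_{L^q(s_n,t_n)}^q=\int_{s_n}^{t_n}|F|^q$. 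This converges to $\int_s^t|F|^q$ by dominated convergence, since the indicators of $(s_n,t_n)$ converge to the indicator of $(s,t)$ a.e. Continuity of $x\mapsto x^{1/q}$ then finishes (i).

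For part (ii), we write analogously
\begin{equation*}
|g(t_n,s_n)-g(t,s)|\leq \|(U(t_n)-U(s_n))-(U(t)-U(s))\|_{L^q(0,T)} + \bigl|\|U(t)-U(s)\|_{L^q(0,s_n)}-\|U(t)-U(s)\|_{L^q(0,s)}\bigr| .
\end{equation*}
The first term is bounded by $\|U(t_n)-U(t)\|_{L^q(0,T)}+\|U(s_n)-U(s)\|_{L^q(0,T)}$, which tends to zero. The second term tends to zero by the same dominated convergence argument, now applied to $G:=U(t)-U(s)\in L^q(0,T)$ on the intervals $(0,s_n)\to(0,s)$.

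There is no real obstacle here. The only point needing care is that $U(t)$ is an equivalence class, and the domain-restricted norms must be well defined. This is automatic, since restricting an $L^q$ class to a subinterval is well defined and changing the representative on a null set does not affect the integrals. Sequential continuity suffices because $\triangle[0,T]$ is a metric space.
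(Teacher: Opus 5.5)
Your proposal is correct and follows essentially the same route as the paper. Both arguments use the same two-term split via the reverse triangle inequality and handle the first term by continuity of $U$; for the interval-change term you use dominated convergence in both parts, where the paper uses dominated convergence in (i) and absolute continuity of the Lebesgue integral in (ii).
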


\begin{proof}
	Let $( (t_n, s_n) )_{n \in \bbN}$ be
	a sequence in $\triangle [0,T]$
	converging to $(t,s) \in \triangle[0,T]$.
	
	For~{\upshape\ref{item:u_continuity_lemma_i}},
	we find that
	\begin{align*}
		\bigl|   f (t_n,s_n) -  f&(t,s) \bigr|
		\leq
		\bigl|  \| U(t_n) \|_{L^q(s_n,t_n)} - \| U(t) \|_{L^q(s_n,t_n)} \bigr| 
		\\
		& \hspace*{35mm}
		+
		\bigl| \| U(t) \|_{L^q(s_n,t_n)} - \| U(t) \|_{L^q(s,t)}  \bigr| 
		\\
		& \leq
		\|  U(t_n) - U(t) \|_{L^q(0,T)}
		+
		\|  U(t) \bigl(\ind{(s_n, t_n)} - \ind{(s,t)} \bigr) \|_{L^q(0,T)}, 
	\end{align*}
	where the last step holds by the reverse triangle inequality.
	In the limit $n\to\infty$,
	the first term  
	of this bound is zero 
	by continuity of the map
	${[0,T] \ni t \mapsto U(t) \in L^q(0,T)}$,
	and the second term vanishes
	by the dominated convergence theorem. 
	
	For~{\upshape\ref{item:u_continuity_lemma_ii}},
	we proceed similarly,
	\begin{equation}\label{eq:u_ct_proof_eq2}
		\begin{split}
			\bigl|   g(t_n,s_n) -  g(t,s) \bigr|
			&\leq 
			\bigl|  \| U(t_n) - U(s_n) \|_{L^q(0,s_n)} 
			- \| U(t) - U(s) \|_{L^q(0,s_n)}  \bigr|
			\\
			&\quad
			+ \bigl| \| U(t) - U(s) \|_{L^q(0,s_n)} - \| U(t) - U(s) \|_{L^q(0,s)} \bigr| .
		\end{split}
	\end{equation}
	Using the (reverse) triangle inequality, 
	the first term of \eqref{eq:u_ct_proof_eq2} 
	can be bounded by  
	\begin{equation*} 
		\| U(t_n) - U(t) \|_{L^q(0,T)}
		+
		\| U(s) - U(s_n) \|_{L^q(0,T)},
	\end{equation*}
	which converges to zero as $n\to\infty$, 
	since $U \in C^0([0,T];L^q(0,T))$.
	For the second\pagebreak 
	  
	\noindent term 
	of \eqref{eq:u_ct_proof_eq2}, 
	we obtain, 
	again by the (reverse) triangle inequality, 
	the upper bound  
	$\| U(t)  \|_{L^q(s_n \wedge s, s_n \vee s)} 
	+ 
	\| U(s) \|_{L^q(s_n \wedge s, s_n \vee s)}$ 
	which, 
	in the limit $n\to\infty$,
	converges to zero
	by absolute continuity of the
	Lebesgue integral, 
	completing the proof 
	of~{\upshape\ref{item:u_continuity_lemma_ii}}. 
\end{proof}

The next lemma 
facilitates to bound increments 
of non-negative, non-increasing convolutions 
kernels in $L^q$-norms. 
Subsequently, 
in Lemma~{\upshape\ref{lma:kernel_assumption_implications}},  
this is  
used to show that  
the uniform properties of convolution kernels
in Assumption~{\upshape\ref{ass:assu_co_unif_xi}} 
imply that the associated non-convolution kernels 
satisfy the conditions of  
Assumption~{\upshape\ref{ass:assu_noco_unif_xi}}.

\begin{lemma}\label{lma:Lq_norm_of_increment_conv}
	Let $T\in(0,\infty)$, 
	$q \in [1,\infty)$, and let 
	$f\in L^q(0,T)$ 
	be non-negative 
	and non-increasing almost everywhere.
	Then, for all $(t,s) \in \triangle [0,T]$,
	\begin{equation*}
		\| f(t-\,\cdot\,) - f(s-\,\cdot\,) \|_{L^q(0,s)} 
		\leq 
		\| f \|_{L^q(0,t-s)}.
	\end{equation*}
\end{lemma}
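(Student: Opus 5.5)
Since $f$ is non-increasing, the increment $f(s-u)-f(t-u)$ has a fixed sign for every $u$. The plan is to exploit this sign by bounding the integrand pointwise by something whose $L^q$-norm telescopes, and the cleanest device is the elementary inequality $(a-b)^q \le a^q - b^q$ for $a\ge b\ge 0$ and $q\ge 1$. That inequality follows from superadditivity of $x\mapsto x^q$ on $[0,\infty)$: $(a-b)^q + b^q \le a^q$.

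\textbf{Step 1 (pointwise bound).} Fix $(t,s)\in\triangle[0,T]$ and substitute $v=s-u$, which gives
\begin{equation*}
\| f(t-\,\cdot\,) - f(s-\,\cdot\,) \|_{L^q(0,s)}^q = \int_0^s | f(v+(t-s)) - f(v) |^q \diff{v}.
\end{equation*}
There is a Lebesgue null set $A$ off which $f$ is non-negative and non-increasing. Outside the null set $A\cup (A-(t-s))$ we therefore have $0\le f(v+t-s)\le f(v)$, and hence
\begin{equation*}
| f(v+(t-s)) - f(v) |^q \le f(v)^q - f(v+(t-s))^q \quad\text{for a.e. } v\in(0,s).
\end{equation*}

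\textbf{Step 2 (telescoping).} Both $f(v)^q$ and $f(v+t-s)^q$ are integrable on $(0,s)$, because $f\in L^q(0,T)$ and $v+t-s\in(t-s,t)\subseteq(0,T)$. Integrating the bound from Step 1 then gives
\begin{equation*}
\int_0^s f(v)^q \diff{v} - \int_{t-s}^{t} f(w)^q \diff{w}.
\end{equation*}
The two integrals overlap on $(t-s,s)$ when $t-s<s$, and on the overlap they cancel, leaving $\int_0^{t-s} f^q - \int_s^t f^q \le \int_0^{t-s} f^q$. When $t-s\ge s$ there is no overlap, and the expression is at most $\int_0^s f^q \le \int_0^{t-s} f^q$. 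In both cases, taking $q$-th roots yields the claim. The case $s=t$ is trivial.

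The only delicate point is measure-theoretic bookkeeping: the monotonicity holds only almost everywhere, so the shifted null set $A-(t-s)$ must be excluded along with $A$. I expect no other obstacle.
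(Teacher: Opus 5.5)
Your proof is correct and follows the paper's argument: the pointwise bound $(a-b)^q\le a^q-b^q$ from superadditivity, then integration and telescoping via $\int_0^s f^q-\int_{t-s}^t f^q=\int_0^{t-s}f^q-\int_s^t f^q$. The paper gets this identity in one step by adding and subtracting $\int_0^t f^q$, so your case split on whether $t-s<s$ is unnecessary, though harmless.
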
 

\begin{proof}
	Since $f$ is non-negative and non-increasing almost everywhere,
	for any fixed 
	$(t,s)\in\triangle[0,T]$, 
	the inequality $f(s-u) \geq f(t-u) \geq 0$
	holds for almost all ${u \in [0,s]}$. 
	Superadditivity of the function 
	$x\mapsto x^q$ for $q\in[1,\infty)$ 
	implies that the inequality 
	$(b-a)^q \leq b^q - a^q$ 
	holds for all $0 \leq a \leq b$. 
	Thus, 
	we obtain, for almost all $u \in [0,s]$,
	\begin{equation*}
		|f(t-u) - f(s-u)|^q 
		= 
		( f(s-u) - f(t-u) )^q 
		\leq  
		f(s-u)^q - f(t-u)^q.
	\end{equation*}
	Integrating over $[0,s]$
	and applying changes of variables yields
	\begin{equation*}
		\| f(t-\,\cdot\,) - f(s-\,\cdot\,) \|_{L^q(0,s)}^q   
		\leq 
		\int_0^s f(u)^q \diff{u} 
		- 
		\int_{t-s}^t f(u)^q \diff{u}. 
	\end{equation*}
	By adding and subtracting
	the term $\int_0^t f(u)^q \diff{u}$,
	we find that the right-hand side equals
	$\int_0^{t-s} f(u)^q \diff{u} - \int_s^t f(u)^q \diff{u}$.
	Omitting the non-positive second term
	and taking the $q$-th root
	completes the proof of the claimed inequality. 
\end{proof}

\begin{lemma}\label{lma:kernel_assumption_implications}
	Let $\Xi$ be a non-empty set.
	Let $(k^\xi_1)_{\xi\in\Xi}$ and $(k^\xi_2)_{\xi\in\Xi}$
	be families of Borel measurable real-valued functions 
	on $(0,\infty)$. 
	Let $(K^\xi_1)_{\xi\in\Xi}$ and $(K^\xi_2)_{\xi\in\Xi}$
	be the associated families of 
	Volterra kernels on $[0,\infty)$
	defined, for $j \in \{1,2\}$, by
	\begin{equation*}
		\forall \xi\in\Xi:
		\quad\; 
		K^\xi_j(t,s) = 
		\begin{cases}
			k^\xi_j(t-s) & t > s,   \\
			0            & t \leq s,
		\end{cases}
		\quad\;  
		(t,s)\in 
		[0,\infty) \times [0,\infty).  
	\end{equation*}  
	
	Whenever $(k^\xi_1,k^\xi_2)_{\xi \in \Xi}$
	satisfies Assumption~{\upshape\ref{ass:assu_co_unif_xi}} 
	with the families of constants  
	${(\eta_T)_{T > 0} \subseteq (0,\infty)}$
	and $(\alpha_T)_{T>0} \subseteq (0,1]$,
	then the family
	$(K^\xi_1, K^\xi_2)_{\xi\in\Xi}$
	satisfies Assumption~{\upshape\ref{ass:assu_noco_unif_xi}} 
	with the same families of constants $(\eta_T)_{T > 0}$
	and $(\alpha_T)_{T>0}$.
\end{lemma}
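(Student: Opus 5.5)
The plan is to check the two items of Assumption~\ref{ass:assu_noco_unif_xi} directly, using the dominating functions $\overline{k}_1,\overline{k}_2$ from Assumption~\ref{ass:assu_co_unif_xi} to build the dominating Volterra kernels. For $j\in\{1,2\}$ I set $\overline{K}_j(t,s):=\overline{k}_j(t-s)$ for $t>s$ and $\overline{K}_j(t,s):=0$ otherwise. Since $0\le \widehat{k}^\xi_j\le \overline{k}_j$ a.e., we get $|K^\xi_j(t,s)|\le \overline{K}_j(t,s)$ a.e.\ in $\triangle[0,T]$ for every $\xi$.

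\textbf{Item (i), size bound.} A change of variables gives
\[
\|\overline{K}_j(t,\cdot)\|_{L^j(s,t)}=\|\overline{k}_j\|_{L^j(0,t-s)}\le \eta_T (t-s)^{\alpha_T}.
\]

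\textbf{Item (i), continuity.} We also need $\overline{K}_j\in C^0([0,T];L^j(0,T))$. For $s<t$, split the difference over $(0,s)$ and $(s,t)$:
\[
\|\overline{K}_j(t,\cdot)-\overline{K}_j(s,\cdot)\|_{L^j(0,T)} \le \|\overline{k}_j(t-\cdot)-\overline{k}_j(s-\cdot)\|_{L^j(0,s)}+\|\overline{k}_j\|_{L^j(0,t-s)}.
\]
The function $\overline{k}_j$ is non-negative and non-increasing, and it lies in $L^j(0,T)$ by the bound with $t=T$. So Lemma~\ref{lma:Lq_norm_of_increment_conv} bounds the first term by $\|\overline{k}_j\|_{L^j(0,t-s)}$. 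Hence the whole difference is at most $2\eta_T(t-s)^{\alpha_T}$, which gives Hölder and in particular continuous dependence on $t$.

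\textbf{Item (ii).} For each $\xi$, I apply Lemma~\ref{lma:Lq_norm_of_increment_conv} to $f=\widehat{k}^\xi_j$. This $f$ is non-negative and non-increasing a.e.\ by assumption, and it lies in $L^j(0,T)$ because it is dominated by $\overline{k}_j$. The lemma gives
\[
\|K^\xi_j(t,\cdot)-K^\xi_j(s,\cdot)\|_{L^j(0,s)}=\|\widehat{k}^\xi_j(t-\cdot)-\widehat{k}^\xi_j(s-\cdot)\|_{L^j(0,s)}\le\|\widehat{k}^\xi_j\|_{L^j(0,t-s)}.
\]
By domination this is at most $\|\overline{k}_j\|_{L^j(0,t-s)}\le\eta_T(t-s)^{\alpha_T}$, with the same constants $\eta_T,\alpha_T$ as required.

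\textbf{Main obstacle.} The only delicate point is measurability and the a.e.\ issue. The indices $t=s$ and the null set where monotonicity fails must not matter. This holds because all quantities involved are integrals, which do not see null sets, and the kernels vanish for $t\le s$. Borel measurability of $(t,s)\mapsto \overline{k}_j(t-s)\mathbf 1_{\{t>s\}}$ follows because it is the composition of a Borel function with a continuous map.
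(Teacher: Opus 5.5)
Your proposal is correct and follows essentially the same route as the paper. The paper also defines $\overline{K}_j(t,s):=\overline{k}_j(t-s)\mathbf{1}_{\{t>s\}}$, gets the size bound by a change of variables, and applies Lemma~\ref{lma:Lq_norm_of_increment_conv} to both $\overline{k}_j$ and $k^\xi_j$, followed by domination, for the increment bounds. Your explicit split over $(0,s)$ and $(s,t)$ to obtain $\overline{K}_j\in C^0([0,T];L^j(0,T))$ just spells out what the paper summarizes as ``the preceding estimates show''; note also that you write $\widehat{k}^\xi_j$ where the lemma's notation is $k^\xi_j$.
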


\begin{proof} 
	By Assumption~{\upshape\ref{ass:assu_co_unif_xi}}, 
	there exist non-increasing 
	Borel measurable functions 
	$\overline{k}_1, \overline{k}_2 \colon (0,\infty) \to[0,\infty)$ 
	such that, 
	for all $\xi\in\Xi$, 
	the functions $k^\xi_1$, $k^\xi_2$ 
	are a.e.\ bounded 
	from above by   
	$\overline{k}_1$ and~$\overline{k}_2$, 
	respectively.   
	We define the corresponding Volterra kernels as 
	$\overline{K}_j(t,s) :=  
	\overline{k}_j(t-s) \ind{\{t > s\}}(t,s)$, 
	for ${j \in \{1,2\}}$ and all $(t,s)\in[0,\infty)\times[0,\infty)$. 
	Let $T\in(0,\infty)$ 
	and $\xi\in\Xi$ be arbitrary. 
	Then, 
	since the functions 
	$k^\xi_1$ and $k^\xi_2$  
	are non-negative a.e., 
	we find that,  
	for $j\in\{1,2\}$,  
	and for almost all $(t,s)\in\triangle[0,T]$, 
	\begin{equation*}
		|K^\xi_j(t,s)|
		= 
		k^\xi_j(t-s)
		\leq 
		\overline{k}_j(t-s) 
		= 
		\overline{K}_j(t,s). 
	\end{equation*}
	Furthermore, for $j\in\{1,2\}$ 
	and all $(t,s) \in \triangle[0,T]$,
	we find  
	by Assumption~{\upshape\ref{ass:assu_co_unif_xi}} 
	that 
	\begin{align*}
		\| \overline{K}_j(t,\,\cdot\,) \|_{L^j(s,t)}
		&= 
		\| \overline{k}_j \|_{L^j(0,t-s)}
		\leq 
		\eta_T (t-s)^{\alpha_T}, 
		\\
		\| \overline{K}_j(t,\,\cdot\,) -  \overline{K}_j(s,\,\cdot\,) \|_{L^j(0,s)}
		&\leq 
		\| \overline{k}_j \|_{L^j(0,t-s)} 
		\leq
		\eta_T (t-s)^{\alpha_T},
		\\
		\| K^\xi_j(t,\,\cdot\,) - K^\xi_j(s,\,\cdot\,) \|_{L^j(0,s)}
		&\leq 
		\| k^\xi_j \|_{L^j(0,t-s)}
		\leq 
		\| \overline{k}_j \|_{L^j(0,t-s)}
		\leq
		\eta_T (t-s)^{\alpha_T},
	\end{align*}
	where we also applied    
	Lemma~{\upshape\ref{lma:Lq_norm_of_increment_conv}} 
	for the functions 
	$\overline{k}_j$ and $k^\xi_j$.  
	The preceding estimates show that 
	${\overline{K}_j 
	\in C^0([0,T]; L^j(0,T))}$ 
	holds for $j\in\{1,2\}$, 
	and that 
	the family $(K_1^\xi, K_2^\xi)_{\xi\in\Xi}$ 
	satisfies Assumption~{\upshape\ref{ass:assu_noco_unif_xi}} 
	with the constants $(\eta_T)_{T > 0}$
	and $(\alpha_T)_{T>0}$.
\end{proof}


\subsection{Resolvent equations and Gr\"onwall's inequality} 
\label{subappendix:gronwall} 

In this subsection, 
we use the product operation~\eqref{eq:def_star}
to introduce the resolvent $r$ 
of a Volterra kernel $\kappa$ 
and a series representation of~$r$, 
needed for an inequality of 
Gr\"onwall type. 

\begin{definition}\label{def:resolvent_eqs} 
	Let $\kappa \in \cV(L^\infty;J)$. 
	A Volterra kernel $r$ of type $L^\infty$ on $J$ 
	is called a (Volterra) \emph{resolvent}  
	(of type $L^\infty$) of $\kappa$ on $J$ if  
	(cf.\ \cite[Definition~9.3.1]{gripenberg1990})
	\begin{equation}\label{eq:resolvent_definition}
		r-\kappa 
		= 
		r \star \kappa 
		= \kappa \star r,
		\quad \text{a.e.\ in }\triangle J.
	\end{equation}
\end{definition}

\begin{remark}\label{rmk:rescale_kernel}
	Note that Definition~{\upshape\ref{def:resolvent_eqs}} 
	differs from
	the convention in \cite{gripenberg1990} by a sign. 
	Applying the results of 
	\cite[Chapter~9]{gripenberg1990} requires
	replacing $(\kappa, r)$ with $(-\kappa, -r)$; 
	thus, a 
	non-positive resolvent of $-\kappa$  
	in \cite{gripenberg1990} 
	corresponds to $r$ in \eqref{eq:resolvent_definition} 
	being non-negative. 
\end{remark} 

Every Volterra kernel $\kappa\in\cV(L^\infty;J)$ 
can have at most one resolvent 
$r\in\cV(L^\infty;J)$, see~\cite[Lemma~9.3.3]{gripenberg1990}. 
In contrast to uniqueness, 
existence of a resolvent  
is not guaranteed.   
For the Volterra kernels 
considered in this work, 
we address this below.

\begin{proposition}\label{prop:k_integrable_resolvent}
	Let $T\in(0,\infty)$, 
	and let $\kappa \in \cV(L^\infty;[0,T])$.
	Assume that 
	there exist constants $c \in (0,\infty)$
	and $\alpha \in (0,1]$ such that, 
	for almost all ${t\in[0,T]}$,   
	\begin{equation}\label{eq:kappa_assumption}  
		\forall s\in[0,t]: 
		\quad 
		\int_s^t | \kappa(t,u) |\diff{u} 
		\leq 
		c \, (t-s)^\alpha.
	\end{equation}
	Then, $\kappa$ has a unique resolvent 
	$r \in \cV(L^\infty;[0,T])$
	which admits the following representation as 
	an absolutely convergent Neumann series 
	in $\cV(L^\infty;[0,T])$, 
	\begin{equation}\label{eq:neumann_series}
		r 
		= 
		\sum_{j=1}^\infty \kappa^{\star j} 
		= 
		\kappa + \kappa\star \kappa + \kappa\star \kappa \star \kappa + \ldots.
	\end{equation}
\end{proposition}

\begin{proof}
	For any subinterval $[a,b] \subseteq [0,T]$, 
	we obtain by~\eqref{eq:kappa_assumption} the estimate  
	\begin{equation*}
		\| \kappa \|_{\cV(L^\infty;[a,b])}
		= 
		\esssup_{t \in [a,b]} \int_a^t | \kappa(t,u) | \diff{u}
		\leq 
		c \, (b-a)^\alpha.
	\end{equation*}
	Partitioning $[0,T]$ into
	$\bigl\lceil T (2c)^{\nicefrac{1}{\alpha}\!} \bigr\rceil$
	intervals of length at most 
	$\delta := (2c)^{-\nicefrac{1}{\alpha}\!}$, 
	we can bound the $\cV(L^\infty;J)$-norm of $\kappa$
	on each subinterval $J$ by
	$c \, \delta^\alpha = \tfrac{1}{2}< 1$.
	Thus, $\kappa$ has a unique resolvent
	$r \in \cV(L^\infty;[0,T])$ by
	\cite[Corollary~9.3.14 and Lemma~9.3.3]{gripenberg1990}. 
	
	By \cite[Corollary~9.3.18(i)]{gripenberg1990},
	the spectral radius of $\kappa$ satisfies $R(\kappa) < 1$, 
	and 
	by \cite[Theorem~1.2.7]{murphy1990} 
	we have that 
	$R(\kappa)  
	= 
	\lim_{n \to \infty} \| \kappa^{\star n} \|_{\cV(L^\infty;[0,T])}^{\nicefrac{1}{n}}$. 
	Choosing an arbitrary $\rho \in (R(\kappa), 1)$, 
	there exists an integer $N \in \bbN$
	such that 
	${\| \kappa^{\star n} \|_{\cV(L^\infty;[0,T])} \le \rho^n}$ 
	holds for all $n \geq N$. 
	Thus, the Neumann series 
	on the right-hand side of \eqref{eq:neumann_series} 
	is absolutely convergent in $\cV(L^\infty;[0,T])$. 
	To prove that it coincides with the resolvent~$r$ of~$\kappa$, 
	we note that 
	$\sum_{j=1}^\infty \kappa^{\star j}  
	- 
	\kappa 
	= 
	\sum_{j=1}^\infty \kappa^{\star (j+1)} 
	= 
	\bigl(\sum_{j=1}^\infty \kappa^{\star j} \bigr) 
	\star \kappa 
	= 
	\kappa \star \bigl( \sum_{j=1}^\infty \kappa^{\star j} \bigr)$, 
	where we have used distributivity and joint continuity 
	of the map $\star$ in \eqref{eq:def_star}, 
	see~\cite[p.~2]{murphy1990}, 
	to move it inside and outside 
	the series. 
	In summary, 
	the series $\sum_{j=1}^\infty \kappa^{\star j}$ 
	is absolutely convergent in $\cV(L^\infty;[0,T])$,  
	and it satisfies \eqref{eq:resolvent_definition}. 
	By uniqueness of limits and of the resolvent, 
	the series converges to~$r$, completing 
	the proof of~\eqref{eq:neumann_series}. 
\end{proof}

\begin{remark}\label{rmk:nonnegative_k_r}
	If $\kappa$ is non-negative 
	a.e.~in $[0,T]\times[0,T]$,
	the Neumann series \eqref{eq:neumann_series}
	consists of only non-negative terms 
	and the resolvent $r$ of $\kappa$ 
	is also non-negative~a.e. 
\end{remark}

We next state an inequality 
of Gr\"onwall type tailored 
for our results in Section~{\upshape\ref{sec:general_SVE}}.

\begin{lemma}[Gr\"onwall's inequality]
	\label{lem:gronwall_ineq_1var} 
	Let $T \in (0, \infty)$, and let 
	$\kappa \in \cV(L^\infty; [0,T])$ 
	be non-negative a.e.\ in $[0,T]\times [0,T]$. 
	Suppose that there exist constants
	$c \in (0,\infty)$ and $\alpha \in (0,1]$ such that 
	\eqref{eq:kappa_assumption} holds 
	for almost all $t\in[0,T]$. 
	Let $f, x \colon [0,T] \to \bbR$ be  
	continuous functions such that, 
	for almost all $t\in[0,T]$ 
	\begin{equation*}  
		x(t) 
		\leq 
		f(t) + \int_0^t \kappa(t,u) x(u) \diff{u}.
	\end{equation*}
	Then, there exists a constant $C_{\kappa,T} \in (0,\infty)$ such that 
	\begin{equation*}
		\forall t \in [0,T]: 
		\quad 
		x(t) 
		\leq 
		C_{\kappa,T} \sup_{v \in [0,t]} |f(v)|.
	\end{equation*}
\end{lemma}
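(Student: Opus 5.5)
The plan is to iterate the inequality using the resolvent from Proposition~\ref{prop:k_integrable_resolvent}. Since $\kappa$ satisfies \eqref{eq:kappa_assumption}, it has a unique resolvent $r\in\cV(L^\infty;[0,T])$ given by the Neumann series \eqref{eq:neumann_series}. By Remark~\ref{rmk:nonnegative_k_r}, $r\geq 0$ a.e. I will set $C_{\kappa,T}:=1+\|r\|_{\cV(L^\infty;[0,T])}$, which depends only on $\kappa$ and $T$. The goal is the inequality
\[
x(t)\leq f(t)+\int_0^t r(t,u)f(u)\diff{u}
\]
for almost all $t$. Bounding $|f(u)|\leq\sup_{v\in[0,t]}|f(v)|$ and using $\int_0^t r(t,u)\diff{u}\leq \|r\|_{\cV(L^\infty)}$ then gives the claim for a.e.\ $t$. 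Continuity of $x$ and of $t\mapsto\sup_{v\le t}|f(v)|$ extends it to every $t\in[0,T]$.

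To get the resolvent inequality, I will write $\phi:=f+\int_0^\cdot\kappa(\cdot,u)x(u)\diff{u}-x\geq 0$ a.e. Then
\[
x=f-\phi+\kappa\star x,
\]
where $\kappa\star x$ is understood in the function sense $(\kappa\star x)(t)=\int_0^t\kappa(t,u)x(u)\diff{u}$. Iterating $n$ times gives
\[
x=\sum_{j=0}^{n-1}\kappa^{\star j}\star(f-\phi)+\kappa^{\star n}\star x .
\]
Alternatively, one can apply the variation-of-constants identity for Volterra equations directly: $x=(f-\phi)+r\star(f-\phi)$. This follows formally from $r-\kappa=r\star\kappa$ and associativity of $\star$, which holds because all objects are bounded: $x,f$ are continuous on a compact set and $\kappa,r$ are of type $L^\infty$.

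Since $r\geq0$ and $\phi\geq0$, the term $-\phi-r\star\phi$ is nonpositive, which yields the desired inequality. In the iteration version, the remainder satisfies
\[
\|\kappa^{\star n}\star x\|_\infty\leq\|\kappa^{\star n}\|_{\cV(L^\infty)}\|x\|_\infty\to0,
\]
because $\|\kappa^{\star n}\|\leq\rho^n$ for $n$ large, as shown in the proof of Proposition~\ref{prop:k_integrable_resolvent}. Nonnegativity of each $\kappa^{\star j}$ handles the $\phi$-terms.

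The main obstacle is measure-theoretic bookkeeping. The hypothesis holds only for a.e.\ $t$, and the star product of a kernel with a function is only defined a.e., so I need Fubini to justify associativity $(\kappa\star\kappa)\star x=\kappa\star(\kappa\star x)$ a.e. I would handle this by working with the iteration version: everything is bounded and nonnegative kernels allow Tonelli. Only at the end do I use the a.e.\ inequality together with continuity to pass to all $t$.
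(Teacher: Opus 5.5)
Your proposal is correct and follows the paper's proof: the nonnegative resolvent $r$ from Proposition~\ref{prop:k_integrable_resolvent} and Remark~\ref{rmk:nonnegative_k_r}, the a.e.\ inequality $x\le f+r\star f$, the constant $C_{\kappa,T}=1+\|r\|_{\cV(L^\infty;[0,T])}$, and continuity to pass from almost every $t$ to all $t$. The only difference is that you derive $x\le f+r\star f$ yourself, via the slack $\phi\ge0$ together with Neumann iteration or the variation-of-constants identity, whereas the paper cites \cite[Theorem~9.3.6 \& Lemma~9.8.2]{gripenberg1990} for this step.
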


\begin{proof}
	By Proposition~{\upshape\ref{prop:k_integrable_resolvent}} 
	the kernel $\kappa$
	has a resolvent
	$r \in \cV(L^\infty; [0,T])$, 
	which is non-negative a.e., 
	see Remark~{\upshape\ref{rmk:nonnegative_k_r}}. 
	By \cite[Theorem~9.3.6 \& Lemma~9.8.2]{gripenberg1990}, 
	recalling the difference in the sign convention 
	from Remark~{\upshape\ref{rmk:rescale_kernel}}, 
	and by H\"older's inequality 
	we find that, 
	for almost all $t\in[0,T)$, 
	\begin{align*} 
		x(t) 
		&\leq 
		f(t) + \int_0^t r(t,u) f(u) \diff{u} 
		\\
		&\leq 
		|f(t)| + 
		\| r(t,\,\cdot\,) \|_{L^1(0,t)} \| f \|_{L^\infty(0,t)} 
		\leq 
		\bigl(1  + \| r \|_{\cV(L^\infty; [0,T])} \bigr) 
		\| f \|_{L^\infty(0,t)} . 
	\end{align*}  
	By continuity of $x$ and $f$, 
	it follows that
	$x(t) 
	\leq 
	\bigl(1  + \| r \|_{\cV(L^\infty; [0,T])} \bigr) \sup_{v \in [0,t]} |f(v)|$ 
	holds   
	for all $t\in[0,T]$.  
	As $r$ is uniquely determined by $\kappa$,
	the value of $\| r \|_{\cV(L^\infty; [0,T])}$ 
	depends only on $\kappa$ and~$T$,
	and the claim follows 
	with 
	$C_{\kappa,T} := 1 + \| r \|_{\cV(L^\infty; [0,T])}$. 
\end{proof}


\section{Auxiliary inequalities}

In the next lemma, 
we formulate an inequality of 
Burkholder--Davis--Gundy type tailored 
for stochastic Volterra processes. 
Finally, 
in Lemma~{\upshape\ref{lma:exponential_maximum_bound}}, 
we state an auxiliary result, 
frequently used 
in Sections~{\upshape\ref{sec:sinc-fractional}} 
and~{\upshape\ref{sec:markov_approx}} to bound 
the integrand in the integral 
representation \eqref{eq:fracker_as_laplace} 
of the fractional kernel. 
We recall that  
$(\Omega, \cF, (\cF_t)_{t \geq 0}, \bbP)$ is
a complete filtered probability space
with a right-continuous filtration.

\begin{lemma}[Burkholder--Davis--Gundy 
	inequality]\label{lma:BDG_SVE} 
	Let $T \in (0,\infty)$,
	and assume that  
	$K \colon [0,T] \times [0,T] \to \bbR$
	is a Borel measurable function
	for which the mapping
	$t \mapsto K(t,\,\cdot\,)$ is in $C^0([0,T];L^2(0,T))$.
	Let $W\colon [0,\infty) \times \Omega \to \bbR^d$
	be a $d$-dimensional standard Brownian motion,
	and let $Z\colon [0,\infty) \times \Omega \to \bbR^{m\times d}$ be
	a progressively measurable stochastic process, 
	i.e., for all $v \in [0,\infty)$, 
	the restriction of $Z$ to $[0,v] \times \Omega$ 
	is $\cB([0,v]) \otimes \cF_v$-measurable,
	where $\cB([0,v])$ denotes 
	the Borel $\sigma$-algebra on $[0,v]$.
	
	Let $p \in [2,\infty)$ 
	and assume that
	$\esssup_{t \in [0,T]} \| Z_t \|_{L^p(\Omega; \bbR^{m\times d})} < \infty$. 
	Then, 
	there exists $\bdg{p}\in(0,\infty)$, 
	depending only on $p$,  
	such that, 
	for all ${(t,s) \in \triangle[0,T]}$, 
	\begin{equation*}  
		\biggl\| \int_s^t K(t,u) Z_u \diff{W}_u  
		\biggr\|_{L^p(\Omega)}
		\leq 
		\sqrt{\bdg{p}} \, 
		\biggl( \int_s^t
		|K(t,u)|^2 \| Z_u \|^2_{L^p(\Omega)} \diff{u} 
		\biggr)^{\nicefrac{1}{2}}.
	\end{equation*}
\end{lemma}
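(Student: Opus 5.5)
The plan is to apply the classical BDG inequality to a martingale with the time parameter frozen, and then to move the $L^p(\Omega)$-norm inside the time integral using Minkowski's integral inequality.

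\textbf{Step 1: a martingale for fixed $t$.} Fix $(t,s)\in\triangle[0,T]$ and freeze the outer time variable $t$. Define
\begin{equation*}
M_v := \int_s^v K(t,u) Z_u \diff{W}_u, \qquad v\in[s,t].
\end{equation*}
This is a well-defined continuous local martingale on $[s,t]$. Indeed, $u\mapsto K(t,u)Z_u$ is progressively measurable, since $K(t,\cdot)$ is deterministic and Borel measurable. It is also square integrable, because
\begin{equation*}
\bbE\int_s^t |K(t,u)|^2\|Z_u\|^2 \diff{u}\le \|K(t,\cdot)\|_{L^2}^2 \, \esssup_u \|Z_u\|_{L^2(\Omega)}^2<\infty.
\end{equation*}
Here we use that $p\ge 2$, so the $L^2(\Omega)$-norm is bounded by the $L^p(\Omega)$-norm. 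The continuity of $t\mapsto K(t,\cdot)$ in $L^2(0,T)$ is only needed to make $K(t,\cdot)$ a genuine element of $L^2$ for every $t$, not merely for almost every $t$.

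\textbf{Step 2: classical BDG and Minkowski.} Apply the classical Burkholder--Davis--Gundy inequality to $M$ at the terminal time $t$. This gives
\begin{equation*}
\|M_t\|_{L^p(\Omega)}\le c_p \Bigl\|\int_s^t |K(t,u)|^2\|Z_u\|_{\bbR^{m\times d}}^2\diff{u}\Bigr\|_{L^{p/2}(\Omega)}^{1/2},
\end{equation*}
where for matrix-valued integrands the quadratic variation equals the Frobenius-norm integral. Since $p/2\ge1$, Minkowski's integral inequality lets us bring the $L^{p/2}(\Omega)$-norm inside the $\diff{u}$-integral. Using $\bigl\||Z_u|^2\bigr\|_{L^{p/2}} = \|Z_u\|_{L^p}^2$, we obtain the claim with $\bdg{p}:=c_p^2$, a constant depending only on $p$.

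\textbf{Main obstacle.} The delicate point is Step 1: because of the dependence on $t$, the Volterra integral is not itself a martingale in $t$. Freezing $t$ resolves this, and the remaining work is to check measurability and integrability carefully. If the BDG constant is wanted uniformly for $\bbR^m$-valued integrals, we either apply BDG componentwise and absorb a dimension factor, or cite a vector-valued version of BDG. In both cases the constant depends only on $p$ (and at most on $m,d$ in the componentwise route).
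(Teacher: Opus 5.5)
Your proposal is correct and is essentially the paper's proof. The paper likewise freezes $t$ by working with the integrand $\Psi^{(t,s)}_u = \mathbf{1}_{[s,t]}(u) K(t,u) Z_u$ and applies a BDG inequality to $\int_0^{\cdot}\Psi^{(t,s)}_u \diff{W}_u$; it then uses Minkowski's integral inequality to pass from $L^p(\Omega;L^2(0,T))$ to $L^2(0,T;L^p(\Omega))$, which is exactly your $L^{p/2}(\Omega)$ step. The paper settles your dimension caveat by citing a vector-valued BDG inequality (Seidler) whose constant depends only on $p$; that option, not the componentwise one, is what the statement as written requires.
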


\begin{proof}
	For $(t,s) \in \triangle[0,T]$,  define
	the stochastic process
	$\Psi^{(t,s)} \colon [0,\infty)\times\Omega \to \bbR^{m\times d}$ 
	by 
	$\Psi^{(t,s)}_u := \ind{[s,t]}(u) K(t,u) Z_u$. 
	The process 
	$\Psi^{(t,s)}$
	is then a product of
	a deterministic Borel measurable function
	and a progressively measurable process,
	and is therefore itself 
	progressively measurable.
	Moreover, 
	by the assumptions on~$K$ and on~$Z$, 
	for every $(t,s) \in \triangle[0,T]$, 
	the $L^2(0,\infty; L^p(\Omega))$-norm 
	of $\Psi^{(t,s)}$ is finite, 
	\begin{equation*}
		\bigl\| \Psi^{(t,s)} 
		\bigr\|_{L^2(0,\infty;L^p(\Omega))}^2 
		\leq 
		\sup_{v \in [0,T]} \| K(v,\,\cdot\,) \|_{L^2(0,T)}^2 \,
		\esssup_{v \in [0,T]}\| Z_v \|_{L^p(\Omega)}^2  
		< \infty.
	\end{equation*}
	Therefore, we can apply 
	\cite[Theorem~1.1]{seidler2010}
	and Minkowski's integral inequality
	\cite[Proposition~1.2.22]{analysis_banach_I}, 
	and obtain that, 
	for all $(t,s)\in\triangle[0,T]$, 
	\begin{equation*}
		\bbE \sup_{v \in [0,T]}
		\biggl\| \int_0^v \Psi^{(t,s)}_u \diff{W}_u 
		\biggr\|_{\bbR^m}^p
		\leq 
		\bdg{p}^{\nicefrac{p}{2}} 
		\bigl\| \Psi^{(t,s)} \bigr\|_{L^p(\Omega;L^2(0,T))}^p
		\leq 
		\bdg{p}^{\nicefrac{p}{2}} 
		\bigl\| \Psi^{(t,s)} 
		\bigr\|_{L^2(0,T;L^p(\Omega))}^p, 
	\end{equation*} 
	where $\bdg{p}$ is a positive constant, 
	which depends only on $p$ 
	and satisfies $\bdg{p} = \cO(p)$. 
	
	By construction, 
	$\int_s^t K(t,u) Z_u \diff{W}_u 
	= \int_0^T \Psi^{(t,s)}_u \diff{W}_u$
	holds $\bbP$-almost surely. 
	Thus,
	\begin{align*}
		\biggl\| \int_s^t K(t,u) &Z_u \diff{W}_u \biggr\|_{L^p(\Omega)} 
		\leq 
		\biggl( \bbE \sup_{v \in [0,T]} 
		\biggl\| \int_0^v \Psi^{(t,s)}_u \diff{W}_u \biggr\|_{\bbR^m}^p \biggr)^{\nicefrac{1}{p}}
		\\ 
		&\leq 
		\sqrt{\bdg{p}} \, 
		\bigl\| \Psi^{(t,s)} \bigr\|_{L^2(0,T;L^p(\Omega))}  
		=  
		\sqrt{\bdg{p}} \, 
		\biggl( 
		\int_s^t |K(t,u)|^2 \| Z_u \|_{L^p(\Omega)}^2 \diff{u} 
		\biggr)^{\nicefrac{1}{2}} 
	\end{align*} 
	holds for all $(t,s)\in\triangle[0,T]$, 
	as claimed.
\end{proof}

\begin{lemma}\label{lma:exponential_maximum_bound} 
	Let $s, \eps \in (0,\infty)$, 
	and define the function $f \colon [0,\infty) \to [0,\infty)$
	by $f(x) = x^\eps e^{-sx}$.
	Then, $f$ has a unique maximum at $x=\tfrac{\eps}{s}$, and 
	\begin{equation*}
		\max_{x \in [0,\infty)} f(x) = \Bigl(\frac{\eps}{e}\Bigr)^\eps \, s^{-\eps}.
	\end{equation*}
\end{lemma}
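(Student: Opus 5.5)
This is elementary calculus, so I would argue directly on $(0,\infty)$ and handle the endpoint $x=0$ separately.

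First, $f$ is continuous on $[0,\infty)$ with $f(0)=0$, and $f(x)>0$ for $x>0$. Moreover $f(x)\to 0$ as $x\to\infty$, because the exponential factor $e^{-sx}$ dominates the power $x^\eps$ (recall $s>0$). Hence any supremum is attained at an interior point, and $x=0$ is not a maximizer.

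On $(0,\infty)$ I would differentiate:
\begin{equation*}
	f'(x) = x^{\eps-1} e^{-sx} (\eps - s x).
\end{equation*}
This is positive on $(0,\eps/s)$ and negative on $(\eps/s,\infty)$. So $f$ is strictly increasing on $[0,\eps/s]$ and strictly decreasing on $[\eps/s,\infty)$. It follows that $x=\eps/s$ is the unique global maximizer.

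Finally, I would evaluate $f$ at this point:
\begin{equation*}
	f(\eps/s) = (\eps/s)^\eps e^{-\eps} = (\eps/e)^\eps s^{-\eps}.
\end{equation*}

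The only step needing any care is uniqueness. It follows from the strict sign change of $f'$ at $\eps/s$, together with $f(0)=0<f(\eps/s)$, which rules out the endpoint.
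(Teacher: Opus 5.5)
Your proof is correct and follows essentially the same route as the paper: differentiate on $(0,\infty)$ to locate the single critical point $x=\eps/s$, use $f(0)=0$, positivity of $f$, and $f(x)\to 0$ as $x\to\infty$ to conclude it is the unique global maximizer, then evaluate. Your explicit sign analysis of $f'$, giving strict monotonicity on either side of $\eps/s$, makes the uniqueness claim slightly more transparent than the paper's appeal to the boundary behavior alone.
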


\begin{proof}
	The derivative 
	$f'(x) = x^{\eps-1}e^{-sx}(\eps - sx)$ 
	vanishes only at $x = \frac{\eps}{s}$
	on $(0,\infty)$.
	Since $f(0) = 0$, $f(x) > 0$ for all $x > 0$,
	and $\lim_{x \to \infty} f(x) = 0$,
	this critical point is the unique global maximum, 
	and ${\max_{x\in[0,\infty)} f(x) 
		= f\bigl(\frac{\eps}{s} \bigr) 
		= \bigl(\frac{\eps}{e}\bigr)^\eps s^{-\eps}}$.
\end{proof}

\section*{Acknowledgment} 

The authors thank Antonis Papapantoleon 
for helpful discussions about  
rough stochastic volatility models 
and Markovian approximations. 

\section*{Funding} 

The work of KK was funded by 
the European Union 
(ERC, FunCalc4Stats, 
grant agreement no.~101219883). 
Views and opinions expressed are however 
those of the author(s) only and do not necessarily reflect 
those of the European Union 
or the European Research Council.  
Neither the European Union nor the granting authority 
can be held responsible for them.

\bibliographystyle{siam}
\bibliography{references.bib}  

\end{document}